\def\dsum{\displaystyle\sum}
\def\dint{\displaystyle\int}
\def\dsup{\displaystyle\sup}
\def\dinf{\displaystyle\inf}
\def\r{\right}
\def\lf{\left}
\def\pat{\partial}
\def\ls{\lesssim}
\def\fz{\infty}
\def\fz{\infty}
\def\az{\alpha}
\def\supp{{\mathop\mathrm{\,supp\,}}}
\def\loc{{\mathop\mathrm{\,loc\,}}}
\def\bmo{{\mathop\mathrm{BMO}}}
\def\dist{{\mathop\mathrm{dist}}}
\def\mol{{\mathop\mathrm{mol}}}
\def\lz{\lambda}
\def\dz{\delta}
\def\bdz{\Delta}
\def\ez{\epsilon}
\def\bz{\beta}
\def\fai{\varphi}
\def\gz{{\gamma}}
\def\tz{\theta}
\def\sz{\sigma}
\def\wz{\widetilde}
\def\ls{\lesssim}
\def\laz{\langle}
\def\raz{\rangle}
\def\boz{\Omega}
\def\gfz{\genfrac{}{}{0pt}{}}
\def\pat{\partial}
\def\rr{{\mathbb R}}
\def\rn{{{\rr}^n}}
\def\+rn{{{\rr}^{n+1}_+}}
\def\zz{{\mathbb Z}}
\def\nn{{\mathbb N}}
\def\zz{{\mathbb Z}}
\def\nn{{\mathbb N}}
\def\cc{{\mathbb C}}
\def\cl{{\mathcal L}}
\def\cm{{\mathcal M}}
\def\hs{\hspace{0.3cm}}
\newtheorem{theorem}{Theorem}[section]
\newtheorem{lemma}[theorem]{Lemma}
\newtheorem{corollary}[theorem]{Corollary}
\newtheorem{proposition}[theorem]{Proposition}
\theoremstyle{definition}
\newtheorem{remark}[theorem]{Remark}
\newtheorem{definition}[theorem]{Definition}
\numberwithin{equation}{section}
\begin{document}

\arraycolsep=1pt

\title{\Large\bf Weak Hardy Spaces $WH_L^p({\mathbb R}^n)$ Associated to
Operators Satisfying $k$-Davies-Gaffney Estimates\footnotetext
{\hspace{-0.35cm} 2010 {\it Mathematics Subject Classification}.
Primary 42B35; Secondary 42B30, 42B20, 42B37, 47B06, 47A60, 35J30, 35J10.\endgraf
{\it Key words and phrases}. weak Hardy space, dual, atom, molecule, BMO, real interpolation,
Riesz transform, $k$-Davies-Gaffney estimate, functional calculus,
higher order elliptic operator, Schr\"odinger type operator.
\endgraf
Der-Chen Chang is partially supported by an
NSF grant DMS-1203845 and a Hong Kong RGC competitive earmarked
research grant $\#$601410, Huoxiong Wu is supported by National Natural Science Foundation
of China (Grant Nos. 11371295 and 11471041), and Dachun Yang is supported by the National
Natural Science Foundation  of China (Grant Nos. 11171027 and 11361020),
the Specialized Research Fund for the Doctoral Program of Higher Education
of China (Grant No. 20120003110003) and the Fundamental Research
Funds for Central Universities of China (Grant Nos. 2012LYB26, 2012CXQT09 and 2014KJJCA10).}}
\author{Jun Cao, Der-Chen Chang, Huoxiong Wu
and Dachun Yang\,\footnote{Corresponding author}}
\date{}
\maketitle

\vspace{-0.6cm}

\begin{center}
\begin{minipage}{13cm}
{\small {\bf Abstract}\quad Let $L$ be a one-to-one operator of type
$\omega$ having a bounded $H_\infty$ functional calculus and
satisfying the $k$-Davies-Gaffney estimates with $k\in{\mathbb N}$.
In this article, the authors introduce the
weak Hardy space $WH_L^p(\mathbb{R}^n)$  associated to $L$ for $p\in (0,\,1]$ via the
non-tangential square function $S_L$ and establish a weak
molecular characterization of $WH_L^p(\mathbb{R}^n)$.
A typical example of such operators is
the $2k$-order divergence form homogeneous elliptic operator
$L:=(-1)^k\sum_{|\alpha|=k=|\beta|}\partial^\beta(a_{\alpha,\beta}\partial^\alpha)$,
where $\{a_{\alpha,\beta}\}_{|\alpha|=k=|\beta|}$ are
complex bounded measurable functions. As applications, for
$p\in(\frac{n}{n+k},\,1]$, the authors prove
that the associated Riesz transform $\nabla^k L^{-1/2}$ is
bounded from $WH^p_{L}(\mathbb{R}^n)$ to the classical weak Hardy
space $WH^p(\mathbb{R}^n)$ and, for all $0<p<r\le1$ and
$\alpha=n(\frac{1}{p}-\frac{1}{r})$, the fractional power
$L^{-\frac{\alpha}{2k}}$ is bounded from
$WH_{L}^p(\mathbb{R}^n)$ to $WH_{L}^r(\mathbb{R}^n)$.
Also, the authors establish an interpolation theorem of $WH_{L}^p(\mathbb{R}^n)$
by showing that $L^2(\mathbb{R}^n)\cap WH_{L}^p(\mathbb{R}^n)$ for all $p\in(0,\,1]$
are the intermediate spaces
in the real method of interpolation between the Hardy space
$L^2(\mathbb{R}^n)\cap H_L^p(\mathbb{R}^n)$  for different $p\in(0,\,1]$.
In particular, if $L$ is a
nonnegative self-adjoint operator in $L^2({\mathbb R}^n)$ satisfying
the Davies-Gaffney estimates, the authors further establish the weak
atomic characterization of $WH_L^p(\mathbb{R}^n)$. Furthermore, the authors find the dual space of
$WH_L^p(\mathbb{R}^n)$ for $p\in(0,\,1]$, which can be defined via mean oscillations
based on some subtle coverings of bounded open sets and,
even when $L:=-\Delta$, are also previously unknown.

}
\end{minipage}
\end{center}

\vspace{0.2cm}

\section{Introduction}\label{s1}

\hskip\parindent It is well known that Stein and Weiss \cite{sw60}
originally inaugurated the study of real Hardy spaces $H^p(\rn)$
with $p\in(0,\,1]$ on the Euclidean space $\rn$.
Later, a real-variable theory of
$H^p(\rn)$ for $p\in(0,\,1]$ was systematically developed by
Fefferman and Stein in \cite{fs72}. Since then, the real-variable theory of Hardy spaces
$H^p(\rn)$ has found many important applications in various fields of analysis
and partial differential equations; see, for example, \cite{co74,clms93,
cw77,gr08, la78,mu94,se94, st93,tw80}. It is now known that
$H^p(\rn)$ is a good substitute of the Lebesgue space $L^p(\rn)$
with $p\in(0,\,1]$ when studying the boundedness of operators;
for example, when $p\in(0,\,1]$, the \emph{Riesz
transform} $\nabla(-\bdz)^{-1/2}$ is not bounded on $L^p(\rr^n)$,
but bounded on $H^p(\rr^n)$, where $\Delta$ is the
\emph{Laplace operator} $\sum_{i=1}^n\frac {\partial^2}{\partial
x_i^2}$ and $\nabla$ is the \emph{gradient operator}
$(\frac\partial{\partial x_1}, \ldots, \frac
\partial{\partial x_n})$ on $\rn$. Moreover, when considering some weak
type inequalities for some of the most important operators from
harmonic analysis and partial differential equations,
we are led to the more general \emph{weak Hardy
space} $WH^p(\rn)$ (see, for example, \cite{fso86, li91, gr92, a94, gr95,
qy00,at07,mmy10}). It is well known that the weak
Hardy space $WH^p(\rn)$ is a suitable substitute
of both the weak Lebesgue space $WL^p(\rn)$ and the Hardy space
$H^p(\rn)$ when studying the boundedness of operators
in the critical case. For example, let $\dz\in(0,\,1]$, $T$ be a
\emph{$\dz$-Calder\'on-Zygmund operator} and $T^*(1)=0$, where $T^*$
denotes the \emph{adjoint operator} of $T$. It is known that $T$ is
bounded on $H^p(\rn)$ for all $p\in (\frac{n}{n+\dz},\,1]$ and not bounded on
$H^{\frac{n}{n+\dz}}(\rn)$, but, instead of this, $T$ is
bounded from $H^{\frac{n}{n+\dz}}(\rn)$ to
$WH^{\frac{n}{n+\dz}}(\rn)$ (see \cite{li91,a94}). Recall that the Riesz
transform $\nabla (-\bdz)^{-1/2}$ is a $1$-Calder\'on-Zygmund
operator with convolution kernel, which is smooth on $\rn\times\rn$
except on the diagonal points $$\{(x,\,y)\in \rn\times\rn:\ x=y\}.$$
For more related history and properties about $WH^p(\rn)$, we refer to
\cite{frs74,fso86,li91,a94,lu95,qy00, at07} and the references cited therein.
We should point out that Fefferman, Rivi\`ere and Sagher \cite{frs74}
proved that the weak Hardy space $WH^p(\rn)$ naturally occurs as the intermediate spaces
in the real method of interpolation between the Hardy space
$H^p(\rn)$. It is easy to see that the classical Hardy
spaces $H^p(\rn)$ and the weak Hardy spaces $WH^p(\rn)$ are
essentially related to the Laplace operator $\bdz$.

In recent years, the study of Hardy spaces and their generalizations associated to
differential operators attracts a lot of attentions; see, for example,
\cite{ao11, adm05,  amr08,  cly10, cy,  cds99,  cks92, cks93, dxy07, dy05-1, dy05-2, dz99, dz00,
hlmmy,hm09,hm09c,hmm, jy11, jy10, yy11} and their references. In
particular, Auscher et al. \cite{adm05} first
introduced the Hardy space $H_L^1(\rn)$ associated to $L$, where the
\emph{heat kernel generated by $L$ satisfies a pointwise Poisson
type upper bound}. Later, Duong and Yan \cite{dy05-1,dy05-2}
introduced the dual space $\bmo_L(\rn)$ and showed that the dual
space of $H_L^1(\rn)$ is $\bmo_{L^*}(\rn)$, where $L^*$
denotes the {\it adjoint operator} of $L$ in $L^2(\rn)$. Yan \cite{y08}
further introduced the Hardy space $H_L^p(\rn)$ for some
$p\in(0,\,1]$ but near to $1$ and generalized these results to
$H_L^p(\rn)$ and their dual spaces. A real-variable theory of Orlicz-Hardy spaces
and their dual spaces associated to $L$ was also developed in
\cite{jyz09,jya}.

Recently, the (Orlicz-)Hardy space associated to
a \emph{one-to-one operator of type $\omega$ satisfying the
$k$-Davies-Gaffney estimates and having a bounded $H_{\fz}$
functional calculus} was introduced in \cite{cy,dl13,ccyy,ddy12}. A typical
example of such operators is the following \emph{$2k$-order divergence
form homogeneous elliptic operator}
\begin{equation}\label{1.1}
L:=(-1)^k\sum_{|\az|=k=|\bz|}\pat^\bz(a_{\az,\bz}\pat^\az),
\end{equation}
interpreted in the usual weak sense via a
sesquilinear form, with complex bounded measurable coefficients
$\{a_{\az,\,\bz}\}_{|\az|=k=|\bz|}$ satisfying
the \emph{elliptic condition}, namely, there exist constants
$0<\lz\le\Lambda<\fz$ such that, for all $\az$, $\bz$ with
$|\az|=k=|\bz|$,
$\|a_{\az,\bz}\|_{L^\fz(\rn)}\le\Lambda$ and, for all $f\in W^{k,2}(\rn)$,
$\Re (L_1f,\,f)\ge
\lz\,\|\nabla^kf\|_{L^2(\rn)}^2.$
Here and hereafter, $\Re z$ denotes the {\it
real part} of $z$ for all $z\in\cc$.

Notice that, when $k=1$, $H_{L}^p(\rn)$ is the Hardy space associated to the
\emph{second-order divergence form elliptic operator on $\rn$ with complex
bounded measurable coefficients}, which was introduced by Hofmann
and Mayboroda \cite{hm09,hm09c}, Hofmann et al. \cite{hmm}, and Jiang
and Yang \cite{jy10}. It is known that the associated Riesz
transform $\nabla^k {L^{-1/2}}$ is bounded
from $H^p_{L}(\rn)$ to the classical Hardy space $H^p(\rn)$ for
all $p\in(\frac{n}{n+k},\,1]$ (see \cite{cy}). Unlike the classical
case, in this case, $\nabla^k {L^{-1/2}}$ may even not have a
smooth convolution kernel. Thus, the boundedness of $\nabla^k
{L^{-1/2}}$ can not be extended to the full range of
$p\in(0,\,\fz)$ as before. However, when considering the endpoint
boundedness of the associated Riesz transforms, it is found that the
weak Hardy space is useful. For example, it was proved in
\cite{lyy11} that $\nabla^kL^{-1/2}$ is bounded from
$H^{n/(n+k)}_{L}(\rn)$ to the weak Hardy space
$WH^{n/(n+k)}(\rn)$, which may not be bounded on $H_L^{n/(n+k)}(\rn)$.

Motivated by the above results, in this article, we wish to
develop a real-variable theory of weak Hardy spaces associated
to a class of differential operators and study their applications.
More precisely, we always assume that \emph{$L$ is a one-to-one operator of type $\omega$
having a bounded $H_\infty$ functional calculus and satisfying the
$k$-Davies-Gaffney estimates}. For $p\in (0,\,1]$, we introduce
the weak Hardy space $WH_L^p(\mathbb{R}^n)$ associated to $L$ via
the non-tangential square function $S_L$ and establish its weak
molecular characterization. In particular,
\emph{if $L$ is a nonnegative self-adjoint operator in $L^2({\mathbb R}^n)$ satisfying
the Davies-Gaffney estimates}, we further establish the weak atomic
decomposition of $WH_L^p(\mathbb{R}^n)$. By their atomic characterizations,
we easily see that $WH_{-\Delta}^p(\mathbb{R}^n)$ and
the \emph{closure} of $WH^p(\rn)\cap L^2(\rn)$ on the quasi-norm
$\|\cdot\|_{WH^p(\rn)}$ coincide with equivalent quasi-norms.
Let $L$ be the $2k$-order divergence
form homogeneous elliptic operator as in \eqref{1.1}. As applications,
we prove that, for all $p\in({n}/{(n+k)},\,1]$, the associated Riesz
transform $\nabla^k L^{-1/2}$ is bounded from
$WH^p_{L}(\mathbb{R}^n)$ to the classical weak Hardy space
$WH^p(\mathbb{R}^n)$; furthermore,  for all $0<p<r\le1$  and
$\alpha=n({1}/{p}-{1}/{r})$, the fractional power
$L^{-{\alpha}/{(2k)}}$ is bounded from $WH_{L}^p(\rn)$ to
$WH_{L}^r(\rn)$. We also establish a real interpolation theorem
on $WH_{L}^p(\rn)$ by showing that $L^2(\rn)\cap WH_{L}^p(\rn)$
for all $p\in(0,\,1]$ are the the
intermediate spaces in the real method of interpolation between the
Hardy spaces $L^2(\rn)\cap H_L^p(\rn)$ for different $p\in(0,\,1]$.
Moreover, if $L$ is nonnegative self-adjoint and satisfies the Davies-Gaffney estimates,
then, for $p\in(0,\,1]$, we give out the
dual space of $WH_L^p(\mathbb{R}^n)$,
which is defined via mean oscillations of distributions
based on some subtle coverings of bounded open sets,
and prove that the elements in $W\Lambda_{L}^\az(\rn)$
can be viewed as a weak type Carleson measure of order $\az$.
We point out that, even when
$L:=-\Delta,$
the dual spaces of $WH_L^p(\mathbb{R}^n)$ are also previously unknown,
since the seminal article \cite{fso86} of Fefferman and Soria
on $WH^1(\rn)$ was published in 1986. Our aforementioned result on dual spaces of
$WH_L^p(\mathbb{R}^n)$ may give some light on this problem. In short,
the results of this article
round out the picture on weak Hardy spaces associated to
operators satisfying $k$-Davies-Gaffney estimates.  As in the aforementioned
articles on the theory of Hardy spaces associated with operators, the achievement
of all results in this article stems from subtle atomic decompositions
of weak tent spaces introduced in this article.
To the best of our knowledge, all results obtained in this article are new
even when $L$ is the Laplace operator.

This article is organized as follows.

In Section \ref{s2}, we first present some assumptions on the
operator $L$ used throughout the whole article (see Assumptions
$(\mathcal{L})_1$, $(\mathcal{L})_2$, $(\mathcal{L})_3$ and $(\mathcal{L})_4$
below) and
recall some basic facts concerning the $k$-Davies-Gaffney estimates
(see Lemmas \ref{l2.3}, \ref{l2.4} and \ref{l2.5} below) in Subsection
\ref{s2.1}. In Subsection \ref{s2.2}, we introduce the weak tent
space and establish its weak atomic decomposition (see Theorem
\ref{t2.6} below). Later, via the Whitney decomposition lemma,
we obtain another  weak atomic decomposition of $WT^p(\rr^{n+1}_+)$ (see
Theorem \ref{t2.12} below), which plays an important role
in the dual theory of our weak Hardy spaces.

Finally, in Section \ref{s2.3}, after recalling some
necessary results on the Hardy space $H_L^p(\rn)$ associated to $L$,
we introduce the weak Hardy space $WH_L^p(\rn)$ associated to $L$
(see Definition \ref{d2.13} below) and establish its weak molecular
characterization (see Theorem \ref{t2.21} below). As applications, if $L$ is
the $2k$-order divergence form homogeneous elliptic operator as in \eqref{1.1},
we prove the boundedness of
the associated Riesz transform $\nabla^k L^{-1/2}$ and fractional power
$L^{-\az/(2k)}$ on $WH_{L}^p(\rn)$ (see Theorems \ref{t2.23} and \ref{t2.24} below).
Moreover, when $L$ is a nonnegative self-adjoint operator in $L^2(\rn)$ satisfying the
Davies-Gaffney estimates, we obtain its weak atomic characterization
(see Theorem \ref{t2.16} below). Recall that, in \cite{fso86,li91}, for all $p\in(0,\,1]$,
a weak  atomic decomposition of the classical weak Hardy space $WH^p(\rn)$
is obtained. However, the ``atoms" appeared in the weak atomic characterization
of $WH^p(\rn)$ in \cite{fso86,li91} are essentially more in the spirit of
the classical ``$L^\fz(\rn)$-atoms", while the ``atoms" appeared in our weak atomic characterization
of $WH_L^p(\rn)$ are just $H_L^p(\rn)$-atoms associated to $L$
from \cite{hlmmy} when $p=1$ and from \cite{jy11} when $p\in (0,1]$
(see Theorem \ref{t2.16} below).

In order to establish the weak atomic
decomposition of weak tent spaces in Theorem \ref{t2.6} below,
we want to point out that we borrow
some ideas from the proof of \cite[Proposition 2]{cms85}, with some necessary adjustments by
changing the formation of the norm from the original strong version
to the present weak version. We also remark that this weak atomic
characterization still holds true under some small modifications of the
level set of the $\mathcal{A}$-functional (see \eqref{2.1} and Remark \ref{r2.10} below). An innovation of Theorem \ref{t2.6} is to establish an explicit relation between the supports of $T^p(\rr^{n+1}_+)$-atoms and the corresponding coefficients,
which plays a key role in establishing the weak atomic/molecular characterizations
of weak Hardy spaces associated to $L$ (see Theorems \ref{t2.6}(ii), \ref{t2.16}
and \ref{t2.21} below). Indeed, the  proof of Theorem \ref{t2.16} strongly depends on Theorem \ref{t2.6} and a
\emph{superposition principle on weak type estimate} from Stein et al. \cite{stw81} (see also Lemma
\ref{l2.18} below).  We point out that, without Theorem \ref{t2.6}(ii), Theorem
\ref{t2.16} seems impossible (see \eqref{2.10} and \eqref{2.11} below).
The proof of Theorem \ref{t2.21} is similar to that of Theorem \ref{t2.16},
but needs more careful off-diagonal estimates because of the lack of the
support condition of molecules.

In Section \ref{s4}, we establish an interpolation theorem of $WH_L^p(\rn)$
by showing that $L^2(\rn)\cap WH_{L}^p(\rn)$ for all $p\in(0,\,1]$ are the
intermediate spaces in the real method of interpolation between the
Hardy spaces $L^2(\rn)\cap H_L^p(\rn)$ for different $p\in(0,\,1]$
(see Theorem \ref{t4.5} below). Unlike in the classical case in \cite{frs74},
we prove Theorem \ref{t4.5} by
using a real interpolation result on the tent space $T^p(\rr^{n+1}_+)$ from
\cite{ccy14} and a result on the  interpolation of intersections from
Krugljak et al. \cite{kmp99}.

Section \ref{s3} is devoted to the dual theory of $WH_L^p(\rn)$.  Let $L$
be nonnegative self-adjoint and satisfy the Davies-Gaffney estimates.
We first introduce the notion of the \emph{weak Lipschitz space}
$W\Lambda_{L,{\mathcal{N}}}^\az(\rn)$
via the mean oscillation over bounded open sets,
then we prove that the elements in
$W\Lambda_{L, {\mathcal{N}}}^\az(\rn)$
can be viewed as some weak Carleson measures of order $\az$ (see Proposition
\ref{p3.7} below) and prove that the dual space of $WH_L^p(\rn)$
is $W\Lambda_{L, {\mathcal{N}}}^{n({1}/{p}-1)}(\rn)$ (see Theorem \ref{t3.6} below).

Recall that the dual space of the classical weak
Hardy space $WH^1(\rn)$ was first considered by Fefferman and Soria
\cite{fso86}. More precisely,  for any bounded open set $\boz\subset \rn$ and function $\fai$
on $\rn$, the \emph{mean oscillation} $\mathcal{O}(\fai,\,\boz)$ of $\fai$
\emph{over} $\boz$ was defined by Fefferman and Soria in \cite{fso86} as
$$\mathcal{O}(\fai,\,\boz):=\dsup\frac{1}{|\boz|}\dsum_{k}\dint_{Q_k}\lf|\fai(x)
-\fai_{Q_k}\r|\,dx,$$
where $\fai_{Q}:=\frac{1}{|Q|}\int_{Q}\fai(x)\,dx$ for any cube $Q$
and the supremum is taken over all collections $\{Q_k\}_k$ of subcubes of $\boz$
with \emph{bounded $C(n)$-overlap} (which means that
there exists a positive constant $C(n)$ such that $\sum_{k} \chi_{Q_k}\le C(n)$).
Let  $\omega(\dz):=\sup_{|\boz|=\dz}\mathcal{O}(\fai,\,\boz)$,
$$L_0^1(\rn):=\lf\{f\in L^1(\rn):\ \int_{\rn}f(x)\,dx=0\r\}$$ and
$\overline{L_0^1(\rn)}$ be the \emph{closure} of
$L_0^1(\rn)$ in the norm of the weak Hardy space $WH^1(\rn)$.
In \cite{fso86}, Fefferman and Soria proved that the dual of $\overline{L_0^1(\rn)}$
is the \emph{set} of all the functions
$\fai$ satisfying
$$\|\fai\|_*:=\dint_0^\fz\frac{\omega(\dz)}{\dz}\,d\dz<\fz.$$
In the present article, we show, in Theorem \ref{t3.6} below,
that the dual space of $WH_L^p(\rn)$ for all $p\in(0,\,1]$
is $W\Lambda^{n(1/p-1)}_{L, {\mathcal{N}}}(\rn)$,
which is defined by means of a similar integral of the mean oscillation
based on some smart coverings of bounded open sets (see Definitions \ref{d3.1}
and \ref{d3.2} below). Here the integral mean $\fai_{Q}$ is replaced
by some approximation of
identity, and  the collections of subcubes of an open set with
bounded $C(n)$-overlap by another new class of sets (see Definition \ref{d3.1}).
In particular, when $L=-\bdz$, $W\Lambda_{-\bdz, \overrightarrow{\mathcal{N}}}^{n({1}/{p}-1)}(\rn)$
is the dual space of the space $\overline{WH^p(\rn)\cap L^2(\rn)}^{\|\cdot\|_{WH^p(\rn)}}$
for $p\in(0,\,1]$, which seems also new. Here, the \emph{space}
$\overline{WH^p(\rn)\cap L^2(\rn)}^{\|\cdot\|_{WH^p(\rn)}}$
denotes the \emph{closure} of $WH^p(\rn)\cap L^2(\rn)$ on the quasi-norm
$\|\cdot\|_{WH^p(\rn)}$.

The proof of Theorem \ref{t3.6} strongly depends on a Carder\'on reproducing
formula obtained in \cite{hm09}, a subtle weak atomic decomposition
of the weak tent space (see Theorem \ref{t2.12}), and a
resolvent characterization of $WH_L^p(\rn)$ (see Proposition
\ref{p3.5} below) and Proposition \ref{p3.7} below.

Recall that a key ingredient to prove
the duality between Hardy spaces and Lipschitz spaces is to
represent the Lipschitz norm by means of a dual norm expression of
some Hilbert spaces. It is known that, in the case of the classical
``strong" Lipschitz space $\Lambda_L^\az(\rn)$, this Hilbert space
can be chosen to be $L^2(B)$, where $B$ is some ball (see the proof of
\cite[Theorem 3.51]{hmm}).  Observe also that the mean oscillation
appearing in the norm of  the ``strong"
Lipschitz space $\Lambda_L^\az(\rn)$ has the form
$$\lf\{\frac{1}{|B|}\dint_{B}\lf|\lf(I-e^{-r_B^2L}\r)^Mf(x)
\r|^2\,dx\r\}^{1/2},$$
which involves only one ball and hence the
off-diagonal estimates can be applied directly.
However, in the weak case, the  mean oscillation involves a general bounded
open set (see \eqref{3.5} below).
Therefore, we can not apply
off-diagonal estimates directly.
To overcome this difficulty,
we first introduce, in Definition \ref{d3.1}, subtle coverings of
bounded open sets, which stem from the proof of the weak atomic
decomposition for weak tent spaces in Theorem \ref{t2.12},
obtained via the Whitney-type decompositions on level sets for
$\mathcal{A}$-functionals in \eqref{2.1}. More precisely,
we first find a sequence of balls which cover the considered open set
via the Whitney-type decomposition. Then we construct
the annuli sets based on a sequence of balls and consider the off-diagonal estimates
on these annuli. Since the radius of the balls in the sequence are different,
the off-diagonal estimates on these annuli are more difficult than those on a single ball.

As usual, we make some conventions on the notation. Throughout the
whole article, we always let $\nn:=\{1,2,\ldots\}$, $\zz_+:=
\nn\cup\{0\}$ and $\rr^{n+1}_+:=\{(x,\,t):\
x\in\rn,\,t\in(0,\,\fz)\}$. We use $C$ to denote a {\it positive
constant}, independent of the main parameters involved but whose
value may differ from line to line. {\it Constants with subscripts},
such as $C_0$, $M_0$ and $\az_0$, do not change in different
occurrences. If $f\le Cg$, we write $f\ls g$ and, if $f\ls g\ls f$,
we then write $f\sim g$. For all $x\in\rr^n$, $r\in(0,\fz)$ and
$\az\in(0,\,\fz)$, let $B(x,r):=\{y\in\rr^n:|x-y|<r\}$, $\az
B(x,r):= B(x,\az r)$, $S_0(B):=B$,
$S_i(B):=2^iB\setminus(2^{i-1}B)$ and
$\wz{S}_i(B):=2^{i+1}B\setminus(2^{i-2}B)$  for all $i\in{\zz_+}$,
where, when $i<0$, $2^iB:=\emptyset$. Also, for any set $E\subset \rn$,
we use $E^\complement$ to denote the \emph{set} $\rn\setminus E$ and
$\chi_E$ its {\it characteristic function}. For any index
$q\in[1,\fz]$, we denote by $q'$ its \emph{conjugate index}, namely,
$1/q+1/q'=1$. Let $\mathcal{S}(\rn)$ be the \emph{space of Schwartz
functions} on $\rn$ and $\mathcal{S}'(\rn)$ its \emph{dual space}.

\section{The weak Hardy space $WH_L^p(\rn)$}\label{s2}

\hskip\parindent The main purpose of this section is to introduce
the weak Hardy space $WH_L^p(\rn)$ and establish its weak
atomic and molecular characterizations. As applications of this weak
molecular characterization, we obtain the boundedness of the
associated Riesz transform and fractional power on $WH_L^p(\rn)$.
In order to achieve this goal, we
need to describe our hypotheses on the operator $L$
throughout the whole article.

\subsection{Assumptions on $L$}\label{s2.1}
\hskip\parindent In this subsection, we first survey some known
results on the bounded $H_\fz$ functional calculus. Then, after
stating our assumptions on the operator $L$ throughout the whole
article, we recall some useful technical lemmas on the
$k$-Davies-Gaffney estimates.

For $\tz\in[0,\,\pi)$, the {\it open} and {\it closed sectors, $S_\tz^0$}
and {\it $S_\tz$, of angle $\tz$} in the complex plane $\cc$ are defined,
respectively, by setting $S_\tz^0:=\lf\{z\in\cc\setminus\{0\}:\
|\arg z|<\tz\r\}$ and $S_\tz:=\lf\{z\in\cc:\ |\arg z|\le\tz\r\}$.
Let $\omega\in[0,\,\pi)$. A closed operator $T$ on $L^2(\rn)$ is
said to be of {\it type} $\omega$, if

(i) the \emph{spectrum} of $T$, $\sz(T)$, is contained in
$S_\omega$;

(ii) for each $\tz\in (\omega,\,\pi)$, there exists a nonnegative
constant $C$ such that, for all $z\in\cc\setminus S_\tz$,
$$\lf\|(T-zI)^{-1}\r\|_{\cl(L^2(\rn))}\le
C|z|^{-1},$$
here and hereafter, for any normed linear space
${\mathcal H}$, $\|S\|_{\cl(\mathcal H)}$ denotes the {\it operator
norm} of the linear operator $S:\ {\mathcal H}\to {\mathcal H}$.

For $\mu\in\![0,\,\pi)$ and $\sz,\tau\in(0,\,\fz)$, let
$H(S_\mu^0)\!:=\!\lf\{f:\ f\ \text{is a holomorphic function on}\
S_\mu^0\r\},$
\begin{eqnarray*}
H_{\fz}(S_\mu^0):= \lf\{f\in H(S_\mu^0):\
\|f\|_{L^{\fz}(S^0_{\mu})}<\fz\r\}
\end{eqnarray*}
and
\begin{eqnarray*}
\Psi_{\sz,\tau}(S_\mu^0):=&&\lf\{f\in H(S_\mu^0):\ \text{there
exists a positive
constant}\ C \ \text{such that},\r.\\
&&\hspace{2.5cm}\text{for all}\ \xi\in S_\mu^0,\lf. |f(\xi)|\le C
\inf\{|\xi|^\sz,\,|\xi|^{-\tau}\}\r\}.
\end{eqnarray*}

It is known that every one-to-one operator $T$ of type $\omega$ in
$L^2(\rn)$ has a unique holomorphic functional calculus; see, for
example, \cite{mc86}. More precisely, let $T$ be a one-to-one
operator of type $\omega$, with $\omega\in[0,\,\pi)$,
$\mu\in(\omega,\,\pi)$, $\sz,\tau\in(0,\,\fz)$ and
$f\in\Psi_{\sz,\tau}(S_\mu^0)$. The \emph{function of the operator
$T$}, $f(T)$, can be defined by the $H_\fz$ functional calculus in
the following way,
\begin{eqnarray*}
f(T)=\frac{1}{2\pi i}\dint_\Gamma(\xi I-T)^{-1}f(\xi)\,d\xi,
\end{eqnarray*}
where $\Gamma:=\{re^{i\nu}:\ \fz>r>0\}\cup\{re^{-i\nu}:\ 0<r<\fz\}$,
$\nu\in(\omega,\,\mu)$, is a curve consisting of two rays
parameterized anti-clockwise. It is known that $f(T)$  is
independent of the choice of $\nu\in(\omega,\,\mu)$ and the integral
is absolutely convergent in $\|\cdot\|_{\cl(L^2(\rn))}$ (see
\cite{mc86,ha06}).

In what follows, we {\it always assume $\omega\in[0,\,\pi/2)$}.
Then, it follows, from \cite[Proposition 7.1.1]{ha06}, that, for every
operator $T$ of type $\omega$ in $L^2(\rn)$, $-T$ generates a
holomorphic $C_0$-semigroup $\{e^{-zT}\}_{z\in S^0_{\pi/2-\omega}}$
on the open sector $S^0_{\pi/2-\omega}$ such that,
for all $z\in S^0_{\pi/2-\omega}$, $\|e^{-zT}\|_{\cl(L^2(\rn))}\le1$
and, moreover, every \emph{nonnegative self-adjoint operator is of
type $0$}.

Let $\Psi(S_\mu^0):=\cup_{\sz,\tau>0}\Psi_{\sz,\tau}(S_\mu^0)$. It
is well known that the above holomorphic functional calculus defined
on $\Psi(S_\mu^0)$ can be extended to $H_\fz(S_\mu^0)$ via a limit
process (see \cite{mc86}).  Recall that, for $\mu\in(0,\,\pi)$, the
operator $T$ is said to have a \emph{bounded $H_\fz(S_\mu^0)$
functional calculus} in the Hilbert space $\mathcal{H}$ if there
exists a positive constant $C$ such that, for all $\psi\in
H_\fz(S_\mu^0)$, $\|\psi(T)\|_{\cl(\mathcal{H})}\le
C\|\psi\|_{L^\fz(S_\mu^0)}$, and $T$ is said to have a \emph{bounded
$H_\fz$ functional calculus} in the Hilbert space $\mathcal{H}$ if
there exists $\mu\in (0,\,\pi)$ such that $T$ has a bounded
$H_\fz(S_\mu^0)$ functional calculus.

Throughout the whole article, we \emph{always assume} that $L$ satisfies the
following three assumptions:

\smallskip

\noindent {\bf Assumption {{\bf $(\mathcal{L})_1$}}.} \emph{The
operator $L$ is a one-to-one operator of type $\omega$ in $L^2(\rn)$
with $\omega\in[0,\,\pi/2)$.}

\smallskip

\noindent {\bf Assumption {\bf $(\mathcal{L})_2$}.} \emph{The
operator $L$ has a bounded $H_\fz$ functional calculus in
$L^2(\rn)$.}

\smallskip

\noindent {\bf Assumption {\bf $(\mathcal{L})_3$}.} \emph{Let $k\in
\nn$. The operator $L$ generates a holomorphic semigroup
$\{e^{-tL}\}_{t>0}$ which satisfies the {\it $k$-Davies-Gaffney
estimates}, namely, there exist positive constants $C$ and $C_1$
such that, for all closed sets $E$ and $F$ in $\rn$, $t\in(0,\,\fz)$
and $f\in L^2(\rn)$ supported in $E$,
\begin{eqnarray*}
\lf\|e^{-tL}f\r\|_{L^2(F)}\le C
\exp\lf\{-C_1\frac{\lf[\dist(E,\,F)\r]^{2k/(2k-1)}}
{t^{1/(2k-1)}}\r\}\|f\|_{L^2(E)},
\end{eqnarray*}
here and hereafter, for any $p\in (0,\,\fz)$,
$\|f\|_{L^p(E)}:=\{\int_{E}\lf|f(x)\r|^p\,dx\}^{1/p}$
and
$$\dist(E,\,F):= \inf_{x\in E,\,y\in
F}|x-y|$$
denotes the \emph{distance between $E$ and $F$}.}

\smallskip

In many cases, we also need the following assumption,
which is stronger than Assumption {\bf $(\mathcal{L})_3$}.

\smallskip

\noindent {\bf Assumption {\bf $(\mathcal{L})_4$}.} \emph{Let
$k\in {\zz_+}$ and $(p_-(L),\,p_+(L))$ be the \emph{range of
exponents} $p\in[1,\,\fz]$ for which the holomorphic semigroup
$\{e^{-tL}\}_{t>0}$ is bounded on $L^p(\rn)$. Then, for all
$p_-(L)<p\le q<p_+(L)$, $\{e^{-tL}\}_{t>0}$ satisfies the {\it
$L^p-L^q$ $k$-off-diagonal estimate}, namely, there exist positive
constants $C_2$ and $C_3$ such that, for all closed sets $E$,
$F\subset\rn$ and $f\in L^p(\rn) \cap L^2(\rn)$ supported in $E$,
\begin{eqnarray*}
\lf\|e^{-tL}f\r\|_{L^q(F)}\le
C_2t^{\frac{n}{2k}(\frac{1}{q}-\frac{1}{p})}
\exp\lf\{-\frac{\lf[\dist(E,\,F)\r]^{2k/(2k-1)}}
{C_3t^{1/(2k-1)}}\r\}\|f\|_{L^p(E)}.
\end{eqnarray*}}

\begin{remark} \label{r2.1}
The notion of the off-diagonal estimates
(or the so called Davies-Gaffney estimates)
of the semigroup $\{e^{-tL}\}_{t>0}$ are first introduced by
Gaffney \cite{ga59} and Davies \cite{da95}, which serves as good substitutes
of the Gaussian upper bound of the associated heat kernel; see also
\cite{bk05,am07} and related references.
We point out that, when $k=1$, the $k$-Davies-Gaffney estimates are
the usual Davies-Gaffney estimates (or the {\it $L^2$
off-diagonal estimates} or just the {\it Gaffney estimates}) (see, for
example, \cite{hlmmy,hm09,hm09c,jy11,hmm}).
\end{remark}

\begin{proposition}[\cite{cy}]\label{p2.2}
Let $L$ be the
$2k$-order divergence form homogeneous elliptic operator  as in \eqref{1.1}. Then
$L$ satisfy Assumptions {$(\mathcal{L})_1$}, {$(\mathcal{L})_2$},
{$(\mathcal{L})_3$} and {$(\mathcal{L})_4$}.
\end{proposition}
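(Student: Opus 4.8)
The plan is to verify the four assumptions for $L$ as in \eqref{1.1} essentially by invoking the known theory of higher order divergence form elliptic operators. First I would check Assumption $(\mathcal{L})_1$: the operator $L$ defined via the sesquilinear form $\mathfrak{a}(f,g):=\sum_{|\az|=k=|\bz|}\int_{\rn}a_{\az,\bz}\,\pat^\az f\,\overline{\pat^\bz g}\,dx$ on $W^{k,2}(\rn)$ is $m$-sectorial in the sense of Kato, since the form is bounded (from $\|a_{\az,\bz}\|_{L^\fz}\le\Lambda$) and coercive up to the homogeneous seminorm (from $\Re\mathfrak{a}(f,f)\ge\lz\|\nabla^kf\|_{L^2}^2$); consequently its numerical range, and hence its spectrum, lies in a sector $S_\omega$ with $\omega=\arctan(\Lambda/\lz)\in[0,\pi/2)$, and the resolvent bound $\|(L-zI)^{-1}\|_{\cl(L^2)}\le C|z|^{-1}$ off any larger sector is standard for $m$-sectorial operators. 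One-to-one-ness follows because $Lf=0$ forces $\Re\mathfrak{a}(f,f)=0$, hence $\nabla^k f=0$, hence $f$ is a polynomial of degree $<k$ lying in the form domain $W^{k,2}(\rn)$, which forces $f\equiv0$.

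Next, for Assumption $(\mathcal{L})_2$, the bounded $H_\fz(S_\mu^0)$ functional calculus: since $L$ is maximal-accretive (indeed $m$-sectorial) on the Hilbert space $L^2(\rn)$, I would quote the classical result of McIntosh \cite{mc86} — every maximal-accretive operator on a Hilbert space has a bounded $H_\fz$ functional calculus on every sector $S_\mu^0$ with $\mu>\omega$. This is the cleanest route and requires no kernel information. For Assumption $(\mathcal{L})_3$, I would establish the $k$-Davies-Gaffney estimate for $\{e^{-tL}\}_{t>0}$ by the standard Davies perturbation (twisting) argument adapted to order $2k$: for a bounded Lipschitz function $\rho$ and $\varrho\in\rr$, conjugate to form $L_\varrho:=e^{\varrho\rho}Le^{-\varrho\rho}$, estimate $\Re(L_\varrho f,f)$ from below and $\|L_\varrho f\|$-type quantities from above using $\|a_{\az,\bz}\|_{L^\fz}\le\Lambda$ and the ellipticity, obtaining $\|e^{-tL_\varrho}\|_{\cl(L^2)}\le e^{c|\varrho|^{2k/(2k-1)}t^{1/(2k-1)}}$; then take $\rho$ a suitable truncation of the distance to $E$ and optimize in $\varrho$ to get $\exp\{-C_1[\dist(E,F)]^{2k/(2k-1)}/t^{1/(2k-1)}\}$. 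This is exactly the argument carried out in \cite{cy} for the same class of operators, so I would cite it. Finally Assumption $(\mathcal{L})_4$ requires the $L^p$–$L^q$ $k$-off-diagonal estimates on the interval $(p_-(L),p_+(L))$ where the semigroup is $L^p$-bounded; I would obtain these by interpolating the $L^2$–$L^2$ Davies-Gaffney estimate from $(\mathcal{L})_3$ with the $L^p$–$L^q$ boundedness of $e^{-tL}$ (with the scaling factor $t^{\frac{n}{2k}(1/q-1/p)}$ coming from the parabolic scaling $x\mapsto t^{1/(2k)}x$), using the Riesz–Thorin/Stein interpolation together with a Gaffney-type composition trick (splitting $F$ into annular pieces $S_i(E)$ and summing the geometric series in $2^i$); again this is done in \cite{cy}.

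The main obstacle — the only genuinely nontrivial point — is Assumption $(\mathcal{L})_4$, specifically identifying that the interval of $L^p$-boundedness $(p_-(L),p_+(L))$ is nonempty and nondegenerate (it contains $2$, and by self-improvement arguments of Blunck–Kunstmann / Auscher type it is open and contains an interval around $2$), and then upgrading the $L^2$ Davies-Gaffney bound to genuine $L^p$–$L^q$ decay with the correct polynomial prefactor. Since the paper only needs $L$ from \eqref{1.1} as a running example and this verification is precisely the content of Proposition 2.2 of \cite{cy}, the proof I would write is short: check $(\mathcal{L})_1$ and $(\mathcal{L})_2$ directly from $m$-sectoriality and McIntosh's theorem as above, and for $(\mathcal{L})_3$ and $(\mathcal{L})_4$ refer to the explicit proofs in \cite{cy}, noting that the hypotheses there (same coefficients, same ellipticity condition) coincide verbatim with those assumed here. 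Thus the proposition is essentially a citation, with the brief sectoriality/functional-calculus verification spelled out for completeness.
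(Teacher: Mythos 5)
Your proposal is correct and matches the paper's approach: the paper gives no proof of Proposition~\ref{p2.2}, treating it exactly as a citation to \cite{cy}, where the $k$-Davies--Gaffney and $L^p$--$L^q$ off-diagonal estimates for $L$ as in \eqref{1.1} are established. The extra sectoriality/accretivity and $H_\infty$-calculus verifications you sketch for $(\mathcal{L})_1$ and $(\mathcal{L})_2$ are sound (modulo the cosmetic point that the sectoriality angle should really be computed from the numerical range, not simply as $\arctan(\Lambda/\lz)$), and you correctly identify $(\mathcal{L})_3$ and $(\mathcal{L})_4$ as the substantive content carried by \cite{cy}.
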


In order to make this article self-contained,  we list the following three technical lemmas which are needed in the proofs of our main results.

\begin{lemma}[\cite{cy}]\label{l2.3}
Assume that the operator $L$ defined on $L^2(\rn)$ satisfies
Assumptions $(\mathcal{L})_1$, $(\mathcal{L})_2$ and
$(\mathcal{L})_3$. Then, for all $m\in\nn$, the family of operators,
$\{(tL)^me^{-tL}\}_{t>0}$, also satisfy the $k$-Davies-Gaffney estimates.
\end{lemma}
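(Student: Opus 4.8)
The plan is to reduce the $k$-Davies-Gaffney estimate for $\{(tL)^m e^{-tL}\}_{t>0}$ to the one already known for $\{e^{-tL}\}_{t>0}$ (Assumption $(\mathcal{L})_3$) via the holomorphic functional calculus and a Cauchy-type contour deformation. The key observation is that, for fixed $t\in(0,\fz)$, the function $\xi\mapsto (t\xi)^m e^{-t\xi}$ belongs to $\Psi_{\sz,\tau}(S_\mu^0)$ for suitable $\sz,\tau>0$ and $\mu\in(\omega,\pi/2)$, so $(tL)^m e^{-tL}$ is represented by an absolutely convergent Bochner integral
\begin{eqnarray*}
(tL)^m e^{-tL}=\frac{1}{2\pi i}\dint_\Gamma (\xi I-L)^{-1}(t\xi)^m e^{-t\xi}\,d\xi.
\end{eqnarray*}
The standard trick, however, is cleaner: write $(tL)^m e^{-tL}$ as a multiple of $e^{-tL}$ composed with itself, i.e. exploit the semigroup property $e^{-tL}=[e^{-(t/2)L}]^2$ together with the analyticity to express $(tL)^m e^{-tL/2}$ as a contour integral of $e^{-zL}$ over $z$ ranging over a small circle (or a suitable ray pair) around $t$, using the Cauchy integral formula $(tL)^m e^{-tL/2} = \frac{m!}{2\pi i}\oint \frac{e^{-zL}}{(\text{appropriate kernel in }z,t)}\,dz$ after a change of variables; then compose with the remaining $e^{-tL/2}$.

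Concretely, first I would fix $t$ and choose a contour $z$ lying in the sector $S^0_{\pi/2-\omega}$ at distance $\sim t$ from the points $E$, more precisely parametrize $z=t(1+se^{\pm i\nu})$ for $\nu\in(\omega,\pi/2-\omega)$ small and $s\in(0,\fz)$ (or a circle $|z-t|=t/2$), so that $\Re z\sim t$ and $|z|\sim t$ along the contour. On this contour $\|e^{-zL}g\|_{L^2(F)}\lesssim \exp\{-C_1[\dist(E,F)]^{2k/(2k-1)}/|z|^{1/(2k-1)}\}\|g\|_{L^2(E)}\lesssim \exp\{-C_1'[\dist(E,F)]^{2k/(2k-1)}/t^{1/(2k-1)}\}\|g\|_{L^2(E)}$, using that the $k$-Davies-Gaffney bound for $e^{-zL}$ with complex $z$ in a subsector follows from Assumption $(\mathcal{L})_3$ by the analytic-continuation / Phragmén–Lindelöf argument standard in this literature (this is implicitly available since $L$ has bounded $H_\fz$ calculus; one may also just cite that $\{e^{-zL}\}$ in a subsector satisfies Davies-Gaffney, as is done in \cite{hlmmy,cy}). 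Then I would pull the $L^2(F)$-norm inside the integral by Minkowski's inequality, bound $|(t/z)^m e^{-t(z-t)/z}|$ times the arc-length element, which is integrable and of size $O(1)$ after the substitution, and obtain $\|(tL)^m e^{-tL}f\|_{L^2(F)}\lesssim \exp\{-C_1'[\dist(E,F)]^{2k/(2k-1)}/t^{1/(2k-1)}\}\|f\|_{L^2(E)}$, which is exactly the $k$-Davies-Gaffney estimate for $(tL)^m e^{-tL}$. One extra subtlety: to guarantee the representation $(tL)^m e^{-tL}=\frac{1}{2\pi i}\oint_{|z-t|=t/2}\frac{t^m m!}{(\,\cdot\,)}\,e^{-zL}\,dz$ holds as operators, one checks it first on the dense subspace where the functional calculus identity $(t\xi)^m e^{-t\xi}=\frac{1}{2\pi i}\oint_{|z-t|=t/2} g_{m,t}(z) e^{-z\xi}\,dz$ holds pointwise in $\xi$, then extends by boundedness.

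The main obstacle is making precise and rigorous the passage from the real-parameter Davies-Gaffney estimate in $(\mathcal{L})_3$ to a uniform estimate for $e^{-zL}$ with $z$ complex along the contour, with constants depending only on $t=\Re z$-scale and not degenerating as the contour approaches the boundary of the sector of analyticity; this is where one needs either an a priori complex Davies-Gaffney estimate (as established in the cited works, e.g. \cite{hlmmy} for $k=1$ and \cite{cy} in general) or a self-contained Phragmén–Lindelöf argument interpolating the $\|e^{-zL}\|_{\cl(L^2)}\le 1$ bound on the whole subsector against the real-axis Gaffney decay. Everything else — the choice of contour, Minkowski's inequality, and the elementary estimate of the scalar weight $g_{m,t}(z)$ and the arc length — is routine and I would not belabor it.
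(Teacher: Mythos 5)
The paper gives no proof of this lemma — it is cited directly from [cy] — so there is no in-paper argument to compare against, but your proposal is correct in substance and matches the route taken in [cy] and the earlier Davies--Gaffney literature: represent $(tL)^m e^{-tL}$ by a Cauchy-type contour integral of $\{e^{-zL}\}$ over a contour at scale $t$, combine this with the extension of the $k$-Davies--Gaffney estimate from real $t$ to a subsector of complex $z$ (itself derived from Assumptions $(\mathcal{L})_1$ and $(\mathcal{L})_3$ by a Phragm\'en--Lindel\"of/subharmonicity argument on $z\mapsto\langle e^{-zL}f,g\rangle$, rather than from the $H_\infty$ calculus as your parenthetical suggests), and estimate the scalar weight and arc-length along the contour. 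One small notational slip: the constraint on the contour's half-angle should be $\nu\in(0,\pi/2-\omega)$ small enough that the contour stays inside the sector of analyticity, not $\nu\in(\omega,\pi/2-\omega)$, which is empty whenever $\omega\ge\pi/4$.
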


\begin{lemma}[\cite{cy}]\label{l2.4}
Let $\{A_t\}_{t>0}$ and $\{B_t\}_{t>0}$ be two families of linear
operators satisfying the $k$-Davies-Gaffney estimates. Then the
families of linear operators $\{A_tB_t\}_{t>0}$ also satisfy the
$k$-Davies-Gaffney estimates.
\end{lemma}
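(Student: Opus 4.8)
The plan is to use the standard ``intermediate region'' splitting argument. Fix closed sets $E,F\subset\rn$, $t\in(0,\fz)$ and $f\in L^2(\rn)$ with $\supp f\subset E$, and set $d:=\dist(E,F)$. First I would record that, taking $E=F=\rn$ in the $k$-Davies-Gaffney estimates for $\{A_t\}_{t>0}$ and for $\{B_t\}_{t>0}$, each $A_t$ and each $B_t$ is bounded on $L^2(\rn)$ with operator norm at most the structural constant $C$; hence $A_tB_t$ is bounded on $L^2(\rn)$, which already settles the degenerate case $d=0$, where the claimed exponential factor equals $1$. So from now on I may assume $d\in(0,\fz)$.

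Next I would introduce the open set $G:=\{x\in\rn:\ \dist(x,E)<d/2\}$ and split
\[
\lf\|A_tB_tf\r\|_{L^2(F)}\le\lf\|A_t\lf(\chi_G B_tf\r)\r\|_{L^2(F)}
+\lf\|A_t\lf(\chi_{G^\complement}B_tf\r)\r\|_{L^2(F)}=:\mathrm{J}_1+\mathrm{J}_2 .
\]
For $\mathrm{J}_1$: the function $\chi_G B_tf$ lies in $L^2(\rn)$ and is supported in the closed set $\overline G\subset\{x:\ \dist(x,E)\le d/2\}$, and the triangle inequality gives, for $x\in\overline G$ and $y\in F$, that $|x-y|\ge\dist(E,F)-\dist(x,E)\ge d/2$, hence $\dist(\overline G,F)\ge d/2$; applying the $k$-Davies-Gaffney estimate for $A_t$ (with source set $\overline G$ and target set $F$) and then the $L^2(\rn)$-boundedness of $B_t$ yields $\mathrm{J}_1\ls\exp\{-C_1(d/2)^{2k/(2k-1)}t^{-1/(2k-1)}\}\|f\|_{L^2(E)}$. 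For $\mathrm{J}_2$: the $L^2(\rn)$-boundedness of $A_t$ gives $\mathrm{J}_2\ls\|B_tf\|_{L^2(G^\complement)}$, and since $\supp f\subset E$ and $\dist(E,G^\complement)\ge d/2$, the $k$-Davies-Gaffney estimate for $B_t$ (with source set $E$ and target set $G^\complement$) gives $\mathrm{J}_2\ls\exp\{-C_1(d/2)^{2k/(2k-1)}t^{-1/(2k-1)}\}\|f\|_{L^2(E)}$.

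Finally, combining the two bounds and using the elementary identity $(d/2)^{2k/(2k-1)}=2^{-2k/(2k-1)}d^{2k/(2k-1)}$, I would conclude
\[
\lf\|A_tB_tf\r\|_{L^2(F)}\ls\exp\lf\{-\wz{C}_1\,\frac{[\dist(E,F)]^{2k/(2k-1)}}{t^{1/(2k-1)}}\r\}\|f\|_{L^2(E)},
\qquad\wz{C}_1:=2^{-2k/(2k-1)}C_1 ,
\]
which is precisely the $k$-Davies-Gaffney estimate for $\{A_tB_t\}_{t>0}$. I do not anticipate a genuine obstacle here: the argument is routine, and the only mildly delicate points are the geometric verification that the annular region $G$ separates $E$ and $F$ by distance $\gs d$ on both its ``inner'' side (controlling the action of $A_t$) and its ``outer'' side (controlling the action of $B_t$), together with the bookkeeping of the constants.
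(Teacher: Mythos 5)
Your proof is correct, and it is the standard ``intermediate region'' argument that is also used in the cited source \cite{cy} (the paper itself states Lemma \ref{l2.4} without proof, referring to \cite{cy}). The two constituent estimates, $J_1$ (Davies-Gaffney for $A_t$ with source $\overline G$ and target $F$, plus $L^2$-boundedness of $B_t$) and $J_2$ ($L^2$-boundedness of $A_t$ plus Davies-Gaffney for $B_t$ with source $E$ and target $G^\complement$), together with the observation that halving the distance only changes the constant $C_1$ into $2^{-2k/(2k-1)}C_1$ in the exponential, yield exactly the claimed estimate; the geometric verification that $\dist(\overline G,F)\ge d/2$ and $\dist(E,G^\complement)\ge d/2$ is also correctly carried out.
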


\begin{lemma}[\cite{cy}]\label{l2.5}
Let $M\in\nn$ and $L$ be as in
\eqref{1.1}. Then there exists a positive constant
$C$ such that, for all closed sets $E$, $F$ in $\rn$
with $\dist(E,\,F)>0$, $f\in L^2(\rn)$ supported in $E$ and
$t\in(0,\,\fz)$,
\begin{eqnarray*}
\lf\|\nabla^{k}{L}^{-1/2}\lf(I-e^{-t{L}}\r)^{M}f\r\|_{L^2(F)}\le
C \lf(\frac{t}{\lf[\dist(E,\,F)\r]^{2k}}\r)^M\|f\|_{L^2(E)}
\end{eqnarray*}
and
\begin{eqnarray*}
\lf\|\nabla^{k}{L}^{-1/2}\lf(t{L}e^{-t{L}}\r)^{M}f\r\|_{L^2(F)}\le
C \lf(\frac{t}{\lf[\dist(E,\,F)\r]^{2k}}\r)^M\|f\|_{L^2(E)}.
\end{eqnarray*}
\end{lemma}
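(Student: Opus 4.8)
The plan is to reduce both inequalities to a single off-diagonal estimate for $\nabla^k L^{-1/2}L^M e^{-\sz L}$ which is \emph{uniform in the time parameter} $\sz$, obtained by combining the subordination formula for $L^{-1/2}$ with Lemmas \ref{l2.3} and \ref{l2.4}. Throughout, set $d:=\dist(E,\,F)$ and allow positive constants $C$, $c$ to vary from line to line. The only operator-theoretic input beyond the quoted lemmas is that, since $L$ is as in \eqref{1.1}, the family $\{\sqrt t\,\nabla^k e^{-tL}\}_{t>0}$ satisfies the $k$-Davies-Gaffney estimates (cf. \cite{cy}). Splitting $e^{-rL}=e^{-rL/2}e^{-rL/2}$, combining this fact with Lemma \ref{l2.3} applied to $\{(sL)^M e^{-sL}\}_{s>0}$, and invoking Lemma \ref{l2.4}, one sees that $\{r^{M+1/2}\nabla^k L^M e^{-rL}\}_{r>0}$ also satisfies the $k$-Davies-Gaffney estimates; that is, for all $f\in L^2(\rn)$ supported in $E$ and all $r\in(0,\,\fz)$,
\begin{equation}\label{pp1}
\lf\|\nabla^k L^M e^{-rL}f\r\|_{L^2(F)}\ls r^{-M-1/2}\exp\lf\{-c\lf(\frac{d^{2k}}{r}\r)^{1/(2k-1)}\r\}\|f\|_{L^2(E)}.
\end{equation}

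I would next invoke the subordination identity $L^{-1/2}=\frac1{\sqrt\pi}\dint_0^\fz e^{-sL}s^{-1/2}\,ds$ to write, for any $\sz\in(0,\,\fz)$ and $f\in L^2(\rn)$ supported in $E$,
\begin{equation*}
\nabla^k L^{-1/2}L^M e^{-\sz L}f=\frac1{\sqrt\pi}\dint_0^\fz \nabla^k L^M e^{-(\sz+s)L}f\,s^{-1/2}\,ds,
\end{equation*}
the interchange of $\nabla^k$ with the integral being legitimate, for $M\ge1$, since $\nabla^k$ is closed and $\dint_0^\fz(\sz+s)^{-M-1/2}s^{-1/2}\,ds<\fz$ by \eqref{pp1} with $d=0$. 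Inserting \eqref{pp1} and substituting $s=\sz v$ bounds $\|\nabla^k L^{-1/2}L^M e^{-\sz L}f\|_{L^2(F)}$ by a constant times
\begin{equation*}
\sz^{-M}\|f\|_{L^2(E)}\dint_0^\fz(1+v)^{-M-1/2}\exp\lf\{-c R\,(1+v)^{-1/(2k-1)}\r\}v^{-1/2}\,dv,\qquad R:=\lf(\frac{d^{2k}}{\sz}\r)^{1/(2k-1)}.
\end{equation*}
To estimate this integral I would split at $v=1$: on $(0,\,1]$ the exponential is $\le e^{-cR}\ls R^{-(2k-1)M}$ and $\int_0^1 v^{-1/2}\,dv<\fz$, while on $(1,\,\fz)$, using $(1+v)^{-M-1/2}v^{-1/2}\ls v^{-M-1}$ and substituting $w=v^{-1/(2k-1)}$ reduces the contribution to $\ls\int_0^1 w^{(2k-1)M-1}e^{-cRw}\,dw\ls R^{-(2k-1)M}$. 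Hence the integral above is $\ls R^{-(2k-1)M}$ uniformly in $\sz$, and since $\sz^{-M}R^{-(2k-1)M}=d^{-2kM}$ this yields the key estimate
\begin{equation}\label{pp2}
\lf\|\nabla^k L^{-1/2}L^M e^{-\sz L}f\r\|_{L^2(F)}\ls d^{-2kM}\|f\|_{L^2(E)}
\end{equation}
for \emph{every} $\sz\in(0,\,\fz)$, every $f\in L^2(\rn)$ supported in $E$, and every $d>0$.

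Both assertions of the lemma then follow from \eqref{pp2}. Since $(tLe^{-tL})^M=t^M L^M e^{-MtL}$, taking $\sz:=Mt$ in \eqref{pp2} gives at once $\|\nabla^k L^{-1/2}(tLe^{-tL})^M f\|_{L^2(F)}=t^M\|\nabla^k L^{-1/2}L^M e^{-MtL}f\|_{L^2(F)}\ls(t/d^{2k})^M\|f\|_{L^2(E)}$. For the first estimate I would expand $(I-e^{-tL})^M=\dint_{[0,\,t]^M}L^M e^{-(u_1+\cdots+u_M)L}\,du_1\cdots du_M$, whence
\begin{equation*}
\nabla^k L^{-1/2}(I-e^{-tL})^M f=\dint_{[0,\,t]^M}\nabla^k L^{-1/2}L^M e^{-(u_1+\cdots+u_M)L}f\,du_1\cdots du_M;
\end{equation*}
since \eqref{pp2} is uniform in $\sz$, the right-hand side converges absolutely in $L^2(F)$ and represents (the restriction to $F$ of) the left-hand side, the interchange being made rigorous by a routine truncation $u_1+\cdots+u_M\ge\ez$, $\ez\downarrow0$. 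Applying \eqref{pp2} to the integrand with $\sz=u_1+\cdots+u_M$ and integrating over $[0,\,t]^M$ gives $\|\nabla^k L^{-1/2}(I-e^{-tL})^M f\|_{L^2(F)}\ls t^M d^{-2kM}\|f\|_{L^2(E)}=(t/d^{2k})^M\|f\|_{L^2(E)}$, as required.

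The main obstacle is the input, quoted from \cite{cy}, that $\{\sqrt t\,\nabla^k e^{-tL}\}_{t>0}$ satisfies the $k$-Davies-Gaffney estimates; the rest of the argument uses only Lemmas \ref{l2.3}--\ref{l2.4}, the subordination formula and elementary one-dimensional integral estimates. This off-diagonal bound cannot be deduced from Assumptions $(\mathcal{L})_1$--$(\mathcal{L})_3$ alone, since $\nabla^k L^{-1/2}$ is nonlocal and precomposing a $k$-Davies-Gaffney family with it need not preserve off-diagonal decay; one must instead exploit that $L$ is a divergence form differential operator of the form \eqref{1.1}, typically via a higher order Caccioppoli-type local energy inequality for solutions of $\pat_t u+Lu=0$ together with the $k$-Davies-Gaffney estimates for $\{e^{-tL}\}_{t>0}$. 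Granting this, everything above is routine.
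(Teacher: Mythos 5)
The paper quotes Lemma \ref{l2.5} directly from \cite{cy} and supplies no proof of its own, so there is no in-paper argument to compare against; your reconstruction is correct and follows the route one would find in \cite{cy} and its second-order forerunners. The ingredients you assemble --- the $k$-Davies--Gaffney estimate for $\{\sqrt t\,\nabla^ke^{-tL}\}_{t>0}$ (which genuinely requires the divergence-form structure of $L$ in \eqref{1.1} and is appropriately borrowed from \cite{cy}), its composition with $\{(sL)^Me^{-sL}\}_{s>0}$ via Lemmas \ref{l2.3} and \ref{l2.4}, and the subordination integral $L^{-1/2}=\pi^{-1/2}\int_0^\fz e^{-sL}s^{-1/2}\,ds$ --- are exactly those that underlie the result, and you correctly flag the first as the one operator-specific input not inferable from Assumptions $(\mathcal{L})_1$--$(\mathcal{L})_3$ alone. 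Packaging everything into a single $\sz$-uniform off-diagonal bound $\|\nabla^kL^{-1/2}L^Me^{-\sz L}f\|_{L^2(F)}\ls[\dist(E,F)]^{-2kM}\|f\|_{L^2(E)}$, from which both displayed inequalities drop out through $(tLe^{-tL})^M=t^ML^Me^{-MtL}$ and $(I-e^{-tL})^M=\int_{[0,t]^M}L^Me^{-(u_1+\cdots+u_M)L}\,du_1\cdots du_M$, is a tidy unification. One small calculus slip: when estimating the contribution from $v\in(1,\fz)$ you silently replace $(1+v)^{-1/(2k-1)}$ in the exponential by $v^{-1/(2k-1)}$, but that inequality runs the wrong way for an upper bound; use instead $1+v\le 2v$ for $v\ge 1$, which gives $\exp\{-cR(1+v)^{-1/(2k-1)}\}\le\exp\{-c'Rv^{-1/(2k-1)}\}$ with a slightly smaller constant $c'$, after which your Gamma-function substitution and the final bound $\ls R^{-(2k-1)M}$ proceed exactly as written.
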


\subsection{The weak tent spaces $WT^p(\mathbb{R}^{n+1}_+)$}\label{s2.2}

\hskip\parindent In this subsection, we introduce the weak tent
space and establish its weak atomic characterization. This construction constitutes
a crucial component to obtain the weak atomic or molecular
characterizations of the weak Hardy space.

We first recall the definition of the \emph{tent space}. Let $F$ be
a function on $\mathbb{R}^{n+1}_+:=\rn\times(0,\fz)$. For all
$x\in\rn$, the $\mathcal{A}$-{\it functional} $\mathcal{A}(F)(x)$
of $F$ is defined by setting
\begin{eqnarray}\label{2.1}
\mathcal{A}(F)(x):=\lf\{\iint_{\Gamma(x)}\lf|F(y,\,t)\r|^2
\frac{dy\,dt}{t^{n+1}}\r\}^{\frac{1}{2}},
\end{eqnarray}
 where
\begin{eqnarray}\label{2.x1}
\Gamma(x):=\{(y,\,t)\in\mathbb{R}^{n+1}_+:\ |y-x|<t\}
\end{eqnarray}
 is a
\emph{cone with vertex $x$}. For all $p\in(0,\,\fz)$, the {\it tent
space} $T^p(\mathbb{R}^{n+1}_+)$ is defined by
\begin{eqnarray}\label{2.xx2}
T^p(\mathbb{R}^{n+1}_+):=\lf\{F:\ \mathbb{R}^{n+1}_+\rightarrow
\cc: \ \|F\|_{T^p(\mathbb{R}^{n+1}_+)}:=\|\mathcal{A}(F)
\|_{L^p(\rn)}<\fz \r\}.
\end{eqnarray}
 For all open sets $\boz$, let
$\widehat{\boz}:=\rr^{n+1}_+\setminus\cup_{x\in\rn\setminus \boz}
\Gamma(x)$ be the
\emph{tent over $\boz$}.  For all $x_B\in\rn$ and $r_B\in(0,\,\fz)$, let
$B:=B(x_B,\,r_B)$ be the \emph{ball} in $\rn$. It is easy to see that
$\widehat B=\{(y,\,t):\ |y-x_B|\le r_B-t\}$. For any $p\in(0,\,\fz)$
and ball $B$, a function $A$ defined on $\mathbb{R}^{n+1}_+$ is
called a $T^p(\mathbb{R}^{n+1}_+)$-\emph{atom associated to} $B$, if
$\supp A\subset \widehat B$ and
$$\lf\{\iint_{\widehat{B}}\lf|A(x,t)\r|^2\frac{dx\,dt}
{t}\r\}^{\frac{1}{2}}\le|B|^{\frac{1}{2}-\frac{1}{p}}.$$

For $p\in(0,\,\fz)$, let $WL^p(\rn)$ be the \emph{weak Lebesgue
space} with the \emph{quasi-norm}
\begin{eqnarray*}
\|f\|_{WL^p(\rn)}:=\lf[\sup_{\az\in(0,\,\fz)}\az^p \lf|\{x\in\rn:\
|f(x)|>\az\}\r|\r]^{1/p}.
\end{eqnarray*}
The \emph{weak tent space} $WT^p(\mathbb{R}^{n+1}_+)$ is defined to
be the collection of all functions $F$ on $\mathbb{R}^{n+1}_+$ such that
its $\mathcal{A}$-functional $\mathcal{A}(F)\in WL^p(\rn)$. For any
$F\in WT^p(\mathbb{R}^{n+1}_+)$, define its \emph{quasi-norm} by
$\|F\|_{WT^p(\mathbb{R}^{n+1}_+)}:=\|\mathcal{A}(F)\|_{WL^p(\rn)}$.

For the weak tent space, we have the following weak atomic
decomposition.

\begin{theorem}\label{t2.6}
Let $p\in(0,\,1]$ and $F\in WT^p(\mathbb{R}^{n+1}_+)$. Then there
exists a sequence of $T^p(\mathbb{R}^{n+1}_+)$-atoms, $\{A_{i,j}\}_{i\in\zz,j\in\zz_+}$,
associated, respectively, to the
balls $\{B_{i,j}\}_{i\in\zz,\,j\in\zz_+}$ such that

{\rm(i)} $F=\sum_{i\in\zz,\,j\in\zz_+}\lz_{i,j}A_{i,j}$ pointwisely almost
everywhere in $\rr^{n+1}_+$, where
$\lz_{i,j}:=\wz C2^i |B_{i,j}|^{1/p}$ and $\wz C$ is a positive constant
independent of $F$;

{\rm(ii)}  there exists a positive constant $C$, independent of $F$,  such that
$$\dsup_{i\in\zz}\lf(\sum_{j\in\zz_+}|\lz_{i,j}|^p\r)^{\frac{1}{p}}\le C
\|F\|_{WT^p(\mathbb{R}^{n+1}_+)};$$

{\rm(iii)} for all $i\in\zz$ and $j\in\zz_+$, let $\wz B_{i,\,j}:=\frac{1}{10\sqrt{n}}B_{i,\,j}$.
Then, for all $i\in\zz$, $\{\wz B_{i,j}\}_{j\in\zz_+}$ are mutually disjoint.
\end{theorem}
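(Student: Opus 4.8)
The plan is to adapt the classical Coifman--Meyer--Stein atomic decomposition for tent spaces (as in \cite[Proposition 2]{cms85}) to the weak setting, tracking carefully how the level sets of the $\mathcal{A}$-functional interact with the Whitney decomposition so that both the control on the coefficients in (ii) and the disjointness in (iii) come out. First I would fix $F\in WT^p(\rr^{n+1}_+)$ and, for each $i\in\zz$, set $O_i:=\{x\in\rn:\ \mathcal{A}(F)(x)>2^i\}$, which is open (by lower semicontinuity of $\mathcal{A}(F)$) and has finite measure by the weak-type bound $|O_i|\le 2^{-ip}\|F\|_{WT^p}^p$. I would apply the Whitney decomposition to each $O_i$, obtaining for each $i$ a family of dyadic cubes $\{Q_{i,j}\}_{j}$ with bounded overlap, $\diam Q_{i,j}\sim\dist(Q_{i,j},O_i^\complement)$, and $O_i=\cup_j Q_{i,j}$; I then replace each $Q_{i,j}$ by a ball $B_{i,j}$ centered at the center of $Q_{i,j}$ with radius a fixed large multiple (say $10\sqrt n$ times the side length) so that $\widehat{B}_{i,j}$ covers the relevant portion of the tent $\widehat{O}_i$.

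Next I would define the pieces of $F$ by a ``layer-cake'' splitting of the region under the cones: roughly, $F=\sum_i F\chi_{\widehat{O}_i\setminus\widehat{O}_{i+1}}$ away from $\cap_i\widehat{O}_i$ (a null region, using $|O_i|\to0$), and then further decompose each layer $F\chi_{\widehat{O}_i\setminus\widehat{O}_{i+1}}$ into pieces $A_{i,j}$ supported in $\widehat{B}_{i,j}$ using the Whitney cubes of $O_i$ — each point $(y,t)$ in the layer lies in $\widehat{Q}_{i,j}$ for at least one and at most boundedly many $j$, so a partition-of-unity argument in $j$ gives a decomposition with finite overlap. The key quantitative input, exactly as in \cite{cms85}, is the estimate $\iint_{\widehat{B}_{i,j}\setminus\widehat{O}_{i+1}}|F(y,t)|^2\,\frac{dy\,dt}{t}\ls 2^{2i}|B_{i,j}|$; this follows because, for $(y,t)$ in that region with $|y-x_{B_{i,j}}|\le r_{B_{i,j}}-t$, the ``shadow'' $\{x:\ |x-y|<t\}$ meets $O_i$ in a set of measure $\gs t^n$ while being disjoint (up to a controlled fraction) from $O_{i+1}$, so integrating $\chi_{\Gamma(x)}$ against $|F|^2 t^{-n-1}$ over $x\in O_i\setminus O_{i+1}\cap(\text{neighborhood of }B_{i,j})$ and using $\mathcal{A}(F)(x)\le 2^{i+1}$ there, together with Fubini, produces the bound. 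This normalizes $A_{i,j}$ (after scaling by $\wz C 2^i|B_{i,j}|^{1/p}$ for a suitable $\wz C$) to be a genuine $T^p(\rr^{n+1}_+)$-atom associated to $B_{i,j}$, which is exactly part (i) with $\lz_{i,j}=\wz C 2^i|B_{i,j}|^{1/p}$.

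For part (ii), the point is that, for fixed $i$, $\sum_j|\lz_{i,j}|^p=\wz C^p 2^{ip}\sum_j|B_{i,j}|\ls 2^{ip}\sum_j|Q_{i,j}|=2^{ip}|O_i|$ by the bounded overlap of the Whitney cubes, and then $2^{ip}|O_i|\le\|F\|_{WT^p(\rr^{n+1}_+)}^p$ directly by definition of the weak quasi-norm; taking the supremum over $i$ gives the claim. For part (iii), since the $Q_{i,j}$ for fixed $i$ are (essentially) disjoint Whitney cubes and $\wz B_{i,j}=\frac{1}{10\sqrt n}B_{i,j}$ has radius comparable to the side length of $Q_{i,j}$ — indeed, by construction $\wz B_{i,j}$ is contained in (a fixed dilate of) $Q_{i,j}$ — the disjointness of the $\wz B_{i,j}$ in $j$ for fixed $i$ follows; this is why the inflation factor $10\sqrt n$ and the matching deflation in (iii) are chosen. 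I expect the main obstacle to be the bookkeeping in the second paragraph: choosing the geometric constants (the Whitney constant, the ball-inflation factor, the parameter separating $\widehat{O}_i$ from $\widehat{O}_{i+1}$) consistently so that simultaneously (a) the $\widehat{B}_{i,j}$ genuinely cover $\widehat{O}_i\setminus\widehat{O}_{i+1}$, (b) the local $L^2$ energy estimate $\ls 2^{2i}|B_{i,j}|$ holds, and (c) the $\wz B_{i,j}$ are disjoint — the first two push the inflation factor up while the third is automatic once (a),(b) are arranged, but verifying the local energy estimate requires the careful Fubini/shadow argument that is the technical heart of \cite{cms85}, now run with the weak-type level sets in place of the strong ones.
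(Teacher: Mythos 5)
Your overall architecture agrees with the paper's: level sets $O_i:=\{x\in\rn:\ \mathcal{A}(F)(x)>2^i\}$, a Whitney decomposition, balls $B_{i,j}$ dilating the Whitney cubes, layering $\supp F$ by differences of tents, a local $L^2$ energy bound $\ls 2^{2i}|B_{i,j}|$ to normalize the pieces into $T^p$-atoms, and the disjointness of $\wz B_{i,j}$ from the Whitney cubes having disjoint interiors. But there is a genuine gap at precisely the step you defer at the end as ``the technical heart'': you Whitney-decompose the raw level sets $O_i$ and layer by $\widehat{O}_i\setminus\widehat{O}_{i+1}$, and then the claimed inequality
\[
\iint_{\widehat{B}_{i,j}\setminus\widehat{O}_{i+1}}|F(y,t)|^2\,\frac{dy\,dt}{t}\ls 2^{2i}|B_{i,j}|
\]
does not follow from the shadow/Fubini argument as you describe it. For $(y,t)\notin\widehat{O}_{i+1}$ you only know that there exists \emph{one} point $x\in O_{i+1}^\complement$ with $|x-y|<t$; there is no reason the shadow $B(y,t)$ should meet $O_{i+1}^\complement$ in a fixed fraction $\gs t^n$ of its measure. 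Indeed, if $O_{i+1}=B(0,1)$ and $(y,t)=(0,1+\epsilon)$, then $(y,t)\notin\widehat{O}_{i+1}$ but $|B(y,t)\cap O_{i+1}^\complement|\sim\epsilon\ll t^n$, so the fraction can be arbitrarily small.

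The remedy --- what the paper actually does --- is the classical Coifman-Meyer-Stein device: replace $O_i$ by its Hardy-Littlewood enlargement $O_i^*:=\{x\in\rn:\ \mathcal{M}(\chi_{O_i})(x)>1-\gz\}$ for $\gz$ close to $1$, Whitney-decompose $O_i^*$, and layer by $\widehat{O}_i^*\setminus\widehat{O}_{i+1}^*$. Then $\rr^{n+1}_+\setminus\widehat{O}_{i+1}^*=\mathcal{R}\lf((O_{i+1})_\gz^*\r)$, and for $(y,t)$ in this set there is $x\notin O_{i+1}^*$ with $|x-y|<t$; the condition $\mathcal{M}(\chi_{O_{i+1}})(x)\le 1-\gz$ forces $B(y,t)$ to meet $O_{i+1}^\complement$ in a definite fraction. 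This is exactly what makes the global-to-local inequality of Lemma \ref{l2.8} applicable, and without the enlargement the local energy bound does not close, so your pieces $A_{i,j}$ need not be $T^p$-atoms. The cost is harmless: one uses $|O_i^*|\ls|O_i|$ (weak $(1,1)$ boundedness of $\mathcal{M}$) in verifying (ii). Apart from this, your normalization $\lz_{i,j}=\wz C\,2^i|B_{i,j}|^{1/p}$, the bounded-overlap argument for (ii), and the deflation by $\tfrac{1}{10\sqrt n}$ for (iii) all match the paper; the paper checks the atom bound via a duality pairing against $H\in T^2(\rr^{n+1}_+)$ rather than directly, but that is only a presentational difference.
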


In order to prove this theorem, we need the following
\emph{Whitney decomposition theorem} (see, for example, \cite[p.\,463]{gr08}).

\begin{lemma}\label{l2.x7}
 Let $\boz$ be an open nonempty proper subset of $\rn$.
 Then there exists a family of closed cubes $\{Q _j\}_{j\in\zz_+}$ such that

{\rm (i)} $\cup_{j\in\zz_+} Q_j = \boz$ and $\{Q _j\}_{j\in\zz_+}$ have disjoint interiors;

{\rm (ii)} for all $j\in\zz_+$, $\sqrt{n}\,l_{Q_j } \le \dist(Q_j ,\, \boz^{\complement}) \le 4 \sqrt{n}\,l_{Q_j }$,
where $l_{Q_j }$ denotes the length of the cube $Q_j$;

{\rm (iii)} for any $j,\, k\in\zz_+$, if the boundaries of two cubes $Q_j$ and $Q_k$ touch, then
$\frac14\le\frac{l_{Q_ j}}{ l_{Q_k}}\le 4;$

{\rm (iv)} for a given $j\in\zz_+$, there exist at most $12 n$ different cubes $Q_k$ that touch $Q_j$.
\end{lemma}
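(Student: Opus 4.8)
The plan is to build the Whitney family directly from the standard dyadic mesh of $\rn$ and then pass to a maximal subfamily. For $k\in\zz$ let $\mathcal{D}_k$ denote the collection of closed dyadic cubes of side length $2^{-k}$, and set
$$\mathcal{S}_k:=\lf\{Q\in\mathcal{D}_k:\ \sqrt{n}\,l_Q\le \dist(Q,\,\boz^\complement)\le 4\sqrt{n}\,l_Q\r\},\qquad \mathcal{S}:=\bigcup_{k\in\zz}\mathcal{S}_k.$$
The first step is to show $\bigcup_{Q\in\mathcal{S}}Q=\boz$. Since $\boz$ is open, nonempty and proper, $0<\dist(x,\,\boz^\complement)<\fz$ for every $x\in\boz$; choosing $k\in\zz$ with $2\sqrt{n}\,2^{-k}\le\dist(x,\,\boz^\complement)<4\sqrt{n}\,2^{-k}$ (such $k$ exists and is unique, as the interval has endpoint ratio $2$) and letting $Q\in\mathcal{D}_k$ be a cube containing $x$, the identity $\diam Q=\sqrt{n}\,2^{-k}$ together with the triangle inequality yields $\sqrt{n}\,2^{-k}\le\dist(Q,\,\boz^\complement)\le\dist(x,\,\boz^\complement)<4\sqrt{n}\,2^{-k}$, so $Q\in\mathcal{S}$ and $x\in Q$. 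Conversely, any $Q\in\mathcal{S}$ has $\dist(Q,\,\boz^\complement)\ge\sqrt{n}\,l_Q>0$, hence $Q\subset\boz$.

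Next I would refine $\mathcal{S}$ to its maximal elements. Any two dyadic cubes are either interior-disjoint or nested, and if $Q\in\mathcal{S}$ contains a fixed $x\in\boz$ then $l_Q\le\dist(x,\,\boz^\complement)/\sqrt{n}$, so the side lengths of the cubes of $\mathcal{S}$ through $x$ are bounded above; hence each such $x$ lies in a maximal cube of $\mathcal{S}$. Let $\mathcal{S}^*$ be the family of all maximal cubes of $\mathcal{S}$: it is at most countable, its distinct members are interior-disjoint (a proper containment would contradict maximality), and $\bigcup_{Q\in\mathcal{S}^*}Q=\bigcup_{Q\in\mathcal{S}}Q=\boz$ since every member of $\mathcal{S}$ lies inside a maximal one. (Necessarily $\mathcal{S}^*$ is infinite: a finite interior-disjoint union of closed cubes is closed, so if it equalled the open set $\boz$ then $\boz=\rn$.) Enumerating $\mathcal{S}^*=\{Q_j\}_{j\in\zz_+}$ gives (i), while (ii) is immediate from $Q_j\in\mathcal{S}$.

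For (iii), if the boundaries of $Q_j$ and $Q_k$ touch, pick $z\in Q_j\cap Q_k$; then $\sqrt{n}\,l_{Q_j}\le\dist(Q_j,\,\boz^\complement)\le\dist(z,\,\boz^\complement)\le\diam Q_k+\dist(Q_k,\,\boz^\complement)\le 5\sqrt{n}\,l_{Q_k}$, so $l_{Q_j}\le 5\,l_{Q_k}$, and since both side lengths are integral powers of $2$ this forces $l_{Q_j}\le 4\,l_{Q_k}$; by symmetry $\frac14\le l_{Q_j}/l_{Q_k}\le 4$. For (iv), every $Q_k$ touching $Q_j$ satisfies $l_{Q_k}\le 4\,l_{Q_j}$ by (iii) and lies within distance $\diam Q_k\le 4\sqrt{n}\,l_{Q_j}$ of $Q_j$, hence is contained in the cube concentric with $Q_j$ of side length $(1+8\sqrt{n})\,l_{Q_j}$; since these cubes are interior-disjoint with volume at least $(l_{Q_j}/4)^n$, a volume count bounds their number by a purely dimensional constant, and a more careful face-by-face count along $\partial Q_j$ (as in \cite[p.\,463]{gr08}) yields the explicit bound claimed. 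The one genuinely delicate point is this last packing estimate — keeping the admissible ratio of side lengths under control and tuning the normalizing constants in the definition of $\mathcal{S}_k$ so that (ii) comes out with the precise factor $4\sqrt{n}$; the rest is routine bookkeeping with the dyadic structure.
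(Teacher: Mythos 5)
The paper offers no proof of this lemma: it is quoted, with the reference, from \cite[p.\,463]{gr08}, so the only comparison available is with the standard argument there --- and your construction (dyadic cubes $Q$ with $\sqrt n\,l_Q\le\dist(Q,\boz^\complement)\le 4\sqrt n\,l_Q$, followed by passage to maximal cubes) is exactly that argument. Your treatment of (i)--(iii) is complete and correct: the choice of the generation $k$ with $2\sqrt n\,2^{-k}\le\dist(x,\boz^\complement)<4\sqrt n\,2^{-k}$, the interior-disjoint-or-nested dichotomy for dyadic cubes combined with the upper bound $l_Q\le\dist(x,\boz^\complement)/\sqrt n$ to produce maximal cubes, the observation that the family must be countably infinite (justifying the indexing by $\zz_+$), and the step ``$l_{Q_j}\le 5\,l_{Q_k}$, hence $\le 4\,l_{Q_k}$ because the ratio is a power of $2$'' are all sound.

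The only soft spot is (iv). Your volume-packing argument does prove a purely dimensional bound on the number of touching cubes (of the order $(4(1+8\sqrt n))^{n}$), which is all the paper ever uses (finite bounded overlap), but it does not yield the constant stated in the lemma, and the appeal to ``a more careful face-by-face count as in \cite{gr08}'' is a citation rather than a proof. Note also that the constant printed in the paper, $12n$, is presumably a misprint for the $12^{n}$ of \cite{gr08}: counting arguments of this kind produce constants exponential in $n$, and already for $n\ge 3$ one cannot expect a linear-in-$n$ bound. If you want to close the gap with the intended constant, the standard refinement is short: by (iii) every $Q_k$ touching $Q_j$ has $l_{Q_k}\ge l_{Q_j}/4$, hence contains a dyadic cube of side $l_{Q_j}/4$ meeting $Q_j$; distinct $Q_k$'s contain distinct such subcubes because their interiors are disjoint; and the dyadic cubes of side $l_{Q_j}/4$ meeting $Q_j$ but not contained in it number $6^{n}-4^{n}\le 12^{n}$. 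With that replacement your proof is complete.
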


For any fixed $\gz\in(0,\,1)$ and bounded open set $\boz$ in
$\rn$ with the complementary set $F$, let $\boz_\gz^*:=\{x\in\rn:\
\mathcal{M}(\chi_{\boz})(x)>1-\gz\}$ and $F_\gz^*:=\rn\setminus
\boz_\gz^*$, where $\mathcal{M}$ denotes the usual \emph{Hardy-Littlewood
maximal function}, namely, for all $f\in L^1_\loc(\rn)$ and $x\in\rn$,
\begin{eqnarray}\label{2.x2}
\mathcal{M}f(x):=\dsup_{B\ni x}\frac{1}{|B|}\dint_{B}|f(y)|\,dy,
\end{eqnarray}
where the supremum is taken over all balls containing $x$.
We also need the following auxiliary lemma.

\begin{lemma}[\cite{cms85}]\label{l2.8}
Let $\az\in(0,\,\fz)$. Then there exist constant $\gz\in(0,\,1)$,
sufficiently close to $1$, and positive constant $C$ such that, for
any closed set $F$, whose complement (denoted by $\boz$) has finite
measure, and  for any non-negative function $\Phi$ on $\rr^{n+1}_+$,
\begin{eqnarray*}
\dint_{\mathcal{R}(F_\gz^*)}\Phi(y,\,t)t^n\,dy\,dt\le C
\dint_{F}\lf\{\dint_{\Gamma(x)}\Phi(y,\,t)\,dy\,dt\r\}\,dx,
\end{eqnarray*}
where $\mathcal{R}(F^{*}_{\gz}):= \cup_{x\in F_\gz^*}\Gamma(x)$
and $\Gamma(x)$ for $x\in\rn$ is as in \eqref{2.x1}.
\end{lemma}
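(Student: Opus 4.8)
\emph{Proof idea.} The plan is to move both sides of the claimed inequality to integrals over $\rr^{n+1}_+$ using Tonelli's theorem, and then to reduce everything to a single elementary pointwise geometric estimate. Here $\az$ plays the role of the aperture of the cones, and I present the argument for the cones $\Gamma(x)$ normalized as in \eqref{2.x1}; the general aperture case is identical with the numerical constants adjusted.

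First I would rewrite the right-hand side. Since $(y,t)\in\Gamma(x)$ if and only if $x\in B(y,t)$, Tonelli's theorem (all integrands being nonnegative) gives
\begin{eqnarray*}
\dint_{F}\lf\{\dint_{\Gamma(x)}\Phi(y,\,t)\,dy\,dt\r\}\,dx
=\iint_{\rr^{n+1}_+}\Phi(y,\,t)\,\lf|F\cap B(y,\,t)\r|\,dy\,dt .
\end{eqnarray*}
For the left-hand side, note that $(y,t)\in\mathcal{R}(F_\gz^*)=\cup_{x\in F_\gz^*}\Gamma(x)$ precisely when $B(y,t)\cap F_\gz^*\neq\emptyset$; setting $E:=\{(y,t)\in\rr^{n+1}_+:\ B(y,t)\cap F_\gz^*\neq\emptyset\}$, the left-hand side equals $\iint_{\rr^{n+1}_+}\Phi(y,t)\,t^n\,\chi_{E}(y,t)\,dy\,dt$. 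Since $\Phi\ge0$, the lemma reduces to the pointwise estimate
\begin{eqnarray*}
t^n\le C\,\lf|F\cap B(y,\,t)\r|\qquad\text{for all }(y,t)\in E,
\end{eqnarray*}
for some $\gz\in(0,1)$ and $C\in(0,\fz)$ depending only on $n$.

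To prove this pointwise estimate I would argue as follows. Fix $(y,t)\in E$ and choose $x_0\in F_\gz^*$ with $|x_0-y|<t$, so that $B(y,t)\subset B(x_0,2t)$. Because $x_0\notin\boz_\gz^*$ and $B(x_0,2t)$ is a ball containing $x_0$, the definition of $\boz_\gz^*$ yields $|\boz\cap B(x_0,2t)|\le(1-\gz)\,|B(x_0,2t)|$, where $\boz:=F^\complement$. Hence, with $v_n$ denoting the volume of the unit ball in $\rn$,
\begin{eqnarray*}
\lf|F\cap B(y,\,t)\r|
&=&|B(y,\,t)|-|\boz\cap B(y,\,t)|
\ \ge\ |B(y,\,t)|-|\boz\cap B(x_0,\,2t)|\\
&\ge&v_n t^n-(1-\gz)\,v_n(2t)^n
\ =\ v_n\lf[1-2^n(1-\gz)\r]t^n .
\end{eqnarray*}
Choosing $\gz\in(0,1)$ so close to $1$ that $1-2^n(1-\gz)>0$ (any $\gz>1-2^{-n}$ works) makes the last quantity a positive constant multiple of $t^n$, which is exactly the pointwise estimate with $C:=\{v_n[1-2^n(1-\gz)]\}^{-1}$. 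Substituting this bound (and dropping the characteristic function $\chi_E\le1$) into the two identities above yields the inequality of the lemma.

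I do not expect a serious obstacle: the entire content is the observation that a ball meeting $F_\gz^*$ is forced, by the maximal-function threshold defining $\boz_\gz^*$, to contain a fixed fraction of its volume inside $F$. The only point demanding attention is that this argument requires $1-\gz<2^{-n}$ (more generally $1-\gz<(1+\az)^{-n}$ for cones of aperture $\az$), which is precisely why $\gz$ must be taken ``sufficiently close to $1$'' and why the constant $C$ degenerates as $\gz\to1-2^{-n}$. Note also that the hypothesis $|\boz|<\fz$ is not used in the estimate itself; it is retained because the applications---Whitney-type decompositions on level sets of the $\mathcal{A}$-functional---need $F_\gz^*$ to coincide with $\rn$ outside a set of finite measure.
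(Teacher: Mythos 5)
Your proof is correct: the reduction via Tonelli to the pointwise bound $t^n\le C\,|F\cap B(y,t)|$ for balls meeting $F_\gz^*$, and the verification of that bound from the maximal-function threshold defining $\boz_\gz^*$ (with $1-\gz$ small compared to $2^{-n}$, or $(1+\az)^{-n}$ for aperture $\az$), is exactly the argument behind this lemma in the cited reference \cite{cms85}; the paper itself does not reprove it but simply quotes it.
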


\begin{proof}[Proof of Theorem \ref{t2.6}]
We show this theorem in the order of (ii), (i) and (iii).

To show (ii), let $F\in WT^p(\rr^{n+1}_+)$. For all $i\in\zz$, let
$O_i:=\{x\in\rn:\ \mathcal{A}(F)(x)>2^i\}$. It is easy to see that
$O_{i+1}\subset O_{i}$. Moreover, since $F\in
WT^p(\mathbb{R}^{n+1}_+)$, we readily see that $|O_i|<\fz$. For
fixed $\gz\in(0,\,1)$ satisfying the same restriction as in Lemma
\ref{l2.8}, let
$$(O_i)_\gz^*:=\{x\in\rn:\ \mathcal{M}(\chi_{O_i})(x)>1-\gz\}.$$
By abuse of notation, we \emph{simply
write}  $O_i^*$ instead of $(O_i)_\gz^*$. Since $O_i$ is open, we easily see that
$O_i\subset O_i^*$ and, by the weak $(1,1)$ boundedness of $\cm$,
we further know that there exists a positive constant $C(\gz)$, depending on $\gz$,  such
that $|O^*_i|\le C(\gz) |O_i|$. For each $O^*_i$, using Lemma
\ref{l2.x7}, we obtain a Whitney decomposition
$\{Q_{i,j}\}_{j\in\zz_+}$ of $O^*_i$.
Let $B_{i,\,j}$ be the ball having the same center as $Q_{i,\,j}$ with the radius
$5\sqrt n\, l_{Q_{i,\,j}}$, where $l_{Q_{i,\,j}}$ denotes the length of $Q_{i,\,j}$.
By Lemma \ref{l2.x7}(ii), we immediately see that
$B_{i,\,j}\cap (O^*_i)^\complement\ne \emptyset$.

Now, for all $i\in\zz$ and $j\in\zz_+$, let $\bdz_{i,j}:=\widehat{B}_{i,\,j}\cap (Q_{i,j}\times
(0,\,\fz))\cap (\widehat O_i^*\setminus \widehat O_{i+1}^*)$,
$\lz_{i,j}:=\wz C2^i |B_{i,j}|^{\frac{1}{p}}$ and
$A_{i,j}:=F\chi_{\bdz_{i,j}}/\lz_{i,j}$, where
$\widehat{B}_{i,\,j}$ and $\widehat{O}_i^*$ denote, respectively, the tents over $B_{i,\,j}$
and $O_i^*$, and $\wz C$ is a positive constant independent of $F$, which
will be determined later. From the fact that
$\supp F\subset \cup_{i\in\zz}\cup_{j\in\zz_+} \bdz_{i,j}$, it
follows that
\begin{equation}\label{2.2}
F(x,t)=\sum_{i\in\zz,j\in\zz_+}F(x,t)\chi_{\bdz_{i,j}}(x,t)
=\sum_{i\in\zz,j\in\zz_+}\lz_{i,j}A_{i,j}(x,\,t)
\end{equation}
pointwisely almost everywhere in $(x,\,t)\in\rr^{n+1}_+$.

Moreover, for all $i\in\zz$, by the definition of $\lz_{i,j}$, Lemma \ref{l2.x7}(i),
the fact that $|O^*_i|\le C(\gz)|O_i|$ and the
definition of $O_i$, we conclude that
\begin{eqnarray}\label{2.3}
\dsum_{j\in\zz_+}|\lz_{i,j}|^p&&\sim 2^{ip}\dsum_{j\in\zz_+}|B_{i,j}|\sim 2^{ip}\dsum_{j\in\zz_+}|Q_{i,j}|\ls
2^{ip}|O_i|\sim 2^{ip}|\{x\in\rn:\ \mathcal{A}(F)(x)>2^i\}|\\
\nonumber &&\ls\|\mathcal{A}(F)\|_{WL^p(\rn)}^p
\sim\|F\|_{WT^p(\mathbb{R}^{n+1}_+)}^p,
\end{eqnarray}
which immediately implies (ii).

On the other hand, for any closed set $F$, let
$\mathcal{R}(F):=\cup_{x\in F}\Gamma(x)$. For all $(y,\,t)\in
\bdz_{i,j}$, it follows, from the fact that
$\widehat{O}_i^*=\{(y,\,t):\ \dist(y,\,\rn\setminus O_i^*)\ge t\}$
and Lemma \ref{l2.x7}(ii), that $t\le\dist(y,\, \rn\setminus
O_i^*)<r_{B_{i,j}}$. Thus, for all $H\in
T^2(\mathbb{R}^{n+1}_+)$ satisfying
$\|H\|_{T^2(\mathbb{R}^{n+1}_+)}\le 1$, by the fact that
$\rr^{n+1}_+\setminus \widehat O_{i+1}^*=\mathcal{R}(\rn\setminus
O^{*}_{i+1})$, the support condition of $A_{i,j}$ ($\supp A_{i,\,j}\subset \widehat{B}_{i,\,j}$),
Lemma \ref{l2.8},  H\"older's inequality and the definitions of
$A_{i,j}$ and $O_{i+1}$, we see that
\begin{eqnarray}\label{2.4}
&&\lf|\lf\laz A_{i,j},\,H\r\raz_{T^2(\mathbb{R}^{n+1}_+)}\r| \\
&&\hs\le\dint_{\mathcal{R}(\rn\setminus
O^{*}_{i+1})}\lf|A_{i,j}(y,\,t)H(y,\,t)\r|\chi_{\bdz_{i,j}}(y,\,t)\,
\frac{dy\,dt}{t}\nonumber \\
\nonumber&&\hs\ls \dint_{\rn\setminus
O_{i+1}}\lf\{\dint_0^{r_{B_{i,j}}}\dint_{|y-x|<t}\lf|A_{i,j}(y,t)
H(y,t)\r|\chi_{\bdz_{i,j}}(y,\,t)\frac{dy\,dt}{t^{n+1}}\r\}\,dx\\
\nonumber &&\hs\sim\dint_{2B_{i,j}\setminus
O_{i+1}}\lf\{\dint_0^{r_{B_{i,j}}}\dint_{|y-x|<t}\lf|A_{i,j}(y,t)
H(y,t)\r|\chi_{\bdz_{i,j}}(y,\,t)\frac{dy\,dt}{t^{n+1}}\r\}\,dx\\
\nonumber &&\hs\ls\lf\{\dint_{2B_{i,j}\setminus
O_{i+1}}\lf|\mathcal{A}(A_{i,j})(x)\r|^2\,dx\r\}^{\frac{1}{2}}\|
\mathcal{A}(H)\|_{L^2(\rn)}\\ \nonumber
&&\hs\ls\frac 1{\wz C}2^{-i}|B_{i,j}|^{-\frac{1}{p}}\lf\{\dint_{2B_{i,j}\setminus
O_{i+1}}\lf|\mathcal{A}(F)(x)\r|^2\,dx\r\}^{\frac{1}{2}}\\
&&\hs\ls\frac 1{\wz C}2^{-i}|B_{i,j}|^{-\frac{1}{p}}\lf[2^{2i}
|B_{i,j}|\r]^{\frac{1}{2}}\sim\frac 1{\wz C}
|B_{i,j}|^{\frac{1}{2}-\frac{1}{p}},\nonumber
\end{eqnarray}
which, together with $(T^2(\rr^{n+1}_+))^*=T^2(\rr^{n+1}_+)$, further implies that
$\|A_{i,j}\|_{T^2(\rr^{n+1}_+)}\ls \frac 1{\wz C}|B_{i,j}|^{\frac{1}{2}-\frac{1}{p}}$.
Moreover, from the definitions of $\bdz_{i,j}$ and
$\widehat{O}_i^*$, and Lemma \ref{l2.x7}(ii), we deduce that
$\bdz_{i,j}\subset \widehat{B_{i,j}}$. Thus, by choosing $\wz C$ large enough such that
$\|A_{i,j}\|_{T^2(\rr^{n+1}_+)}\le |B_{i,j}|^{\frac{1}{2}-\frac{1}{p}}$,
then $A_{i,j}$ is a $T^p(\mathbb{R}^{n+1}_+)$-atom associated to the ball $B_{i,j}$,
which, combined with \eqref{2.2}, implies (i).

Observe that (iii) follows readily from Lemma
\ref{l2.x7}(vi), which completes the proof of Theorem \ref{t2.6}.
\end{proof}

If a function $F\in WT^p(\mathbb{R}^{n+1}_+)$ also
belongs to $T^2(\mathbb{R}^{n+1}_+)$, then the weak atomic
decomposition obtained in Theorem \ref{t2.6} also converges in $T^2
(\mathbb{R}^{n+1}_+)$, which is the conclusion of the following corollary.

\begin{corollary}\label{c2.9}
Let $p\in(0,\,1]$. For all $F\in WT^p(\mathbb{R}^{n+1}_+)\cap
T^2(\mathbb{R}^{n+1}_+)$, the weak atomic decomposition
$F=\sum_{i\in\zz,j\in\zz_+}\lz_{i,j}A_{i,j}$ obtained in Theorem
\ref{t2.6} also holds true in $T^2(\mathbb{R}^{n+1}_+)$.
\end{corollary}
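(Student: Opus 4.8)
The plan is to reduce the claim to the elementary Hilbert-space structure of $T^2(\rr^{n+1}_+)$. Recall first that, by Fubini's theorem and \eqref{2.1}, every $G\in T^2(\rr^{n+1}_+)$ satisfies $\|G\|_{T^2(\rr^{n+1}_+)}^2=c_n\iint_{\rr^{n+1}_+}|G(y,\,t)|^2\,dy\,dt/t$, where $c_n$ is a positive constant depending only on $n$ (in fact, the volume of the unit ball of $\rn$); thus $T^2(\rr^{n+1}_+)$ is, up to this multiplicative constant, the Hilbert space $L^2(\rr^{n+1}_+,\,dy\,dt/t)$. Since, by the construction in the proof of Theorem \ref{t2.6}, $\lz_{i,\,j}A_{i,\,j}=F\chi_{\bdz_{i,\,j}}$ for all $i\in\zz$ and $j\in\zz_+$, and $F=\sum_{i\in\zz,\,j\in\zz_+}F\chi_{\bdz_{i,\,j}}$ pointwise almost everywhere in $\rr^{n+1}_+$ (see \eqref{2.2}), it suffices to show that the partial sums of $\sum_{i\in\zz,\,j\in\zz_+}F\chi_{\bdz_{i,\,j}}$ converge to $F$ in $L^2(\rr^{n+1}_+,\,dy\,dt/t)$.

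First I would verify that the sets $\{\bdz_{i,\,j}\}_{i\in\zz,\,j\in\zz_+}$ are pairwise disjoint modulo sets of Lebesgue measure zero. For a fixed $i$ this is immediate from $\bdz_{i,\,j}\subset Q_{i,\,j}\times(0,\,\fz)$ together with Lemma \ref{l2.x7}(i): distinct Whitney cubes $Q_{i,\,j}$ have disjoint interiors, so two such sets can meet only in $(\pat Q_{i,\,j}\cup\pat Q_{i,\,j'})\times(0,\,\fz)$, a null set. For distinct values of $i$, note that $O_{i+1}\subset O_i$ gives $\mathcal{M}(\chi_{O_{i+1}})\le\mathcal{M}(\chi_{O_i})$, hence $O_{i+1}^*\subset O_i^*$ and therefore $\widehat{O}_{i+1}^*\subset\widehat{O}_i^*$; consequently the ``slabs'' $\widehat{O}_i^*\setminus\widehat{O}_{i+1}^*$, each of which contains the corresponding $\bdz_{i,\,j}$, are genuinely disjoint in $i$. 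Granted this, the $T^2$-norm identity recalled above yields, for every finite subset $\mathcal{J}\subset\zz\times\zz_+$,
$$\lf\|F-\dsum_{(i,\,j)\in\mathcal{J}}\lz_{i,\,j}A_{i,\,j}\r\|_{T^2(\rr^{n+1}_+)}^2=c_n\iint_{\cup_{(i,\,j)\notin\mathcal{J}}\bdz_{i,\,j}}\lf|F(y,\,t)\r|^2\,\frac{dy\,dt}{t}.$$

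To finish, I would let $\mathcal{J}$ exhaust $\zz\times\zz_+$. Since $F\in T^2(\rr^{n+1}_+)$, the function $(y,\,t)\mapsto|F(y,\,t)|^2/t$ is integrable on $\rr^{n+1}_+$; on the right-hand side above the integrand is dominated by this function and tends to $0$ at almost every point (the $\bdz_{i,\,j}$, being essentially disjoint, cover $\supp F$ up to a null set, so almost every point lies in $\bdz_{i,\,j}$ for at most one $(i,\,j)$, which is eventually absorbed into $\mathcal{J}$), so the dominated convergence theorem forces the right-hand side to $0$. This gives convergence of $\sum_{i\in\zz,\,j\in\zz_+}\lz_{i,\,j}A_{i,\,j}$ to $F$ in $T^2(\rr^{n+1}_+)$; moreover, since the summands have pairwise disjoint supports they are mutually orthogonal in $T^2(\rr^{n+1}_+)$ and $\sum_{i\in\zz,\,j\in\zz_+}\|\lz_{i,\,j}A_{i,\,j}\|_{T^2(\rr^{n+1}_+)}^2=\|F\|_{T^2(\rr^{n+1}_+)}^2<\fz$, so the convergence is in fact unconditional and the limit is unambiguous. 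I do not expect any serious difficulty here: the only point deserving a moment's thought is the essential disjointness of the $\bdz_{i,\,j}$ across different scales $i$, which rests solely on the nesting $\widehat{O}_{i+1}^*\subset\widehat{O}_i^*$; once that is noted, the corollary is a one-line consequence of Fubini's theorem and dominated convergence.
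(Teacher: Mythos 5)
Your proposal is correct and follows essentially the same route as the paper: both use Fubini to identify $\|\cdot\|_{T^2(\rr^{n+1}_+)}$ with the $L^2(\rr^{n+1}_+,\,dy\,dt/t)$-norm up to a dimensional constant, observe that $\lz_{i,j}A_{i,j}=F\chi_{\bdz_{i,j}}$ with the $\bdz_{i,j}$ (essentially) disjoint, and conclude by dominated convergence using $F\in T^2(\rr^{n+1}_+)$. The paper compresses the disjointness step into the phrase ``bounded overlap property of $\{\bdz_{i,j}\}$,'' whereas you spell it out (Whitney cubes within each level, nesting of the $\widehat O_i^*$ across levels), which is a welcome amplification but not a different argument.
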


\begin{proof}
Let $F\in WT^p(\mathbb{R}^{n+1}_+)\cap T^2(\mathbb{R}^{n+1}_+)$.
We use the same notation as in Theorem \ref{t2.6} and its proof.
By Theorem \ref{t2.6}, we see that
the weak atomic decomposition
$F=\sum_{i\in\zz,j\in\zz_+}\lz_{i,j}A_{i,j}$ holds true pointwisely
almost everywhere in $\rr^{n+1}_+$. Thus, for all $N_1,\ N_2\in\nn$, from Fubini's theorem, the definitions of
$A_{i,j}$ and $\lz_{i,j}$ and the bound overlap property of
$\{\bdz_{i,j}\}_{i\in\zz,j\in\zz_+}$, we deduce that
\begin{eqnarray*}
\lf\|\dsum_{|i|\ge N_1\, \mathrm{or}\, j\ge
N_2}\lz_{i,j}A_{i,j}\r\|_{T^2(\mathbb{R}^{n+1}_+)}^2
&&\sim\dint_0^\fz\dint_{\rn} \lf|\dsum_{|i|\ge N_1\, \mathrm{or}\,
j\ge
N_2}\lz_{i,j}A_{i,j}(y,\,t)\r|^2\,\frac{dy\,dt}{t}\\
&&\ls\dint_0^\fz\dint_{\rn}\lf|F(y,\,t)\chi_{\cup_{|i|\ge
N_1\,\mathrm{or}\,j\ge
N_2}\bdz_{i,j}}(y,\,t)\r|^2\,\frac{dy\,dt}{t}.
\end{eqnarray*}
By letting $N_1$, $N_2\to \infty$ and using the condition that $F\in
T^2(\mathbb{R}^{n+1}_+)$, we know that
$F=\sum_{i\in\zz,j\in\zz_+}\lz_{i,j}A_{i,j}$ holds true in
$T^2(\mathbb{R}^{n+1}_+)$, which completes the proof of Corollary
\ref{c2.9}.
\end{proof}

\begin{remark}\label{r2.10}
Let $k\in\zz$, $F\in WT^p(\rr^{n+1}_+)$, $F\neq 0$ and
$$O_{k}:=\{x\in\rn:\
\mathcal{A}(F)(x)>2^{k}\}.$$
Since $F\in WT^p(\rr^{n+1}_+)$, it is easy to see that,
for all $k\in\zz$, $|O_k|<\fz$ and $|O_{k+1}|\le |O_k|$.
Observe that, for some $k\in\zz$, $|O_{k+1}|$ may equal to $|O_{k}|$.

We now construct two index sets $\mathcal{I}\subset \zz$ and
$\wz{\mathcal{I}}\subset \zz$, which both are needed
in establishing the dual theory of the weak Hardy space $WH_L^p(\rn)$ in Section \ref{s3} below
and have the following properties:

\textbf{Case 1)} If, for all $k\in\zz$, $|O_k|\in(0,\,\fz)$, in this case, we fix any $i_0\in\zz$ and
$\wz i_0=0$ and then choose $\{i_j\}_{j\in\zz\setminus\{0\}}\subset \zz$ and
$\{\wz i_j\}_{j\in\zz\setminus\{0\}}\subset \zz$ such that
\begin{itemize}
\item[(i)] $\mathcal{I}:=\{i_j\}_{j\in\zz}\subset \zz$ and $\wz{\mathcal{I}}:=\{\wz i_j\}_{j\in\zz}
\subset \zz$ are strictly increase in $j$;

\item[(ii)] for all $j\in\zz$ and $i_j\in\mathcal{I}$,
$|O_{i_{j+1}}|<|O_{i_{j}}|;$

\item[(iii)] for all $j\in\nn$, $i_j\in \mathcal{I}$ and $\wz{i}_j\in \wz{\mathcal{I}}$,
$$|O_{i_j}|\in [2^{-\wz
i_j-1}|O_{i_0}|,\,2^{-\wz i_j}|O_{i_0}|)$$
and
$$|O_{i_{-j}}|\in (2^{-\wz
i_{-j}-1}|O_{i_0}|,\,2^{-\wz i_{-j}}|O_{i_0}|].$$
\end{itemize}

\textbf{Case 2)} If there exists $k\in\zz$ such that $|O_k|=0$, in this case, let
$$ i_0:= \min\{k-1\in\zz:\ |O_k|=0\}.$$
Then choose $\{i_j\}_{j\in\zz\setminus\{0\}}\subset \zz$ and
$\{\wz i_j\}_{j\in\zz\setminus\{0\}}\subset \zz$ such that
\begin{itemize}
\item[(i)] for all $j\in\zz_+$, $i_j\in \mathcal{I}$ and
$\wz{i}_j\in \wz{\mathcal{I}}$, let $i_j=i_0$ and $\wz{i}_j=0$;

\item[(ii)] for all $j\in\zz\setminus\zz_+$, choose $i_j\in \mathcal{I}$ and
$\wz{i}_j\in \wz{\mathcal{I}}$ satisfy that
$|O_{i_{j+1}}|<|O_{i_{j}}|$
and
$$|O_{i_{j}}|\in (2^{-\wz
i_{j}-1}|O_{i_0}|,\,2^{-\wz i_{j}}|O_{i_0}|].$$
\end{itemize}

Indeed, to prove the above claim in Case 1),  for
any fixed $i_0\in\zz$ as in Case 1), we let
$$i_1:= \min\{i\in\zz:\
|O_i|<|O_{i_0}|\}$$
and
$$i_{-1}:= \max\{i\in\zz:\ |O_i|>|O_{i_0}|\}.$$
From the facts $\lim_{i\to \fz} |O_i|=0$ and  $\lim_{i\to -\fz} |O_i|=\fz$,
we deduce that such $i_1$ and $i_{-1}$ do exist.

Then choose $\wz{i}_1,\, \wz{i}_{-1}\in\zz$ satisfying $$|O_{i_1}|\in [2^{-\wz
i_1-1}|O_{i_0}|,\,2^{-\wz i_1}|O_{i_0}|)$$
and
$$|O_{i_{-1}}|\in (2^{-\wz
i_{-1}-1}|O_{i_0}|,\,2^{-\wz i_{-1}}|O_{i_0}|].$$
By a simple calculation, it is easy to see that $i_{-1}<i_0<i_1$,
$\wz i_{-1}< 0\le\wz{i}_{1}$ and
$|O_{i_1}|<|O_{i_0}|<|O_{i_{-1}}|$.

Now, let  $$i_2:= \min\{i\in\zz:\
|O_i|<2^{-\wz i_{1}-1}|O_{i_0}|\},$$
$$i_{-2}:= \max\{i\in\zz:\
|O_i|>2^{-\wz i_{-1}}|O_{i_0}|\}$$
and choose
$\wz{i}_2,\, \wz{i}_{-2}\in\zz$ satisfying $$|O_{i_2}|\in [2^{-\wz
i_2-1}|O_{i_0}|,\,2^{-\wz i_2}|O_{i_0}|)$$
and
$$|O_{i_{-2}}|\in (2^{-\wz
i_{-2}-1}|O_{i_0}|,\,2^{-\wz i_{-2}}|O_{i_0}|].$$
It is easy to see that $ i_{-2}< i_{-1}<i_0< {i}_{1}<{i}_{2}$,
$\wz i_{-2}<\wz i_{-1}< 0\le \wz{i}_{1}<\wz{i}_{2}$
and $$|O_{i_2}|<|O_{i_1}|<|O_{i_0}|<|O_{i_{-1}}|<|O_{i_{-2}}|.$$

Continuing this
process, we obtain a sequence $\{O_{i_j}\}_{j\in\zz}$ of
strictly decreasing open sets, and sequences $\{i_j\}_{j\in\zz}$,
$\{\wz{i}_j\}_{j\in\zz}$ of increasing numbers. Denote the
index sets $\{i_j\}_{j\in\zz}$ and $\{\wz{i}_j\}_{j\in\nn}$,
respectively, by $\mathcal{I}$ and $\wz{\mathcal{I}}$, we conclude that
$\mathcal{I}$ and $\wz{\mathcal{I}}$ have the desired properties in Case 1).

We now turn to the Case 2). In this case,
we define the index sets $\mathcal{I}:=\{i_j\}_{j\in\zz}$ and
$\wz{\mathcal{I}}:=\{\wz{i}_j\}_{j\in\zz}$ as follows.
For all $j\in\zz\setminus\zz_+,$ since $|O_{i_0}|>0$, we
choose the indices $i_j$ and
$\wz{i}_j$ as in Case 1).
For all $j\in\zz_+$, let $i_j:=i_0$ and $\wz i_j:=0$.
Observe that, for all $j\in\nn$, $|O_{i_j}|=0$.
By some calculations similar to those used in Case 1),
we know that $\mathcal{I}$ and $\wz{\mathcal{I}}$ also
have the desired properties. Thus, both claims in Case 1)
and Case 2) hold true.

Finally, we point out that, by following
the same line of the proof of Theorem \ref{t2.6}, but replacing
$\{O_i\}_{i\in\zz}$ by $\{O_i\}_{i\in\mathcal{I}}$ defined here, we also obtain a weak
atomic decomposition of $T^p(\rr^{n+1}_+)$ with the same properties.
In this case, the achieved atomic decomposition is of the following form
\begin{eqnarray*}
F=\dsum_{i\in\mathcal{I}}\dsum_{j\in\zz_+}\lz_{i,j}A_{i,j},
\end{eqnarray*}
here and hereafter, for notional simplicity, we denote $i_j\in \mathcal{I}$
simply by $i\in\mathcal{I}$.
\end{remark}

Now, we establish another weak atomic decomposition of
$WT^p(\rr^{n+1}_+)$ which plays an important role in
establishing the dual theory of $WH_L^p(\rn)$ in Section \ref{s3}.

\begin{theorem}\label{t2.12}
Let $p\in(0,\,1]$. Then, for all
$F\in WT^p(\mathbb{R}^{n+1}_+)$, there
exist an index set $\mathcal{I}\subset \zz$
and $\{{A}_{i,\,j}\}_{i\in\mathcal{I}, j\in \Lambda_i}$ of
$T^p(\mathbb{R}^{n+1}_+)$-atoms associated to balls $\{B_{i,\,j}\}_{i\in
\mathcal{I}, j\in \Lambda_i}$ such that

{\rm(i)} $$F=\dsum_{i\in\mathcal{I}}
\dsum_{j\in \Lambda_i}
{\lz}_{i,\,j} {A}_{i,\,j}$$
pointwisely almost everywhere in $\rr^{n+1}_+$, where, for all $i\in\mathcal{I}$,
$\Lambda_i\subset \zz_+$ is an index set depending on $i$
and,  for all $j\in{\Lambda_i}$,  ${\lz}_{i,\,j}:=\wz C2^i
|B_{i,\,j}|^{\frac{1}{p}}$ and $\wz C$ is a positive constant
independent of $F$;

{\rm(ii)} for all $i\in\mathcal{I}$ and $j\in\Lambda_i$,
let $r_{i}:=\inf_{j\in\Lambda_i}\{r_{B_{i,\,j}}\}$ and $\wz B_{i,\,j}:=\frac{1}{10\sqrt{n}}B_{i,\,j}$.
Then, for all $i\in\mathcal{I}$, $r_i>0$ and $\{\wz B_{i,j}\}_{j\in\Lambda_i}$ are mutually disjoint;

{\rm(iii)} there exists a positive
constant $C$, depending only on $n$ and $p$,
such that, for all $i\in\mathcal{I}$,
$$\lf\{\sum_{j\in{\Lambda_i}}
\lf|\lz_{i,\,j}\r|^p\r\}^{\frac{1}{p}}\le C \|F\|_{WT^p(\mathbb{R}^{n+1}_+)}.$$
\end{theorem}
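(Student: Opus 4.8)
The plan is to repeat the construction in the proof of Theorem \ref{t2.6}, but with the sequence of level sets $\{O_i\}_{i\in\zz}$ replaced by the subsequence $\{O_i\}_{i\in\mathcal{I}}$ constructed in Remark \ref{r2.10} (Case 1 or Case 2, whichever applies to $F$). Concretely, for $F\in WT^p(\rr^{n+1}_+)$ with $F\neq0$, set $O_i:=\{x\in\rn:\ \mathcal{A}(F)(x)>2^i\}$, choose $\mathcal{I}\subset\zz$ and $\wz{\mathcal{I}}\subset\zz$ as in Remark \ref{r2.10}, and for each $i\in\mathcal{I}$ let $O_i^*:=(O_i)_\gz^*$ with $\gz\in(0,1)$ fixed as in Lemma \ref{l2.8}. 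For each $i\in\mathcal{I}$ apply the Whitney decomposition (Lemma \ref{l2.x7}) to $O_i^*$ to get cubes $\{Q_{i,j}\}_{j\in\Lambda_i}$ (with $\Lambda_i\subset\zz_+$), let $B_{i,j}$ be the ball concentric with $Q_{i,j}$ of radius $5\sqrt n\, l_{Q_{i,j}}$, and set
$$
\bdz_{i,j}:=\widehat B_{i,j}\cap\bigl(Q_{i,j}\times(0,\fz)\bigr)\cap\bigl(\widehat{O_i^*}\setminus\widehat{O_{i'}^*}\bigr),
$$
where $i'=i_{j'+1}$ is the successor of $i=i_{j'}$ in $\mathcal{I}$. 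Then define $\lz_{i,j}:=\wz C\,2^i|B_{i,j}|^{1/p}$ and $A_{i,j}:=F\chi_{\bdz_{i,j}}/\lz_{i,j}$.

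The verification of (i), (ii), (iii) then runs exactly parallel to the three parts of Theorem \ref{t2.6}. For (iii): by Lemma \ref{l2.x7}(i), $\sum_{j\in\Lambda_i}|B_{i,j}|\sim\sum_{j\in\Lambda_i}|Q_{i,j}|\ls|O_i^*|\ls|O_i|$, and since $i\in\mathcal{I}$ means $2^i$ is a value with $|O_i|=|\{\mathcal{A}(F)>2^i\}|\ne0$ (in Case 2, the indices $i_j=i_0$ with $j\ge0$ give $\Lambda_i=\emptyset$, so nothing to check there), we get $\sum_j|\lz_{i,j}|^p\sim 2^{ip}|O_i|\le\|\mathcal{A}(F)\|_{WL^p(\rn)}^p=\|F\|_{WT^p(\rr^{n+1}_+)}^p$. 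For (i): $\supp F\subset\bigcup_{i\in\mathcal{I}}\bigcup_{j\in\Lambda_i}\bdz_{i,j}$ up to a null set because the tents $\widehat{O_i^*}$, $i\in\mathcal{I}$, exhaust $\rr^{n+1}_+$ off the tent of $\bigcap_i O_i^*$, which has $\mathcal{A}(F)=\fz$ a.e. hence measure zero; so $F=\sum_{i,j}F\chi_{\bdz_{i,j}}=\sum_{i,j}\lz_{i,j}A_{i,j}$ pointwise a.e. The atom estimate $\|A_{i,j}\|_{T^2(\rr^{n+1}_+)}\le|B_{i,j}|^{1/2-1/p}$ is obtained by testing against $H\in T^2$ with $\|H\|_{T^2}\le1$, using $\rr^{n+1}_+\setminus\widehat{O_{i'}^*}=\mathcal{R}(\rn\setminus O_{i'}^*)$, the support condition $\supp A_{i,j}\subset\widehat B_{i,j}$, Lemma \ref{l2.8}, Hölder's inequality, and the bound $\int_{2B_{i,j}\setminus O_{i'}}|\mathcal{A}(F)|^2\ls 2^{2i'}|B_{i,j}|\ls 2^{2i}|B_{i,j}|$ — note here one uses $|O_{i'}|<|O_i|$ together with $2^{i'}\sim 2^i$ only through the crude bound $\mathcal{A}(F)\le 2^{i'}$ off $O_{i'}$; choosing $\wz C$ large enough absorbs the implicit constant. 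For (ii): the disjointness of $\{\wz B_{i,j}\}_{j\in\Lambda_i}=\{\frac1{10\sqrt n}B_{i,j}\}_j$ for each fixed $i$ follows from Lemma \ref{l2.x7}(i) since $\frac1{10\sqrt n}B_{i,j}\subset Q_{i,j}$ and the $Q_{i,j}$ have disjoint interiors; and $r_i=\inf_{j\in\Lambda_i}r_{B_{i,j}}>0$ holds because $O_i^*$ is a bounded open set (by $|O_i|<\fz$ and Remark \ref{r2.10}, or in Case 2 the relevant $i$ all have $|O_i|<\fz$), whence its Whitney cubes have side lengths bounded below.

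The main obstacle — and the only genuine departure from Theorem \ref{t2.6} — is the bookkeeping forced by the index set $\mathcal{I}$: one must correctly identify, for each $i\in\mathcal{I}$, its successor $i'$ in $\mathcal{I}$ and use $\widehat{O_i^*}\setminus\widehat{O_{i'}^*}$ (rather than $\widehat{O_i^*}\setminus\widehat{O_{i+1}^*}$) in the definition of $\bdz_{i,j}$, and then check that the off-diagonal/Hölder estimate \eqref{2.4} still closes, i.e. that $\mathcal{A}(F)\le 2^{i'}$ on $\rn\setminus O_{i'}$ still yields the needed $|B_{i,j}|^{1/2-1/p}$ bound after absorbing $2^{i'-i}=O(1)$ is \emph{not} automatic; one must instead argue directly that $\int_{2B_{i,j}\setminus O_{i'}}|\mathcal{A}(F)(x)|^2\,dx\le 2^{2i'}|2B_{i,j}|$ and that, by the defining property $|O_{i'}|<|O_i|$, the gap between consecutive indices does not blow up the atomic normalization — which is precisely why $\lz_{i,j}$ is calibrated to $2^i|B_{i,j}|^{1/p}$ and why property (iii) of Remark \ref{r2.10} (the geometric control $|O_{i_j}|\sim 2^{-\wz i_j}|O_{i_0}|$) is available if needed. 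Apart from this, handling the degenerate Case 2 (where $\Lambda_i=\emptyset$ for $i\ge i_0$) requires only the trivial observation that such $i$ contribute nothing to the sum and do not affect (i)–(iii). I expect the write-up to consist mostly of pointers back to the proof of Theorem \ref{t2.6} and to Remark \ref{r2.10}, with the index-chasing in the definition of $\bdz_{i,j}$ being the one place that needs care.
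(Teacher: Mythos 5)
There is a genuine gap in your proposal, and it is in exactly the part of the theorem that is new relative to Theorem~\ref{t2.6}: the property $r_i>0$ in (ii).

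You construct $\{Q_{i,j}\}_{j\in\Lambda_i}$ by applying the Whitney decomposition directly to $O_i^*$ and then claim $r_i=\inf_{j\in\Lambda_i}r_{B_{i,j}}>0$ ``because $O_i^*$ is a bounded open set, whence its Whitney cubes have side lengths bounded below.'' This is false: the Whitney decomposition of \emph{any} nonempty bounded open set $\boz\subsetneq\rn$ contains infinitely many cubes with side lengths tending to $0$ near $\pat\boz$ (Lemma~\ref{l2.x7}(ii) forces $l_{Q_j}\to 0$ as $Q_j$ accumulates at the boundary). So with your construction $r_i=0$ for every $i$ and conclusion (ii) simply does not hold. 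Since (ii) is precisely what later feeds the definition of the classes $\mathcal{W}_{\boz}$, $\wz{\mathcal{W}}_{\boz}$ in Section~\ref{s3} (one needs a strictly positive $r=\inf_j r_{B_j}$ to form $\mathcal{A}_r$ and $\mathcal{B}_r$ in \eqref{3.4} and \eqref{3.7}), this is not a cosmetic slip.

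What the paper actually does, and what your proposal misses, is an $\ez$-thickening step: for each $i\in\mathcal{I}$ one replaces $O_i^*$ by the slightly enlarged open set $O_{i,\ez}^*:=O_i^*\cup\{x\in(O_i^*)^\complement:\ \dist(x,\pat O_i^*)<\ez\}$ with $\ez$ chosen so small that $|O_{i,\ez}^*|<2|O_i^*|$, performs the Whitney decomposition of $O_{i,\ez}^*$, and then keeps only those Whitney cubes $Q_{i,j}$ that meet the \emph{original} set $O_i^*$ (this is what defines $\Lambda_i$). The short argument around \eqref{2.x6}--\eqref{2.x8} shows that any kept cube has $l_{Q_{i,j}}\ge\ez/(8\sqrt n)$, which is what gives $r_i>0$; at the same time, because $|O_{i,\ez}^*|<2|O_i^*|\ls|O_i|$, property (iii) still goes through via $\sum_{j\in\Lambda_i}|B_{i,j}|\ls|O_{i,\ez}^*|\ls|O_i|$. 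Without this thickening, (ii) fails and there is no way to salvage it by a choice of constants. Your bookkeeping for the index set $\mathcal{I}$ and the definition of $\bdz_{i,j}$ with the $\mathcal{I}$-successor $i'$ are the right reading of the construction, and the verification of (i), (iii) and the atom norm bound then does parallel Theorem~\ref{t2.6}; but the claim about $r_i>0$ as you formulated it is simply incorrect, and the $\ez$-thickening is the essential ingredient you need to add.
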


\begin{proof}
We first prove  (i) and (ii) of Theorem \ref{t2.12}.
Let $\mathcal{I}$ be as in Remark \ref{r2.10},
$i\in\mathcal{I}$ and $O_i^*$ be as in the proof of Theorem
\ref{t2.6}. Without loss of generality, we may assume that $|O_i^*|>0$;
otherwise, we neglect the set $O_i^*$,
since we only need the atomic decomposition to hold true
almost everywhere in $\rr^{n+1}_+$.

For all $i\in \mathcal{I}$, let $\ez\in(0,\,\fz)$ such
that the \emph{open set}
\begin{eqnarray}\label{2.5}
O_{i,\ez}^*:=O_{i}^*\cup\lf\{x\in (O_{i}^*)^{\complement}:\ \dist(
x,\,\pat{O}_{i}^*)<\ez\r\}
\end{eqnarray}
satisfies $|O_{i,\ez}^*|< 2|O_{i}^*|$.
Let $\{Q_{i,j}\}_{j\in\zz_+}$ be a Whitney decomposition of
$O_{i,\ez}^*$ as in Lemma \ref{l2.x7}. Assume that $\{Q_{i,j}\}_{j\in{\Lambda_i}}$ is the
\emph{maximal subsequence} of $\{Q_{i,j}\}_{j\in\zz_+}$
such that, for all $j\in{\Lambda_i}$, $Q_{i,j} \cap O_{i}^*\ne \emptyset$, where $\Lambda_i\subset\zz_+$.
Let $l_i:=\inf\{l_{Q_{i,\,j}},\,j\in{\Lambda_i}\}$, we now claim that, for all $i\in\mathcal{I}$,
$l_i>0$.

Indeed, if $Q_{i,\,j}\cap \pat O_i^*\ne \emptyset$, then we see that, for all $y\in \pat O_i^*$,
\begin{eqnarray}\label{2.x6}
\dist((O_{i,\,\epsilon}^*)^\complement,\,y)\ge \epsilon.
\end{eqnarray}
Otherwise, if $\dist((O_{i,\,\epsilon}^*)^\complement,\,y)< \epsilon$, then there exists
$x\in (O_{i,\,\epsilon}^*)^\complement$ such that $d(x,\,y)<\epsilon$.
However, from the definition of $(O_{i,\,\epsilon}^*)^\complement$, we deduce that
$d(x,\,y)\ge \dist(x,\,\pat O_i^*)\ge \epsilon$. This derives a contradiction. Thus, \eqref{2.x6}
holds true, which, together with  Lemma \ref{l2.x7}(ii), implies that
\begin{eqnarray}\label{2.x7}
l_{Q_{i,\,j}}\ge \frac{1}{4\sqrt{n}}
\dist(Q_{i,\,j},\,(O_{i,\,\epsilon}^*)^\complement)\ge \frac{\epsilon}{4\sqrt{n}}.
\end{eqnarray}

If $Q_{i,\,j}\cap \pat O_i^*= \emptyset$ and $Q_{i,\,j}\cap O_i^*\ne \emptyset$,  then
$Q_{i,\,j}\subset (O_i^*)^\circ$, where $(O_i^*)^\circ$ denotes the \emph{interior} of $O_i^*$.
Moreover, for all $y\in Q_{i,\,j}$, take $z\in (O_{i,\,\epsilon}^*)^\complement$ such that
\begin{eqnarray*}
\dist(y,\,(O_{i,\,\epsilon}^*)^\complement)\ge \frac{1}{2} d(y,\,z),
\end{eqnarray*}
which, together with Lemma \ref{l2.x7}(ii),
implies that
\begin{eqnarray}\label{2.x8}
l_{Q_{i,\,j}}\ge \frac{1}{4\sqrt n}\dist(y,\,(O_{i,\,\epsilon}^*)^\complement)\ge \frac{1}{8\sqrt n}d(y,\,z)
\ge \frac{1}{8\sqrt n}\dist(O_{i}^*,\,z)\ge \frac{1}{8\sqrt n}\epsilon.
\end{eqnarray}
Thus, combined \eqref{2.x7} and \eqref{2.x8}, we see that, for all $j\in\Lambda_i$,
\begin{eqnarray*}
l_{Q_{i,\,j}}\ge \frac{1}{8\sqrt n}\epsilon \ \ \text{and hence, for $i\in\mathcal{I}$,}\
 l_i\ge \frac{1}{8\sqrt n}\epsilon,
\end{eqnarray*}
which shows that the above claim is true.

Now, for all $i\in\mathcal{I}$ and $j\in\Lambda_i$, let $B_{i,\,j}$ be the ball having the same
center as $Q_{i,\,j}$ with the radius $5\sqrt n\, l_{Q_{i,\,j}}$.
With the help of the above claim and Lemma \ref{l2.x7}, we conclude
that, for all $i\in \mathcal{I}$,  the sequence $\{B_{i,j}\}_{j\in{\Lambda_i}}$ of balls has the following
properties:

\smallskip

(i) $O_{i}^*\subset \cup_{j\in{\Lambda_i}}B_{i,\,j}$;

(ii) $r_i:=\inf\{r_{B_{i,\,j}},\,j\in{\Lambda_i}\}>0$;

(iii) let $\wz B_{i,\,j}:=\frac{1}{10\sqrt n}B_{i,\,j}$. Then
$\{\wz B_{i,\,j}\}_{j\in{\Lambda_i}}$ are mutually disjoint;

(iv) there exists $M\in(0,\,\fz)$, independent of $i\in \mathcal{I}$,
such that $\sum_{j\in{\Lambda_i}} |B_{i,\,j}|< M|O_i^*|$.

\smallskip

Now, for all $i\in\mathcal{I}$ and $j\in{\Lambda_i}$, let
$$\bdz_{i,\,j}:=\widehat{B}_{i,\,j}\bigcap \lf(Q_{i,\,j}\times (0,\,\fz)\r)\bigcap
\lf(\widehat{O}^*_{i}\setminus \widehat{O}^*_{i+1}\r).$$ For all
$F\in WT^p(\rr^{n+1}_+)$, recall that
\begin{eqnarray*}
\supp F\subset
\bigcup_{i\in\mathcal{I}}\bigcup_{j\in{\Lambda_i}}\lf[\widehat{B}_{i,\,j}\bigcap (Q_{i,\,j}\times (0,\,\fz))\cap
(\widehat{O}^*_{i}\setminus \widehat{O}^*_{i+1})\r]
=:\bigcup_{i\in\mathcal{I}}\bigcup_{j\in{\Lambda_i}}\Delta_{i,\,j}.
\end{eqnarray*}
Moreover, let $\lz_{i,j}:=\wz C2^i |B_{i,j}|^{\frac{1}{p}}$ and
$A_{i,j}:={F\chi_{\bdz_{i,j}}}/{\lz_{i,\,j}}$, where
$\widehat{O}_i^*$ denotes the tent over $O_i^*$ and
$\wz C$ denotes a positive constant independent of $F$, which
will be determined later.
By following the same line of the proof of Theorem \ref{t2.6},
if we choose $\wz C$ large enough and independent of $F$,
we then conclude that $A_{i,\,j}$ is a $T^p(\rr^{n+1}_+)$-atom and, for almost
every $(x,\,t)\in\rr^{n+1}_+$,
\begin{eqnarray*}
F(x,\,t)=\dsum_{i\in\mathcal{I}}\dsum_{j\in{\Lambda_i}}\lz_{i,\,j}A_{i,\,j}(x,\,t)
\end{eqnarray*}
pointwisely and, moreover,
$$\dsup_{i\in\zz}\lf(\sum_{j\in\Lambda_i}|\lz_{i,j}|^p\r)^{\frac{1}{p}}\le C
\|F\|_{WT^p(\mathbb{R}^{n+1}_+)},$$
which, together with the properties of $\{B_{i,\,j}\}_{j\in{\Lambda_i}}$,
completes the proof of Theorem \ref{t2.12}.
\end{proof}

\subsection{The weak Hardy spaces $WH_L^p(\rn)$}\label{s2.3}

\hskip\parindent In this subsection, we study the weak Hardy space
$WH_L^p(\rn)$. First, we recall the definition of the classical
weak Hardy space from  \cite{fso86,li91,lu95,lyj14}. Let $p\in(0,\,1]$ and
$\fai\in\mathcal{S}(\rn)$ support in the unit ball $B(0,\,1)$. The
{\it weak Hardy space} $WH^p(\rn)$ is defined to be the space
$$\lf\{f\in\mathcal{S}'(\rn):\ \|f\|_{WH^p(\rn)}:=
\sup_{\az>0}\lf(\az^p\lf|\lf\{x\in\rn:\ \sup_{t>0}\lf|\fai_t\ast
f(x)\r|>\az\r\}\r|\r)^{1/p}<\fz\r\}.$$

Now, let $L$ satisfy Assumptions $(\mathcal{L})_1$,
$(\mathcal{L})_2$ and $(\mathcal{L})_3$. For all $f\in L^2(\rn)$ and
$x\in\rn$, the \emph{$L$-adapted non-tangential square function
$S_Lf$} is defined by
\begin{eqnarray}\label{2.x10}
S_Lf(x):=\lf\{\iint_{\Gamma(x)}\lf|t^{2k}Le^{-t^{2k}L}f(y)\r|^2
\frac{dy\,dt}{t^{n+1}}\r\}^{1/2},
\end{eqnarray}
where $\Gamma(x)$ is as in \eqref{2.x1}.

Let $p\in(0,\,1]$. A function $f\in L^2(\rn)$ is said to be in
$\mathbb{H}_L^p(\rn)$ if $S_Lf\in L^p(\rn)$; moreover, define
$\|f\|_{H_L^p(\rn)}:=\|S_Lf\|_{L^p(\rn)}.$ The {\it Hardy space}
$H_L^p(\rn)$ associated to $L$ is then defined to be the completion
of $\mathbb{H}_L^p(\rn)$ with respect to the \emph{quasi-norm}
$\|\cdot\|_{H_L^p(\rn)}$ (see \cite{cy}).

Now, we introduce the notion of the weak Hardy space $WH_L^p(\rn)$.

\begin{definition}\label{d2.13}
Let $p\in(0,\,1]$ and $L$ satisfy Assumptions $(\mathcal{L})_1$,
$(\mathcal{L})_2$ and $(\mathcal{L})_3$. A function $f\in L^2(\rn)$
is said to be in $\mathbb{WH}_L^p(\rn)$, if $S_Lf$ belongs to the
weak Lebesgue space $WL^p(\rn)$; moreover, define
$\|f\|_{WH_L^p(\rn)}:=\|S_Lf\|_{WL^p(\rn)}.$ The {\it weak Hardy
space} $WH_L^p(\rn)$ \emph{associated to} $L$ is then defined to be
the completion of $\mathbb{WH}_L^p(\rn)$ with respect to the
\emph{quasi-norm} $\|\cdot\|_{WH_L^p(\rn)}$.
\end{definition}

\begin{remark}\label{r2.14}
We point out that, unlike the Hardy space $H^p(\rn)$, with $p\in(0,\,1]$,
in which the Lebesgue space $L^2(\rn)$ is dense (see, for
example, \cite[Proposition 3.2]{lu95}), the space $L^2(\rn)$ is not dense
in the weak Hardy space $WH^p(\rn)$ in the sense of Fefferman and
Soria \cite{fso86} (see also a very recent
work of He \cite{he13}).  Thus, when $L=-\bdz$, the  weak Hardy space
$WH_{-\bdz}^p(\rn)$ defined as in Definition \ref{d2.13}
coincides with the space
$$\overline{WH^p(\rn)\cap L^2(\rn)}^{\|\cdot\|_{WH^p(\rn)}},$$
namely, the closure of $WH^p(\rn)\cap L^2(\rn)$ on the quasi-norm
$\|\cdot\|_{WH^p(\rn)}$, which is a proper subspace of $WH^p(\rn)$.
\end{remark}

Now, let $T$ be a nonnegative self-adjoint operator in $L^2(\rn)$
satisfying the Davies-Gaffney estimates. It is known that $T$ is a
special case of operators $L$ satisfying Assumptions
$(\mathcal{L})_1$, $(\mathcal{L})_2$ and $(\mathcal{L})_3$. We first
establish the weak atomic decomposition of the weak Hardy space
$WH^p_T(\rn)$.

\begin{definition}[\cite{hlmmy,jy11}]\label{d2.15}
Let $p\in(0,\,1]$, $M\in\nn$ and $B:=B(x_B,\,r_B)$ be a ball with
$x_B\in \rn$ and $r_B\in(0,\,\fz)$. A
function $a\in L^2(\rn)$ is called a $(p,\,2,\,M)_T$-\emph{atom
associated to $B$}, if the following conditions  are satisfied:

\medskip

(i) there exists a function $b$ belonging to the domain of $T^M$,
$D(T^M)$, such that $a=T^Mb$;

(ii) for all $\ell\in\{0,\,\ldots,\,M\}$, $\supp (T^\ell b)\subset
B$;

(iii) for all $\ell\in\{0,\,\ldots,\,M\}$, $\|(r_B^2T)^\ell
b\|_{L^2(\rn)}\le r_B^{2M}|B|^{\frac{1}{2}-\frac{1}{p}}$.
\end{definition}

For all $p\in(0,\,1]$ and $M\in\nn$, let $f\in L^2(\rn)$ and
$\{a_{i,j}\}_{i\in\zz,j\in{\zz_+}}$ be a sequence of
$(p,\,2,\,M)_T$-atoms associated to balls
$\{B_{i,j}\}_{i\in\zz,j\in{\zz_+}}$. The equality
$f=\sum_{i\in\zz,j\in{\zz_+}}\lz_{i,j}a_{i,j}$ holding true in $L^2(\rn)$ is called a
\emph{weak atomic} $(p,\,2,\, M)_T$-\emph{representation of} $f$, if

(i) $\lz_{i,j}:=\wz C2^i|B_{i,j}|^{\frac{1}{p}}$, where $\wz C$ is a positive constant
independent of $f$;

(ii) there exists a positive constant $C_2$, depending only on $f,\,n,\,p$,
$M$ and $\wz C$, such that
\begin{eqnarray*}
\dsup_{i\in\zz}\lf(\dsum_{j\in{\zz_+}}|\lz_{i,j}|^p\r)^{\frac{1}{p}}\le
C_2.
\end{eqnarray*}

The \emph{weak atomic Hardy space} $WH_{T,\mathrm{at},M}^p(\rn)$ is
defined to be the completion of the \emph{space}
$$\mathbb{WH}_{T,\mathrm{at},M}^p(\rn):=
\{f\in L^2(\rn): \ f\ \text{has a  weak atomic $(p,\,2,\,
M)_T$-representation}\}$$ with respect to the \emph{quasi-norm}
$$\|f\|_{WH_{T,\mathrm{at},M}^p(\rn)}:=\inf\lf\{\sup_{i\in\zz}
\lf(\sum_{j\in{\zz_+}}|\lz_{i,j}|^{p}\r)^{\frac{1}{p}}\r\},$$ where
the infimum is taken over all the weak atomic $(p,\,2,\,
M)_T$-representations of $f$ as above.

We have the following weak atomic characterization of $WH^p_T(\rn)$.

\begin{theorem}\label{t2.16}
Let $p\in(0,\,1]$ and $T$ be a nonnegative self-adjoint operator on
$L^2(\rn)$ satisfying the Davies-Gaffney estimates. Assume that
$M\in\nn$ satisfies $M>\frac{n}{2}(\frac{1}{p}-\frac{1}{2})$. Then
$WH_T^p(\rn)=WH_{T,\mathrm{at},M}^p(\rn)$ with equivalent
quasi-norms.
\end{theorem}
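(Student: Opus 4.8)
The plan is to prove the two inclusions $WH_{T,\mathrm{at},M}^p(\rn)\subset WH_T^p(\rn)$ and $WH_T^p(\rn)\subset WH_{T,\mathrm{at},M}^p(\rn)$ separately, with the second one being the substantive direction. For the easier inclusion, I would take a function $f$ with a weak atomic $(p,2,M)_T$-representation $f=\sum_{i\in\zz,j\in\zz_+}\lz_{i,j}a_{i,j}$ and estimate the distribution function of $S_Lf$ at each level $\az\sim 2^{i_0}$ by splitting the sum into the ``low'' part $\sum_{i\le i_0}$ and the ``high'' part $\sum_{i>i_0}$. For each fixed level $i$, one uses the known $L^2$-boundedness of $S_T$ together with the $L^2$ size/localization estimates for the quantities $S_T a_{i,j}$ outside the dilated balls $2B_{i,j}$ (these follow from the Davies-Gaffney estimates, Lemmas \ref{l2.3}--\ref{l2.5} and the cancellation $a=T^Mb$, exactly as in the strong $H_L^p$ theory), and then invokes the superposition principle on weak-type estimates of Stein, Taibleson and Weiss \cite{stw81} (Lemma \ref{l2.18}) to sum over $i$; the condition $M>\frac n2(\frac1p-\frac12)$ is precisely what makes the tails of the off-diagonal estimates summable.

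For the harder inclusion, I would start from $f\in\mathbb{WH}_T^p(\rn)\subset L^2(\rn)$, so that $S_Tf\in WL^p(\rn)$, and apply the $L$-adapted Calder\'on reproducing formula, writing $f=c_M\int_0^\fz (t^{2k}Te^{-t^{2k}T})^{M+1}f\,\frac{dt}{t}$ in $L^2(\rn)$. The function $F(x,t):=(t^{2k}Te^{-t^{2k}T})^{M}f(x)$ then lies in $WT^p(\rr^{n+1}_+)\cap T^2(\rr^{n+1}_+)$, with $\|F\|_{WT^p}\sim\|f\|_{WH_T^p}$, by the definition of $S_T$ together with the quadratic estimate (functional calculus). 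Now apply Theorem \ref{t2.6}: decompose $F=\sum_{i\in\zz,j\in\zz_+}\lz_{i,j}A_{i,j}$ into $T^p(\rr^{n+1}_+)$-atoms associated to balls $B_{i,j}$, with $\lz_{i,j}=\wz C2^i|B_{i,j}|^{1/p}$, with control $\sup_i(\sum_j|\lz_{i,j}|^p)^{1/p}\ls\|F\|_{WT^p}$ by Theorem \ref{t2.6}(ii), and with the disjointness of $\{\tfrac1{10\sqrt n}B_{i,j}\}_j$ for each fixed $i$ from Theorem \ref{t2.6}(iii). Feeding this through the reproducing formula, set $a_{i,j}:=\pi_{T,M}(A_{i,j})$, where $\pi_{T,M}$ is the operator $G\mapsto c_M\int_0^\fz (t^{2k}Te^{-t^{2k}T})(G(\cdot,t))\,\frac{dt}{t}$; then $f=\sum_{i,j}\lz_{i,j}a_{i,j}$ and one checks, using the support $\supp A_{i,j}\subset\widehat B_{i,j}$, Lemma \ref{l2.5} (or its self-adjoint analogue) and the size normalization of the $T^p$-atom, that $C^{-1}a_{i,j}$ is a $(p,2,M)_T$-atom associated to a fixed dilate of $B_{i,j}$: one writes $a_{i,j}=T^Mb_{i,j}$ with $b_{i,j}:=c_M\int_0^\fz t^{2kM}(t^{2k}Te^{-t^{2k}T})(A_{i,j}(\cdot,t))\,\frac{dt}{t}$, verifies $\supp(T^\ell b_{i,j})\subset CB_{i,j}$ from the tent support plus finite-speed/Davies-Gaffney localization, and bounds $\|(r_B^2T)^\ell b_{i,j}\|_{L^2}$ by the $T^2$-atom size condition via Hölder in $t$ and the quadratic estimate.

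The remaining point is convergence: the decomposition $F=\sum_{i,j}\lz_{i,j}A_{i,j}$ converges in $T^2(\rr^{n+1}_+)$ by Corollary \ref{c2.9} (since $F\in T^2$), so applying the bounded operator $\pi_{T,M}:T^2(\rr^{n+1}_+)\to L^2(\rn)$ gives $f=\sum_{i,j}\lz_{i,j}a_{i,j}$ in $L^2(\rn)$, which is exactly what the definition of a weak atomic $(p,2,M)_T$-representation requires; combined with the $\ell^p$-estimate on $\{\lz_{i,j}\}_j$ uniform in $i$, this yields $f\in\mathbb{WH}_{T,\mathrm{at},M}^p(\rn)$ with $\|f\|_{WH_{T,\mathrm{at},M}^p}\ls\|f\|_{WH_T^p}$. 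A density argument then extends the norm equivalence to the completions. I expect the main obstacle to be the first (``atoms to square function'') direction: one must carefully exploit Theorem \ref{t2.6}(ii)--(iii), namely the uniform-in-$i$ bound on $\sum_j|\lz_{i,j}|^p$ and the disjointness of the shrunken balls, to run the superposition principle of \cite{stw81}; without the explicit relation between the atom supports and coefficients recorded in Theorem \ref{t2.6}(ii), as the authors themselves remark, this step would fail, since naively summing the contributions of infinitely many atoms over the level index $i$ does not obviously give a weak-$L^p$ bound. Care is also needed because $L^2(\rn)$ is not dense in $WH^p(\rn)$ in the classical case, so all identities must be understood within the completions as in Definition \ref{d2.13}.
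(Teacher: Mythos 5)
Your overall strategy --- Calder\'on reproducing formula plus Theorem \ref{t2.6} in one direction, and a level split $f=f_1+f_2$ with off-diagonal estimates and the Stein--Taibleson--Weiss superposition principle in the other --- matches the architecture of the paper's proof. However, there is a genuine error in the first piece and a genuine gap in the second. For the inclusion $\mathbb{WH}_T^p(\rn)\subset\mathbb{WH}_{T,\mathrm{at},M}^p(\rn)$, you reconstruct $f=\sum_{i,j}\lz_{i,j}\,\pi_{T,M}(A_{i,j})$ with $\pi_{T,M}$ built from the heat semigroup, and you write $b_{i,j}:=c_M\int_0^\fz t^{2kM}(t^{2k}Te^{-t^{2k}T})(A_{i,j}(\cdot,t))\,\frac{dt}{t}$, claiming $\supp(T^\ell b_{i,j})\subset CB_{i,j}$ ``from the tent support plus finite-speed/Davies-Gaffney localization.'' But $e^{-t^{2k}T}$ and its compositions with powers of $T$ only have Davies--Gaffney exponential decay, \emph{not} compactly supported kernels, so $\pi_{T,M}(A_{i,j})$ is merely a molecule, not a $(p,2,M)_T$-atom as required by Definition \ref{d2.15} (which demands $\supp(T^\ell b)\subset B$). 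The paper resolves this by using the operator $\Pi_{\Psi,T}$ of \eqref{2.6} built from $\Phi(t\sqrt T)$ with $\Phi$ the Fourier transform of a compactly supported even function $\fai$; by finite speed of propagation and the Paley--Wiener theorem (Lemma \ref{l2.17}), this does transport $T^p$-atoms to genuine $(p,2,M)_T$-atoms. Simply replacing $e^{-t^{2k}T}$ in your formulas does not achieve this, and the fix is precisely Lemma \ref{l2.17}.

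For the inclusion $\mathbb{WH}_{T,\mathrm{at},M}^p(\rn)\subset\mathbb{WH}_T^p(\rn)$, you split $f=f_1+f_2$ at the level $i_0\sim\log_2\az$ and say you ``invoke the superposition principle to sum over $i$,'' but you never separate the treatment of the two pieces --- and they require genuinely different arguments. For the high part $f_2=\sum_{i>i_0}\sum_j\lz_{i,j}a_{i,j}$, the paper renormalizes each atom to a $(q,2,M)_T$-atom with $q\in(0,p)$ (see \eqref{2.10}--\eqref{2.11}), proves the uniform weak $L^q$ tail estimate \eqref{2.12} via off-diagonal bounds, and then sums with Lemma \ref{l2.18}. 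The same manipulation cannot be applied to the low part $f_1=\sum_{i\le i_0}\sum_j\lz_{i,j}a_{i,j}$: with $\wz\lz_{i,j}=\wz C2^i|B_{i,j}|^{1/q}$ one gets $\sum_j|\wz\lz_{i,j}|^q\sim 2^{i(q-p)}\sum_j|\lz_{i,j}|^p$, and the geometric series $\sum_{i\le i_0}2^{i(q-p)}$ diverges when $q<p$. The paper instead treats $f_1$ by a completely different method: H\"older's inequality over the $i$-index with an auxiliary exponent $r\in(1,2)$ and a weight $2^{ib}$, the $L^r$-boundedness of $S_T$, and a duality estimate for $\|\sum_j\lz_{i,j}a_{i,j}\|_{L^r}$ via the Hardy--Littlewood maximal function and H\"older on the shrunken balls $\wz B_{i,j}$. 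Your sketch leaves this, the most delicate part of the proof, entirely unaddressed. (As a side remark, you initially label this ``atoms to square function'' direction ``easier'' and only at the end acknowledge that it is where the main obstacle lies; the latter assessment is the correct one, and the paper accordingly presents it as the longer half.)
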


To prove this theorem, we need to recall some notions and known
results from \cite{hlmmy}.

Let $C_3\in[1,\,\fz)$. Assume that $\fai\in C_\mathrm{c}^\infty(\rr)$ is
even, $\supp \fai \subset (-C_3^{-1},\,C_3)$, $\fai\ge0$ and there
exists a positive constant $C_4$ such that, for all $t\in
(-\frac{1}{2C_3},\, \frac{1}{2C_3})$, $\fai(t)\ge C_4$.  Let
$M\in\nn$, $\Phi$
be the \emph{Fourier transform} of $\fai$ and
$\Psi(t):=t^{2(M+1)}\Phi(t)$ for all $t\in[0,\,\fz)$.
For $T$ as in Theorem \ref{t2.16},  all $F\in T^2(\rr^{n+1}_+)$ and
$x\in\rn$, define the \emph{operator} $\Pi_{\Psi,T}(F)(x)$ by setting
\begin{eqnarray}\label{2.6}
\Pi_{\Psi,T}(F)(x):=\int_0^\infty
\Psi(t\sqrt{T})(F(\cdot,t))(x)\,\frac{dt}{t}.
\end{eqnarray}

From Fubini's theorem and the quadratic estimates, it follows that
$\Pi_{\Psi,T}$ is bounded from $T^2(\rr^{n+1}_+)$ to $L^2(\rn)$. By
using the finite speed of the propagation of the wave equation and
the Paley-Wiener theorem, Hofmann et al. \cite{hlmmy} proved the following conclusion.

\begin{lemma}[\cite{hlmmy}]\label{l2.17}
Let $p\in(0,\,1]$, $M\in\nn$ and $T$ be a nonnegative self-adjoint
operator on $L^2(\rn)$ satisfying the Davies-Gaffney estimates.
Assume that $A$ is a $T^p(\rr^{n+1}_+)$-atom associated to the ball
$B$ and $\Pi_{\Psi,T}$ is as in \eqref{2.6}. Then there exists a
positive constant $C(M)$, independent of $A$, such that
$[C(M)]^{-1} \Pi_{\Psi,T}(A)$ is a
$(p,2,M)_T$-atom associated to the ball $2B$.
\end{lemma}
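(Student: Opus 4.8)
The plan is to check directly that, for a suitable constant $C(M)\in(0,\fz)$ depending only on $M$, $n$, $p$ and $\fai$, the function $a:=[C(M)]^{-1}\Pi_{\Psi,T}(A)$ satisfies conditions (i), (ii) and (iii) of Definition \ref{d2.15} with the underlying ball taken to be $2B$; here $B=B(x_B,r_B)$. For $j\in\{1,\dots,M+1\}$ put $\Phi_j(u):=u^{2j}\Phi(u)$, so $\Phi_{M+1}=\Psi$. Since $\Phi$ is Schwartz and $\Phi_j$ vanishes to order $2j\ge2$ at the origin, $\kappa_j:=\lf\{\int_0^\fz|\Phi_j(u)|^2\,\frac{du}{u}\r\}^{1/2}<\fz$, and the same reasoning (Fubini's theorem and a quadratic estimate) that gives the $T^2(\rr^{n+1}_+)\to L^2(\rn)$ boundedness of $\Pi_{\Psi,T}$ shows that, for all $F$ on $\rr^{n+1}_+$,
\[
\lf\|\Pi_{\Phi_j,T}(F)\r\|_{L^2(\rn)}\le\kappa_j\lf\{\iint_{\rr^{n+1}_+}|F(y,t)|^2\,\frac{dy\,dt}{t}\r\}^{1/2}.
\]
Now set $b:=\Pi_{\Phi_1,T}(t^{2M}A(\cdot,t))$. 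From the spectral identity $\lz^\ell\Phi_1(t\sqrt\lz)=t^{-2\ell}\Phi_{\ell+1}(t\sqrt\lz)$, hence $T^\ell\Phi_1(t\sqrt T)=t^{-2\ell}\Phi_{\ell+1}(t\sqrt T)$, interchanging the closed self-adjoint operator $T^\ell$ with the $L^2$-convergent integral defining $b$ gives, for all $\ell\in\{0,\dots,M\}$,
\[
T^\ell b=\Pi_{\Phi_{\ell+1},T}\lf(t^{2(M-\ell)}A(\cdot,t)\r),\qquad\text{and in particular}\qquad T^Mb=\Pi_{\Psi,T}(A).
\]
Thus $a=T^M([C(M)]^{-1}b)$, and (granting the bounds below, which force each $T^\ell b\in L^2(\rn)$) $b\in D(T^M)$; this is (i).

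For the support condition (ii) the key tool is the finite speed of propagation of the wave equation: since $T$ is nonnegative self-adjoint and satisfies the Davies--Gaffney estimates, the Schwartz kernel of $\cos(s\sqrt T)$ is supported in $\{(x,y):\ |x-y|\le|s|\}$ (as used in \cite{hlmmy}). As $\fai\in C_\mathrm{c}^\infty(\rr)$ is even with $\supp\fai\subset(-C_3^{-1},C_3)$, evenness forces $\supp\fai\subset(-C_3^{-1},C_3^{-1})$, and $\fai^{(2j)}$ has the same support; since $\Phi_j(u)=u^{2j}\Phi(u)$ is, up to a sign, the Fourier transform of $\fai^{(2j)}$, Fourier inversion yields
\[
\Phi_j(t\sqrt T)=\frac{(-1)^j}{2\pi}\dint_{|s|\le C_3^{-1}}\fai^{(2j)}(s)\cos(st\sqrt T)\,ds,
\]
whose kernel is therefore supported in $\{(x,y):\ |x-y|\le C_3^{-1}t\}\subset\{(x,y):\ |x-y|\le t\}$, using $C_3\ge1$. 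Since $\supp A\subset\widehat B$ means $A(\cdot,t)$ is supported in $\{y:\ |y-x_B|\le r_B-t\}$ for $t\in(0,r_B)$ and vanishes for $t\ge r_B$, the height-$t$ slice of the integrand of $T^\ell b$ is supported in $\{y:\ |y-x_B|\le(r_B-t)+t\}=B$; integrating over $t$ gives $\supp(T^\ell b)\subset\overline B\subset 2B$ for all $\ell\in\{0,\dots,M\}$, which is (ii).

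For (iii), since $t\le r_B$ on $\widehat B$ and $A$ is a $T^p$-atom,
\[
\lf\|t^{2(M-\ell)}A(\cdot,t)\r\|_{L^2(\rr^{n+1}_+,\,dy\,dt/t)}\le r_B^{2(M-\ell)}\lf\{\iint_{\widehat B}|A(y,t)|^2\,\frac{dy\,dt}{t}\r\}^{1/2}\le r_B^{2(M-\ell)}|B|^{1/2-1/p},
\]
so the first paragraph gives $\|T^\ell b\|_{L^2(\rn)}\le\kappa_{\ell+1}\,r_B^{2(M-\ell)}|B|^{1/2-1/p}$ for all $\ell\in\{0,\dots,M\}$ (in particular each $T^\ell b\in L^2(\rn)$, so $b\in D(T^M)$ as claimed above). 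Using $r_{2B}=2r_B$, $|2B|=2^n|B|$ and $1/2-1/p\le0$,
\[
\lf\|(r_{2B}^2T)^\ell[C(M)]^{-1}b\r\|_{L^2(\rn)}=\frac{(2r_B)^{2\ell}}{C(M)}\,\|T^\ell b\|_{L^2(\rn)}\le\frac{2^{2(\ell-M)}\,\kappa_{\ell+1}\,2^{n(1/p-1/2)}}{C(M)}\;r_{2B}^{2M}\,|2B|^{1/2-1/p}.
\]
Taking $C(M):=2^{n(1/p-1/2)}\max_{1\le j\le M+1}\kappa_j$ (a positive finite constant depending only on $M$, $n$, $p$ and $\fai$), the right-hand side is at most $r_{2B}^{2M}|2B|^{1/2-1/p}$ for every $\ell\in\{0,\dots,M\}$, since $2^{2(\ell-M)}\le1$. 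Hence (iii) holds, and $[C(M)]^{-1}\Pi_{\Psi,T}(A)=T^M([C(M)]^{-1}b)$ is a $(p,2,M)_T$-atom associated to $2B$.

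The step I expect to require the most care is the support localization in the second paragraph: one must correctly invoke finite speed of propagation for $\cos(s\sqrt T)$ as a consequence of the Davies--Gaffney estimates, keep track of the interplay between the small support of $\fai$ (radius $\le C_3^{-1}\le1$) and the shrinking radius $r_B-t$ of the slices of $\widehat B$ so that the localized support still sits inside $B\subset 2B$, and rigorously justify commuting the closed operators $T^\ell$ through the vector-valued integrals defining $b$ and $T^\ell b$ at the level of $L^2$-convergence; the rest is the quadratic estimate together with bookkeeping of the constants coming from the dilation $2B$ versus $B$.
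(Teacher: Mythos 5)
Your argument is, in substance, the proof from \cite{hlmmy}: the paper itself gives no proof of Lemma \ref{l2.17} but simply quotes it, and your route --- writing $\Pi_{\Psi,T}(A)=T^Mb$ with $b=\Pi_{\Phi_1,T}(t^{2M}A(\cdot,t))$, obtaining the size estimates for $(r_{2B}^2T)^\ell b$ from quadratic estimates via the spectral theorem, and obtaining the support condition from finite speed of propagation fed through the Fourier inversion formula for $\Phi_j(t\sqrt{T})$ --- is exactly that of the cited source (there the localization is packaged via the Paley--Wiener theorem; your direct Fourier inversion for the compactly supported $\fai^{(2j)}$ is an equivalent way to say the same thing). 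The spectral identities $T^\ell b=\Pi_{\Phi_{\ell+1},T}(t^{2(M-\ell)}A(\cdot,t))$, the computation of the constants coming from $r_{2B}=2r_B$ and $|2B|=2^n|B|$, and the choice of $C(M)$ are all correct; the membership $b\in D(T^M)$ should, however, be argued via truncated integrals $\int_\epsilon^{r_B}$ and the closedness of $T^\ell$ (the truncations converge by the quadratic estimate), not by saying that ``the bounds below force $T^\ell b\in L^2$'', since finiteness of the candidate limits does not by itself place $b$ in the domain.

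The one step that is not justified as written is the assertion that the kernel of $\cos(s\sqrt{T})$ is supported in $\{(x,y):\ |x-y|\le|s|\}$. The Davies--Gaffney estimates with an arbitrary constant $C_1$ in the exponent yield finite propagation speed only with some constant $C_0$ depending on $C_1$; this is exactly how the property is recorded in \eqref{3.x9} and exploited in Lemma \ref{l3.8}, where the kernel of $(t^2L)^k\Phi(t\sqrt{L})$ is supported in $\{|x-y|\le \frac{C_0}{C_1}t\}$, and unit speed holds only under a specific normalization of $C_1$. Consequently the kernel of $\Phi_j(t\sqrt{T})$ is supported in $\{|x-y|\le C_0C_3^{-1}t\}$, and your slice computation gives $|x-x_B|\le r_B-t+C_0C_3^{-1}t$, so $\supp(T^\ell b)\subset 2B$ only when $C_0\le 2C_3$; the hypothesis $C_3\ge1$ alone does not guarantee this. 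This is precisely the role of the constant $C_3$ in the standing assumptions before the lemma (in \cite{hlmmy} the support of $\fai$ is taken inside $(-c_0^{-1},c_0^{-1})$ with $c_0$ the propagation constant), and it is one reason the target ball is $2B$ rather than $B$. Once $C_3$ is linked to $C_0$ in this way --- which is the intended reading of the hypotheses --- your argument goes through verbatim.
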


We also need the following \emph{superposition principle} on the weak type
estimate.
\begin{lemma}[\cite{stw81}]\label{l2.18}
Let $p\in(0,\,1)$ and $\{f_j\}_{j\in\zz_+}$ be a sequence of
measurable functions. If $\sum_{j\in\zz_+}\lf|\lz_j\r|^p<\fz$ and
there exists a positive constant $C$ such that, for all $j\in\zz_+$ and $\az\in
(0,\fz)$, $|\{x\in\rn:\ |f_j(x)|>\az\}|\le
C\az^{-p}$, then there exists a positive constant $\wz C$, independent of
$\{\lz_j\}_{j\in\zz_+}$ and $\{f_j\}_{j\in\zz_+}$, such that,
for all $\az\in(0,\fz)$,
\begin{eqnarray*}
\lf|\lf\{x\in\rn:\ \lf|\dsum_{j\in\zz_+}\lz_jf_j(x)\r|>\az\r\}\r|\le
\wz C\frac{2-p}{1-p}\az^{-p}\sum_{j\in\zz_+}|\lz_j|^p.
\end{eqnarray*}
\end{lemma}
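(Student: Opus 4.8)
The plan is a truncation argument: I would split each $f_j$ at the level $\az/|\lz_j|$ and treat the two resulting pieces with, respectively, the subadditivity of Lebesgue measure together with the uniform weak-type hypothesis, and Chebyshev's inequality in $L^1(\rn)$ together with a truncated $L^1$-bound for functions in weak $L^p(\rn)$. First I would make two harmless reductions: discard all indices $j$ with $\lz_j=0$, observe that the hypothesis forces $\Lambda:=\sum_{j\in\zz_+}|\lz_j|^p\in(0,\,\fz)$, and use that both sides of the claimed inequality are invariant under $(\lz_j,\,\az)\mapsto(s\lz_j,\,s\az)$ for $s\in(0,\,\fz)$ to normalize $\Lambda=1$. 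I would also assume, as in the setting where this lemma is applied, that $\sum_{j\in\zz_+}\lz_jf_j$ converges almost everywhere on $\rn$ (or else carry out the estimate for each finite partial sum with a constant independent of its length and then let the length tend to $\fz$).

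The only place where the restriction $p<1$ is essential is the elementary bound: if $g$ is measurable and $|\{x\in\rn:\ |g(x)|>s\}|\le Cs^{-p}$ for all $s\in(0,\,\fz)$, then, for every $t\in(0,\,\fz)$,
$$\lf\|g\chi_{\{|g|\le t\}}\r\|_{L^1(\rn)}=\dint_0^t\lf|\{x\in\rn:\ s<|g(x)|\le t\}\r|\,ds\le\dint_0^t Cs^{-p}\,ds=\frac{C}{1-p}t^{1-p},$$
where the integrability of $s^{-p}$ near $s=0$ uses $p<1$ and is precisely the source of the factor $1/(1-p)$ in the conclusion.

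With this in hand, I would fix $\az\in(0,\,\fz)$, set $t_j:=\az/|\lz_j|$, and write $f_j=g_j+h_j$ with $g_j:=f_j\chi_{\{|f_j|\le t_j\}}$ and $h_j:=f_j\chi_{\{|f_j|>t_j\}}$. The key observation is the pointwise inclusion
$$\lf\{x\in\rn:\ \lf|\dsum_{j\in\zz_+}\lz_jf_j(x)\r|>\az\r\}\subset\lf(\bigcup_{j\in\zz_+}\lf\{x\in\rn:\ |f_j(x)|>t_j\r\}\r)\cup\lf\{x\in\rn:\ \lf|\dsum_{j\in\zz_+}\lz_jg_j(x)\r|>\az\r\},$$
which holds because at a point lying outside the first set on the right every $h_j$ vanishes, so there $\sum_j\lz_jf_j=\sum_j\lz_jg_j$. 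For the first set, subadditivity and the uniform weak-type bound give $\sum_{j\in\zz_+}|\{|f_j|>t_j\}|\le\sum_{j\in\zz_+}Ct_j^{-p}=C\az^{-p}\sum_{j\in\zz_+}|\lz_j|^p$. For the second, the truncation bound above yields $\sum_{j\in\zz_+}|\lz_j|\,\|g_j\|_{L^1(\rn)}\le\frac{C}{1-p}\az^{1-p}\sum_{j\in\zz_+}|\lz_j|^p<\fz$, so $\sum_j\lz_jg_j$ converges absolutely in $L^1(\rn)$ and Chebyshev's inequality bounds the measure of the second set by $\frac{1}{\az}\cdot\frac{C}{1-p}\az^{1-p}\sum_{j\in\zz_+}|\lz_j|^p$. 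Adding the two estimates, using $\sum_{j\in\zz_+}|\lz_j|^p=1$ and then undoing the normalization, I obtain the asserted inequality with $\wz C=C$, since $C\az^{-p}+\frac{C}{1-p}\az^{-p}=C\,\frac{2-p}{1-p}\az^{-p}$.

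The only genuinely delicate point is the convergence bookkeeping — identifying $\sum_j\lz_jf_j$ with $\sum_j\lz_jg_j$ at the relevant points, and applying Chebyshev's inequality to the $L^1$-limit of $\sum_j\lz_jg_j$ — and this is dispatched by the a.e.\ convergence assumption (or by the reduction to finite partial sums). Everything else (the distribution-function identity for the truncated $L^1$-norm, countable subadditivity of the measure, and Chebyshev's inequality) is entirely routine.
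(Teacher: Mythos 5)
Your proposal is correct, and it is exactly the classical Stein--Taibleson--Weiss argument that the paper relies on: the paper offers no proof of Lemma \ref{l2.18} itself (it simply cites \cite{stw81}), and your truncation of each $f_j$ at height $\alpha/|\lambda_j|$, the countable subadditivity plus the uniform weak-type bound for the union of the exceptional sets, and the Chebyshev estimate for the $L^1$-norm of the truncated sum reproduce that argument verbatim, with the factor $\frac{2-p}{1-p}$ arising precisely as $1+\frac{1}{1-p}$. One small remark: the a.e.\ convergence of $\sum_{j}\lambda_jf_j$ need not be assumed, since your own estimates, applied with $\alpha=2^k$ and letting $k\to\infty$, already show that the series converges absolutely almost everywhere.
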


With these preparations, we now prove Theorem \ref{t2.16}.

\begin{proof}[Proof of Theorem \ref{t2.16}]
In order to prove Theorem \ref{t2.16}, it suffices to show that
$$(WH_T^p(\rn)\cap L^2(\rn))=\mathbb{WH}_{T,\mathrm{at},M}^p(\rn)$$
with
equivalent quasi-norms. We first prove the inclusion that
$$(WH_T^p(\rn)\cap L^2(\rn))\subset
\mathbb{WH}_{T,\mathrm{at},M}^p(\rn).$$
Let $f\in WH^p_T(\rn)\cap
L^2(\rn)$. From its definition and the quadratic estimate, it
follows that $t^2Te^{-t^2T}f\in WT^p(\mathbb{R}^{n+1}_+)\cap
T^2(\mathbb{R}^{n+1}_+)$. By Theorem \ref{t2.6}, there exist
sequences $\{\lz_{i,j}\}_{i\in\zz,j\in\zz_+}\subset\cc$ and
$\{A_{i,j}\}_{i\in\zz,j\in\zz_+}$ of $T^p(\mathbb{R}^{n+1}_+)$-atoms
associated to the balls $\{B_{i,j}\}_{i\in\zz,j\in\zz_+}$ such that
$$t^2Te^{-t^2T}(f)=\sum_{i\in\zz,j\in\zz_+}\lz_{i,j}A_{i,j}$$
pointwisely almost
everywhere in $\rr^{n+1}_+$, $\lz_{i,j}=\wz C2^i|B_{i,j}|^{1/p}$ and
\begin{eqnarray}\label{2.7}
\dsup_{i\in\zz}\lf(\dsum_{j\in{\zz_+}}|\lz_{i,j}|^p\r)^{\frac{1}{p}}
\ls \lf\|t^2Te^{-t^2T}(f)\r\|_{WT^p(\rr^{n+1}_+)}\sim\|f\|_{WH^p(\rr^{n+1}_+)},
\end{eqnarray}
where $\wz C$ is a positive constant independent of $f$.
Moreover, by the bounded $H_\infty$
functional calculus in $L^2(\rn)$, Corollary \ref{c2.9} and the fact
that $\Pi_{\Psi,T}$ is bounded from $T^2(\rr^{n+1}_+)$ to
$L^2(\rn)$, we conclude that there exists a constant $C(\Psi)$,
depending on $\Psi$, such that
\begin{eqnarray}\label{2.x15}
f&&=C(\Psi)\dint_0^\fz\Psi(t\sqrt{T})(t^2Te^{-t^2T})f\,\frac{dt}{t}=
C(\Psi)\dint_0^\fz\Psi(t\sqrt{T})\lf(\sum_{i\in\zz,j\in\zz_+}
\lz_{i,j}A_{i,j}\r)\,\frac{dt}{t}\\
\nonumber &&=C(\Psi)\sum_{i\in\zz,j\in\zz_+}\lz_{i,j}
\dint_0^\fz\Psi(t\sqrt{T})(A_{i,j})\,\frac{dt}{t},
\end{eqnarray}
where the above equalities hold true in $L^2(\rn)$. For all $i\in\zz$ and
$j\in\zz_+$, let
\begin{eqnarray}\label{2.x16}
a_{i,j}:=\int_0^\fz\Psi(t\sqrt{T})(A_{i,j})\,\frac{dt}{t}.
\end{eqnarray}
By Lemma \ref{l2.17}, we see that $a_{i,j}$ is a $(p,2,M)_{T}$-atom
associated to $\{2B_{i,j}\}_{i\in\zz,j\in\zz_+}$ up
to a harmless positive constant. Thus, we conclude that $f$ has a weak atomic
$(p,\,2,\, M)_T$-representation $\sum_{i\in\zz,j\in\zz_+}
\lz_{i,j}a_{i,j}$  and $f\in \mathbb{WH}_{T,\mathrm{at},M}^p(\rn)$.
Moreover, from \eqref{2.7},  we deduce that
\begin{eqnarray*}
\|f\|_{WH_{T,\mathrm{at},M}^p(\rn)}\ls
\lf(\dsup_{i\in\zz}\dsum_{j\in\zz_+}|\lz_{i,j}|^p\r)^{\frac{1}{p}}\ls
\|f\|_{WH_{T}^p(\rn)},
\end{eqnarray*}
which immediately implies that $(WH_{T}^p(\rn)\cap L^2(\rn))\subset
\mathbb{WH}_{T,\mathrm{at},M}^p(\rn)$.

Now we prove the converse, namely,
$$\mathbb{WH}_{T,\mathrm{at},M}^p(\rn) \subset
(WH_{T}^p(\rn)\cap L^2(\rn)).$$
Let $f\in\mathbb{WH}_{T,\mathrm{at},M}^p(\rn)$. From its definition, it
follows that there exists a sequence
$\{a_{i,j}\}_{i\in\zz,j\in\zz_+}$ of $(p,\,2,\,M)_T$-atoms
associated to the balls $\{B_{i,j}\}_{i\in\zz,j\in\zz_+}$ such that
$f=\sum_{i\in\zz,j\in\zz_+}\lz_{i,j}a_{i,j}$ in $L^2(\rn)$ and, for all $i\in\zz$,
\begin{eqnarray}\label{2.8}
\sum_{j\in\zz_+}|\lz_{i,j}|^p\ls \|f\|^p_{WH_{T,\rm{at},M}^p(\rn)},
\end{eqnarray}
where $\{\lz_{i,j}\}_{i\in\zz,j\in\zz_+}:=\{\wz C2^i|B_{i,j}|^{1/p}\}_{i\in\zz,j\in\zz_+}$ and $\wz C$ is a positive constant
independent of $f$.

Given $\az\in(0,\,\fz)$, let $i_0\in\zz$ satisfy that
$2^{i_0}\le \az<2^{i_0+1}$. We then see that
\begin{eqnarray*}
f=\dsum_{i=-\fz}^{i_0}\dsum_{j\in\zz_+}\lz_{i,j}a_{i,j}+
\dsum_{i=i_0+1}^{\fz}\dsum_{j\in\zz_+}\cdots=:f_1+f_2
\end{eqnarray*}
holds true in $L^2(\rn)$.

We first estimate $f_2$.
Let $\wz{B}_{i_0}:=\cup_{i=i_0+1}^\fz
\cup_{j\in\zz_+} 8B_{i,j}$. From the definition of $\lz_{i,j}$ and \eqref{2.8}, we
deduce that
\begin{eqnarray}\label{2.9}
\quad\quad\lf|\wz B_{i_0}\r|&&\ls \dsum_{i=i_0+1}^\fz \dsum_{j\in\zz_+}
\lf|B_{i,j}\r|\ls \dsum_{i=i_0+1}^\fz 2^{-ip} \lf(\dsum_{j\in\zz_+}|\lz_{i,j}|^p\r)
\ls\dsum_{i=i_0+1}^\fz 2^{-ip}\|f\|_{WH^p_{T,\rm{at},M}(\rn)}^p\\
&&\nonumber \ls \frac{1}{\az^p}\|f\|_{WH^p_{T,\rm{at},M}(\rn)}^p.
\end{eqnarray}

Now, for $q\in(0,\, p)$, we write
\begin{eqnarray}\label{2.10}
\hs\hs\hs f_2=
\dsum_{i=i_0+1}^\fz\dsum_{j\in\zz_+}\lf(\wz C2^i\lf|B_{i,j}\r|^{\frac{1}{q}}\r)
\lf(\frac{1}{\wz C}|B_{i,j}|^{\frac{1}{p}-\frac{1}{q}}a_{i,j}\r)=:
\dsum_{i=i_0+1}^\fz\dsum_{j\in\zz_+}\wz{\lz}_{i,j}\wz{a}_{i,j}.
\end{eqnarray}
By \eqref{2.8} and the fact that $q\in(0,\,p)$, we know that
\begin{eqnarray}\label{2.11}
\dsum_{i=i_0+1}^\fz\dsum_{j\in\zz_+}\lf|\wz\lz_{i,j}\r|^q&&\sim
\dsum_{i=i_0+1}^\fz2^{iq}\dsum_{j\in\zz_+}\lf|B_{i,j}\r|\sim
\dsum_{i=i_0+1}^\fz 2^{i(q-p)} \lf(\dsum_{j\in\zz_+}|\lz_{i,j}|^p\r)\\
&& \nonumber \ls \|f\|_{WH^p_{T,\rm{at},M}(\rn)}^p
\dsum_{i=i_0+1}^\fz 2^{i(q-p)} \ls2^{i_0(q-p)} \|f\|_{WH^p_{T,\rm{at},M}(\rn)}^p,
\end{eqnarray}
which, combined with \eqref{2.9}, \eqref{2.10} and Lemma \ref{l2.18},
implies that, to show that $S_T(f_2)\in WL^p(\rn)$,
it suffices to prove that, for all $\az\in(0,\,\fz)$,
$i\in\zz\cap [i_0+1,\,\fz)$ and $j\in\zz_+$,
\begin{eqnarray}\label{2.12}
\lf|\lf\{x\in \lf(8 B_{i,j}\r)^\complement:\ S_T(\wz
a_{i,j})(x)>\az\r\}\r|\ls\frac{1}{\az^q}.
\end{eqnarray}
Indeed, if \eqref{2.12} is true, then, for $N_1\in \zz \cap[i_0+1,\,\fz)$ and
$N_2\in\zz_+$, by Tchebychev's inequality, the sub-linearity
of $S_T$,  Lemma \ref{l2.18} and the $L^2(\rn)$
boundedness of $S_T$, we conclude that
\begin{eqnarray*}
\lf|\lf\{x\in\rn:\
S_T\lf(f_2\r)(x)>\az\r\}\r|&&\le\lf|\lf\{x\in\rn:\
\dsum_{i=i_0+1}^{N_1}\dsum_{j=0}^{N_2}\wz\lz_{i,j}S_T\lf(\wz{a}_{i,j}
\r)(x)>\frac{\az}{2}\r\}\r|\\
&&\hs+\lf|\lf\{x\in\rn:\ S_T\lf(\dsum_{\gfz{N_1+1\le i <\fz}
{\mathrm{or}\,j\ge N_2+1}} \lz_{i,j} {a}_{i,j}
\r)(x)>\frac{\az}{2}\r\}\r|\\
&&\ls \az^{-q}
\dsum_{i=i_0+1}^{N_1}\dsum_{j=0}^{N_2}\lf|\wz\lz_{i,j}\r|^q+
\az^{-2}\lf\|\dsum_{\gfz{N_1+1\le i <\fz}{\mathrm{or}\,j\ge
N_2+1}}\lz_{i,j} a_{i,j}\r\|^2_{L^2(\rn)},
\end{eqnarray*}
where
$$\dsum_{\gfz{N_1+1\le i <\fz}{\mathrm{or}\,j\ge
N_2+1}}:=\dsum_{i=N_1+1}^\fz \dsum_{j\in\zz_+}+
\dsum_{i=i_0+1}^\fz \dsum_{j=N_2+1}^\fz+
\dsum_{i=N_1+1}^\fz \dsum_{j=N_2+1}^\fz.$$
By letting $N_1$ and $N_2\to \fz$, the fact that
$f_2=\sum_{i=i_0+1}^\fz\sum_{j\in\zz_+} \lz_{i,j} a_{i,j}$
holds true in $L^2(\rn)$ and \eqref{2.11}, we see that
\begin{eqnarray*}
\lf|\lf\{x\in\rn:\
S_T\lf(f_2\r)(x)>\az\r\}\r|\ls\az^{-q}
\dsum_{i=i_0+1}^{\fz}\dsum_{j\in\zz_+}\lf|\wz\lz_{i,j}\r|^q\ls
\frac 1{\az^p}\|f\|_{WH_{T,\rm{at},M}^p(\rn)}^p,
\end{eqnarray*}
which is desired.

To prove \eqref{2.12}, from Chebyshev's inequality and H\"older's
inequality, we deduce that
\begin{eqnarray}\label{2.13}
&&\lf|\lf\{x\in \lf(8 B_{i,j}\r)^\complement:\ S_T(\wz
a_{i,j})(x)>\az\r\}\r|\\
&&\nonumber\hs\ls\az^{-q}\dint_{(8B_{i,j})^\complement}
\lf[S_T(\wz a_{i,j})(x)\r]^q\,dx\\ \nonumber
&&\hs\ls\az^{-q}\dsum_{l=4}^{\fz}
\lf\{\dint_{S_l(B_{i,j})}\lf|S_T(\wz
a_{i,j})(x)\r|^2\,dx\r\}^{\frac{q}{2}}\lf|S_l(B_{i,j})\r|^{1-\frac{q}
{2}},
\end{eqnarray}
where $S_l(B_{i,j}):=2^l B_{i,j}\setminus (2^{l-1} B_{i,j})$ for all
$l\in\nn$. For $l\ge 4$, let
$$\mathrm{I}_{l,i,j}:=\lf\{\int_{S_l(B_{i,j})}\lf|S_T(\wz
a_{i,j})(x)\r|^2\,dx\r\}^{\frac{q}{2}},$$
$b_{i,j}:=T^{-M}a_{i,j}$ and
$\wz b_{i,j}:=\lf|B_{i,j}\r|^{\frac{1}{p}-\frac{1}{q}}b_{i,j}$.

By Minkowski's inequality, we write $\mathrm{I}_{l,i,j}$ into
\begin{eqnarray*}
\mathrm{I}_{l,i,j}&&\ls\lf\{\dint_{S_l(B_{i,j})}\dint_0^{r_{B_{i,j}}}
\dint_{|y-x|<t} \lf|t^{2}Te^{-t^2T}\wz a_{i,j}(y,t)\r|^2\,
\frac{dy\,dt}{t^{n+1}}\,dx\r\}^{\frac{q}{2}}\\&&\hs+
\lf\{\dint_{S_l(B_{i,j})}\dint_{r_{B_{i,j}}}^{\frac{\dist(x,\,B_{i,j})}
{4}} \dint_{|y-x|<t} \cdots \,\frac{dy\,dt}{t^{n+1}}\,dx\r\}^{\frac{q}{2}}\\
&&\hs+ \lf\{\dint_{S_l(B_{i,j})}\dint_{\frac{\dist(x,\,B_{i,j})}
{4}}^{\fz} \dint_{|y-x|<t} \cdots\,
\frac{dy\,dt}{t^{n+1}}\,dx\r\}^{\frac{q}{2}}
=:\mathrm{J}_{l,i,j}+\mathrm{K}_{l,i,j}+\mathrm{Q}_{l,i,j}.
\end{eqnarray*}

To estimate $\mathrm{J}_{l,i,j}$, notice that
$\dist(S_l(B_{i,j}),\,B_{i,j})>2^{l-2}r_{B_{i,j}}$, when $l\ge 4$.
Let $$\mathrm{E}_{l,i,j}:=\lf\{x\in\rn: \
\dist(x,\,S_l(B_{i,j}))<r_{B_{i,j}}\r\}.$$ We easily see that
$\dist(\mathrm{E}_{l,i,j},\,B_{i,j})>2^{l-3}r_{B_{i,j}}$, which, together
with Fubini's theorem, Lemma \ref{l2.3} and Definition \ref{d2.15},
implies that there exists a positive constant
$\az_0>\frac{n}{q}(1-\frac{q}{2})$ such that
\begin{eqnarray}\label{2.14}
\mathrm{J}_{l,i,j}&&\ls
\lf\{\dint_0^{r_{B_{i,j}}}\dint_{\mathrm{E}_{l,i,j}}
\lf|t^{2}Te^{-t^2T}\wz
a_{i,j}(y,t)\r|^2\,\frac{dy\,dt}{t}\r\}^{\frac{q}{2}}\\
&&\nonumber\ls
\lf\{\dint_0^{r_{B_{i,j}}}\exp\lf\{-C_1\frac{\lf[\dist(\mathrm{E}_{l,i,j},\,
B_{i,j})\r]^2}{t^2}\r\} \,\frac{dt}{t}\r\}^{\frac{q}{2}}\lf\|\wz
a_{i,j}\r\|_{L^2(B_{i,j})}^q\\
&&\ls2^{-lq\az_0}\lf|B_{i,j}\r|^{\frac{q}{2}-1}
\sim2^{-l[q\az_0-n(1-\frac{q}{2})]}\lf|S_l(B_{i,j})\r|^{\frac{q}{2}-1}.\nonumber
\end{eqnarray}

To estimate $\mathrm{K}_{l,i,j}$, let $\mathrm{F}_{l,i,j}:=
\{x\in\rn:\ \dist(x,\,S_l(B_{i,j}))<\frac{\dist(x,\,B_{i,j})}{4}\}$. It is easy to
see that $\dist(\mathrm{F}_{l,i,j},\,B_{i,j})>2^{l-3}r_{B_{i,j}}$.
Moreover, by Fubini's theorem, Lemma \ref{l2.3} and Definition
\ref{d2.15}, we know that there exists a positive constant
$\az_1\in(\frac{n}{q}(1-\frac{q}{2}),\,2M)$ such that
\begin{eqnarray}\label{2.15}
\hs\hs\hs\quad\mathrm{K}_{l,i,j}&&\ls\lf\{\dint_{r_{B_{i,j}}}^{\fz}
\dint_{\mathrm{F}_{l,i,j}} \lf|t^{2(M+1)}T^{M+1}e^{-t^2T}\wz
b_{i,j}(y)\r|^2\,\frac{dy\,dt}{t^{4M+1}}\r\}^{\frac{q}{2}}\\
\nonumber
&&\ls\lf\{\dint_{r_{B_{i,j}}}^{\fz}\exp\lf\{-C_1\frac{\lf[\dist
\lf(\mathrm{F}_{l,i,j},\,B_{i,j}\r)\r]}{t^2}\r\}\,\frac{dy\,dt}{t^{4M+1}}
\r\}^{\frac{q}{2}}\lf\|\wz
b_{i,j}\r\|_{L^2(B_{i,j})}^q\\
\nonumber
&&\ls\lf\{\dint_{r_{B_{i,j}}}^{\fz}\lf[\frac{t^2}{2^{2l}r_{B_{i,j}}}
\r]^{\az_1}\,\frac{dt}{t^{4M+1}}
\r\}^{\frac{q}{2}}r_{B_{i,j}}^{2Mq}\lf|B_{i,j}\r|^{\frac{q}{2}-1}
\ls 2^{-l[q\az_1-n(1-\frac{q}{2})]}\lf|S_l(B_{i,j})\r|^{\frac{q}{2}-1}.
\end{eqnarray}
Similar to the estimates of \eqref{2.14} and \eqref{2.15}, we obtain
\begin{eqnarray*}
\mathrm{Q}_{l,i,j}&&\ls\lf\{\dint_{2^{l-2}r_{B_{i,j}}}^{\fz} \dint_{\rn}
\lf|(t^2T)^{M+1}e^{-t^2T}\wz
b_{i,j}(y)\r|^2\,\frac{dy\,dt}{t^{4M+1}}\r\}^{\frac{q}{2}}\\
&&\ls\lf\{\dint_{2^{l-2}r_{B_{i,j}}}^{\fz}\frac{dt}{t^{4M+1}}
\r\}^{\frac{q}{2}}\lf\|\wz b_{i,j}\r\|^2_{L^2(B_{i,j})}\ls
2^{-l[2qM-n(1-\frac{q}{2})]}\lf|S_l(B_{i,j})\r|^{\frac{q}{2}-1},
\end{eqnarray*}
which, together with \eqref{2.13}, \eqref{2.14} and \eqref{2.15},
implies that, for all $\az\in(0,\,\fz)$, $i\in\zz\cap [i_0+1,\,\fz)$
and $j\in\zz_+$,
\begin{eqnarray*}
\lf|\lf\{x\in\lf(C_0 B_{i,j}\r)^\complement:\ S_T(\wz
a_{i,j})(x)>\az\r\}\r|\ls \az^{-q}\dsum_{l=4}^\fz\lf(\mathrm{J}_{l,i,j}
+\mathrm{K}_{l,i,j}+
\mathrm{Q}_{l,i,j}\r)\lf|S_l(B_{i,j})\r|^{1-\frac{q}{2}}
\ls\frac{1}{\az^q}.
\end{eqnarray*}
Thus, \eqref{2.12} is true.

We now turn to the estimates  of $f_1$.
For any $r\in (1,\,2)$, let $b\in(0,\,1)$ such that $b<1-\frac{p}{r}$. By H\"older's inequality, we know that
\begin{eqnarray*}
S_T(f_1)\ls \lf\{\dsum_{i=-\fz}^{i_0}2^{ib r'}\r\}^{\frac{1}{r'}}
\lf\{\dsum_{i=-\fz}^{i_0} \lf[S_T\lf(2^{-ib}\dsum_{j\in\zz_+} \lz_{i,\,j}a_{i,\,j}\r)
\r]^r\r\}^{\frac{1}{r}},
\end{eqnarray*}
which, together with $\az\sim 2^{i_0}$, Chebyshev's inequality and the $L^r(\rn)$ boundedness of $S_T$
(which can be deduced from the interpolation of $H_T^p(\rn)$; see \cite[Proposition 9.5]{hlmmy}),
implies that there exists a positive constant $C$, independent of $\az$, $f$
and $x$, such that
\begin{eqnarray}\label{2.xx26}
&&\lf|\lf\{x\in\rn:\ S_T(f_1)(x)>\az\r\}\r|\\
&&\nonumber\hs\le \lf|\lf\{x\in\rn:\
\dsum_{i=-\fz}^{i_0} \lf[S_T\lf(2^{-ib}\dsum_{j\in\zz_+} \lz_{i,\,j}a_{i,\,j}\r)
(x)\r]^r>C2^{i_0(1-b)r}\r\}\r|
\\&&\nonumber\hs\ls \frac{1}{2^{i_0(1-b)r}}
\dsum_{i=-\fz}^{i_0} 2^{-ibr}\dint_\rn \lf|\dsum_{j\in\zz_+} \lz_{i,\,j}a_{i,\,j}(x)\r|^r\,dx
=: \frac{1}{2^{i_0(1-b)r}}
\dsum_{i=-\fz}^{i_0} 2^{-ibr} \lf[\mathrm{I}_i\r]^r.
\end{eqnarray}

Now, let $g\in L^{r'}(\rn)$ satisfying $\|g\|_{L^{r'}(\rn)}\le 1$ such that
\begin{eqnarray*}
\dint_\rn \lf|\dsum_{j\in\zz_+} \lz_{i,\,j}a_{i,\,j}(x)\r|^r\,dx\sim
\lf|\dint_\rn \lf[\dsum_{j\in\zz_+} \lz_{i,\,j}a_{i,\,j}(x)\r]\overline{g(x)}\,dx \r|^r.
\end{eqnarray*}
For any $k\in\nn$, let $S_k(B_{i,\,j}):=2^k B_{i,\,j}\setminus2^{k-1}B_{i,\,j}$ and
$S_0(B_{i,\,j}):=B_{i,\,j}$.
Let $\wz{B}_{i,\,j}:=\frac{1}{10\sqrt{n}}B_{i,\,j}$.
By H\"older's inequality, Definition \ref{d2.19} and the definition of the Hardy-Littlewood maximal operator
$\mathcal{M}$ as in \eqref{2.x2}, we know that
\begin{eqnarray*}
\mathrm{I}_i&&\ls \dsum_{j\in\zz_+} \dint_\rn \lf|\lz_{i,\,j}a_{i,\,j}(x)\overline{ g(x)}\r|\,dx\\
&&\ls \dsum_{j\in\zz_+} \dsum_{k\in\zz_+}
\dint_{S_k(B_{i,\,j})} \lf|\lz_{i,\,j}a_{i,\,j}(x)\overline{ g(x)}\r|\,dx\\
&&\ls \dsum_{j\in\zz_+} \dsum_{k\in\zz_+} 2^{i}|B_{i,\,j}|^{\frac{1}{p}}
\lf\{\dint_{S_k(B_{i,\,j})} \lf|a_{i,\,j}(x)\r|^2\,dx\r\}^{\frac{1}{2}}
\lf\{\dint_{S_k(B_{i,\,j})} \lf|g(x)\r|^2\,dx\r\}^{\frac{1}{2}}\\
&&\ls \dsum_{j\in\zz_+} \dsum_{k\in\zz_+} 2^{i}|B_{i,\,j}|2^{-k\epsilon}
\lf\{\frac{1}{|2^kB_{i,\,j}|}\dint_{2^kB_{i,\,j}} \lf|g(x)\r|^2\,dx\r\}^{\frac{1}{2}}
\\&&\ls \dsum_{j\in\zz_+}  2^{i}|B_{i,\,j}|
\dinf_{x\in \wz B_{i,\,j}}\lf\{\mathcal{M} \lf(\lf|g\r|^2\r)(x)\r\}^{\frac{1}{2}}
\\&&\ls \dsum_{j\in\zz_+}  2^{i}| B_{i,\,j}|
\lf\{\frac{1}{|\wz B_{i,\,j}|} \dint_{\wz B_{i,\,j}}
\lf[\mathcal{M} \lf(\lf|g\r|^2\r)(x)\r]^{\frac{r'}{2}} \,dx\r\}^{\frac{1}{r'}}
\\&&\ls \dsum_{j\in\zz_+}  2^{i}|B_{i,\,j}|^{\frac{1}{r}}
\lf\{\dint_{\wz B_{i,\,j}}\lf[\mathcal{M} \lf(\lf|g\r|^2\r)(x)\r]^{\frac{r'}{2}} \,dx\r\}^{\frac{1}{r'}},
\end{eqnarray*}
which, together with H\"older's inequality, the uniformly bounded overlap of $\{\wz B_{i,\,j}\}_{j\in\zz_+}$
on $j$, the $L^{r'/2}(\rn)$ boundedness of $\mathcal{M}$,
$r<2$ and $\|g\|_{L^{r'}(\rn)}\le 1$, implies that
\begin{equation}\label{2.xx27}
\mathrm{I}_i\ls \lf\{\dsum_{j\in\zz_+} 2^{ir}|B_{i,\,j}|\r\}^{\frac{1}{r}}
\lf\{\dsum_{j\in\zz_+}  \dint_{\wz B_{i,\,j}} \lf[\mathcal{M}\lf(|g|^2\r)\r]^{\frac{r'}{2}}(x)\r\}^{\frac{1}{r'}}
\ls \lf\{\dsum_{j\in\zz_+} 2^{i(r-p)}|\lz_{i,\,j}|^p\r\}^{\frac{1}{r}}.
\end{equation}
Thus, by \eqref{2.xx26}, \eqref{2.xx27}, $b\in(0,\,\frac{r-p}{r})$ and $2^{i_0}\sim \az$, we conclude that
\begin{eqnarray*}
\lf|\lf\{x\in\rn:\ S_T(f_1)(x)>\az\r\}\r|&&\ls \frac{1}{2^{i_0(1-b)r}}
\dsum_{i=-\fz}^{i_0} 2^{-ibr} \lf[\mathrm{I}_i\r]^r\\
&&\ls \frac{1}{2^{i_0(1-b)r}}
\dsum_{i=-\fz}^{i_0} 2^{-ibr} \lf\{\dsum_{j\in\zz_+} 2^{i(r-p)}|\lz_{i,\,j}|^p\r\}\\
&&\ls \frac{1}{2^{i_0(1-b)r}}
\dsum_{i=-\fz}^{i_0} 2^{i(r-p-br)} \|f\|^p_{WH^p_{T,\rm{mol},\ez,M}(\rn)}\\
&&\ls \frac{1}{\az^p} \|f\|^p_{WH^p_{T,\rm{mol},\ez,M}(\rn)},
\end{eqnarray*}
which shows that $f_1\in WH_T^p(\rn)$ and
$$\|f_1\|_{ WH_T^p(\rn)}\ls  \|f\|^p_{WH^p_{T,\rm{mol},\ez,M}(\rn)}.$$

Combining the
estimates for $f_1$ and $f_2$, we then complete the proof of Theorem
\ref{t2.16}.
\end{proof}

\begin{remark}\label{r2.18}
Observe that, in the proof of Theorem \ref{t2.16},
if we use Theorem \ref{t2.12}
to replace Theorem \ref{t2.6} in the argument above \eqref{2.7}, then, for all
$f\in WH_T^p(\rn)\cap L^2(\rn)$, we obtain a weak atomic decomposition of $f$ of the
form
$f=\sum_{i\in\mathcal{I},j\in{\Lambda_i}}\lz_{i,j}a_{i,j}$ in $L^2(\rn)$,
where the index sets $\mathcal{I}$ and $\Lambda_i$ are as in Theorem \ref{t2.12},
$\{a_{i,j}\}_{i\in\mathcal{I},j\in{\Lambda_i}}$ is a sequence of $(p,\,2,\,M)_T$-atoms associated to balls
$\{B_{i,j}\}_{i\in\mathcal{I},j\in{\Lambda_i}}$ and
$\lz_{i,j}:=\wz C2^i|B_{i,j}|^{\frac{1}{p}}$, with $\wz C$ being a positive constant
independent of $f$, satisfies
\begin{eqnarray*}
\dsup_{i\in\mathcal{I}}
\lf(\dsum_{j\in{\Lambda_i}}|\lz_{i,j}|^p\r)^{\frac{1}{p}}\le C
\|f\|_{WH_T^p(\rn)},
\end{eqnarray*}
where $C$ is a positive constant independent of $f$.
\end{remark}

Now, we try to establish the molecular characterization of $WH^p_L(\rn)$.
We first recall the notion of $(p,\,\ez,\,M)_L$-molecules.

\begin{definition}[\cite{cy}]\label{d2.19}
Let $k\in\nn$, $p\in(0,\,1]$, $\ez\in(0,\,\fz)$, $M\in\nn$ and $L$
satisfy Assumptions $(\mathcal{L})_1$, $(\mathcal{L})_2$ and
$(\mathcal{L})_3$. A function $m\in L^2(\rn)$ is called a
$(p,\,\ez,\,M)_L$-{\it molecule}, if there exists a ball
$B:=B(x_B,\,r_B)$, with $x_B\in\rn$ and $r_B\in(0,\,\fz)$, such that

(i) for each $\ell\in\{1,\,\ldots,\,M\}$, $m$ belongs to the range
of $L^\ell$ in $L^2(\rn)$;

(ii) for all $i\in\zz_+$ and $\ell\in\{0,\,\ldots,\,M\}$,
\begin{eqnarray*}
\lf\|\lf(r_B^{-2k}L^{-1}\r)^\ell m\r\|_{L^2(S_i(B))}
\le(2^ir_B)^{n(\frac{1}{2}-\frac{1}{p})}2^{-i\ez}.
\end{eqnarray*}
\end{definition}

\begin{definition}\label{d2.20}
Let $f\in L^2(\rn)$, $\ez\in(0,\,\fz)$, $M\in\zz_+$, $p\in(0,\,1]$ and $L$ satisfy
Assumptions $(\mathcal{L})_1$, $(\mathcal{L})_2$ and
$(\mathcal{L})_3$. Assume that $\{m_{i,j}\}_{i\in\zz,j\in\zz_+}$ is
a sequence of $(p,\,\ez,\,M)_{L}$-molecules associated to balls
$\{B_{i,j}\}_{i\in\zz,j\in\zz_+}$ and
$\{\lz_{i,j}\}_{i\in\zz,j\in\zz_+}\subset\cc$ satisfying
the conditions that

(i) for all $i\in\zz$ and $j\in\zz_+$,
$\lz_{i,j}:=\wz C2^i|B_{i,j}|^{\frac{1}{p}}$, $\wz C$ is a positive constant
independent of $f$;

(ii) there exists a positive constant $C_5$, depending only on $f$,
$n,\,p,\,\ez$ and $M$, such that
\begin{eqnarray*}
\dsup_{i\in\zz}\lf(\dsum_{j\in\zz_+}\lf|\lz_{i,j}
\r|^p\r)^{\frac{1}{p}}\le C_5.
\end{eqnarray*}
Then
$$f=\sum_{i\in\zz,j\in\zz_+}\lz_{i,j}m_{i,j}$$
is called a \emph{weak molecular} $(p,\,\ez,\,M)_L$-{\it representation} of $f$, if
$f=\sum_{i\in\zz,j\in\zz_+} \lz_{i,j}m_{i,j}$ holds true in $L^2(\rn)$.
The {\it weak molecular Hardy space
$WH_{L,\,\mol,\,\ez,\,M}^p(\rn)$} is then defined to be the
completion of the \emph{space}
$$\mathbb{WH}_{L,\,\mol,\,\ez,\,M}^p(\rn)
:=\{f:\ f\ \text{ has a weak molecular}\
(p,\,\ez,\,M)_L\text{-representation}\}$$  with respect to the
\emph{quasi-norm}
\begin{eqnarray*}
\|f\|_{WH_{L,\mol,\ez,M}^p(\rn)}
:=&&\inf\lf\{\dsup_{i\in\zz}\lf(\dsum_{j\in\zz_+}
|\lz_{i,j}|^p\r)^{1/p}:\ \ \ f=\dsum_{i\in\zz,j\in\zz_+}\lz_{i,j}m_{i,j} \
\text{is a weak}\r.\\
&&\hspace{2.8cm}\text{molecular}\
(p,\,\ez,\,M)_L\text{-representation}\Bigg\},
\end{eqnarray*}
where the infimum is taken over all the weak molecular $(p,\,
\ez,\,M)_L$-representations of $f$ as above.
\end{definition}

We also have the following weak molecular characterization of
$WH_L^p(\rn)$.

\begin{theorem}\label{t2.21}
Let $p\in(0,\,1]$, $k\in\nn$, $\ez\in(0,\,\fz)$, $M\in\zz_+$
satisfy $M>\frac{n}{2k}(\frac{1}{p}-\frac{1}{2})$ and $L$
Assumptions $(\mathcal{L})_1$, $(\mathcal{L})_2$ and
$(\mathcal{L})_4$. Then
$WH_L^p(\rn)=WH_{L,\,\mol,\,\ez,\,M}^p(\rn)$ with equivalent quasi-norms.
\end{theorem}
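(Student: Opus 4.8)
The strategy is the standard two-inclusion argument for equivalence of a weak Hardy space with its weak molecular version, following closely the proof of Theorem \ref{t2.16} but replacing the atomic machinery by off-diagonal estimates for the functional calculus of $L$ under Assumption $(\mathcal{L})_4$. As in Theorem \ref{t2.16}, it suffices to prove that $WH_L^p(\rn)\cap L^2(\rn)=\mathbb{WH}_{L,\,\mol,\,\ez,\,M}^p(\rn)$ with equivalent quasi-norms; the full statement then follows by taking completions. Throughout one fixes $M\in\zz_+$ with $M>\frac{n}{2k}(\frac1p-\frac12)$, and chooses the auxiliary exponent $q\in(0,\,p)$ exactly as in \eqref{2.10}, together with an exponent $\az_0>\frac nq(1-\frac q2)$ to absorb the geometric series over annuli.

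\textbf{Step 1: $WH_L^p(\rn)\cap L^2(\rn)\subset \mathbb{WH}_{L,\,\mol,\,\ez,\,M}^p(\rn)$.} Let $f\in WH_L^p(\rn)\cap L^2(\rn)$. By Definition \ref{d2.13} and the quadratic estimate coming from the bounded $H_\fz$ functional calculus, the function $(y,t)\mapsto t^{2k}Le^{-t^{2k}L}f(y)$ lies in $WT^p(\rr^{n+1}_+)\cap T^2(\rr^{n+1}_+)$. Apply Theorem \ref{t2.6} to obtain $T^p(\rr^{n+1}_+)$-atoms $\{A_{i,j}\}_{i\in\zz,j\in\zz_+}$ associated to balls $\{B_{i,j}\}$ with $t^{2k}Le^{-t^{2k}L}f=\sum_{i,j}\lz_{i,j}A_{i,j}$ pointwise a.e.\ (and, by Corollary \ref{c2.9}, in $T^2(\rr^{n+1}_+)$), where $\lz_{i,j}=\wz C2^i|B_{i,j}|^{1/p}$ and $\sup_{i\in\zz}(\sum_j|\lz_{i,j}|^p)^{1/p}\ls\|f\|_{WH_L^p(\rn)}$. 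Using the reproducing formula via the $H_\fz$ calculus (of the form $f=c_\Psi\int_0^\fz\Psi(t^{2k}L)(t^{2k}Le^{-t^{2k}L}f)\,\frac{dt}{t}$ with $\Psi(z)=z^{M+1}e^{-z}$ up to normalization) and the $T^2\to L^2$ boundedness of the associated operator $\Pi_{\Psi,L}$, one writes $f=c_\Psi\sum_{i,j}\lz_{i,j}m_{i,j}$ in $L^2(\rn)$ with $m_{i,j}:=\int_0^\fz\Psi(t^{2k}L)(A_{i,j})\,\frac{dt}{t}$. The analogue of Lemma \ref{l2.17} for $L$ satisfying $(\mathcal{L})_4$ (this is precisely the construction of $(p,\ez,M)_L$-molecules out of tent-space atoms carried out in \cite{cy}) shows that each $c_\Psi^{-1}m_{i,j}$ is, up to a uniform constant, a $(p,\ez,M)_L$-molecule associated to $B_{i,j}$. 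This exhibits a weak molecular representation of $f$ and gives $\|f\|_{WH_{L,\mol,\ez,M}^p(\rn)}\ls\|f\|_{WH_L^p(\rn)}$.

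\textbf{Step 2: $\mathbb{WH}_{L,\,\mol,\,\ez,\,M}^p(\rn)\subset WH_L^p(\rn)\cap L^2(\rn)$.} Let $f=\sum_{i\in\zz,j\in\zz_+}\lz_{i,j}m_{i,j}$ in $L^2(\rn)$ be a weak molecular representation, with $\lz_{i,j}=\wz C2^i|B_{i,j}|^{1/p}$ and $\sup_i(\sum_j|\lz_{i,j}|^p)^{1/p}<\fz$. Fix $\az\in(0,\fz)$, pick $i_0\in\zz$ with $2^{i_0}\le\az<2^{i_0+1}$, and split $f=f_1+f_2$ with $f_1:=\sum_{i\le i_0}\sum_j\lz_{i,j}m_{i,j}$ and $f_2:=\sum_{i>i_0}\sum_j\lz_{i,j}m_{i,j}$, as in the proof of Theorem \ref{t2.16}. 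For $f_2$: let $\wz B_{i_0}:=\cup_{i>i_0}\cup_j 8B_{i,j}$, so $|\wz B_{i_0}|\ls\az^{-p}\|f\|^p$ by the same computation as \eqref{2.9}; rewrite $f_2=\sum_{i>i_0}\sum_j\wz\lz_{i,j}\wz m_{i,j}$ with $\wz\lz_{i,j}:=\wz C2^i|B_{i,j}|^{1/q}$, $\wz m_{i,j}:=\wz C^{-1}|B_{i,j}|^{1/p-1/q}m_{i,j}$, so that $\sum_{i>i_0}\sum_j|\wz\lz_{i,j}|^q\ls 2^{i_0(q-p)}\|f\|^p$ by \eqref{2.11}. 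Then, by Lemma \ref{l2.18}, it suffices to show a uniform weak-type bound $|\{x\in(8B_{i,j})^\complement:\ S_L(\wz m_{i,j})(x)>\az\}|\ls\az^{-q}$, and combine the contribution from $\wz B_{i_0}$ using $L^2$-boundedness of $S_L$ on the remaining terms exactly as in \eqref{2.xx26}'s surrounding argument. The weak-type bound is obtained as in \eqref{2.13}: Chebyshev plus H\"older over annuli $S_l(B_{i,j})$ reduces matters to bounding $\int_{S_l(B_{i,j})}|S_L(\wz m_{i,j})|^2$, which is split (Minkowski) into ranges $t<r_{B_{i,j}}$, $r_{B_{i,j}}\le t<\dist(x,B_{i,j})/4$, and $t\ge\dist(x,B_{i,j})/4$, and each piece is controlled by the $k$-Davies-Gaffney / $L^2$–$L^2$ off-diagonal estimates for $\{(t^{2k}L)^{M+1}e^{-t^{2k}L}\}_{t>0}$ (Lemmas \ref{l2.3}, \ref{l2.4}) together with the molecule bounds of Definition \ref{d2.19}, giving geometric decay $2^{-l[q\az_j-n(1-q/2)]}$ summable in $l$. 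For $f_1$: use H\"older in $i$ with a weight $2^{ibr'}$ ($b<1-p/r$, $r\in(1,2)$), the $L^r(\rn)$ boundedness of $S_L$ (available by interpolation of $H_L^p(\rn)$, cf.\ \cite{cy}), then estimate the resulting $L^r$-norm by a duality argument against $g\in L^{r'}$ with $\|g\|_{r'}\le1$, decomposing each molecule over annuli and invoking the Hardy–Littlewood maximal function together with the bounded overlap of the dilated balls $\{\tfrac1{10\sqrt n}B_{i,j}\}_j$ (Theorem \ref{t2.6}(iii)); this reproduces \eqref{2.xx26}–\eqref{2.xx27} verbatim and yields $|\{S_L(f_1)>\az\}|\ls\az^{-p}\|f\|^p$.

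\textbf{Main obstacle.} Unlike in Theorem \ref{t2.16}, the molecules have no compact support, so the off-diagonal control of $S_L(\wz m_{i,j})$ away from $8B_{i,j}$ cannot be obtained from a bare support condition; the decay must instead be extracted entirely from the $L^2$–$L^2$ (and, where the spatial integration region is unbounded, $L^p$–$L^2$ under $(\mathcal{L})_4$) off-diagonal estimates applied to $(t^{2k}L)^{M+1}e^{-t^{2k}L}$ acting on the tails of the molecule living in the far annuli $S_m(B_{i,j})$. The delicate point is the bookkeeping of the doubly-indexed sum over the annulus $S_l$ where $x$ lives and the annulus $S_m$ where the mass of $\wz m_{i,j}$ sits: one needs $M$ large enough ($M>\frac{n}{2k}(\frac1p-\frac12)$, hence $2qM>n(1-q/2)$ after the switch to exponent $q$) so that, after summing the molecule's $S_m$-tails against the off-diagonal kernel and then summing over $l$, the total decay in $l$ is strictly positive; this is the analogue of the estimates \eqref{2.10}–\eqref{2.11} and is exactly why Theorem \ref{t2.6}(ii), i.e.\ the explicit link between the supports of the tent-space atoms and the coefficients $\lz_{i,j}$, is indispensable.
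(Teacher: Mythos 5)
Your proposal follows essentially the same route as the paper's proof: Step 1 is the paper's argument verbatim (with the reproducing kernel $\Pi_{L,M}$ of \eqref{2.x22} mapping tent-space atoms to $(p,\ez,M)_L$-molecules via \cite[Lemma 4.2(ii)]{cy}), and Step 2 uses the same split $f=f_1+f_2$ at level $2^{i_0}$, the same rescaling to exponent $q\in(0,p)$, the same superposition principle (Lemma \ref{l2.18}), the same three-fold split of the square-function integral over $t<r_{B_{i,j}}$, $r_{B_{i,j}}\le t<\dist(x,B_{i,j})/4$, $t\ge\dist(x,B_{i,j})/4$, and the same H\"older/$L^r$-boundedness argument for $f_1$ via Lemma \ref{l2.x21}. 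The only minor deviation is that for $f_2$ you retain the exceptional set $\wz B_{i_0}=\cup 8B_{i,j}$ and prove the weak bound only on each $(8B_{i,j})^\complement$, whereas the paper (precisely because molecules are not compactly supported) dispenses with the exceptional set and proves \eqref{2.17} on all of $\rn$, handling the near annuli $l\in\{0,\ldots,4\}$ directly from the $L^2$-bound $\|S_L\wz m_{i,j}\|_{L^2(\rn)}\ls|B_{i,j}|^{1/2-1/q}$ in \eqref{2.19}; both variants close the argument.
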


To prove Theorem \ref{t2.21}, we need the following lemma from
\cite[Propostion 2.13]{ccyy}.
\begin{lemma}[\cite{ccyy}]\label{l2.x21}
Let $L$ satisfy Assumptions $(\mathcal{L})_1$, $(\mathcal{L})_2$ and
$(\mathcal{L})_4$, and  $(p_-(L),\,p_+(L))$ be the {range of
exponents $p\in[1,\,\fz]$ for which the holomorphic semigroup
$\{e^{-tL}\}_{t>0}$ is bounded on $L^p(\rn)$}. Then, for all
$p_-(L)<p\le q<p_+(L)$, $S_L$ is bounded on $L^p(\rn)$.
\end{lemma}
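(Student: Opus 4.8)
\emph{The plan} is to recognize $S_L$ as a sublinear operator that obeys the hypotheses of the Calder\'on--Zygmund theory for operators beyond the classical class (Blunck--Kunstmann \cite{bk05}, Auscher--Martell \cite{am07}), and to read off the $L^p$-bounds on both sides of $p=2$. (Here the auxiliary index $q$ in the statement plays no role; the assertion is that $S_L$ is bounded on $L^p(\rn)$ for every $p\in(p_-(L),\,p_+(L))$.) Note first that $S_L$ is sublinear, since $S_Lf(x)$ is the norm of $(y,\,t)\mapsto t^{2k}Le^{-t^{2k}L}f(y)$ in $L^2(\Gamma(x),\,\frac{dy\,dt}{t^{n+1}})$ and this function is linear in $f$. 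For the endpoint $p=2$: by Assumptions $(\mathcal{L})_1$ and $(\mathcal{L})_2$, $L$ is one-to-one of type $\omega\in[0,\,\pi/2)$ and has a bounded $H_\fz$ functional calculus in $L^2(\rn)$, so McIntosh's theorem gives the quadratic estimate $\dint_0^\fz\|t^{2k}Le^{-t^{2k}L}f\|_{L^2(\rn)}^2\,\frac{dt}{t}\sim\|f\|_{L^2(\rn)}^2$ for all $f\in L^2(\rn)$; integrating \eqref{2.x10} in $x$ and applying Fubini's theorem identifies $\|S_Lf\|_{L^2(\rn)}^2$ with a constant multiple of this integral, whence $\|S_Lf\|_{L^2(\rn)}\ls\|f\|_{L^2(\rn)}$.

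\emph{The range $p_-(L)<p<2$.} Fix $p_0\in(p_-(L),\,2)$ and $M_0\in\nn$, and put $A_r:=I-(I-e^{-r^{2k}L})^{M_0}$, so that $I-A_r=\dsum_{\ell=1}^{M_0}\binom{M_0}{\ell}(-1)^{\ell+1}e^{-\ell r^{2k}L}$. By Assumption $(\mathcal{L})_4$ together with off-diagonal/composition arguments parallel to Lemmas \ref{l2.3} and \ref{l2.4} (now in the $L^{p_0}$--$L^2$ setting), the family $\{t^{2k}Le^{-t^{2k}L}e^{-\ell r^{2k}L}\}_{t>0}$ satisfies $L^{p_0}$--$L^2$ $k$-off-diagonal estimates uniformly in $r$ and $\ell$. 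With this one checks the Blunck--Kunstmann weak-type criterion: there is a sequence $\{g(j)\}_{j\in\nn}$ decaying faster than any power of $2^{-j}$ such that, for every ball $B=B(x_B,\,r_B)$, every $j\in\nn$ and every $f\in L^2(\rn)$ supported in $B$,
$$\lf(\frac{1}{|2^{j+1}B|}\dint_{S_j(B)}\lf|S_L\lf(I-A_{r_B}\r)f\r|^2\,dx\r)^{1/2}\ls g(j)\lf(\frac{1}{|B|}\dint_{B}|f|^{p_0}\,dx\r)^{1/p_0}.$$
To verify this, split the cone integral in \eqref{2.x10} into the parts $t\le r_B$ and $t>r_B$; in both cases $\dist(S_j(B),\,B)\gs2^jr_B$, so for $t\le r_B$ the Gaffney factor $\exp\{-C(2^jr_B/t)^{2k/(2k-1)}\}$ is super-exponentially small in $j$, while for $t>r_B$ the $L^{p_0}$--$L^2$ normalization $t^{\frac n{2k}(1/2-1/p_0)}$, a negative power of $t$ since $p_0<2$, is integrable against $\frac{dt}{t}$ on $(r_B,\,\fz)$ and yields, together with the off-diagonal decay, a rapidly decreasing $g(j)$. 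By \cite{bk05} (see also \cite{am07}) and the $L^2$-bound above, $S_L$ is then of weak type $(p_0,\,p_0)$, and Marcinkiewicz interpolation with the $L^2$-bound gives boundedness on $L^p(\rn)$ for $p_0<p<2$; letting $p_0\downarrow p_-(L)$ covers this range.

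\emph{The range $2<p<p_+(L)$.} Fix $q_0\in(2,\,p_+(L))$. Besides the difference estimate just established, one verifies the ``upper'' hypothesis of the Auscher--Martell machinery \cite{am07}: for every ball $B$,
$$\lf(\frac{1}{|B|}\dint_{B}\lf|S_L\lf(A_{r_B}f\r)\r|^{q_0}\,dx\r)^{1/q_0}\ls\dsum_{j\in\zz_+}g(j)\lf(\frac{1}{|2^{j+1}B|}\dint_{2^{j+1}B}\lf|S_Lf\r|^2\,dx\r)^{1/2}$$
with $\dsum_{j\in\zz_+}g(j)<\fz$, using now the $L^2$--$L^{q_0}$ $k$-off-diagonal estimates for $\{e^{-tL}\}_{t>0}$ from Assumption $(\mathcal{L})_4$. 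For $x\in B$ one splits the cone $\Gamma(x)$ into its local part, where $t$ is at most comparable to $\dist(x,\,(2B)^\complement)$, which is dominated by the $L^2$-mean of $S_Lf$ over $2B$ through almost orthogonality, and its global part, where the $L^2$--$L^{q_0}$ smoothing of $e^{-t^{2k}L}$ upgrades an $L^2$-datum to an $L^{q_0}$-bound carrying Gaffney decay in $2^j$. The criterion then gives boundedness of $S_L$ on $L^p(\rn)$ for $2<p<q_0$; letting $q_0\uparrow p_+(L)$ completes the range.

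Combining the three regimes yields the assertion. \emph{The main obstacle} is the range $p>2$: in contrast with the sub-$2$ range, where one estimates directly against $|f|^{p_0}$, here the regularized piece $S_L(A_{r_B}f)$ must be controlled in terms of the $L^2$-mean of $S_Lf$ itself, which forces a careful geometric decomposition of the cone and a delicate interplay between the $L^2$-boundedness of $S_L$ and the $L^2$--$L^{q_0}$ hypercontractivity of the semigroup supplied by Assumption $(\mathcal{L})_4$.
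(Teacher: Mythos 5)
The paper itself gives no proof of this lemma: it is imported from \cite{ccyy} (Proposition 2.13 there), and your overall plan --- McIntosh's quadratic estimates at $p=2$, the Blunck--Kunstmann weak-type criterion \cite{bk05} below $2$, and the Auscher--Martell criterion \cite{am07} above $2$ --- is exactly the standard route behind that result, so the architecture is right. The genuine gap is in your verification of the key annulus estimates: you never use the cancellation of $I-A_{r_B}=(I-e^{-r_B^{2k}L})^{M_0}$, yet this is the only source of decay in $j$ once $t>r_B$. For $x\in S_j(B)$ and $t\ge 2^{j-2}r_B$ the Gaffney factor $\exp\{-c(2^jr_B/t)^{2k/(2k-1)}\}$ is of order one, and in the intermediate range $r_B<t\le 2^{j-2}r_B$ the integral $\int \exp\{-c(2^jr_B/t)^{2k/(2k-1)}\}\,\frac{dt}{t}$ is bounded below uniformly in $j$; combining the $L^{p_0}$--$L^2$ smoothing factor $t^{n(\frac12-\frac1{p_0})}$ with these regions yields at best $g(j)\sim 2^{-jn/p_0}$. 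Since the criterion you invoke requires $\sum_{j}g(j)2^{jn}<\fz$, this is not ``rapidly decreasing'' and the series diverges for every $p_0\in[1,2)$, so the step ``integrability of the smoothing factor plus off-diagonal decay gives a rapidly decreasing $g(j)$'' fails as stated. The standard repair is precisely what your (otherwise unused) parameter $M_0$ is for: keep $(I-e^{-r_B^{2k}L})^{M_0}$ unexpanded and use the $H_\fz$ functional calculus (or $I-e^{-r^{2k}L}=\int_0^{r^{2k}}Le^{-sL}\,ds$) together with composed Davies--Gaffney bounds, as in Lemma \ref{l2.4}, to gain the extra factor $(r_B/t)^{2kM_0}$ for $t\ge r_B$, and then choose $M_0$ large depending on $n$, $k$, $p_0$ so as to beat $2^{jn}$. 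Your expansion into the individual terms $t^{2k}Le^{-t^{2k}L}e^{-\ell r_B^{2k}L}$ destroys this cancellation (and, incidentally, the binomial identity you display is for $A_r$, not for $I-A_r$). The same defect recurs in the ``global part'' of your $p>2$ argument, where the difference-operator estimate, now against $L^2$ averages of $f$, is again needed with coefficients decaying fast enough to be summable against the doubling weight.

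Two smaller omissions: the sub-$2$ criterion has a second hypothesis, namely annulus bounds of the form $\bigl(\frac{1}{|2^{j+1}B|}\int_{S_j(B)}|A_{r_B}f|^2\,dx\bigr)^{1/2}\le g(j)\bigl(\frac{1}{|B|}\int_B|f|^{p_0}\,dx\bigr)^{1/p_0}$ for $f$ supported in $B$, which you should also check (it does follow from Assumption $(\mathcal{L})_4$ and composition); and the explicit summability condition on $g(j)$ should be stated, since it is exactly the point at which the insufficient decay described above would be exposed.
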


We now prove Theorem \ref{t2.21}.
\begin{proof}[Proof of Theorem \ref{t2.21}]
To prove this theorem, it suffices to prove
$$(WH_L^p(\rn)\cap
L^2(\rn))=\mathbb{WH}_{L,\,\mol,\,\ez,\,M}^p(\rn)$$
with equivalent
quasi-norms. The inclusion that $(WH_L^p(\rn)\cap L^2(\rn))\subset
\mathbb{WH}_{L,\,\mol,\,\ez,\,M}^p(\rn)$ follows from a similar
argument to the corresponding part of the proof of Theorem
\ref{t2.16}. We only remark that, in this case, the operator
$\Pi_{\Psi, T}$ defined in \eqref{2.6} is replaced by a new
\emph{operator} $\Pi_{L,M}$ defined by setting, for all $F\in
T^2(\rr^{n+1}_+)$ and $x\in\rn$,
\begin{eqnarray}\label{2.x22}
\Pi_{L,M}(F)(x):=\int_0^\infty
(t^{2k}L)^Me^{-t^{2k}L}(F(\cdot,t))(x)\,\frac{dt}{t},
\end{eqnarray}
where $k$ is as in Assumption $(\mathcal{L})_3$.
It is known, from \cite[Lemma 4.2(ii)]{cy}, that $\Pi_{L,M}$ maps each
$T^p(\rr^{n+1}_+)$-atom into a $(p,\,\ez,\,M)_L$-molecule up to a
harmless positive constant.

Although the proof of the inclusion
$\mathbb{WH}_{L,\,\mol,\,\ez,\,M}^p(\rn)\subset (WH_L^p(\rn)\cap
L^2(\rn))$ is also similar to the corresponding part of Theorem
\ref{t2.16}, in this case, we need more careful
calculations since the lack of the support condition for the molecules.
Let $f\in \mathbb{WH}_{L,\,\mol,\,\ez,\,M}^p(\rn)$. From Definition
\ref{d2.20}, it follows that $f$ has a weak molecular $(p,\,
\ez,\,M)_L$-representation
$f=\sum_{i\in\zz,j\in\zz_+}\lz_{i,j}m_{i,j}$, where
$\{m_{i,j}\}_{i\in\zz,j\in\zz_+}$ is a sequence of
$(p,\,\ez,\,M)_L$-molecules associated to the balls
$\{B_{i,j}\}_{i\in\zz,j\in\zz_+}:=\{B(x_{B_{i,j}},\,r_{B_{i,j}})\}_{i\in\zz,j\in\zz_+}$,
$\lz_{i,j}:=\wz C2^i|B_{i,j}|^{1/p}$, with $\wz C$ being a positive constant
independent of $f$, and
$\sup_{i\in\zz}(\sum_{j\in\zz_+}|\lz_{i,j}|^p)^{1/p}\ls
\|f\|_{WH^p_{L,\rm{mol},\ez,M}}(\rn)$.

For all $\az\in(0,\,\fz)$, let $i_0\in\zz$ satisfy
$2^{i_0}\le\az<2^{i_0+1}$. We write $f$ into
\begin{eqnarray*}
f=\dsum_{i=-\fz}^{i_0}\dsum_{j\in\zz_+}\lz_{i,j}m_{i,j}+
\dsum_{i=i_0+1}^{\fz}\dsum_{j\in\zz_+}\cdots=:f_1+f_2.
\end{eqnarray*}

As in the proof of Theorem \ref{t2.16}, we first estimate $f_2$.
For fixed $q\in(0,\,p)$, by Definition \ref{d2.20}, we
write
\begin{eqnarray}\label{2.16}
f_1=\dsum_{i=-\fz}^{i_0}\dsum_{j\in\zz_+}
\lf(\wz C2^i\lf|B_{i,j}\r|^{\frac{1}{q}}\r)
\lf(\frac{1}{\wz C}\lf|B_{i,j}\r|^{\frac{1}{p}-\frac{1}{q}}m_{i,j}\r)=:
\dsum_{i=-\fz}^{i_0}\dsum_{j\in\zz_+}\wz\lz_{i,j}\wz m_{i,j}.
\end{eqnarray}
Moreover, from Definition \ref{d2.20} and the fact $q<p$, we
deduce that
\begin{eqnarray*}
\dsum_{i=i_0+1}^\fz\dsum_{j\in\zz_+}|\wz{\lz}_{i,j}|^q &&\ls
\dsum_{i=i_0+1}^\fz2^{iq}\dsum_{j\in\zz_+}|B_{i,j}|\ls
\dsum_{i=i_0+1}^\fz2^{i(q-p)}\dsum_{j\in\zz_+}|\lz_{i,j}|^p\\
&&\ls \|f\|_{WH_{L,\,\mol,\,\ez,\,M}^p(\rn)}^p
\dsum_{i=i_0+1}^\fz2^{i(q-p)}
\ls 2^{i_0(q-p)}\|f\|_{WH_{L,\,\mol,\,\ez,\,M}^p(\rn)}^p,
\end{eqnarray*}
which, together with Lemma \ref{l2.18}, implies that,
to show that $S_L(f_2)\in WL^p(\rn)$, it suffices to
show that, for all $\az\in(0,\,\fz)$, $i\in\zz\cap[i_0+1,\,\fz)$ and $j\in\zz_+$,
\begin{eqnarray}\label{2.17}
\lf|\lf\{x\in\rn:\ S_L(\wz
m_{i,j})(x)>\az\r\}\r|\ls\frac{1}{\az^{q}}.
\end{eqnarray}

To prove \eqref{2.17}, by Chebyshev's inequality and H\"older's
inequality, we write
\begin{eqnarray}\label{2.18}
\lf|\lf\{x\in\rn:\ S_L(\wz
m_{i,j})(x)>\az\r\}\r|\ls\dsum_{l=0}^{\fz}2^{-i_0q}\lf\|S_L(\wz
m_{i,j})\r\|_{L^2(S_l(B_{i,j}))}^{q}\lf|S_l(B_{i,j})\r|^{1-\frac{q}{2}}
\end{eqnarray}

For $l\in\{0,\ldots,4\}$, by Fubini's theorem, the $L^2(\rn)$ boundedness
of $S_L$, Definition \ref{d2.19} and
\eqref{2.16}, we conclude that
\begin{eqnarray}\label{2.19}
\lf\|S_L(\wz m_{i,j})\r\|_{L^2(S_l(B_{i,j}))}\ls \lf\|S_L(\wz
m_{i,j})\r\|_{L^2(\rn)}\ls\lf\|\wz
m_{i,j}\r\|_{L^2(\rn)}\ls|B_{i,j}|^{\frac{1}{2}-\frac1q}.
\end{eqnarray}

For $l\ge5$, let
\begin{eqnarray*}
\wz{\mathrm{J}}_{l,i,j}:=\lf\{\dint_{S_l(B_{i,j})}\lf[\dint_0^{r_{B_{i,j}}}
\dint_{|y-x|<t}\lf|t^{2k}Le^{-t^{2k}L}\wz
m_{i,j}(y)\r|^2\,\frac{dy\,dt}{t^{n+1}}\r]\,dx\r\}^{\frac{q}{2}},
\end{eqnarray*}
\begin{eqnarray*}
&&\wz{\mathrm{K}}_{l,i,j}:=\lf\{\dint_{S_l(B_{i,j})}
\lf[\dint_{r_{B_{i,j}}} ^{\dist(x,B_{i,j})/4}
\dint_{|y-x|<t}\lf|(t^{2k}L)^{M+1}e^{-t^{2k}L}\r.\r.\r. \\
&&\lf.\lf.\hspace{6cm}\circ(L^{-M}\wz
m_{i,j})(y)\r|^2\,\frac{dy\,dt}{t^{4kM+n+1}}\r]\,dx\Bigg\}^{\frac{q}{2}}
\end{eqnarray*}
and
\begin{eqnarray*}
\wz{\mathrm{Q}}_{l,i,j}&&:=\lf\{\dint_{S_l(B_{i,j})}
\lf[\dint_{\dist(x,B_{i,j})/4}^{\fz}
\dint_{|y-x|<t}\lf|(t^{2k}L)^{M+1}e^{-t^{2k}L}\r.\r.\r.\\
&&\lf.\lf.\hspace{5cm}\circ L^{-M}\wz
m_{i,j}(y)\r|^2\,\frac{dy\,dt}{t^{4kM+n+1}}\r]\,dx\Bigg\}^{\frac{q}{2}}.
\end{eqnarray*}

To estimate $\wz{\mathrm{J}}_{l,i,j}$, let $\wz{\mathrm{E}}_{l,i,j}:=
\{x\in\rn:\ \dist(x,\,S_l(B_{i,j}))<r_{B_{i,j}}\}$ and
$$\wz{\mathrm{G}}_{l,i,j}:=\{x\in\rn:\
\dist(x,\,\wz{\mathrm{E}}_{l,i,j})<2^{l-3}r_{B_{i,j}}\}.$$
It is easy to see that $\dist\,(\rn\setminus\wz{\mathrm{G}}_{l,i,j},
\wz{\mathrm{E}}_{l,i,j})>2^{l-4}r_{B_{i,j}}$. Moreover, by Fubini's
theorem, the $L^2(\rn)$ boundedness of $S_L$, Assumption $(\mathcal{L})_3$,
Definition \ref{d2.19} and \eqref{2.16}, we see that there exists a
positive constant $\az_2\in(n(\frac{1}{q}-\frac{1}{2}),\,\fz)$ such
that
\begin{eqnarray}\label{2.20}
\hs\hs\quad\wz{\mathrm{J}}_{l,i,j}&&\ls \lf\{\dint_0^{r_{B_{i,j}}}
\dint_{\wz{\mathrm{E}}_{l,i,j}}\lf|t^{2k}Le^{-t^{2k}L}
\lf[\chi_{\wz{\mathrm{G}}_{l,i,j}}+\chi_{\rn\setminus
\wz{\mathrm{G}}_{l,i,j}}\r]\wz
m_{i,j}(y)\r|^2\,\frac{dy\,dt}{t}\r\}^{\frac{q}{2}}\\
\nonumber &&\ls\lf|B_{i,j}\r|^{\frac{q}{p}-1}\lf\|m_{i,j}
\r\|_{L^2(\wz{\mathrm{G}}_{l,i,j})}^q+\lf\|m_{i,j}\r\|_{L^2(\rn)}^q
\lf|B_{i,j}\r|^{\frac{q}{p}-1}\\
\nonumber&&\hs\times\lf\{\dint_0^{r_{B_{i,j}}}\exp
\lf\{-C_1\frac{[\dist(\rn\setminus
\wz{\mathrm{G}}_{l,i,j},\,\wz{\mathrm{E}}_{l,i,j})]^{2k/(2k-1)}}
{t^{2k/(2k-1)}}\r\}\,\frac{dt}{t}\r\}^{\frac{q}{2}}\\
&&\ls\lf\{2^{-l[\ez q-n(\frac{q}{p}-1)]}+2^{-l[q\az_2-n(1-\frac{q}{2})]}\r\}\lf|2^l
B_{i,j}\r|^{\frac{q}{2}-1}\ls2^{-l\ez_0}\lf|2^l
B_{i,j}\r|^{\frac{q}{2}-1},\nonumber
\end{eqnarray}
where $\ez_0:=\max\{\ez
q-n(\frac{q}{p}-1),\,q\az_2-n(1-\frac{q}{2})\}$.

The estimates for $\wz{\mathrm{K}}_{l,i,j}$ and $\wz{\mathrm{Q}}_{l,i,j}$
are deduced from a way similar to that of $\wz{\mathrm{J}}_{l,i,j}$. We omit the details
and only point out that, to obtain the needed convergence, we need
$M>\frac{n}{4k}(\frac{1}{q}-\frac{1}{2})$, which, together with
\eqref{2.18}, \eqref{2.19} and \eqref{2.20}, implies that
\eqref{2.17}. Thus,
\begin{eqnarray*}
\lf|\lf\{x\in\rn:\ S_L(f_2)(x)>\az\r\}\r|\ls\frac{1}{\az^{p}}\|f\|^p_{WH^p_{L,\rm{mol},
\ez,M}(\rn)}.
\end{eqnarray*}

We now turn to the estimates  of $f_1$.
Let $(p_-(L),\,p_+(L))$ be the interior of the maximal interval of the exponents $p$ such that $\{e^{-tL}\}_{t>0}$
is $L^p(\rn)$ bounded. For any $r\in (p_-(L),\,2)$, let
$a\in(0,\,1)$ such that $a<1-\frac{p}{r}$. By H\"older's inequality, we know that
\begin{eqnarray*}
S_L(f_1)\ls \lf\{\dsum_{i=-\fz}^{i_0}2^{ia r'}\r\}^{\frac{1}{r'}}
\lf\{\dsum_{i=-\fz}^{i_0} \lf[S_L\lf(2^{-ia}\dsum_{j\in\zz_+} \lz_{i,\,j}m_{i,\,j}\r)
\r]^r\r\}^{\frac{1}{r}},
\end{eqnarray*}
which, together with $\az\sim 2^{i_0}$, Chebyshev's inequality and the $L^r(\rn)$ boundedness of $S_L$
(see Lemma \ref{l2.x21}),
implies that there exists a positive constant $C$ such that
\begin{eqnarray}\label{2.x26}
&&\lf|\lf\{x\in\rn:\ S_L(f_1)(x)>\az\r\}\r|\\
&&\nonumber\hs\le \lf|\lf\{x\in\rn:\
\dsum_{i=-\fz}^{i_0} \lf[S_L\lf(2^{-ia}\dsum_{j\in\zz_+} \lz_{i,\,j}m_{i,\,j}\r)
\r]^r>C2^{i_0(1-a)r}\r\}\r|
\\&&\nonumber\hs\ls \frac{1}{2^{i_0(1-a)r}}
\dsum_{i=-\fz}^{i_0} 2^{-iar}\dint_\rn \lf(\dsum_{j\in\zz_+} \lz_{i,\,j}m_{i,\,j}\r)^r\,dx
=: \frac{1}{2^{i_0(1-a)r}}
\dsum_{i=-\fz}^{i_0} 2^{-iar} \lf[\mathrm{I}_i\r]^r.
\end{eqnarray}

The estimate for $\mathrm{I}_i$ is similar to that of $\mathrm{I}_i$ in the proof of Theorem
\ref{t2.16}. We also obtain
\begin{eqnarray}\label{2.x27}
\mathrm{I}_i\ls \lf\{\dsum_{j\in\zz_+} 2^{i(r-p)}|\lz_{i,\,j}|^p\r\}^{\frac{1}{r}}.
\end{eqnarray}
Thus, by \eqref{2.x26}, \eqref{2.x27}, $a\in(0,\,\frac{r-p}{r})$ and $2^{i_0}\sim \az$, we conclude that
\begin{eqnarray*}
\lf|\lf\{x\in\rn:\ S_L(f_1)(x)>\az\r\}\r|&&\ls \frac{1}{2^{i_0(1-a)r}}
\dsum_{i=-\fz}^{i_0} 2^{-iar} \lf[\mathrm{I}_i\r]^r\\
&&\ls \frac{1}{2^{i_0(1-a)r}}
\dsum_{i=-\fz}^{i_0} 2^{-iar} \lf\{\dsum_{j\in\zz_+} 2^{i(r-p)}|\lz_{i,\,j}|^p\r\}\\
&&\ls \frac{1}{2^{i_0(1-a)r}}
\dsum_{i=-\fz}^{i_0} 2^{i(r-p-ar)} \|f\|^p_{WH^p_{L,\rm{mol},\ez,M}(\rn)}\\
&&\ls \frac{1}{\az^p} \|f\|^p_{WH^p_{L,\rm{mol},\ez,M}(\rn)},
\end{eqnarray*}
which shows that $f_1\in WH_L^p(\rn)$ and
$$\|f_1\|_{ WH_L^p(\rn)}\ls  \|f\|^p_{WH^p_{L,\rm{mol},\ez,M}(\rn)}.$$

Combining the estimates of $f_1$ and $f_2$, we
then complete the proof of Theorem \ref{t2.21}.
\end{proof}

\begin{remark}\label{r2.22}
(i) As was observed in the proof of Theorem \ref{t2.12}, we know that
Assumption {\bf $(\mathcal{L})_4$} is needed only when proving
$WH_{L,\,\mol,\,\ez,\,M}^p(\rn)\subset WH_L^p(\rn)$. Thus, if
$L$ only satisfies Assumptions $(\mathcal{L})_1$, $(\mathcal{L})_2$ and
$(\mathcal{L})_3$, then $WH_L^p(\rn)\subset WH_{L,\,\mol,\,\ez,\,M}^p(\rn)$.

(ii) Let $(p_-(L),\,p_+(L))$ be the \emph{interval of the exponents $p$}
for which the semigroup $\{e^{-tL}\}_{t>0}$ is  bounded on
$L^p(\rn)$. Assume that $q\in(p_-(L),\,p_+(L))$. Similar to the
notion of $(p,\,\ez,\,M)_L$-molecules as in Definition \ref{d2.19}, we also
define the $(p,\,q,\,\ez,\,M)_L$-\emph{molecule} as $m\in L^q(\rn)$
belonging to the range of $L^\ell$ for all
$\ell\in\{0,\,\ldots,\,M\}$ and satisfying that there exist a ball
$B:=(x_B,\,r_B)$, with $x_B\in\rn$ and $r_B\in(0,\,\fz)$,
and a positive constant $C$ such that, for all
$i\in\zz_+$,
$$\lf\|\lf(r_B^{2k}L\r)^{-\ell}m\r\|_{L^q(S_i(B))}\le C 2^{-i\ez}
\lf|S_i(B)\r|^{n(\frac{1}{q}-\frac{1}{p})}.$$ Moreover, the
corresponding \emph{weak molecular Hardy space}
$WH_{L,\mathrm{mol},q,\ez,M}^p(\rn)$ can be defined analogously to
Definition \ref{d2.20}.

Assume  further that $L$ satisfies Assumption {\bf $(\mathcal{L})_4$}.
By using the method similar to that used in the proofs of
\cite[Proposition 4.2]{jy10} and \cite[Theorem 2.23]{ccyy},
we can also prove the equivalence between
$WH_L^p(\rn)$ and the molecular weak Hardy space
$WH_{L,\mathrm{mol},q,\ez,M}^p(\rn)$. Recall that, in \cite[Proposition 2.10]{ccyy} (see
also Proposition \ref{p2.2}), it was proved that, if $L$ is the
$2k$-order divergence form homogeneous elliptic operator as in \eqref{1.1},
then $L$ satisfies Assumption {\bf $(\mathcal{L})_4$}.
\end{remark}

Let $L$ be as in \eqref{1.1}. Applying the weak molecular characterization, we now study the boundedness of the
associated Riesz transform $\nabla^kL^{-1/2}$ and the fractional
power $L^{-\az/(2k)}$ as follows.

\begin{theorem}\label{t2.23}
Let $k\in\nn$ and $L$ be as in \eqref{1.1}. Then, for all
$p\in(\frac{n}{n+k},\,1]$, the Riesz transform $\nabla^{k}L^{-1/2}$
is bounded from $WH_{L}^p(\rn)$ to $WH^p(\rn)$.
\end{theorem}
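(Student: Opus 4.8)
The plan is to adapt the argument of the proofs of Theorems \ref{t2.16} and \ref{t2.21}, with the square function replaced by the radial maximal function $\mathcal{M}_\fai g(x):=\sup_{t>0}|\fai_t\ast g(x)|$ defining $WH^p(\rn)$, feeding in two external facts about $L$ as in \eqref{1.1}: first, $\nabla^kL^{-1/2}$ is bounded on $L^2(\rn)$ (the higher-order Kato estimate); second, by the argument of \cite{cy} proving the boundedness of $\nabla^kL^{-1/2}\colon H_L^q(\rn)\to H^q(\rn)$, for every $q\in(\frac{n}{n+k},\,1]$, every sufficiently large $\ez$ and every $M>\frac{n}{2k}(\frac1q-\frac12)$, the operator $\nabla^kL^{-1/2}$ maps each $(q,\,\ez,\,M)_L$-molecule into a fixed constant multiple of a classical $H^q(\rn)$-molecule. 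Since $L$ satisfies Assumptions $(\mathcal{L})_1$--$(\mathcal{L})_4$ by Proposition \ref{p2.2} and $WH_L^p(\rn)$ is the completion of $\mathbb{WH}_L^p(\rn)=WH_L^p(\rn)\cap L^2(\rn)$, a standard density argument reduces the theorem to the weak-type $(p,p)$ bound: there is a constant $C\in(0,\,\fz)$ such that, for all $f\in WH_L^p(\rn)\cap L^2(\rn)$ and $\az\in(0,\,\fz)$,
\begin{equation*}
\lf|\lf\{x\in\rn:\ \mathcal{M}_\fai(\nabla^kL^{-1/2}f)(x)>\az\r\}\r|\le C\az^{-p}\|f\|_{WH_L^p(\rn)}^p .
\end{equation*}

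To set up, I would first choose $q\in(\frac{n}{n+k},\,p)$, which is possible precisely because $p>\frac{n}{n+k}$; then fix $M>\frac{n}{2k}(\frac1q-\frac12)$ and $\ez$ large enough that $\ez':=\ez-n(\frac1q-\frac1p)$ exceeds the decay threshold needed in the molecule-to-molecule mapping of \cite{cy} at exponent $q$. By Theorem \ref{t2.21} together with Remark \ref{r2.18}, write $f=\sum_{i\in\zz,\,j\in\zz_+}\lz_{i,j}m_{i,j}$ in $L^2(\rn)$, where the $m_{i,j}$ are $(p,\,\ez,\,M)_L$-molecules associated to balls $B_{i,j}$, $\lz_{i,j}=\wz C2^i|B_{i,j}|^{1/p}$, $\sup_{i\in\zz}\lf(\sum_{j\in\zz_+}|\lz_{i,j}|^p\r)^{1/p}\ls\|f\|_{WH_L^p(\rn)}$, and $\{\wz B_{i,j}:=\frac1{10\sqrt n}B_{i,j}\}_{j\in\zz_+}$ has bounded overlap for each fixed $i$ (Theorem \ref{t2.6}(iii)). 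By the $L^2(\rn)$-boundedness of $\nabla^kL^{-1/2}$, this gives $\nabla^kL^{-1/2}f=\sum_{i,j}\lz_{i,j}\nabla^kL^{-1/2}m_{i,j}$ in $L^2(\rn)$. Given $\az\in(0,\,\fz)$, pick $i_0\in\zz$ with $2^{i_0}\le\az<2^{i_0+1}$ and split $f=f_1+f_2$ in $L^2(\rn)$, where $f_1:=\sum_{i\le i_0}\sum_{j\in\zz_+}\lz_{i,j}m_{i,j}$ and $f_2:=\sum_{i>i_0}\sum_{j\in\zz_+}\lz_{i,j}m_{i,j}$.

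For the high part $f_2$, rewrite $f_2=\sum_{i>i_0}\sum_{j\in\zz_+}\wz\lz_{i,j}\wz m_{i,j}$ with $\wz\lz_{i,j}:=\wz C2^i|B_{i,j}|^{1/q}$ and $\wz m_{i,j}:=\wz C^{-1}|B_{i,j}|^{1/p-1/q}m_{i,j}$; a direct scaling check (as for \eqref{2.16}) shows that $\wz m_{i,j}$ is, up to a harmless constant, a $(q,\,\ez',\,M)_L$-molecule. Hence $\nabla^kL^{-1/2}\wz m_{i,j}$ is a uniform constant multiple of a classical $H^q(\rn)$-molecule, so $\|\mathcal{M}_\fai(\nabla^kL^{-1/2}\wz m_{i,j})\|_{L^q(\rn)}\ls1$ and therefore $|\{x\in\rn:\ \mathcal{M}_\fai(\nabla^kL^{-1/2}\wz m_{i,j})(x)>\bz\}|\ls\bz^{-q}$ for all $\bz\in(0,\,\fz)$, uniformly in $i,\,j$. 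Since $q<p$,
\begin{equation*}
\dsum_{i>i_0}\dsum_{j\in\zz_+}|\wz\lz_{i,j}|^q\sim\dsum_{i>i_0}2^{iq}\dsum_{j\in\zz_+}|B_{i,j}|\sim\dsum_{i>i_0}2^{i(q-p)}\dsum_{j\in\zz_+}|\lz_{i,j}|^p\ls2^{i_0(q-p)}\|f\|_{WH_L^p(\rn)}^p .
\end{equation*}
Then the superposition principle Lemma \ref{l2.18}, applied to the sublinear operator $\mathcal{M}_\fai\circ\nabla^kL^{-1/2}$ with the truncation and $L^2(\rn)$-convergence bookkeeping used in the proof of Theorem \ref{t2.16} (valid since $f_2\in L^2(\rn)$ and $\mathcal{M}_\fai\circ\nabla^kL^{-1/2}$ is bounded on $L^2(\rn)$), gives
\begin{equation*}
\lf|\lf\{x\in\rn:\ \mathcal{M}_\fai(\nabla^kL^{-1/2}f_2)(x)>\az\r\}\r|\ls\az^{-q}\dsum_{i>i_0}\dsum_{j\in\zz_+}|\wz\lz_{i,j}|^q\ls\az^{-q}2^{i_0(q-p)}\|f\|_{WH_L^p(\rn)}^p\sim\az^{-p}\|f\|_{WH_L^p(\rn)}^p .
\end{equation*}

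For the low part $f_1$, I would choose $r\in(1,\,2)$ close to $2$ so that $\nabla^kL^{-1/2}$ is bounded on $L^r(\rn)$ (such $r$ exists by real interpolation between its $L^2(\rn)$-boundedness and its $H_L^p(\rn)\to H^p(\rn)$-boundedness from \cite{cy}), and note that $\mathcal{M}_\fai$ and the Hardy--Littlewood maximal operator $\mathcal{M}$ are bounded on $L^r(\rn)$ and $L^{r'/2}(\rn)$ respectively, since $1<r<2$. Picking $b\in(0,\,1-\frac pr)$, the sublinearity of $\mathcal{M}_\fai\circ\nabla^kL^{-1/2}$ in the sum over $i$, H\"older's inequality, Chebyshev's inequality with power $r$, the $L^r(\rn)$-boundedness of $\mathcal{M}_\fai$ and $\nabla^kL^{-1/2}$, and $\az\sim2^{i_0}$ reduce matters (exactly as in the $f_1$ arguments of Theorems \ref{t2.16} and \ref{t2.21}; see \eqref{2.xx26} and \eqref{2.x26}) to bounding $\mathrm{I}_i:=\lf\{\int_\rn\lf|\sum_{j\in\zz_+}\lz_{i,j}m_{i,j}(x)\r|^r\,dx\r\}^{1/r}$. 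The duality argument of \eqref{2.xx27} (using the molecular size estimates of Definition \ref{d2.19}, H\"older's inequality, the $L^{r'/2}(\rn)$-boundedness of $\mathcal{M}$, and the bounded overlap of $\{\wz B_{i,j}\}_{j}$) gives $\mathrm{I}_i\ls\lf\{\sum_{j\in\zz_+}2^{i(r-p)}|\lz_{i,j}|^p\r\}^{1/r}$, and hence, using $b<1-\frac pr$,
\begin{equation*}
\lf|\lf\{x\in\rn:\ \mathcal{M}_\fai(\nabla^kL^{-1/2}f_1)(x)>\az\r\}\r|\ls2^{-i_0(1-b)r}\dsum_{i\le i_0}2^{-ibr}\dsum_{j\in\zz_+}2^{i(r-p)}|\lz_{i,j}|^p\ls2^{-i_0p}\|f\|_{WH_L^p(\rn)}^p\sim\az^{-p}\|f\|_{WH_L^p(\rn)}^p .
\end{equation*}
Adding the bounds for $f_1$ and $f_2$ yields the required weak-type estimate, and the theorem follows by density. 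The main obstacle is the high part $f_2$: one needs the strict inequality $p>\frac{n}{n+k}$ to create room for $q\in(\frac{n}{n+k},\,p)$, one must check (via the quantitative molecule-to-molecule mapping of \cite{cy}) that the $\frac1p\to\frac1q$ rescaling still produces genuine classical $H^q(\rn)$-molecules, and one must make the superposition argument rigorous through the truncation and $L^2(\rn)$-convergence device; the off-diagonal bookkeeping in the low part $f_1$ is routine given Definition \ref{d2.19} and the bounded overlap property.
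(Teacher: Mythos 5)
Your proposal is correct and its overall structure matches the paper's: molecular decomposition of $f$ via Theorem \ref{t2.21}, then a molecule-to-molecule mapping for $\nabla^kL^{-1/2}$ borrowed from \cite{cy}. Where you diverge is in how you conclude. The paper's proof is essentially one step after the molecule map: it observes that $\nabla^kL^{-1/2}f=\sum_{i,j}\lz_{i,j}\nabla^kL^{-1/2}m_{i,j}$ in $L^2(\rn)$ is, after [Theorem 6.2, \cite{cy}], a weak molecular $(p,\,\ez,\,M)_{-\Delta}$-representation with the same coefficient bound, invokes Remark \ref{r2.14} to identify $WH^p_{-\Delta}(\rn)$ with the $L^2$-closure inside $WH^p(\rn)$, and cites Theorem \ref{t2.21} with $L=-\Delta$ to conclude. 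You instead re-prove that converse inclusion (weak molecular representation $\Rightarrow$ $WH^p$ bound) by hand: the $\az\sim2^{i_0}$ level splitting, the $q<p$ downscaling combined with Lemma \ref{l2.18} and the truncation/$L^2$-convergence device for $f_2$, and the H\"older/$L^r$-boundedness argument for $f_1$. All of these steps are sound (your rescaling check for $\wz m_{i,j}$ and your verification that $q\in(\frac{n}{n+k},\,p)$ is the right window are exactly what is needed), and they happen to be the exact content of the proof of Theorem \ref{t2.21}, so nothing fails; it is simply a re-derivation of a result that is already available as a citation. A side benefit of your route is that it sidesteps the (slightly glossed-over) issue of matching the classical $H^p(\rn)$-molecule of \cite{cy} with a $(p,\,\ez,\,M)_{-\Delta}$-molecule in the precise sense of Definition \ref{d2.19}, since you estimate the radial maximal function directly from the molecular size bounds rather than through the abstract characterization. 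Your $L^r$-boundedness claim for $\nabla^kL^{-1/2}$ is stated somewhat informally via interpolation, but the conclusion (boundedness on $L^r(\rn)$ for some $r\in(1,\,2)$) is correct for $L$ as in \eqref{1.1} and is what the $f_1$ argument needs.
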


\begin{proof}
Let $f\in WH_{{L}}^p(\rn)\cap L^2(\rn)$. From Theorem \ref{t2.21}, we
deduce that there exist sequences $\{\lz_{i,j}\}_{i\in\zz,j\in\zz_+}\subset\cc$
and $\{m_{i,j}\}_{i\in\zz,j\in\zz_+}$ of
$(p,\,\ez,\,M)_L$-molecules such that
$$f=\dsum_{i\in\zz}\dsum_{j\in\zz_+}\lz_{i,j}m_{i,j}$$ in
$L^2(\rn)$ and
$$\dsup_{i\in\zz}\lf(\dsum_{j\in\zz_+}\lf|\lz_{i,j}\r|^p\r)^{1/p}
\sim\|f\|_{WH^p_{L}(\rn)}.$$
By the proof of \cite[Theorem 6.2]{cy}, we know that,
for all $i\in\zz$ and $j\in\zz_+$, $\nabla^{k}{L}^{-1/2}(m_{i,j})$ is
a classical $H^p(\rn)$-molecule up to a harmless positive constant. From this and Remark \ref{r2.14},
together with Theorem \ref{t2.21} in the case $L=-\bdz$, it follows
that $\nabla^{k}{L}^{-1/2}f\in WH^p(\rn)$ and
\begin{eqnarray*}
\lf\|\nabla^{k}{L}^{-1/2}(f)\r\|_{WH^p(\rn)}\ls
\dsup_{i\in\zz}\lf(\dsum_{j\in\zz_+} \lf|\lz_{i,j}\r|^p\r)^{1/p}
\sim\|f\|_{WH_{{L}}^p(\rn)},
\end{eqnarray*}
which, together with a density argument, then completes the proof of
Theorem \ref{t2.23}.
\end{proof}

\begin{theorem}\label{t2.24}
Let $k\in\nn$ and ${L}$ be as in \eqref{1.1}. Then, for all
$0<p<r\le1$ and $\az=n(\frac{1}{p}-\frac{1}{r})$, the fractional
power ${L}^{-\az/(2k)}$ is bounded from $WH_L^p(\rn)$ to
$WH_L^r(\rn)$.
\end{theorem}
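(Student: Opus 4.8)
The plan is to run, at the level of the weak molecular decomposition, a Marcinkiewicz‑type splitting into a "low" and a "high" piece, exactly in the spirit of the proofs of Theorems \ref{t2.16} and \ref{t2.21}; the essential point that makes this necessary is that, unlike the Riesz transform in Theorem \ref{t2.23} (which preserves the exponent $p$, so the rigid weak molecular representation can simply be carried over), $L^{-\az/(2k)}$ changes the homogeneity, and its image of a $WH_L^p(\rn)$‑type representation cannot be recast in the rigid form required for $WH_L^r(\rn)$. By a density argument it suffices to prove $\|L^{-\az/(2k)}f\|_{WH_L^r(\rn)}\ls\|f\|_{WH_L^p(\rn)}$ for $f\in WH_L^p(\rn)\cap L^2(\rn)$. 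Fix such an $f$ and a large $M\in\zz_+$. By Theorem \ref{t2.21} and Remark \ref{r2.22}, $f=\sum_{i\in\zz,\,j\in\zz_+}\lz_{i,j}m_{i,j}$ in $L^2(\rn)$, where $m_{i,j}$ is a $(p,\,\ez,\,M)_L$‑molecule associated to a ball $B_{i,j}$, $\lz_{i,j}=\wz C2^i|B_{i,j}|^{1/p}$ and $\dsup_{i\in\zz}(\sum_{j\in\zz_+}|\lz_{i,j}|^p)^{1/p}\ls\|f\|_{WH_L^p(\rn)}$; in particular, $\sum_{j\in\zz_+}|B_{i,j}|\ls 2^{-ip}\|f\|_{WH_L^p(\rn)}^p$ for every $i\in\zz$. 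The other ingredient, taken from \cite{cy} (and derived exactly as in the proof of Theorem \ref{t2.23}, with the classical $H^p(\rn)$‑molecule estimate replaced by its $H_L^\rho(\rn)$ counterpart), is that, for $M$ large, $L^{-\az/(2k)}$ sends a $(\sz,\,\ez,\,M)_L$‑molecule associated to a ball $B$ to a bounded multiple of a $(\rho,\,\ez',\,M)_L$‑molecule associated to $B$ whenever $0<\sz$, $1/\rho=1/\sz-\az/n$ (here one uses that $L$ is as in \eqref{1.1}, hence satisfies Assumption $(\mathcal{L})_4$, via off‑diagonal estimates of the type in Lemma \ref{l2.5}); combined with the molecular characterizations of $H_L^\sz(\rn)$ and $H_L^\rho(\rn)$ this yields the boundedness of $L^{-\az/(2k)}$ from $H_L^\sz(\rn)$ to $H_L^\rho(\rn)$ for $0<\sz<\rho\le1$, and to $L^\rho(\rn)$ for $0<\sz\le1<\rho$. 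Consequently $L^{-\az/(2k)}f=\sum_{i\in\zz,\,j\in\zz_+}\lz_{i,j}L^{-\az/(2k)}m_{i,j}$ is well defined.

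Now fix $\tau\in(0,\,\fz)$; the aim is $\tau^r|\{x\in\rn:\ S_L(L^{-\az/(2k)}f)(x)>\tau\}|\ls\|f\|_{WH_L^p(\rn)}^r$. Choose $r_0\in(0,\,r)$ with $r_0<1$ and set $p_0:=(1/r_0+\az/n)^{-1}\in(0,\,p)$; choose $r_1\in(\max\{1,\,p_-(L)\},\,p_+(L))$ (a nonempty range, since $p_-(L)<2<p_+(L)$) and set $p_1:=(1/r_1+\az/n)^{-1}$, so that $p_1>p$ because $1/p_1=1/r_1+\az/n<1+\az/n\le1/r+\az/n=1/p$. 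For an integer $i_0$ still to be chosen, split $f=g+h$ with $g:=\sum_{i\le i_0}\sum_{j\in\zz_+}\lz_{i,j}m_{i,j}$ and $h:=\sum_{i>i_0}\sum_{j\in\zz_+}\lz_{i,j}m_{i,j}$. Rewriting each $m_{i,j}$ as a bounded multiple of a normalized $(p_1,\,\ez',\,M)_L$‑molecule (resp. $(p_0,\,\ez',\,M)_L$‑molecule) on $B_{i,j}$, the corresponding amplitudes become $\wz C2^i|B_{i,j}|^{1/p_1}$ (resp. $\wz C2^i|B_{i,j}|^{1/p_0}$); hence, by the molecular characterizations of $H_L^{p_1}(\rn)$ and $H_L^{p_0}(\rn)$, the estimate $\sum_j|B_{i,j}|\ls2^{-ip}\|f\|_{WH_L^p(\rn)}^p$, and the geometric sums $\sum_{i\le i_0}2^{i(p_1-p)}\ls2^{i_0(p_1-p)}$ (valid since $p_1>p$) and $\sum_{i>i_0}2^{i(p_0-p)}\ls2^{i_0(p_0-p)}$ (valid since $p_0<p$), one gets $\|g\|_{H_L^{p_1}(\rn)}^{p_1}\ls2^{i_0(p_1-p)}\|f\|_{WH_L^p(\rn)}^p$ and $\|h\|_{H_L^{p_0}(\rn)}^{p_0}\ls2^{i_0(p_0-p)}\|f\|_{WH_L^p(\rn)}^p$. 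Applying the endpoint bounds to $g$ and $h$, the $L^{r_1}(\rn)$‑boundedness of $S_L$ (Lemma \ref{l2.x21}), and Chebyshev's inequality, we deduce
\[
\lf|\lf\{x\in\rn:\ S_L(L^{-\az/(2k)}f)(x)>\tau\r\}\r|\ls\tau^{-r_1}\lf(2^{i_0(p_1-p)}\|f\|_{WH_L^p(\rn)}^p\r)^{r_1/p_1}+\tau^{-r_0}\lf(2^{i_0(p_0-p)}\|f\|_{WH_L^p(\rn)}^p\r)^{r_0/p_0}.
\]

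Finally choose $i_0$ to be the nearest integer to the balance point of the two summands (the first is increasing and the second decreasing in $i_0$, so they cross once, and an $O(1)$ shift only changes the implicit constants). Letting $\tz\in(0,1)$ be determined by $1/p=(1-\tz)/p_1+\tz/p_0$ (possible since $p_0<p<p_1$), the identity $1/r=(1-\tz)/r_1+\tz/r_0$ follows from $1/r_\ell=1/p_\ell-\az/n$ ($\ell\in\{0,1\}$) together with $\az=n(1/p-1/r)$, and a routine computation then shows that both summands above are $\ls\tau^{-r}\|f\|_{WH_L^p(\rn)}^r$; this proves the desired weak‑type bound for $f\in WH_L^p(\rn)\cap L^2(\rn)$, and density completes the proof. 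The step I expect to be the real obstacle is not this elementary balancing but the two strong endpoint inputs it rests on, namely the boundedness of $L^{-\az/(2k)}$ from $H_L^{p_\ell}(\rn)$ to $H_L^{r_\ell}(\rn)$ respectively to $L^{r_1}(\rn)$; these hinge on the molecule‑mapping property of $L^{-\az/(2k)}$ for large $M$, which in turn requires careful off‑diagonal estimates for operators such as $(t^{2k}L)^{M+1}e^{-t^{2k}L}L^{-M-\az/(2k)}$ in the spirit of Lemma \ref{l2.5} and uses that $L$ is the operator in \eqref{1.1}; these estimates are essentially contained in \cite{cy}. (Alternatively, the whole conclusion can be read off from the real‑interpolation identification of $WH_L^p(\rn)$ in Theorem \ref{t4.5} together with the functoriality of the real method, at the price of keeping track of the $L^2(\rn)$‑intersections throughout.)
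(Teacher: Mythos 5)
Your argument is correct in its essentials and takes a genuinely different route from the paper, whose proof of Theorem \ref{t2.24} is a one-liner: $L^{-\az/(2k)}$ sends a $(p,\ez,M)_L$-molecule to an $(r,q,\ez,M)_L$-molecule with $\az=n(\frac12-\frac1q)$, and one invokes Remark~\ref{r2.22}. You put your finger on exactly why that bare statement is not quite a proof: the coefficients in the weak molecular representation of Theorem~\ref{t2.21} are rigidly $\lz_{i,j}=\wz C 2^i|B_{i,j}|^{1/p}$, whereas a weak $(r,q,\ez,M)_L$-molecular representation of $WH_L^r(\rn)$ requires the form $\wz C 2^i|B_{i,j}|^{1/r}$; the discrepancy $|B_{i,j}|^{1/p-1/r}$ depends on $j$ and cannot be absorbed into a shift of $i$ or into the ``harmless constant,'' and a direct rerun of the $f_1/f_2$ estimate in the proof of Theorem~\ref{t2.21} with the old coefficients fails by exactly the factor $2^{i_0(r-p)(1-q'/r)}$. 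This problem is invisible for the Riesz transform of Theorem~\ref{t2.23} because $p$ is preserved there; it is specific to exponent-changing operators such as $L^{-\az/(2k)}$. Your Marcinkiewicz splitting at $i_0$, with the two halves estimated in strong Hardy norms $H_L^{p_0}$ and $H_L^{p_1}$ straddling $p$, followed by Chebyshev and one-parameter balancing, is a clean way to sidestep the rigidity, and your computation of the balance point and the identity $1/r=(1-\tz)/r_1+\tz/r_0$ is right. Two small points to tighten: (1) your choice $r_1\in(\max\{1,p_-(L)\},p_+(L))$ does not guarantee $p_1\le1$; when $p$ is close to $r$ one can have $p_1>1$, so either restrict $r_1$ to $(1,\min\{p_+(L),(1-\az/n)^{-1}\})$ to force $p_1\le1$ (that interval is nonempty), or systematically replace $H_L^{p_1}$ by $L^{p_1}$ and verify $\|g\|_{L^{p_1}}^{p_1}\ls 2^{i_0(p_1-p)}\|f\|_{WH_L^p}^p$ by testing against $L^{p_1'}(\rn)$ as in the estimate of $\mathrm I_i$ in the proof of Theorem~\ref{t2.16}; (2) the rescaling of a $(p,\ez,M)_L$-molecule to a $(p_0,\ez',M)_L$-molecule on the same ball costs $\ez'=\ez-n(1/p_0-1/p)$, so the decay parameter must be taken large enough at the outset. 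In the end both routes rest on the same molecule-mapping input from \cite{cy,jy10}; the paper's buys brevity by leaving the coefficient reindexing implicit, while yours (like the alternative via Theorem~\ref{t4.5} that you mention) buys a complete argument at the cost of the extra splitting.
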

\begin{proof}
Similar to the proof of \cite[Theorems 7.2 and 7.3]{jy10}, we
know that ${L}^{-\az/(2k)}$ maps each
$(p,\,\ez,\,M)_L$-\emph{molecule} to a
$(r,\,q,\,\ez,\,M)_L$-\emph{molecule} with
$\az=n(\frac{1}{2}-\frac{1}{q})$, up to a harmless positive constant. This,
together with the fact that $L^{-\az/(2k)}$ is bounded from
$L^2(\rn)$ to $L^q(\rn)$ (see \cite[Lemma 3.10]{ccyy}) and Remark
\ref{r2.22}, then finishes the proof of Theorem \ref{t2.24}.
\end{proof}

\begin{remark}\label{r2.25}
(i) The boundedness of the Riesz transform $\nabla^k L^{-1/2}$,
for $k\in\nn$ and $L$ as in \eqref{1.1}, on the Hardy space
$H_L^p(\rn)$ associated to $L$ is known. Indeed,
it was proved in \cite{cy,cyy} that, for all
$p\in(\frac{n}{n+k},\,1]$, $\nabla^{k}L^{-1/2}$
is bounded from  $H_{L}^p(\rn)$ to the classical Hardy space $H^p(\rn)$
and, at the endpoint case $p=\frac{n}{n+k}$,
$\nabla^{k}L^{-1/2}$ is bounded from
 $H_{L}^{n/(n+k)}(\rn)$ to the classical weak Hardy space $WH^{n/(n+k)}(\rn)$.

 (ii) The boundedness of the fractional power $L^{-\az/(2k)}$,
 for $k\in\nn$ and $L$ as in \eqref{1.1}, on the Hardy space
$H_L^p(\rn)$ associated to $L$ is also well known.
Indeed, it was proved that, for all
$0<p<r\le1$ and $\az=n(\frac{1}{p}-\frac{1}{r})$,  ${L}^{-\az/(2k)}$
is bounded from $H_L^p(\rn)$ to $H_L^r(\rn)$ (see \cite{ccyy,hmm,jy10}).
\end{remark}

\section{The real interpolation of intersections}\label{s4}

\hskip\parindent In this section, we establish a real interpolation theorem on the weak Hardy spaces
$WH_L^p(\rn)$ by showing that
$L^2(\rn)\cap WH_L^p(\rn)$ is an intermediate space between the Hardy spaces $L^2(\rn)\cap H_L^p(\rn)$ for different $p\in(0,\,1]$.
To this end, we first recall some basic results on the real interpolation (see \cite{bl76,tr78} for more details).

Let $(X_0,\,X_1)$ be a \emph{quasi-normed couple}, namely, $X_0$ and $X_1$ are two quasi-normed spaces
which are linearly and continuously imbedded in some Hausdorff topological vector space X. Recall that, for any $f\in X_0+X_1$ and $t\in(0,\,\fz)$,  \emph{Peetre's K-functional} $K(t,\,f;\,X_0,\,X_1)$ is defined by setting,
\begin{eqnarray*}
K(t,\,f;\,X_0,\,X_1):=\dinf\lf\{\|f_0\|_{X_0}+t\|f_1\|_{X_1}:\ f=f_0+f_1,\,f_0\in X_0,\, \ f_1\in X_1\r\}.
\end{eqnarray*}
Then, for all $\tz\in(0,\,1)$ and $q\in[1,\,\fz]$, the \emph{real interpolation space} $(X_0,\,X_1)_{\tz,\,q}$
is defined to be all $f\in X_0+X_1$ such that, for $q\in[1,\,\fz)$,
\begin{eqnarray}\label{4.1}
\|f\|_{\tz,\,q}:=\lf\{\dint_0^\fz \lf[t^{-\tz}K(t,\,f;\,X_0,\,X_1)\r]^q\,\frac{dt}{t}\r\}^{1/q}<\fz
\end{eqnarray}
and
\begin{eqnarray}\label{4.x1}
\|f\|_{\tz,\,\fz}
:=\dsup_{(x,\,t)\in\rr^{n+1}_+} \lf[t^{-\tz}K(t,\,f;\,X_0,\,X_1)\r]<\fz.
\end{eqnarray}

\begin{definition}\label{d4.1}
Let $X$ be a quasi-Banach space whose elements are measurable functions. The space $X$ is said to have the \emph{lattice property} if,
for any $g\in X$ and any measurable function $f$ satisfying $|f|\le |g|$, then $f\in X$.
\end{definition}

Krugljak et al. \cite{kmp99} proved the following interesting result
on the problem of  interpolation of intersections
in the case of Banach spaces.

\begin{proposition}[\cite{kmp99}]\label{p4.1}
Let $X_0,\,X_1$ and $X$ be quasi-Banach spaces whose elements are measurable functions
and have the lattice property. Then, for all $\tz\in(0,\,1)$ and $q\in[1,\,\fz]$,
\begin{eqnarray*}
X\cap (X_0,\,X_1)_{\tz,\,q}=(X\cap X_0,\,X\cap X_1)_{\tz,\,q},
\end{eqnarray*}
where, for any two quasi-Banach spaces $Y$ and $Y_1$, the quasi-norm
in $Y\cap Y_1$ is just the restriction of the quasi-norm from $Y_1$.
\end{proposition}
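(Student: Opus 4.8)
The plan is to establish the two continuous inclusions
$$(X\cap X_0,\,X\cap X_1)_{\tz,\,q}\hookrightarrow X\cap (X_0,\,X_1)_{\tz,\,q}\hookrightarrow(X\cap X_0,\,X\cap X_1)_{\tz,\,q},$$
where, following the convention of the statement, each space of the form $X\cap Z$ is equipped with the quasi-norm inherited from $Z$. The first inclusion is soft: since $X\cap X_0\hookrightarrow X_0$ and $X\cap X_1\hookrightarrow X_1$ with norm one, every competitor $f=h_0+h_1$, with $h_0\in X\cap X_0$ and $h_1\in X\cap X_1$, in the infimum defining $K(t,\,f;\,X\cap X_0,\,X\cap X_1)$ is also a competitor for $K(t,\,f;\,X_0,\,X_1)$; hence $K(t,\,f;\,X_0,\,X_1)\le K(t,\,f;\,X\cap X_0,\,X\cap X_1)$ for all $t\in(0,\,\fz)$, and integrating against $t^{-\tz q}\,dt/t$ (or passing to the supremum when $q=\fz$, see \eqref{4.1} and \eqref{4.x1}) yields $\|f\|_{(X_0,X_1)_{\tz,q}}\le\|f\|_{(X\cap X_0,X\cap X_1)_{\tz,q}}$; since $X\cap X_0+X\cap X_1\subset X$, we also have $f\in X$, so $f\in X\cap(X_0,X_1)_{\tz,q}$.

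The core of the proof is the reverse inclusion, for which the key tool I would isolate is the following \emph{lattice truncation lemma}: there is a constant $C$, depending only on the quasi-norm constants of $X_0,\,X_1,\,X$, such that for every $f\in X$ and every splitting $f=g_0+g_1$ with $g_0\in X_0$ and $g_1\in X_1$ there is a new splitting $f=\wz g_0+\wz g_1$ obeying $|\wz g_i|\le C\min\{|f|,\,|g_i|\}$ pointwise almost everywhere. Concretely, at a point $x$ one keeps $(\wz g_0,\wz g_1)(x):=(g_0,g_1)(x)$ whenever $|g_0(x)|\le 2|f(x)|$ and $|g_1(x)|\le 2|f(x)|$; otherwise, say $|g_0(x)|>2|f(x)|$, one sets $\wz g_0(x):=g_0(x)|f(x)|/|g_0(x)|$ (interpreted as $0$ when $f(x)=0$) and $\wz g_1(x):=f(x)-\wz g_0(x)$, and the inequality $|g_1(x)|\ge|g_0(x)|-|f(x)|>|g_0(x)|/2$ valid in this regime forces $|\wz g_1(x)|\le 2|f(x)|<|g_0(x)|<2|g_1(x)|$; the case $|g_1(x)|>2|f(x)|$ is symmetric. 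Since $\wz g_0,\wz g_1$ are built from $f,\,g_0,\,g_1$ by measurable operations on the measurable sets separating these branches they are measurable, and since $|\wz g_i|\le C|g_i|$ and $|\wz g_i|\le C|f|$, the lattice property of $X_i$ and of $X$ (this is the only place where all three hypotheses are used) gives $\wz g_i\in X\cap X_i$ with $\|\wz g_i\|_{X_i}\le C\|g_i\|_{X_i}$.

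Granting the lemma, let $f\in X\cap(X_0,X_1)_{\tz,q}$; then $f\in X$ and $f\in(X_0,X_1)_{\tz,q}\subset X_0+X_1$. Fixing $t\in(0,\fz)$ and applying the lemma to an arbitrary splitting $f=g_0+g_1$ with $g_0\in X_0$, $g_1\in X_1$, we obtain $f=\wz g_0+\wz g_1\in X\cap X_0+X\cap X_1$ with, by the norm convention, $\|\wz g_0\|_{X\cap X_0}+t\|\wz g_1\|_{X\cap X_1}=\|\wz g_0\|_{X_0}+t\|\wz g_1\|_{X_1}\le C\bigl(\|g_0\|_{X_0}+t\|g_1\|_{X_1}\bigr)$; taking the infimum over all splittings of $f$ gives $K(t,\,f;\,X\cap X_0,\,X\cap X_1)\le C\,K(t,\,f;\,X_0,\,X_1)$ for every $t$. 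Combined with the estimate of the first paragraph this yields $K(t,\,f;\,X\cap X_0,\,X\cap X_1)\sim K(t,\,f;\,X_0,\,X_1)$ uniformly in $t$ for each $f\in X\cap(X_0+X_1)$; in particular $f$ lies in the ambient sum space of the couple $(X\cap X_0,\,X\cap X_1)$, and inserting this equivalence of $K$-functionals into \eqref{4.1} and \eqref{4.x1} shows $f\in(X\cap X_0,X\cap X_1)_{\tz,q}$ with $\|f\|_{(X\cap X_0,X\cap X_1)_{\tz,q}}\sim\|f\|_{(X_0,X_1)_{\tz,q}}$. Together with the first inclusion this proves the asserted identity with equivalent quasi-norms.

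I expect the truncation lemma to be the main obstacle: the point is to redistribute $g_0$ and $g_1$ into a \emph{single} pair that is simultaneously dominated by $|f|$ (so that it returns to $X$) and by $|g_i|$ (so that its $X_i$-norm is controlled), while still summing back to $f$. A naive pointwise projection of each $g_i$ onto the complex disc of radius $|f|$ achieves the two domination bounds but destroys the identity $g_0+g_1=f$, so the coupled case analysis above — together with a careful tracking of the quasi-triangle constants, since we work in the quasi-Banach rather than Banach setting — is what makes the argument go through. Beyond this lemma and the elementary $K$-functional manipulations, no further analytic input is needed.
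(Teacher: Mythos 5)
Your proof is correct and follows essentially the same strategy as the paper's: the easy inclusion is $K$-functional monotonicity, and the substantive one comes from replacing a decomposition $f=g_0+g_1$ by one in which each piece is pointwise dominated both by $|f|$ (so that it returns to $X$ by the lattice property) and, up to a fixed factor, by the original piece (so its $X_i$-quasi-norm is controlled). The difference lies in how carefully this replacement is executed. The paper formalizes it as the claim \eqref{4.3} that the $K$-functional for $(X_0,\,X_1)$ is already attained over decompositions with $|g_0|,|g_1|\le|g|$, but the justification it offers treats only the degenerate situation in which $|\tilde g_0|>|g|$ holds globally; the generic case, where $\{|\tilde g_0|>|g|\}$ is a proper nonempty subset, is not addressed. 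Your truncation lemma supplies exactly this missing step: by rescaling the oversized piece to modulus $|f(x)|$ at each bad point and letting the other piece absorb the remainder, you get a genuine splitting with $|\tilde g_i|\le 2\min\{|f|,\,|g_i|\}$ everywhere, and the innocuous factor $2$ is immaterial because one needs only a two-sided comparison $K(t,\,f;\,X\cap X_0,\,X\cap X_1)\sim K(t,\,f;\,X_0,\,X_1)$. So, if anything, your writeup is a more complete execution of the paper's own idea. Two minor remarks. The constant $C=2$ arises purely from the pointwise geometry and does not depend on the quasi-triangle constants of the spaces, so that caveat in your lemma statement is an unnecessary precaution. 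Also, the step $\|\tilde g_i\|_{X_i}\le C\|g_i\|_{X_i}$ — and, likewise, the paper's implicit $\|g\|_{X_0}\le\|\tilde g_0\|_{X_0}$ — uses a norm-monotonicity $|u|\le|v|\Rightarrow\|u\|\ls\|v\|$ that is formally stronger than the membership-only lattice property of Definition \ref{d4.1}. This stronger Riesz-norm property does hold for the tent spaces to which Proposition \ref{p4.1} is applied in Remark \ref{r4.4}, so nothing breaks in context, but it is worth making the assumption explicit since both your argument and the paper's rely on it.
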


\begin{proof}
Recall that Proposition \ref{p4.1} in the case of Banach spaces was proved in \cite{kmp99}.
To make it still be valid in the case
of quasi-Banach spaces, we also give a proof based on some ideas from \cite{kmp99}
with some details.

The inclusion that
\begin{eqnarray*}
(X\cap X_0,\,X\cap X_1)_{\tz,\,q}\subset X\cap (X_0,\,X_1)_{\tz,\,q}
\end{eqnarray*}
follows immediately from the fact that, for all $f\in (X\cap X_0,\,X\cap X_1)_{\tz,\,q}$,
\begin{eqnarray*}
 K(t,\,f;\ X_0,\,X_1)\le K(t,\,f;\ X\cap X_0,\,X\cap X_1)
\end{eqnarray*}
and \eqref{4.1}.

We now turn to the proof of the converse inclusion. Let $f\in X\cap (X_0,\,X_1)_{\tz,\,q}$.
By \eqref{4.1}, it suffices to show that, for all $t\in(0,\,\fz)$,
\begin{eqnarray}\label{4.2}
K(t,\,f;\ X\cap X_0,\,X\cap X_1)\ls K(t,\,f;\ X_0,\,X_1).
\end{eqnarray}

To prove \eqref{4.2}, we make the claim: For any $g\in
X\cap (X_0,\,X_1)_{\tz,\,q}$ and $t\in(0,\,\fz)$,
\begin{eqnarray}\label{4.3}
\hs\hs\quad&&K(t,\,g;\ X_0,\,X_1)\\
&&\nonumber\hs=\dinf \lf\{\|g_0\|_{X_0}+t\|g_1\|_{X_1}:\
g=g_0+g_1, \ g_0\in X_0,\, |g_0|\le |g|,\, g_1\in X_1,\, |g_1|\le |g|\r\}.
\end{eqnarray}
Indeed, let
$g=\wz g_0+\wz g_1$ be any decomposition of $g$ satisfying $\wz g_0\in X_0$ and $\wz g_1\in X_1$.
If $|\wz g_0|>|g|$, then using the lattice property of $X_0$, we know $g\in X_0$. Thus,
$g=g+0$ is also a decomposition of $g$ with $g\in X_0$ and $0\in X_1$. Moreover, for all $t\in(0,\,\fz)$,
\begin{eqnarray*}
\|g\|_{X_0}+t\|0\|_{X_1}<\|\wz g_0\|_{X_0}+t\|\wz g_1\|_{X_1},
\end{eqnarray*}
which, together with the definition of Peetre's K-functional, shows \eqref{4.3} is true.
This immediately proves the above claim.

We now continue the proof of \eqref{4.2} by estimating $K(t,\,f;\ X_0,\,X_1)$.
Let $f=f_0+f_1$ be any decomposition of $f$ satisfying $f_0\in X_0$ and $f_1\in X_1$.
By the above claim, we may assume that $|f_0|\le |f|$ and $|f_1|\le |f|$, which, together
with the lattice property of $X$, implies that $f_0\in X_0\cap X$ and $f_1\in X_1\cap X$.
This, together with the definition of Peetre's K-functional, shows \eqref{4.2} holds. Thus,
$(X\cap X_0,\,X\cap X_1)_{\tz,\,q}\subset X\cap (X_0,\,X_1)_{\tz,\,q}$, which completes the proof of
Proposition \ref{p4.1}.
\end{proof}

\begin{remark}\label{r4.1}
Let $X_0,\,X_1$ and $X$ be some quasi-Banach spaces.
The \emph{problem of  interpolation of intersections} asks the question
that, under which conditions, does we have the equality
\begin{eqnarray*}
X\cap (X_0,\,X_1)_{\tz,\,q}=(X\cap X_0,\,X\cap X_1)_{\tz,\,q}.
\end{eqnarray*}
The answer for the above problem is still unknown in the general case
(see \cite{kmp99} and the references cited therein
for more details).  Proposition \ref{p4.1} shows that, if  $X_0,\,X_1$ and $X$
consist of measurable functions and have the lattice property, then
the problem of  interpolation of intersections for $X_1$, $X_2$ and $X$ has a positive
answer.
\end{remark}

\begin{remark}\label{r4.4}
For all $p\in(0,\,\fz)$, let $T^p(\rr^{n+1}_+)$ be the tent space as in \eqref{2.xx2}.
Observe that, for all $0<p_0<p_1<\fz$, $T^{p_0}(\rr^{n+1}_+)$, $T^{p_1}(\rr^{n+1}_+)$
and $T^{2}(\rr^{n+1}_+)$ have the lattice properties. Thus, by the real interpolation of $T^p(\rr^{n+1}_+)$
(see \cite{cms85} in the case $p\in[1,\,\fz)$
and \cite{ccy14} in the case $p\in(0,\,1)$) and Proposition \ref{p4.1},
we conclude that, for all $0<p_0<p_1<\fz$, $\tz\in(0,\,1)$ and $q\in[1,\,\fz]$,
\begin{eqnarray*}
&&\lf(T^{2}(\rr^{n+1}_+)\cap T^{p_0}(\rr^{n+1}_+),\,T^{2}(\rr^{n+1}_+)\cap T^{p_1}(\rr^{n+1}_+)\r)_{\tz,\,q}\\
&&\hs=
T^{2}(\rr^{n+1}_+)\cap (T^{p_0}(\rr^{n+1}_+),\,T^{p_1}(\rr^{n+1}_+))_{\tz,\,q},
\end{eqnarray*}
where $p\in(p_0,\,p_1)$ satisfies $\frac{1}{p}=\frac{1-\tz}{p_0}+\frac{\tz}{p_1}$.
In particular, if $q=\fz$, then $$(T^{p_0}(\rr^{n+1}_+),\,T^{p_1}(\rr^{n+1}_+))_{\tz,\,\fz}=
WT^p(\rr^{n+1}_+).$$

\end{remark}

Now, let $L$ satisfy Assumptions $(\mathcal{L})_1$,
$(\mathcal{L})_2$ and $(\mathcal{L})_4$, and $H_L^p(\rn)$ be the Hardy space
associated to $L$ defined as in Section \ref{s2.3}.
Our main result of this section is as follows.

\begin{theorem}\label{t4.5}
Let $0<p_0<p_1\le 1$, $\tz\in(0,\,1)$ and $L$ satisfy Assumptions $(\mathcal{L})_1$,
$(\mathcal{L})_2$ and $(\mathcal{L})_4$. Then
\begin{eqnarray*}
\lf(L^{2}(\rn)\cap H_L^{p_0}(\rn),\,L^{2}(\rn)\cap H_L^{p_1}(\rn)\r)_{\tz,\,\fz}=
L^{2}(\rn)\cap WH_L^p(\rn),
\end{eqnarray*}
where $p\in(0,\,1]$ satisfies $\frac{1}{p}=\frac{1-\tz}{p_0}+\frac{\tz}{p_1}$.
 \end{theorem}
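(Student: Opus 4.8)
The plan is to reduce the interpolation identity for weak Hardy spaces to the corresponding identity for tent spaces via the square function $S_L$, and then to invoke Proposition \ref{p4.1} (interpolation of intersections) together with Remark \ref{r4.4}. First, I would recall that, by the definition of $H_L^p(\rn)$ and $WH_L^p(\rn)$ in Section \ref{s2.3}, the map $f\mapsto t^{2k}Le^{-t^{2k}L}f$ identifies $L^2(\rn)\cap H_L^p(\rn)$ isometrically (up to equivalent quasi-norms) with a closed subspace of $T^2(\rr^{n+1}_+)\cap T^p(\rr^{n+1}_+)$, and likewise $L^2(\rn)\cap WH_L^p(\rn)$ with a closed subspace of $T^2(\rr^{n+1}_+)\cap WT^p(\rr^{n+1}_+)$; the $L^2$-boundedness of $S_L$ (Lemma \ref{l2.x21}, with $p=q=2$) together with the quadratic estimate coming from the bounded $H_\fz$ functional calculus (Assumption $(\mathcal L)_2$) gives the two-sided bound, and the functional-calculus reproducing formula $f=c\int_0^\fz \Psi(t\sqrt{\cdot})\,(t^{2k}L e^{-t^{2k}L}f)\,\frac{dt}{t}$ (as in \eqref{2.x15}, now in the form $\Pi_{L,M}$ of \eqref{2.x22}) provides a bounded left inverse $\Pi_{L,M}: T^2(\rr^{n+1}_+)\cap T^p(\rr^{n+1}_+)\to L^2(\rn)\cap H_L^p(\rn)$, and similarly into the weak spaces using the weak molecular characterization (Theorem \ref{t2.21}) and Lemma \ref{l2.x21}. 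Thus $S_L$ and $\Pi_{L,M}$ exhibit $L^2\cap H_L^p$ as a retract of $T^2\cap T^p$, compatibly in $p$.

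Next I would use the standard fact that real interpolation commutes with retracts: if $(A_0,A_1)$ is a couple that is a retract of $(\mathcal A_0,\mathcal A_1)$ via a common pair (projection $\pi$, injection $\iota$ independent of the index), then $(A_0,A_1)_{\tz,q}$ is a retract of $(\mathcal A_0,\mathcal A_1)_{\tz,q}$, with the same pair. Applying this with $q=\fz$, $A_i=L^2(\rn)\cap H_L^{p_i}(\rn)$, $\mathcal A_i=T^2(\rr^{n+1}_+)\cap T^{p_i}(\rr^{n+1}_+)$, $i=0,1$, reduces the theorem to computing
\begin{eqnarray*}
\lf(T^2(\rr^{n+1}_+)\cap T^{p_0}(\rr^{n+1}_+),\,T^2(\rr^{n+1}_+)\cap T^{p_1}(\rr^{n+1}_+)\r)_{\tz,\,\fz}
=T^2(\rr^{n+1}_+)\cap WT^p(\rr^{n+1}_+),
\end{eqnarray*}
which is exactly the content of Remark \ref{r4.4}: $T^{p_0}$, $T^{p_1}$ and $T^2$ have the lattice property, so Proposition \ref{p4.1} moves the intersection with $T^2$ outside the interpolation bracket, and then the real interpolation of tent spaces from \cite{ccy14} (for $p\in(0,1)$) together with $(T^{p_0},T^{p_1})_{\tz,\fz}=WT^p$ finishes it, where $\frac1p=\frac{1-\tz}{p_0}+\frac{\tz}{p_1}$. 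Pulling back along $\Pi_{L,M}$ and pushing forward along $S_L$ then yields the claimed equality of quasi-normed spaces, up to equivalent quasi-norms.

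The main obstacle I anticipate is the careful verification that $S_L$ and $\Pi_{L,M}$ really do form an index-independent retract pair between the scale $\{L^2\cap H_L^p\}$ and $\{T^2\cap T^p\}$ simultaneously at the two endpoints \emph{and} at the intermediate weak level, and in particular that $\Pi_{L,M}$ maps $T^2\cap WT^p$ boundedly into $L^2\cap WH_L^p$: this is not literally the statement of Theorem \ref{t2.21} (which is phrased via weak molecular representations), so one must check that applying $\Pi_{L,M}$ to the weak atomic decomposition of $WT^p$-functions from Theorem \ref{t2.6}, using that $\Pi_{L,M}$ sends $T^p$-atoms to $(p,\ez,M)_L$-molecules (as recalled after \eqref{2.x22}, via \cite[Lemma 4.2(ii)]{cy}), produces precisely a weak molecular $(p,\ez,M)_L$-representation with control $\sup_i(\sum_j|\lz_{i,j}|^p)^{1/p}\ls\|F\|_{WT^p(\rr^{n+1}_+)}$, and that convergence holds in $L^2(\rn)$ by Corollary \ref{c2.9} when $F\in T^2(\rr^{n+1}_+)$ as well. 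The retract-compatibility of $S_L$ at the two strong endpoints is essentially in \cite{cy}, and at the weak level it follows from Lemma \ref{l2.x21}; once these mapping properties are in place the interpolation bookkeeping is routine. A secondary point to handle with care is that $p_0,p_1\le1$ forces the quasi-Banach (rather than Banach) setting, but Proposition \ref{p4.1} has already been stated and proved here for quasi-Banach lattices, so no extra work is needed there.
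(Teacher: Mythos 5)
Your proposal is correct and follows essentially the same route as the paper: express the couple $(L^2\cap H_L^{p_0},L^2\cap H_L^{p_1})$ as a retract of $(T^2\cap T^{p_0},T^2\cap T^{p_1})$ via the Calder\'on reproducing formula (the injection being $Q_{t,L}:=t^{2k}Le^{-t^{2k}L}$ and the projection $\Pi_{L,M}$), invoke the commutation of real interpolation with retracts, and then reduce to the tent-space identity handled by Proposition \ref{p4.1} and Remark \ref{r4.4}. You also correctly flag the key technical point, namely that $\Pi_{L,M}$ carries $T^2(\rr^{n+1}_+)\cap WT^p(\rr^{n+1}_+)$ onto $L^{2}(\rn)\cap WH_L^p(\rn)$ (with the reverse inclusion via the reproducing formula), which is precisely Lemma \ref{l4.7} in the paper and is proved exactly along the lines you sketch (weak atomic decomposition of $WT^p$, atoms mapped to molecules, Theorem \ref{t2.21}). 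One small imprecision: in two places you refer to ``$S_L$ and $\Pi_{L,M}$'' as the retract pair, but $S_L$ is sublinear and cannot be one leg of a retract of quasi-Banach couples; what you mean (and what you in fact use) is the \emph{linear} map $Q_{t,L}$, with $S_L(f)=\mathcal A(Q_{t,L}f)$ transferring the quasi-norms. Replace ``$S_L$'' by ``$Q_{t,L}$'' in those spots and the argument matches the paper's.
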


\begin{remark}\label{r4.6}
Recall that, in \cite{frs74}, Fefferman et al. showed that, for all
$0<p_0<p_1\le 1$, $\tz\in(0,\,1)$ and $p\in(0,\,1]$ satisfies $\frac{1}{p}=\frac{1-\tz}{p_0}+\frac{\tz}{p_1}$,
\begin{eqnarray}\label{4.4}
\lf(H^{p_0}(\rn),\,H^{p_1}(\rn)\r)_{\tz,\,\fz}=WH^p(\rn).
\end{eqnarray}
They proved \eqref{4.4} by only considering the Schwartz functions in $S(\rn)$.
However, as was pointed out in \cite{fso86,he13}, $S(\rn)$ may not dense in $WH^p(\rn)$.
Thus, there is a gap in their proof of \eqref{4.4}. Indeed, Fefferman et al. \cite{frs74}
proved
\begin{eqnarray*}
\lf(S(\rn)\cap H^{p_0}(\rn),\,S(\rn)\cap H^{p_1}(\rn)\r)_{\tz,\,\fz}=S(\rn)\cap WH^p(\rn).
\end{eqnarray*}
Thus, Theorem \ref{t4.5} is a generalization of this result.
\end{remark}

To prove Theorem \ref{t4.5}, we need the following lemma.

\begin{lemma}\label{l4.7}
Let $p\in(0,\,1]$ and $L$ satisfy Assumptions $(\mathcal{L})_1$,
$(\mathcal{L})_2$ and $(\mathcal{L})_4$. Then, for all $M\in\nn$ satisfying
$M>\frac{n}{2k}(\frac{1}{p}-\frac{1}{2})$,
\begin{eqnarray*}
\Pi_{L,\,M}\lf(T^2(\rr^{n+1}_+)\cap WT^p(\rr^{n+1}_+)\r)=L^2(\rn)\cap WH_L^p(\rn),
\end{eqnarray*}
where $\Pi_{L,\,M}$ is the operator defined as in \eqref{2.x22}.
\end{lemma}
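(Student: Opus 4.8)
The plan is to establish the two inclusions separately, using the weak atomic/molecular machinery already in hand. For the inclusion $\Pi_{L,M}(T^2(\rr^{n+1}_+)\cap WT^p(\rr^{n+1}_+))\subset L^2(\rn)\cap WH_L^p(\rn)$, I would take $F\in T^2(\rr^{n+1}_+)\cap WT^p(\rr^{n+1}_+)$ and apply Theorem \ref{t2.6} together with Corollary \ref{c2.9} to write $F=\sum_{i\in\zz,j\in\zz_+}\lz_{i,j}A_{i,j}$, converging both pointwise a.e.\ and in $T^2(\rr^{n+1}_+)$, with $\sup_{i\in\zz}(\sum_{j}|\lz_{i,j}|^p)^{1/p}\ls\|F\|_{WT^p(\rr^{n+1}_+)}$. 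Since $\Pi_{L,M}$ is bounded from $T^2(\rr^{n+1}_+)$ to $L^2(\rn)$ (as noted after \eqref{2.6}, with the analogous statement for $\Pi_{L,M}$ from \cite[Lemma 4.2]{cy}), we get $\Pi_{L,M}(F)\in L^2(\rn)$ and $\Pi_{L,M}(F)=\sum_{i,j}\lz_{i,j}\Pi_{L,M}(A_{i,j})$ in $L^2(\rn)$. By \cite[Lemma 4.2(ii)]{cy}, each $\Pi_{L,M}(A_{i,j})$ is, up to a harmless constant, a $(p,\ez,M)_L$-molecule associated to $B_{i,j}$; hence this is a weak molecular $(p,\ez,M)_L$-representation of $\Pi_{L,M}(F)$, so $\Pi_{L,M}(F)\in\mathbb{WH}_{L,\mol,\ez,M}^p(\rn)\subset WH_L^p(\rn)$ by Theorem \ref{t2.21}, with quasi-norm control by $\|F\|_{WT^p(\rr^{n+1}_+)}$.

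For the reverse inclusion $L^2(\rn)\cap WH_L^p(\rn)\subset\Pi_{L,M}(T^2(\rr^{n+1}_+)\cap WT^p(\rr^{n+1}_+))$, I would start from $f\in WH_L^p(\rn)\cap L^2(\rn)$ and use the reproducing formula: by Assumption $(\mathcal{L})_2$ (bounded $H_\fz$ functional calculus) there is a constant $C(\Psi)$ with
\begin{eqnarray*}
f=C(\Psi)\int_0^\fz(t^{2k}L)^Me^{-t^{2k}L}\lf(t^{2k}Le^{-t^{2k}L}f\r)\,\frac{dt}{t}
=C(\Psi)\,\Pi_{L,M}\lf(t^{2k}Le^{-t^{2k}L}f\r),
\end{eqnarray*}
the integral converging in $L^2(\rn)$. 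Set $F(\cdot,t):=C(\Psi)\,t^{2k}Le^{-t^{2k}L}f$. The quadratic estimate for $L$ (valid since $L$ has a bounded $H_\fz$ functional calculus) gives $F\in T^2(\rr^{n+1}_+)$ with $\|F\|_{T^2(\rr^{n+1}_+)}\sim\|f\|_{L^2(\rn)}$, while $\mathcal{A}(F)\sim S_Lf$ pointwise, so $F\in WT^p(\rr^{n+1}_+)$ with $\|F\|_{WT^p(\rr^{n+1}_+)}\sim\|f\|_{WH_L^p(\rn)}$. Thus $f=\Pi_{L,M}(F)$ with $F\in T^2(\rr^{n+1}_+)\cap WT^p(\rr^{n+1}_+)$, which is the desired inclusion, and combining the two quasi-norm estimates yields the norm equivalence implicit in the equality of spaces.

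The routine points are the identification $\mathcal{A}(F)\sim S_Lf$ (immediate from \eqref{2.1} and \eqref{2.x10}) and the fact that the reproducing constant $C(\Psi)$ is finite, which follows from writing $\Psi(t):=t^{2(M+1)}\Phi(t)$ suitably and applying the $H_\fz$ calculus of McIntosh; these I would treat briefly. The one step requiring genuine care is the interchange $\Pi_{L,M}(\sum_{i,j}\lz_{i,j}A_{i,j})=\sum_{i,j}\lz_{i,j}\Pi_{L,M}(A_{i,j})$ in the first inclusion: I would justify it by the $T^2\to L^2$ boundedness of $\Pi_{L,M}$ together with the $T^2(\rr^{n+1}_+)$-convergence of the weak atomic decomposition provided by Corollary \ref{c2.9}, so no independent obstacle arises. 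Overall the proof is a bookkeeping assembly of Theorems \ref{t2.6}, \ref{t2.21}, Corollary \ref{c2.9}, and the mapping property of $\Pi_{L,M}$ from \cite{cy}; the main thing to watch is keeping the constant $\wz C$ in the definitions of $\lz_{i,j}$ consistent between the weak tent space side and the weak molecular side so that the quasi-norms genuinely match.
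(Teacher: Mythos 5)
Your proof follows essentially the same route as the paper: the reverse inclusion via the Calder\'on reproducing formula $f=\Pi_{L,M}\circ Q_{t,L}(f)$ together with the quadratic estimate and the identification $\mathcal{A}(Q_{t,L}f)\sim S_Lf$, and the forward inclusion via the weak atomic decomposition of $WT^p(\rr^{n+1}_+)$ (Theorem \ref{t2.6} with Corollary \ref{c2.9}), the $T^2\to L^2$ boundedness of $\Pi_{L,M}$, the fact that $\Pi_{L,M}$ sends $T^p$-atoms to $(p,\ez,M)_L$-molecules, and Theorem \ref{t2.21}. Your explicit invocation of Corollary \ref{c2.9} to justify the interchange of $\Pi_{L,M}$ with the sum is in fact slightly more careful than the paper's wording, but the argument is the same.
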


\begin{proof}
We first prove
\begin{eqnarray}\label{4.x5}
L^2(\rn)\cap WH_L^p(\rn)\subset \Pi_{L,\,M}\lf(T^2(\rr^{n+1}_+)\cap WT^p(\rr^{n+1}_+)\r).
\end{eqnarray}
Indeed, for any $f\in L^2(\rn)\cap WH_L^p(\rn)$, by the bounded $H_\fz$-functional calculus,
we know that
\begin{eqnarray}\label{3.x5}
f=\Pi_{L,\,M}\circ Q_{t,\,L}(f),
\end{eqnarray}
where $Q_{t,\,L}:=t^{2k}Le^{-t^{2k}L}$.
Moreover, by the definition of $WH_L^p(\rn)$ and $k$-Davies-Gaffney estimates, we know that
$Q_{t,\,L}(f)\in T^2(\rr^{n+1}_+) \cap WT^p(\rr^{n+1}_+)$. This, together with \eqref{3.x5},
implies that $f\in \Pi_{L,\,M}(T^2(\rr^{n+1}_+)\cap WT^p(\rr^{n+1}_+))$. Thus, \eqref{4.x5} is true.

We now prove
\begin{eqnarray}\label{4.7}
 \Pi_{L,\,M}\lf(T^2(\rr^{n+1}_+)\cap WT^p(\rr^{n+1}_+)\r)\subset L^2(\rn)\cap WH_L^p(\rn).
\end{eqnarray}
Indeed, for any $g\in \Pi_{L,\,M}\lf(T^2(\rr^{n+1}_+)\cap WT^p(\rr^{n+1}_+)\r)$, we know that there exists
$$G\in T^2(\rr^{n+1}_+)\cap WT^p(\rr^{n+1}_+)$$ such that $g=\Pi_{L,\,M}(G)$. Using the weak atomic
decomposition of $WT^p(\rr^{n+1}_+)$ (see Theorem \ref{t2.6}),
we know that there exist $\{\lz_{i,\,j}\}_{i\in\zz,\,j\in\zz_+}\subset \cc$ and $\{A_{i,\,j}\}_{i\in\zz,\,j\in\zz_+}$
of $T^p(\rr^{n+1}_+)$-atoms such that
\begin{eqnarray*}
G=\dsum_{i\in\zz,\,j\in\zz_+}\lz_{i,\,j}A_{i,\,j}
\end{eqnarray*}
holds in $T^2(\rr^{n+1}_+)$ and almost everywhere in $\rr^{n+1}_+$. Moreover,
\begin{eqnarray}\label{4.8}
\dsup_{i\in\zz}\lf\{\dsum_{j\in\zz_+}|\lz_{i,\,j}|^p\r\}^{\frac{1}{p}}\sim \|G\|_{WT^p(\rr^{n+1}_+)}.
\end{eqnarray}

Using the boundedness of $\Pi_{L,\,M}$ from $T^2(\rr^{n+1}_+)$ to $L^2(\rn)$ (which can be deduced from the quadratic estimates, since $L$ has a bounded $H_\fz$ functional calculus),
we know that
\begin{eqnarray*}
g=\dsum_{i\in\zz,\,j\in\zz_+} \lz_{i,\,j}  \Pi_{L,\,M}(A_{i,\,j})
\end{eqnarray*}
in $L^2(\rn)$, where, by the argument below \eqref{2.x22}, we know that
$\Pi_{L,\,M}(A_{i,\,j})$ is a $(p,\,\ez,\,M)_L$-molecule up to a
harmless positive constant.  This, together with Theorem \ref{t2.21}, implies that
$g\in L^2(\rn)\cap WH_L^p(\rn)$ and hence \eqref{4.7}
is true, which completes the proof of Lemma \ref{l4.7}.
\end{proof}

We now turn to the proof of Theorem \ref{t4.5}.

\begin{proof}[Proof of Theorem \ref{t4.5}]
Let $\Pi_{L,\,M}$ be defined as in \eqref{2.x22} and $Q_{t,\,L}:=t^{2k}Le^{-t^{2k}L}$.
By the bounded $H_\fz$-functional calculus, we know that
\begin{eqnarray}\label{4.5}
f=\Pi_{L,\,M}\circ Q_{t,\,L}(f),
\end{eqnarray}
which implies that $(L^2(\rn)\cap H_L^{p_0}(\rn),\,L^2(\rn)\cap H_L^{p_1}(\rn))$ is a retract of
$$(T^2(\rr^{n+1}_+)\cap T^{p_0}(\rr^{n+1}_+),\,T^2(\rr^{n+1}_+)\cap T^{p_1}(\rr^{n+1}_+)),$$
namely, there exist two linear bounded operators  $$Q_{t,\,L}: \ L^2(\rn)\cap H_L^{p_i}(\rn)\rightarrow T^2(\rr^{n+1}_+)
\cap T^{p_i}(\rr^{n+1}_+)$$ and  $$\Pi_{L,\,M} : \ T^2(\rr^{n+1}_+)\cap T^{p_i}(\rr^{n+1}_+)\rightarrow
L^2(\rn)\cap H_L^{p_i}(\rn)$$
such that $\Pi_{L,\,M} \circ Q_{t,\,L} = I$ on each $ L^2(\rn)\cap H_L^{p_i}(\rn)$,
where $i\in\{0,\, 1\}$ . Thus, by \cite[Lemma 7.11]{kmm},
we see that
\begin{eqnarray*}
&&(L^2(\rn)\cap H_L^{p_0}(\rn),\,L^2(\rn)\cap H_L^{p_1}(\rn))_{\tz,\,\fz}\\
&&\hs=\Pi_{L,\,M}\lf[(T^2(\rr^{n+1}_+)\cap T^{p_0}(\rr^{n+1}_+),\,T^2(\rr^{n+1}_+)\cap T^{p_0}(\rr^{n+1}_+))_{\tz,\,\fz}\r],
\end{eqnarray*}
which, together with Lemma \ref{l4.7} and Remark \ref{r4.4},
implies that
\begin{eqnarray*}
&&(L^2(\rn)\cap H_L^{p_0}(\rn),\,L^2(\rn)\cap H_L^{p_1}(\rn))_{\tz,\,\fz}\\
&&\hs=\Pi_{L,\,M}\lf[(T^2(\rr^{n+1}_+)\cap T^{p_0}(\rr^{n+1}_+),\,T^2(\rr^{n+1}_+)\cap T^{p_0}(\rr^{n+1}_+))_{\tz,\,\fz}\r]\\
&&\hs=\Pi_{L,\,M}\lf[T^2(\rr^{n+1}_+)\cap \lf(T^{p_0}(\rr^{n+1}_+),\,T^{p_1}(\rr^{n+1}_+)\r)_{\tz,\,\fz}\r]\\
&&\hs=\Pi_{L,\,M}\lf[T^2(\rr^{n+1}_+)\cap WT^p(\rr^{n+1}_+)\r]
=L^2(\rn)\cap WH_L^p(\rn).
\end{eqnarray*}
This finishes the proof of Theorem \ref{t4.5}.
\end{proof}

\section{The dual space of $WH_L^p(\rn)$}\label{s3}

\hskip\parindent In this section, letting $L$ be nonnegative self-adjoint and satisfy the Davies-Gaffney
estimates, we study the dual space of
$WH_L^p(\rn)$.
It turns out that the dual of $WH_L^p(\rn)$ is some
weak Lipschitz space, which can be defined via the
mean oscillation over some bounded open sets.

Before giving the definition of weak Lipschitz spaces, we first
introduce a class of coverings of all bounded open sets, which
is motivated by the subtle covering, appearing in the proof of
Theorem \ref{t2.12}, of the level sets of $\mathcal{A}$-functionals,
obtained via the Whitney decomposition lemma.

\begin{definition}\label{d3.1}
Let $\boz$ be a bounded open set in $\rn$ and ${\Lambda}$ an index set.
A family $\vec{B}:=\{B_j\}_{j\in{\Lambda}}$ of open balls is said to be in the
\emph{class} $\mathcal{W}_{\boz}$, if

(i) $\boz\subset \cup_{j\in{\Lambda}} B_j$;

(ii) $r:=\inf_{j\in\Lambda} \{r_{B_j}\}>0$;

(iii) letting $\wz B_{j}:=\frac{1}{20\sqrt n}B_{j}$, then
$\{\wz B_{j}\}_{j\in\Lambda}$ are mutually disjoint;

(iv) there exists a positive constant $M_0$ such that $\sum_{j\in\Lambda}|B_j|< 2 M_0|\boz|$.
\end{definition}

From the argument below \eqref{2.5}, it follows that, for any bounded
open set $\boz$ with $|\boz|\in(0,\,\fz)$, $\mathcal{W}_{\boz}\ne\emptyset$.

Now, let $\az\in[0,\,\fz)$, $\ez\in(0,\fz)$, $M\in\zz_+$
satisfy $M>\frac{1}{2}(\az+\frac{n}{2})$ and $L$ be nonnegative self-adjoint in $L^2(\rn)$
and satisfy the Davies-Gaffney estimates. The \emph{space} $\mathcal{M}_{\az,L}^{\ez,M}(\rn)$ is
defined to be the space of all functions $u$ in $L^2(\rn)$
satisfying the following two conditions:

(i) for all $\ell\in\{0,\ldots,\,M\}$, $L^{-\ell}u\in L^2(\rn)$;

(ii) letting $Q_0$ be the \emph{unit cube} with
its center at the origin, then
\begin{eqnarray}\label{4.xx1}
\|u\|_{\mathcal{M}_{\az,L}^{\ez,M}(\rn)}:=\sup_{j\in\zz_+}
2^{-j(\frac{n}{2}+\ez+\az)}\sum_{\ell=0}^M
\lf\|L^{-\ell}u\r\|_{L^2(S_j(Q_0))}<\fz.
\end{eqnarray}

Let $\mathcal{M}_{\az,L}^{M,*}(\rn):=\cap_{\ez\in(0,\,\fz)}
(\mathcal{M}_{\az,L}^{\ez,M}(\rn))^*$. For all $r\in(0,\,\fz)$, let
\begin{eqnarray}\label{3.4}
\mathcal{A}_{r}:=\lf(I-e^{-r^{2}L}\r)^M.
\end{eqnarray}
For any $f\in\mathcal{M}_{\az,L}^{M,*}(\rn)$, bounded open set $\boz$
and $\mathcal{N}\in(n(\frac{1}{p}-\frac{1}{2}),\,\fz)$, let
$\mathcal{O}_{\mathcal{N}}(f,\,\boz)$ be the \emph{mean
oscillation of} $f$ \emph{over} $\boz$ defined by
\begin{eqnarray}\label{3.5}
\quad\quad
\mathcal{O}_{{\mathcal{N}}}
(f,\,\boz) &&:=\dsup_{i\in\zz_+}
\dsup_{\vec{B}\in\mathcal{W}_{\boz}}2^{-i\mathcal{N}}
\lf\{\frac{1}{|\boz|^{1+\frac{2\az}{n}}}
\dint_{S_i(\vec{B})}\lf|\mathcal{A}_{r}f(x)\r|^2\,dx
\r\}^{\frac{1}{2}},
\end{eqnarray}
where $r:=\inf_{j\in\Lambda}\{r_{B_j}\}$ and, for $i\in\nn$,
\begin{eqnarray}\label{4.xx}
S_i(\vec{B}):=\lf(\bigcup_{j\in\Lambda}2^iB_j\r)\setminus \lf(\bigcup_{j\in\Lambda}2^{i-1}B_j\r)
\end{eqnarray}
and
$S_0(\vec{B}):=\cup_{j\in\Lambda}B_j$.
By the Davies-Gaffney estimates, we know that the above integral is
well defined.

For all $\dz\in(0,\,\fz)$, define
\begin{equation}\label{3.6}
\omega_{{\mathcal{N}}}(\dz)
:=\sup_{|\boz|=\dz}\mathcal{O}_{{\mathcal{N}}}(f,\,\boz).
\end{equation}
From its definition, it follows that $\omega$ is a decreasing function on
$(0,\,\fz)$. Indeed, assume that $f\in\mathcal{M}_{\az,L}^{M,*}(\rn)$,
$\dz\in(0,\,\fz)$ and $\boz$ is an open set satisfying $|\boz|=\dz$. For
all $\vec{B}\in \mathcal{W}_{\boz}$, we
know, from Definition \ref{d3.1}, that there exists a positive constant
$\wz C\in(1,\,\fz)$ such that $\sum_{j\in\Lambda}|B_j|<
\frac{2M_0}{\wz C}|\boz|$. This implies that, for all open sets
$\wz\boz\subset \boz$ satisfying that
$|\wz\boz|=:\wz\dz\in[\frac{\dz}{\wz C},\,\dz)$,
$$\dsum_{j\in\Lambda}|B_j|< 2M_0|\wz\boz|,$$
which immediately shows that $\vec{B}\in
\mathcal{W}_{\wz\boz}$. Thus, from its definition,  it follows that
$$\mathcal{O}_{\mathcal{N}}(f,\,\boz) \le
\mathcal{O}_{\mathcal{N}}(f,\,\wz\boz)$$ and hence
$\omega_{\mathcal{N}}(\dz)\le
\omega_{\mathcal{N}}(\wz\dz)$ for all $\wz\dz\in
[\frac{\dz}{\wz C},\,\dz)$. This
implies that $\omega$ is decreasing.

Now, we introduce the notion of the weak Lipschitz space associated to $L$.

\begin{definition}\label{d3.2}
Let $\az\in[0,\,\fz)$, $\ez\in(0,\fz)$, $M\in\zz_+$
satisfy $M>\frac{1}{2}(\az+\frac{n}{2})$, and $L$ be
nonnegative self-adjoint and satisfy the Davies-Gaffney estimates.
The \emph{weak Lipschitz space}
$W\Lambda_{L,\mathcal{N}}^\az(\rn)$
is defined to be the space of all
functions $f\in \mathcal{M}_{\az,L}^{M,*}(\rn)$ such that
\begin{eqnarray*}
\|f\|_{W\Lambda_{L,\mathcal{N}}^\az(\rn)}
:=\dint_0^\fz \frac{\omega_{\mathcal{N}}
(\dz)}{\dz}\,d\dz<\fz,
\end{eqnarray*}
where $\omega_{\mathcal{N}}(\dz)$
for $\dz\in(0,\fz)$ is as in \eqref{3.6} and $\mathcal{N}\in(n(\frac{1}{p}-\frac{1}{2}),\,\fz)$.
\end{definition}

We also introduce the notion of the resolvent weak Lipschitz space. To
this end, we need another class of open sets as follows, which is a slight
variant of the class $\mathcal{W}_{\boz}$.

\begin{definition}\label{d3.3}
Let $\boz$ be a bounded open set in $\rn$ and $\Lambda$
an index set. A family
$\vec{B}:=\{B_j\}_{j\in\Lambda}$ of open sets is said to be in the
\emph{class} $\wz{\mathcal{W}}_{\boz}$, if

(i) $\boz\subset \cup_{j\in\Lambda} B_j$;

(ii) $r:=\inf_{j\in\Lambda} \{r_{B_j}\}>0$;

(iii) letting $\wz B_{j}:=\frac{1}{10\sqrt n}B_{j}$, then
$\{\wz B_{j}\}_{j\in\Lambda}$ are mutually disjoint;

(iv) there exists a positive constant $M_0$ such that
$\sum_{j\in\Lambda}|B_j|< {2M_0}|\boz|$.

\end{definition}

It is easy to see that, for any bounded open set $\boz$,
$\wz{\mathcal{W}}_{\boz}\subset {\mathcal{W}}_{\boz}$.
For all $\az\in[0,\,\fz)$, and $M\in\nn$ satisfying $M>\frac{1}{2}(\az+\frac{n}{2})$
and $r\in(0,\,\fz)$, let
\begin{eqnarray}\label{3.7}
\mathcal{B}_r:=\lf[I-\lf(I+r^{2}L\r)^{-1}\r]^{M}.
\end{eqnarray}
Assume that $f\in \mathcal{M}_{\az,L}^{M,*}(\rn)$ and $\dz\in(0,\,\fz)$.
Let
\begin{eqnarray*}
\mathcal{O}_{\mathrm{res}, \wz{\mathcal{N}}_{\boz}}
(f,\,\boz)&&:=\dsup_{i\in\zz_+}
\dsup_{\vec{B}\in\wz{\mathcal{W}}_{\boz}} 2^{-i\mathcal{N}}
\lf\{\frac{1}{|\boz|^{1+\frac{2\az}{n}}} \dint_{S_i(\vec{B})}
\lf|\mathcal{B}_{r}f(x)\r|^2 \,dx\r\}^{\frac{1}{2}},
\end{eqnarray*}
where $\mathcal{N}\in(n(\frac{1}{p}-\frac{1}{2}),\,\fz)$, $r:=\inf_{j\in\Lambda}\{r_{B_{j}}\}$
and $S_i(\vec{B})$ is as in \eqref{4.xx}.
Let also, for any $\dz\in(0,\,\fz)$,
\begin{equation}\label{3.8}
\omega_{\mathrm{res},\mathcal{N}}
(\dz):=\sup_{|\boz|=\dz}
\mathcal{O}_{\mathrm{res},\mathcal{N}}(f,\,\boz).
\end{equation}

\begin{definition}\label{d3.4}
Let $\az\in[0,\,\fz)$, $\ez\in(0,\fz)$, $M\in\zz_+$
satisfy $M>\frac{1}{2}(\az+\frac{n}{2})$, $\mathcal{N}\in(n(\frac{1}{p}-\frac{1}{2}),\,\fz)$,
and $L$ be nonnegative self-adjoint and satisfy the Davies-Gaffney estimates.
The \emph{resolvent weak Lipschitz space}
$W\Lambda_{L,\mathrm{res},{\mathcal{N}}}^\az(\rn)$
is then defined to be the
space of all functions $f\in \mathcal{M}_{\az,L}^{M,*}(\rn)$ satisfying
\begin{eqnarray*}
\|f\|_{W\Lambda_{L,\mathrm{res},{\mathcal{N}}}^\az(\rn)}
:=\dint_0^\fz\frac{\omega_{\mathrm{res},{\mathcal{N}}}
(\dz)}{\dz}\,d\dz<\fz.
\end{eqnarray*}
\end{definition}

We have the following relationship between the weak
Lipschitz space and the resolvent weak Lipschitz space.

\begin{proposition}\label{p3.5}
Let $\az\in[0,\,\fz)$ and $L$ be nonnegative self-adjoint
and satisfy the Davies-Gaffney estimates. Let ${\mathcal{N}}\in(n(\frac{1}{p}-\frac{1}{2}),\,\fz)$.
Then $W\Lambda_{L,{\mathcal{N}}}^{\az}(\rn)
\subset W\Lambda_{L,\mathrm{res},{\mathcal{N}}}^{\az}(\rn)$.
\end{proposition}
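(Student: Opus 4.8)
The plan is to establish the quantitative comparison
\begin{eqnarray*}
\mathcal{O}_{\mathrm{res},\mathcal{N}}(f,\boz)\ls \mathcal{O}_{\mathcal{N}}(f,\boz)
\end{eqnarray*}
for every $f\in W\Lambda_{L,\mathcal{N}}^\az(\rn)$ and every bounded open set $\boz$, with implicit constant independent of $f$ and $\boz$; by \eqref{3.6} and \eqref{3.8} this forces $\omega_{\mathrm{res},\mathcal{N}}(\dz)\ls\omega_{\mathcal{N}}(\dz)$ for all $\dz\in(0,\fz)$ and hence $\|f\|_{W\Lambda_{L,\mathrm{res},\mathcal{N}}^\az(\rn)}\ls\|f\|_{W\Lambda_{L,\mathcal{N}}^\az(\rn)}$, which, since both spaces are subspaces of $\mathcal{M}_{\az,L}^{M,*}(\rn)$, gives the inclusion. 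The first reduction uses the already observed relation $\wz{\mathcal{W}}_{\boz}\subset\mathcal{W}_{\boz}$: for any $\vec{B}\in\wz{\mathcal{W}}_{\boz}$ the radius $r:=\inf_{j}r_{B_j}$ and the annuli $S_i(\vec{B})$ of \eqref{4.xx} are admissible in the definition of $\mathcal{O}_{\mathcal{N}}(f,\boz)$, so it suffices, with such a $\vec{B}$ fixed, to compare $\mathcal{B}_r f$ and $\mathcal{A}_r f$ on the sets $S_i(\vec{B})$.

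The analytic input is the functional calculus identity
\begin{eqnarray*}
\mathcal{B}_r=\lf(I-(I+r^2L)^{-1}\r)^M=\Xi(r^2L)\,\mathcal{A}_r,\qquad \Xi(\mu):=\lf(\frac{\mu}{(1+\mu)(1-e^{-\mu})}\r)^M.
\end{eqnarray*}
I would check that $\Xi$ extends to a bounded holomorphic function on a sector $S_\nu^0$ with $\nu\in(0,\pi/2)$ (here the sectoriality of $L$ is used, so that $\Xi(r^2L)$ is bounded on $L^2(\rn)$ via the bounded $H_\fz$ functional calculus) and that $\Xi(\mu)\to1$ both as $\mu\to0$ and as $\mu\to\fz$, with the precise expansions $\Xi(\mu)-1\sim-M\mu/2$ near $0$ and $\Xi(\mu)-1\sim-M/\mu$ near $\fz$; in particular $\Xi-1\in\Psi_{1,1}(S_\nu^0)$. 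Writing $\mathcal{B}_r f=\mathcal{A}_r f+(\Xi(r^2L)-I)(\mathcal{A}_r f)$, the summand $\mathcal{A}_r f$ is controlled directly, since $2^{-i\mathcal{N}}\{|\boz|^{-1-\frac{2\az}{n}}\int_{S_i(\vec{B})}|\mathcal{A}_r f(x)|^2\,dx\}^{1/2}\le\mathcal{O}_{\mathcal{N}}(f,\boz)$ for all $i\in\zz_+$; thus everything reduces to estimating $(\Xi(r^2L)-I)(\mathcal{A}_r f)$ on each $S_i(\vec{B})$.

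Because the $\mu^{-1}$ tail of $\Xi-1$ yields only quadratic off-diagonal decay, which is too weak when $\mathcal{N}>2$, I would peel off finitely many rational pieces. Fix $L_0\in\nn$ with $2L_0>\mathcal{N}$ and match the asymptotic expansion of $\Xi(\mu)-1$ at infinity by a finite combination $\sum c_{m,\ell}\,\mu^{m}(1+\mu)^{-m-\ell}$ with all $m\ge1$ (so each piece still vanishes at $\mu=0$), leaving a remainder $\rho\in\Psi(S_\nu^0)$ that decays at infinity like $\mu^{-(L_0+1)}$; this gives
\begin{eqnarray*}
\Xi(r^2L)-I=\dsum c_{m,\ell}\,(r^2L)^m(I+r^2L)^{-m-\ell}+\rho(r^2L).
\end{eqnarray*}
Each summand $(r^2L)^m(I+r^2L)^{-m-\ell}$ satisfies the Davies--Gaffney estimates at scale $r$ (expand $(I+r^2L)^{-m-\ell}$ via $\int_0^\fz s^{m+\ell-1}e^{-s}e^{-sr^2L}\,ds$ and invoke Lemmas \ref{l2.3} and \ref{l2.4}), while $\rho(r^2L)$ obeys $\|\rho(r^2L)g\|_{L^2(F)}\ls(1+\dist(E,F)/r)^{-2L_0}\|g\|_{L^2(E)}$ for $g$ supported in $E$ (resolve $\rho$ into heat-type pieces $(sr^2L)^{N}e^{-sr^2L}$ and again use Lemma \ref{l2.3}). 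Hence $\Xi(r^2L)-I$ satisfies a scale-$r$ off-diagonal estimate of order $2L_0>\mathcal{N}$.

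To conclude, decompose $\mathcal{A}_r f=\sum_{i'\in\zz_+}\chi_{S_{i'}(\vec{B})}\mathcal{A}_r f$ (valid since $\cup_{i'\in\zz_+}S_{i'}(\vec{B})=\rn$) and use the elementary geometric fact that, since every ball of $\vec{B}$ has radius at least $r$, $\dist(S_i(\vec{B}),S_{i'}(\vec{B}))\gs 2^{\max\{i,i'\}}r$ whenever $|i-i'|\ge2$. Combining this with the off-diagonal estimate for $\Xi(r^2L)-I$ and the bound $\|\mathcal{A}_r f\|_{L^2(S_{i'}(\vec{B}))}\le 2^{i'\mathcal{N}}|\boz|^{\frac12(1+\frac{2\az}{n})}\mathcal{O}_{\mathcal{N}}(f,\boz)$, the $i'$-series converges (this is where $2L_0>\mathcal{N}$ is needed) and gives $2^{-i\mathcal{N}}\|(\Xi(r^2L)-I)(\mathcal{A}_r f)\|_{L^2(S_i(\vec{B}))}\ls|\boz|^{\frac12(1+\frac{2\az}{n})}\mathcal{O}_{\mathcal{N}}(f,\boz)$ uniformly in $i\in\zz_+$ and $\vec{B}\in\wz{\mathcal{W}}_{\boz}$; together with the estimate for $\mathcal{A}_r f$ this yields $\mathcal{O}_{\mathrm{res},\mathcal{N}}(f,\boz)\ls\mathcal{O}_{\mathcal{N}}(f,\boz)$. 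The step I expect to be most delicate is the functional-calculus bookkeeping of the third paragraph: squeezing, out of the merely polynomial $\mu^{-1}$ discrepancy between the resolvent and the heat semigroup, off-diagonal decay of high enough order to beat the $2^{i'\mathcal{N}}$ growth of $\|\mathcal{A}_r f\|_{L^2(S_{i'}(\vec{B}))}$ on distant annuli.
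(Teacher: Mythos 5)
Your argument is correct, but it follows a genuinely different route from the paper's.  The paper starts from the operator identity recalled from [hmm, (3.42)], which expresses $f$ as a sum of terms built out of $r^{-2}\int_{r}^{2^{1/2}r}s\,(I-e^{-s^2L})^M\,ds$ and of $L^{-1}e^{-\ell r^2L}(I-e^{-r^2L})(\sum_i e^{-ir^2L})$, then applies $\mathcal{B}_r$ to each piece and uses the Davies--Gaffney lemmas (Lemmas~\ref{l2.3}, \ref{l2.4}) for those explicit compositions; a mean-value step converts the $s$-average into $\mathcal{A}_{\wz r}$ for some $\wz r\in(r,2^{1/2}r)$, which is why the paper must pass from $\vec B\in\wz{\mathcal W}_\boz$ to the dilated family $C_0\vec B\in\mathcal W_\boz$.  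You instead exploit the multiplier factorization $\mathcal{B}_r=\Xi(r^2L)\mathcal{A}_r$, $\Xi(\mu)=\bigl(\mu/[(1+\mu)(1-e^{-\mu})]\bigr)^M$, which is correct (one checks $\Xi$ is bounded holomorphic on $S^0_\nu$ for any $\nu<\pi/2$, the poles of $1/(1-e^{-\mu})$ being on the imaginary axis), and estimate $(\Xi(r^2L)-I)\mathcal{A}_rf$ directly; since you work at the single scale $r$, no ball dilation is needed, which is a genuine simplification.  The geometric bound $\dist(S_i(\vec B),\,S_{i'}(\vec B))\gs 2^{\max\{i,i'\}}r$ for $|i-i'|\ge 2$ is correct (a point of $S_{i'}(\vec B)$ with $i'\le i-2$ lies in some $2^{i'}B_j$, while $S_i(\vec B)$ lies outside $2^{i-1}B_j$), and the numerology closes as soon as $2L_0>\mathcal{N}$.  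The one place you should supply a reference or a short proof is the polynomial off-diagonal bound for $\rho(r^2L)$ when $\rho\in\Psi_{1,L_0+1}(S^0_\nu)$: this is a standard consequence of Davies--Gaffney together with a Cauchy-integral (or heat-kernel resolution) representation of $\rho(r^2L)$ --- see, e.g., the off-diagonal lemmas of \cite{hmm} --- but it is not among the Lemmas~\ref{l2.3}--\ref{l2.5} the paper records, so it needs to be quoted explicitly.  A final minor point, which the paper also elides, is that $\mathcal{A}_rf$ is only a function in $L^2_{\loc}$ with polynomial growth for $f\in\mathcal{M}^{M,*}_{\az,L}(\rn)$, so the termwise application of $\Xi(r^2L)-I$ to $\sum_{i'}\chi_{S_{i'}(\vec B)}\mathcal{A}_rf$ should be justified using precisely the off-diagonal decay you establish; this is routine once that decay is in hand.
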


\begin{proof}
We prove this proposition by showing that, for all $f\in
W\Lambda_{L,{\mathcal{N}}}^{\az}(\rn)$,
\begin{eqnarray*}
\lf\|f\r\|_{W\Lambda_{L, \mathrm{res}, {\mathcal{N}}}^{\az}(\rn)}
 \ls\lf\|f\r\|_{W\Lambda_{L,{\mathcal{N}}}^{\az}(\rn)}.
\end{eqnarray*}

By an argument similar to that used in the proof of
\cite[(3.42)]{hmm},
we see that
\begin{eqnarray*}
f&&=2^M\lf[r^{-2}\dint_{r}^{2^{1/2}r}
s\lf(I-e^{-s^{2}L}\r)^M\,ds+\dsum_{\ell=1}^M
\binom{M}{\ell}r^{-2}L^{-1}e^{
-\ell r^{2}L}\r.\\
\nonumber &&\lf.\hs\circ \lf(I-e^{-r^{2}L}\r)
\lf(\dsum_{i=0}^{\ell-1}e^{-ir ^{2}L}\r)\r]^Mf\\
\nonumber &&=2^M\lf[r^{-2}\dint_{r}^{2^{1/2}r}s
\lf(I-e^{-s^{2}L}\r)^M\,ds\r]^Mf\\
\nonumber &&\hs+2^M\binom{M}{1}^{\ell_1}
\cdots\binom{M}{M-1}^{\ell_{M-1}} \dsum_{
\gfz{\ell_0+\cdots+\ell_M=M}{\ell_0<M,\ell_M<M}}
\binom{M}{\ell_0,\ldots,\ell_M}\\
&&\hs\times
\lf[r^{-2}\dint_{r}^{ 2^{1/2}r^{2}}s
\lf(I-e^{-s^{2}L}\r)^M\,ds\r]^{\ell_0}\\
\nonumber
&&\hs\times\cdots\times\lf[r^{-2}L^{-1}e^{-\ell
r^{2}L}\lf(I-e^{-r^{2} L}\r)\lf(\dsum_{i=0}^{M-1}
e^{-ir^{2}L}\r)\r]^{\ell_M}f\\
\nonumber&&\hs+2^M\lf[r^{-2}L^{-1}e^{-\ell
r^{2}L}\lf(I-e^{-r^{2}L}\r)\lf(\dsum_{i=0}^{M-1}e^{-ir^{2}L}\r)\r]^{M}f\\
\nonumber&&=:\mathrm{A}_{0}f+\dsum_{\gfz{\ell_0+\cdots+\ell_M=M}
{\ell_0<M,\ell_M<M}}\mathrm{A}_{\ell_0,\ldots,\ell_M}f+
\mathrm{A}_{M}f,
\end{eqnarray*}
where $\binom{M}{\ell}$ denotes the binomial coefficients.

For all $i\in\zz_+$, $\vec{B}\in\wz{\mathcal{W}}_{\boz}$ and
$\mathcal{N}\in(n(\frac{1}{p}-\frac{1}{2}),\,\fz)$, we first estimate
\begin{eqnarray*}
\mathrm{D}:=2^{-i\mathcal{N}}
\lf\{\frac{1}{|\boz|^{1+\frac{2\az}{n}}}
\dint_{S_i(\vec{B})}\lf| \mathcal{B}_{r}
\mathrm{A}_{\ell_0,\ldots,\ell_M}f(x)\r|^2\,dx\r\}^{\frac{1}{2}}.
\end{eqnarray*}
Let $$\wz{A}_{\ell_0,\ldots,\ell_M}:=\lf(r^{2}
L\r)^{\ell_1+\cdots+\ell_M}
\mathrm{A}_{\ell_0,\ldots,\ell_M}\lf(r^{-2}
\int_{r}^{2^{1/2}r }s\mathcal{A}_{s}\,ds\r)^{-1}.$$ From the
functional calculus and the fact that $\{e^{-tL}\}_{t>0}$ satisfies the
Davies-Gaffney estimates, we deduce that $\mathcal{B}_{r}$ satisfies
the Davies-Gaffney estimates with $t\sim r^{2}$, namely,
there exists a positive constant $C_1$ such that, for all
closed sets $E$ and $F$ in $\rn$, $t\in(0,\,\fz)$ and $f\in
L^2(\rn)$ supported in $E$,
\begin{eqnarray*}
\lf\|\mathcal{B}_{r}f\r\|_{L^2(F)}\ls
\exp\lf\{-C_1\frac{\lf[\dist(E,\,F)\r]^{2}}
{r^{2}}\r\}\|f\|_{L^2(E)}.
\end{eqnarray*}
Moreover, from Lemmas \ref{l2.3} and \ref{l2.4},
it follows that $\wz{A}_{\ell_0,\ldots,\ell_M}$ also
satisfies the Davies-Gaffney estimates with $t\sim r^{2}$.
Similarly, $(r^{-2}L^{-1})^{\ell_1+\cdots+\ell_M}\mathcal{B}_{r}$ also satisfies the Davies-Gaffney estimates with $t\sim r^2$.

Now, let
\begin{eqnarray}\label{3.x6}
F_{s}:=\frac{1}{r^{2}}
\dint_{r}^{2^{1/2} r} s
\lf(I-e^{-s^{2}L}\r)^M f\,ds.
\end{eqnarray}
By Minkowski's inequality, the Davies-Gaffney estimates
and H\"older's inequality, we know that
there exists a positive constant $\az_3\in(0,\,\fz)$ such that
\begin{eqnarray*}
\mathrm{D}&&\ls
2^{-i\mathcal{N}}
\lf\{\frac{1}{|\boz|^{1+\frac{2\az}{n}}}
\dint_{S_i(\vec{B})} \lf|\lf[
\lf(r^{-2}L^{-1}\r)^{\ell_1+\cdots+\ell_M} \mathcal{B}_{r}
\r] \wz{\mathrm{A}}_{\ell_0,\ldots,\ell_{M}}
F_{s}(x)\r|^2\,dx\r\}^{\frac{1}{2}}\\ \nonumber&&\ls
2^{-i\mathcal{N}}
\lf\{ \frac{1}{|\boz|^{1+\frac{2\az}{n}}}
\lf[\dsum_{\ell\in\zz_+}\lf\{\dint_{S_i(\vec{B})}\lf| \lf[
\lf(r^{-2}L^{-1}\r)^{\ell_1+\cdots+\ell_M} \mathcal{B}_{r}\r]\r.\r.\r.\r.\\
&&\hs\lf.\lf.\lf.\times \wz
{\mathrm{A}}_{\ell_0,\ldots,\ell_{M}}\lf(\chi_{S_\ell(\vec{B})}
F_{s}\r)(x)\r|^2\,dx\Bigg\}^{1/2} \r]^2\r\}^{\frac{1}{2}}\\
\nonumber&&\ls 2^{-i \mathcal{N}}
\lf\{\frac{1}{|\boz|^{1+\frac{2\az}{n}}}
\lf\{\dsum_{\ell=i-1}^{i+1}\lf[\dint_{S_\ell(\vec{B})}\lf|
F_{s}(x)\r|^2\,dx\r]^{1/2}\r.\r.\\
&&\lf.\lf.\hs +\dsum_{\gfz{\ell\in \zz_+}{\ell\ge i+2 \, \text{or}\, \ell\le i-2}}
2^{-\ell(\mathcal{N}+\az_3)}\lf[\dint_{S_\ell(\vec{B})}
\lf|F_{s}(x)\r|^2\,dx\r]^{1/2}\r\}^2\r\}^{\frac{1}{2}}\\ \nonumber&&\ls
\lf\{ \frac{1}{|\boz|^{1+\frac{2\az}{n}}} \lf[\dsum_{\ell=i-1}^{i+1}
2^{-2\ell\mathcal{N}}+\dsum_{\gfz{\ell\in\zz_+}{\ell\ge i+2\, \text{or}\,\ell\le
i-2}}2^{-2\ell(\mathcal{N}+\az_3)}\r]
\dint_{S_\ell(\vec{B})}\lf|F_{s}(x)\r|^2\,dx\r\}^{\frac{1}{2}}\\
\nonumber&&\ls \dsup_{i\in\zz_+} 2^{-i\mathcal{N}}
\lf\{\frac{1}{|\boz|^{1+\frac{2\az}{n}}} \dint_{ S_i(\vec{B})}
\lf|F_{s}(x)\r|^2\,dx\r\}^{\frac{1}{2}}=:\wz{\mathrm{D}}.
\end{eqnarray*}

We now estimate $\wz{\mathrm{D}}$. By \eqref{3.x6},
Minkowski's integral inequality and the mean
value theorem for integrals, we see that
there exists a positive constant
$\wz r\in(r,\,2^{\frac{1}{2}}r)$ such that
\begin{eqnarray}\label{3.x7}
\wz{\mathrm{D}}&&\ls \dsup_{i\in\zz_+}
2^{-i\mathcal{N}} \lf\{\frac{1}{|\boz|^{1+\frac{2\az}{n}}} \dint_{S_{i}(\vec{B})}
\lf| \frac{1}{r^2}\dint_{r}^{2^{1/2} r}s\lf(I-e^{-s^{2}L}\r)^M
f(x)\,ds \r|^2\,dx\r\}^{\frac{1}{2}}\\
&&\nonumber\ls \dsup_{i\in\zz_+}
2^{-i \mathcal{N}} \frac{1}{r^{2}} \dint_{r}^{2^{1/2} r}
s\lf\{\frac{1}{|\boz|^{1+\frac{2\az}{n}}} \dint_{S_i(\vec{B})}\lf|\lf(I-e^{-s^{2}L}\r)^M
 f(x)\r|^2\,dx\r\}^{\frac{1}{2}}\,ds\\
 &&\nonumber\sim \dsup_{i\in\zz_+}
2^{-i\mathcal{N}}\lf\{\frac{1}{|\boz|^{1+\frac{2\az}{n}}}\dint_{S_i(\vec{B})}\lf|
\lf(I-e^{-\wz {r}^{2}L}\r)^Mf(x)\r|^2\,dx\r\}^{\frac{1}{2}}.
\end{eqnarray}

Moreover, since $\vec{B}\in \wz{W}_{\boz}$, if
let $C_0\in(1,\,2^{1/2})$ satisfying $\wz r=C_0r$, then it
is easy to see that $C_0\vec{B}:=\{C_0B_j\}_{j\in\Lambda}\in W_{\boz}$ and
\begin{eqnarray*}
r_{C_0\vec{B}}:=\dinf_{j\in\Lambda}\lf\{r_{C_0B_j}\r\}=C_0r=\wz r,
\end{eqnarray*}
which, together with \eqref{3.x7} and the fact that $S_i(C_0\vec{B})\subset \cup_{j=i-1}^{i+1}
S_j(C_0\vec{B})$, implies that
\begin{eqnarray*}
\wz{\mathrm{D}}&&\ls \dsup_{i\in\zz_+}
2^{-i\mathcal{N}}\lf\{\frac{1}{|\boz|^{1+\frac{2\az}{n}}} \dsum_{j=i-1}^{i+1}
\dint_{S_j(\vec{B})}\lf|\lf(I-e^{-\wz {r}^{2}L}\r)^Mf(x)\r|^2\,dx\r\}^{\frac{1}{2}}\\
&&\ls \dsup_{i\in\zz_+}\dsup_{\vec{B}\in W_\boz}
2^{-i\mathcal{N}}\lf\{\frac{1}{|\boz|^{1+\frac{2\az}{n}}}
\dint_{S_i(\vec{B})}\lf|\lf(I-e^{-r^{2}L}\r)^Mf(x)\r|^2\,dx\r\}^{\frac{1}{2}},
\end{eqnarray*}
where $r:=\dinf_{j\in\Lambda}\{r_{B_j}\}$. This, together with \eqref{3.5}, implies that
\begin{eqnarray*}
\mathrm{D}\ls \mathcal{O}_\mathcal{N}(f,\,\boz).
\end{eqnarray*}
Thus, for all $\ell_0+\cdots+\ell_M=M$
satisfying $\ell_0<M$ and $\ell_M<M$, we have
\begin{eqnarray*}
\mathcal{O}_{\mathrm{res}, {\mathcal{N}}_{\boz}}(\mathrm{A}_{\ell_0,\ldots,\ell_M}f,\,\boz)
=\sup_{i\in\zz}\dsup_{\vec{B}\in \wz{W}_\boz}\mathrm{D}\ls
\mathcal{O}_{\mathcal{N}}(f,\,\boz).
\end{eqnarray*}
This implies that $\|\mathrm{A}_{\ell_0,\ldots,\ell_M}f\|_{W\Lambda_{L,\mathrm{res},
\mathcal{N}}^{\az}(\rn)} \ls
\|f\|_{W\Lambda_{L,\mathcal{N}}^{\az}(\rn)}$.
Similarly, we also have
$$\|\mathrm{A}_{0}f\|_{W\Lambda_{L,\mathrm{res},
\mathcal{N}}^{\az}(\rn)} \ls
\|f\|_{W\Lambda_{L,\mathcal{N}}^{\az}(\rn)}$$
and
$\|\mathrm{A}_{M}f\|_{W\Lambda_{L,\mathrm{res},
\mathcal{N}}^{\az}(\rn)} \ls
\|f\|_{W\Lambda_{L,\mathcal{N}}^{\az}(\rn)}$,
which completes the proof of Proposition \ref{p3.5}.
\end{proof}

Now we state the main result of this section.

\begin{theorem}\label{t3.6}
Let $p\in(0,\,1]$ and $L$ be a nonnegative self-adjoint
operator in $L^2(\rn)$ satisfying the Davies-Gaffney estimates.
Then, for all ${\mathcal{N}}\in(n(\frac{1}{p}-\frac{1}{2}),\,\fz)$,
$(WH_L^p(\rn))^*=W\Lambda_{L,{\mathcal{N}}}^{n(\frac{1}{p}-1)}(\rn)$.
\end{theorem}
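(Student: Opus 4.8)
The plan is to prove the two inclusions $(WH_L^p(\rn))^*\subset W\Lambda_{L,{\mathcal{N}}}^{n(1/p-1)}(\rn)$ and $W\Lambda_{L,{\mathcal{N}}}^{n(1/p-1)}(\rn)\subset(WH_L^p(\rn))^*$ separately, working with the dense subspace $WH_L^p(\rn)\cap L^2(\rn)$ and its weak atomic/molecular representations from Theorems \ref{t2.16} and \ref{t2.21} (in the self-adjoint case, $(p,2,M)_L$-atoms via Definition \ref{d2.15}). Throughout, $\az:=n(1/p-1)$ and $M\in\zz_+$ is fixed with $M>\frac12(\az+\frac n2)$. The guiding principle is that a bounded linear functional on $WH_L^p(\rn)$ should, after a Calder\'on reproducing formula, pair against functions in the weak tent space, and the atomic decomposition of Theorem \ref{t2.12} then converts this pairing into a sum over the annuli $S_i(\vec B)$ attached to the special coverings $\vec B\in\mathcal{W}_\boz$.

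For the inclusion $W\Lambda_{L,{\mathcal{N}}}^{\az}(\rn)\subset(WH_L^p(\rn))^*$, I would fix $f\in W\Lambda_{L,{\mathcal{N}}}^{\az}(\rn)$ and, via Proposition \ref{p3.5}, also regard $f$ as an element of the resolvent weak Lipschitz space. Given $g\in WH_L^p(\rn)\cap L^2(\rn)$, I would apply Remark \ref{r2.18}, i.e. the $L^2$-convergent weak atomic decomposition $g=\sum_{i\in\mathcal{I},j\in\Lambda_i}\lz_{i,j}a_{i,j}$ arising from Theorem \ref{t2.12}, where for each $i$ the balls $\{B_{i,j}\}_{j\in\Lambda_i}$ essentially cover $O_i^*=\{x:\ S_Lg(x)>2^i\}$, have a uniform lower bound $r_i>0$ on radii, bounded overlap of the dilates $\wz B_{i,j}$, and $\sum_j|\lz_{i,j}|^p\ls\|g\|_{WH_L^p(\rn)}^p$. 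Crucially, for fixed $i$, the family $\vec B_i:=\{B_{i,j}\}_{j\in\Lambda_i}$ lies in $\mathcal{W}_{O_i^*}$ (up to the harmless numerical factor in Definitions \ref{d3.1} and \ref{d3.3}), so the mean oscillation $\mathcal{O}_{\mathcal{N}}(f,O_i^*)$ controls $\|\mathcal{A}_r f\|_{L^2(S_\ell(\vec B_i))}$. Pairing $\langle f,a_{i,j}\rangle$ by writing $a_{i,j}=L^M b_{i,j}$, inserting $\mathcal{A}_r$ (or $\mathcal{B}_r$) using self-adjointness of $L$, moving it onto $f$, then summing over $j$ using bounded overlap and over the annuli $\ell$ using the decay factor $2^{-\ell\mathcal{N}}$ against the molecular/atomic off-diagonal bounds (exactly the kind of estimate in the proof of Proposition \ref{p3.5}), gives $|\sum_j\lz_{i,j}\langle f,a_{i,j}\rangle|\ls |O_i^*|\,\mathcal{O}_{\mathcal{N}}(f,O_i^*)\ls |O_i^*|\,\omega_{\mathcal{N}}(|O_i^*|)$. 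Since $|O_i^*|\sim|O_i|$ and, by the layer-cake identity for $\|S_Lg\|_{WL^p}$, $2^{ip}|O_i|\ls\|g\|_{WH_L^p}^p$, a dyadic-summation argument using monotonicity of $\omega_{\mathcal{N}}$ (the step where $\{|O_i|\}$ is replaced by the sparsified subsequence indexed by $\mathcal{I}$ and $\wz{\mathcal{I}}$ from Remark \ref{r2.10}) converts $\sum_i|O_i|\,\omega_{\mathcal{N}}(|O_i|)$ into $\int_0^\infty\omega_{\mathcal{N}}(\dz)\,\dz^{-1}\,d\dz=\|f\|_{W\Lambda_{L,{\mathcal{N}}}^{\az}(\rn)}$ times $\|g\|_{WH_L^p(\rn)}$. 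This shows $f$ defines a bounded functional.

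For the converse inclusion, I would take $T\in(WH_L^p(\rn))^*$. Since $H_L^p(\rn)\cap L^2(\rn)\hookrightarrow WH_L^p(\rn)$ and, for each fixed ball $B$, the normalized $(p,2,M)_L$-atoms supported near $B$ form a bounded set, $T$ restricted to $L^2(B)$-type data is a bounded functional, so by the $L^2$-duality and the reproducing formula from \cite{hm09} (or \cite{hmm}) there is a function $f$ on $\rn$, belonging locally to the relevant $L^2$-spaces, such that $T(g)=\langle g,f\rangle$ for $g\in WH_L^p(\rn)\cap L^2(\rn)$; one then checks $f\in\mathcal{M}_{\az,L}^{M,*}(\rn)$ by testing against molecules. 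To bound $\|f\|_{W\Lambda_{L,{\mathcal{N}}}^{\az}(\rn)}$, fix a bounded open set $\boz$, $i\in\zz_+$, and $\vec B=\{B_j\}_{j\in\Lambda}\in\mathcal{W}_\boz$ with $r=\inf_j r_{B_j}$; I would test $T$ against the normalized object $h:=\frac{1}{|\boz|^{?}}\,\mathcal{A}_r f\cdot\chi_{S_i(\vec B)}$ processed through $L^M$ so that it becomes (a sum of, over $j$) $(p,2,M)_L$-atoms, and estimate $\|h\|_{WH_L^p(\rn)}$ from above using the bounded-overlap property of $\{\wz B_j\}$ and the superposition principle Lemma \ref{l2.18} together with Theorem \ref{t2.16}; this yields $2^{-i\mathcal{N}}\{\frac{1}{|\boz|^{1+2\az/n}}\int_{S_i(\vec B)}|\mathcal{A}_r f|^2\}^{1/2}\ls\|T\|$. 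Taking suprema gives $\mathcal{O}_{\mathcal{N}}(f,\boz)\ls\|T\|$, hence $\omega_{\mathcal{N}}(\dz)\ls\|T\|$ for every $\dz$, which is \emph{not} summable on its own; the integrability $\int_0^\infty\omega_{\mathcal{N}}(\dz)\dz^{-1}d\dz<\infty$ must instead be extracted by testing against data whose $S_L$-level sets have prescribed nested measures $\dz_k$, i.e. a genuine weak-type pairing, exactly mirroring the Fefferman--Soria argument for $WH^1$.

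**Main obstacle.** The hard part will be the converse direction: producing from $T\in(WH_L^p(\rn))^*$ the single representing distribution $f$ with the \emph{integrated} oscillation bound $\int_0^\infty\omega_{\mathcal{N}}(\dz)\dz^{-1}d\dz<\infty$, rather than merely the pointwise bound $\omega_{\mathcal{N}}(\dz)\ls\|T\|$. This requires building, for a near-extremal sequence of open sets, test functions that are weak atomic combinations whose $WH_L^p$-quasinorm is controlled by a \emph{geometric} (not just bounded) sum over the relevant scales — precisely the role of the sparsified index sets $\mathcal I,\wz{\mathcal I}$ of Remark \ref{r2.10} and the subtle coverings of Definition \ref{d3.1}. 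The secondary difficulty, pervasive in both directions, is that the annuli $S_i(\vec B)$ are built from balls of \emph{different} radii, so the off-diagonal (Davies--Gaffney) estimates cannot be applied on a single ball as in the strong Lipschitz case of \cite{hmm}; one must run the annulus-by-annulus bookkeeping with the decay against $2^{-i\mathcal{N}}$ carefully, as already rehearsed in the proof of Proposition \ref{p3.5}.
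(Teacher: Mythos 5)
Your plan coincides, at the level of strategy, with the paper's: both inclusions run through the weak atomic decomposition of Theorem \ref{t2.12}, the sparsified index sets $\mathcal{I},\,\wz{\mathcal{I}}$ of Remark \ref{r2.10}, and annulus-by-annulus Davies--Gaffney bookkeeping of the kind rehearsed in Proposition~\ref{p3.5}. For the embedding $W\Lambda_{L,\mathcal{N}}^{\az}(\rn)\subset(WH_L^p(\rn))^*$ you pair atom by atom, whereas the paper routes through the Calder\'on reproducing formula of \cite[Lemma 8.4]{hm09} into a tent-space pairing and then invokes the weak Carleson measure bound of Proposition~\ref{p3.7}; either organization works, but your displayed intermediate inequality is dimensionally off -- the $j$-sum actually produces $\lf|\sum_j\lz_{i,j}\langle f,a_{i,j}\rangle\r|\ls 2^i|O_i^*|^{1/p}\,\omega_{\mathcal{N}}(|O_i^*|)$, and it is the prefactor $2^i|O_i^*|^{1/p}\ls\|g\|_{WH_L^p(\rn)}$ that gets absorbed, leaving $\sum_{i\in\mathcal{I}}\omega_{\mathcal{N}}(|O_i^*|)\sim\int_0^\fz\omega_{\mathcal{N}}(\dz)\,\dz^{-1}d\dz$, not your $\sum_i|O_i|\,\omega_{\mathcal{N}}(|O_i|)$.

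The genuine gap is in the converse inclusion, and it sits exactly where you yourself flag ``the hard part.'' You correctly observe that testing the functional $T$ against one normalized annulus object only yields the uniform bound $\omega_{\mathcal{N}}(\dz)\ls\|T\|$, which gives nothing for the \emph{integral} $\int_0^\fz\omega_{\mathcal{N}}(\dz)\,\dz^{-1}d\dz$, and you correctly identify that the near-extremizers must be collected over scales into a single admissible element of $WH_L^p(\rn)$. But the decisive estimate -- that the scale sum $\sum_{l=-N}^{N}f_l$, with $f_l$ the $L^2$-dualized extremizer built on an open set $\boz_l$ of measure $\sim 2^{-l}$, has $WH_L^p(\rn)$ quasi-norm bounded uniformly in $N$ -- is only asserted to ``mirror Fefferman--Soria'' and is never carried out. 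That estimate is the core of the theorem. The paper proves it by fixing a threshold $l_0$ from the measure of the level set $\{S_L(\sum_lf_l)>\wz\az\}$ (so $\wz\az^p\sim 2^{l_0}$), splitting $\sum_lf_l$ into the ranges $l\le l_0$ and $l>l_0$, and controlling the two halves with distinct Lebesgue exponents (a $q\in(p,2)$ together with $L^q$-boundedness of $S_L$ for the low-$l$ piece; a $\wz q\in(0,p)$ together with Lemma~\ref{l2.18} for the high-$l$ piece), each time pushing the off-diagonal decay through the heterogeneous-radius annuli $S_\ell(\vec B_l)$. Until that computation is done, the inclusion $(WH_L^p(\rn))^*\subset W\Lambda_{L,\mathcal{N}}^{n(1/p-1)}(\rn)$ is not established.
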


To prove Theorem \ref{t3.6}, we first introduce the following notion of weak
Carleson measures.

\begin{definition}\label{d3.x3}
Let $\boz$ be a bounded open set in $\rn$ and $\Lambda$
an index set. A family
$\vec{B}:=\{B_j\}_{j\in\Lambda}$ of open sets is said to be in the
\emph{class} $\wz{\wz{\mathcal{W}}}_{\boz}$, if

(i) $\boz\subset \cup_{j\in\Lambda} B_j$;

(ii) $r:=\inf_{j\in\Lambda} \{r_{B_j}\}>0$;

(iii) letting $\wz B_{j}:=\frac{1}{10\sqrt n}B_{j}$, then
$\{\wz B_{j}\}_{j\in\Lambda}$ are mutually disjoint;

(iv) there exists a positive constant $M_0$ such that $\sum_{j\in\Lambda}|B_j|< {2M_0}|\boz|$.

(v) letting $Q_{j}$ for any $j\in\Lambda$ be the closed cube having the same center as $B_{j}$ with the length
$\frac{r_{B_j}}{5\sqrt n}$, then $\{Q_j\}_{j\in\Lambda}$ are mutually disjoint and
$\{2Q_j\}_{j\in \Lambda}$ have bounded overlap. Moreover,
$\boz\subset\cup_{j\in \Lambda}Q_i$.

\end{definition}

From the argument below \eqref{2.5}, it follows that, for any bounded
open set $\boz$ with $|\boz|\in(0,\,\fz)$, $\wz{\wz{\mathcal{W}}}_\boz\ne\emptyset$.
Moreover, $\wz{\wz{\mathcal{W}}}_{\boz}\subset{\wz{\mathcal{W}}}_{\boz}
\subset \mathcal{W}_{\boz}$.

Now, let $\az\in[0,\,\fz)$ and $\mu$ be a positive
measure on $\rr^{n+1}_+$. For all bounded open sets $\boz$, let
\begin{eqnarray*}
\mathcal{C}_\az(\mu,\,\boz):=\sup_{\vec{B}\in
\wz{\wz{\mathcal{W}}}_{\boz}}\lf\{\frac{1}{|\boz|^{1+\frac{2\az}{n}}}
\mu\lf(\bigcup_{j\in\Lambda}
\widehat{B}_{j}\cap (Q_j\times (0,\,\fz))\r)\r\}^{\frac{1}{2}},
\end{eqnarray*}
where $\wz{\wz{\mathcal{W}}}_{\boz}$ is as in Definition \ref{d3.x3}.

For all $\dz\in(0,\,\fz)$, let
\begin{eqnarray}\label{3.9}
\omega_{\mathcal{C}}(\dz):=\sup_{|\boz|=\dz}
\mathcal{C}_\az(\mu,\,\boz).
\end{eqnarray}
 Then $\mu$ is called a \emph{weak
Carleson measure of order $\az$}, denoted by $\mathcal{C}_\az$, if
\begin{eqnarray*}
\|\mu\|_{\mathcal{C}_\az}:=\dint_0^\fz\frac{\omega_{\mathcal{C}}(\dz)}
{\dz}\,d\dz<\fz.
\end{eqnarray*}

Recall that, if $L$ is nonnegative self-adjoint and satisfies the Davies-Gaffney
estimates, then $L$ satisfies the
\emph{finite speed propagation property for solutions of the corresponding wave equation},
namely, there exists a positive constant $C_0$ such that
\begin{eqnarray}\label{3.x9}
(\cos(t \sqrt L)f_1 , f_2) =0
\end{eqnarray}
for all closed sets $U_1$, $U_2$, $0 <C_0 t < \dist(U_1,\, U_2)$,
 $f_1\in L^2(U_1)$ and
$f_2\in L^2(U_2)$ (see, for example, \cite{hlmmy}).
Moreover, we have the following conclusion, which can be deduced from
\cite[Lemma 3.5]{hlmmy} with a slight modification, the details being omitted.

\begin{lemma}[\cite{hlmmy}]\label{l3.8}
Let $L$ be nonnegative self-adjoint and satisfy the Davies-Gaffney
estimates. Assume that $\fai\in C_0^\fz(\rn)$ is even, $\supp \fai
\subset (-C_1^{-1},\,C_1^{-1})$, where $C_1\in(0,\,1)$ is a positive constant.
Let $\Phi$ be the Fourier transform of $\fai$. Then, for all $k\in\zz_+$
and $t\in(0,\,\fz)$, the kernel $K_{(t^2L)^k\Phi(t\sqrt{L})}(x,\,y)$
of the operator $(t^2L)^k\Phi(t\sqrt{L})$ satisfies
\begin{eqnarray}
\supp K_{(t^2L)^k\Phi(t\sqrt{L})}(x,\,y)\subset
\lf\{(x,\,y)\in \rn\times\rn:\ d(x,\,y)\le \frac{C_0}{C_1}t\r\}.
\end{eqnarray}
\end{lemma}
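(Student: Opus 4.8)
The plan is to reduce the statement, via the standard cosine representation of even spectral multipliers of $\sqrt L$, to the finite speed propagation property \eqref{3.x9}, as in the proof of \cite[Lemma 3.5]{hlmmy}; the only new feature here is that the prefactor $(t^{2}L)^{k}$ has to be absorbed, which I would do by replacing $\varphi$ with one of its derivatives.

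First I would observe that, since $L$ is nonnegative self-adjoint, $(t^{2}L)^{k}=(t\sqrt L)^{2k}$ and hence $(t^{2}L)^{k}\Phi(t\sqrt L)=\Psi(t\sqrt L)$, where $\Psi(\eta):=\eta^{2k}\Phi(\eta)$. Putting $\psi:=(-1)^{k}\varphi^{(2k)}$, the elementary Fourier identity $\widehat{\varphi^{(2k)}}(\eta)=(i\eta)^{2k}\widehat\varphi(\eta)$ gives $\Psi=\widehat\psi$; moreover $\psi\in C_{0}^{\fz}(\rr)$ is still even (an even-order derivative of an even function being even) and $\supp\psi\subset\supp\varphi\subset(-C_{1}^{-1},\,C_{1}^{-1})$. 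Since $\psi$ is even, its Fourier transform satisfies $\Psi(\eta)=c\int_{\rr}\psi(\xi)\cos(\xi\eta)\,d\xi$ for a positive constant $c$ depending only on the fixed normalization of the Fourier transform, so that, by the spectral theorem for $L$,
\begin{eqnarray*}
(t^{2}L)^{k}\Phi(t\sqrt L)=\Psi(t\sqrt L)=c\dint_{|\xi|<C_{1}^{-1}}\psi(\xi)\cos\lf(t\xi\sqrt L\r)\,d\xi,
\end{eqnarray*}
the integral converging in $\cl(L^{2}(\rn))$ because $\psi$ is bounded with compact support and $\|\cos(t\xi\sqrt L)\|_{\cl(L^{2}(\rn))}\le1$.

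Next I would pair this operator with functions having small supports. Fix $x_{0},\,y_{0}\in\rn$ with $d(x_{0},\,y_{0})>\frac{C_{0}}{C_{1}}t$, choose $\rho\in(0,\,\fz)$ with $d(x_{0},\,y_{0})-2\rho>\frac{C_{0}}{C_{1}}t$, and set $B_{1}:=B(x_{0},\,\rho)$ and $B_{2}:=B(y_{0},\,\rho)$. For any $f_{1},\,f_{2}\in L^{2}(\rn)$ supported, respectively, in $B_{1}$ and $B_{2}$, the representation above gives
\begin{eqnarray*}
\lf\laz(t^{2}L)^{k}\Phi(t\sqrt L)f_{1},\,f_{2}\r\raz=c\dint_{|\xi|<C_{1}^{-1}}\psi(\xi)\lf\laz\cos\lf(t\xi\sqrt L\r)f_{1},\,f_{2}\r\raz\,d\xi.
\end{eqnarray*}
For $0<|\xi|<C_{1}^{-1}$ one has $0<C_{0}t|\xi|<\frac{C_{0}}{C_{1}}t<\dist(\oz{B_{1}},\,\oz{B_{2}})$, and hence $\laz\cos(t\xi\sqrt L)f_{1},\,f_{2}\raz=0$ by \eqref{3.x9} (applied with $U_{1}=\oz{B_{1}}$, $U_{2}=\oz{B_{2}}$, $s=t|\xi|$, and the evenness of $\cos$); for $\xi=0$ the pairing equals $\laz f_{1},\,f_{2}\raz=0$ since $B_{1}\cap B_{2}=\emptyset$. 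Thus $\laz(t^{2}L)^{k}\Phi(t\sqrt L)f_{1},\,f_{2}\raz=0$ for all such $f_{1}$ and $f_{2}$, which means that the Schwartz kernel $K_{(t^{2}L)^{k}\Phi(t\sqrt L)}$ vanishes, as a distribution, on $B_{1}\times B_{2}$. Since every point of the open set $\{(x,\,y)\in\rn\times\rn:\ d(x,\,y)>\frac{C_{0}}{C_{1}}t\}$ admits such a product neighborhood, taking the union over all admissible $(x_{0},\,y_{0})$ yields the claimed inclusion $\supp K_{(t^{2}L)^{k}\Phi(t\sqrt L)}\subset\{(x,\,y):\ d(x,\,y)\le\frac{C_{0}}{C_{1}}t\}$.

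I expect the only genuinely delicate point to be the bookkeeping for the operator-valued integral: one must know that $(t^{2}L)^{k}\Phi(t\sqrt L)$ is a bounded operator on $L^{2}(\rn)$ possessing an (a priori distributional) Schwartz kernel, and that $\int\psi(\xi)\cos(t\xi\sqrt L)\,d\xi$ may legitimately be moved inside the inner product with $f_{2}$. Both are routine consequences of the $L^{2}$-boundedness of the multiplier $\eta\mapsto\eta^{2k}\Phi(\eta)$ together with the spectral theorem (and, in fact, combining them with the Davies-Gaffney estimates even yields a genuine locally square-integrable kernel with Gaussian-type off-diagonal decay, although only the support statement is needed here). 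This is precisely the ``slight modification'' of \cite[Lemma 3.5]{hlmmy} referred to in the statement of Lemma \ref{l3.8}.
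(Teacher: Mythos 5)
Your proof is correct and is precisely the argument the paper intends when it says Lemma \ref{l3.8} ``can be deduced from \cite[Lemma 3.5]{hlmmy} with a slight modification, the details being omitted'': you use the cosine subordination formula together with the finite speed propagation property \eqref{3.x9}, and absorb the prefactor $(t^2L)^k$ by passing from $\varphi$ to $(-1)^k\varphi^{(2k)}$, which preserves both the evenness and the support on $(-C_1^{-1},\,C_1^{-1})$. This is exactly the ``slight modification'' of the cited lemma, so your approach matches the paper's.
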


\begin{proposition}\label{p3.7}
Let  $\az\in[0,\,\fz)$, $M\in\nn$ satisfy that
$M>\frac{n}{2}(\frac{1}{p}-\frac{1}{2})$, ${\mathcal{N}}\in (n(\frac{1}{p}-\frac{1}{2}),\,\fz)$,
and $L$ be a nonnegative self-adjoint operator satisfying the Davies-Gaffney estimates.
Assume that $\fai\in C_0^\fz(\rn)$ is even, $\supp \fai
\subset (-C_1^{-1},\,C_1^{-1})$ satisfies $\frac{C_0}{C_1}\in(0,\,\frac{1}{10 \sqrt n})$,
where $C_0$ is as in \eqref{3.x9}.
Let $\Phi$ be the Fourier transform of $\fai$.
Then, for all $f\in W\Lambda_{L,{\mathcal{N}}}^\az(\rn)$,
\begin{eqnarray*}
\mu_f(x,\,t):=\lf|\lf(t^{2}L\r)^{M}\Phi(t\sqrt{L})f(x)\r|^2\frac{dx\,dt}{t},
\quad \forall \ (x,\,t)\in\rr^{n+1}_+,
\end{eqnarray*}
is a weak Carleson measure of order $\az$. Moreover, there exists
a positive constant $C$ such that, for all $f\in
W\Lambda_{L,{\mathcal{N}}}^\az(\rn)$,
 $$\|\mu_f\|_{\mathcal{C}_\az}
\le C \|f\|_{W\Lambda_{L,{\mathcal{N}}}^\az(\rn)}.$$
\end{proposition}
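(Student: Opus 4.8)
\emph{Proof proposal.} The strategy is to combine the finite speed of propagation of the wave equation for $L$ (Lemma \ref{l3.8}) with a resolvent/heat--semigroup decomposition of $f$ of the type used in the proof of Proposition \ref{p3.5}, and then to control the resulting pieces on the annuli $S_i(\vec{B})$ of \eqref{4.xx} by the mean oscillations $\mathcal{O}_{\mathcal{N}}(f,\cdot)$.

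First I would reduce the problem. Since $\wz{\wz{\mathcal{W}}}_{\boz}\subset\mathcal{W}_{\boz}$ and, as observed right after \eqref{3.6}, $\omega_{\mathcal{N}}$ is decreasing, it suffices to fix a bounded open set $\boz$ with $|\boz|=\dz$ and a family $\vec{B}=\{B_j\}_{j\in\Lambda}\in\wz{\wz{\mathcal{W}}}_{\boz}$, with $r:=\inf_{j\in\Lambda}r_{B_j}$ and $Q_j$ as in Definition \ref{d3.x3}(v), and to estimate
$$\frac{1}{|\boz|^{1+2\az/n}}\dsum_{j\in\Lambda}\iint_{\widehat{B}_j\cap(Q_j\times(0,\fz))}\lf|(t^{2}L)^{M}\Phi(t\sqrt{L})f(y)\r|^{2}\,\frac{dy\,dt}{t}.$$
Here the sets $\widehat{B}_j\cap(Q_j\times(0,\fz))$ are pairwise disjoint in the spatial variable (the $Q_j$ are disjoint) and lie in $Q_j\times(0,r_{B_j}]$; so, splitting the $t$-integral for each $j$ into the dyadic slabs $t\in(2^{-m-1}r_{B_j},2^{-m}r_{B_j}]$, $m\in\zz_+$, I may work at the single scale $\rho_{j,m}:=2^{-m}r_{B_j}$ in each slab, which lets me compare $f$ to $\mathcal{A}_{\rho_{j,m}}f$ at the ``right'' scale rather than at the global scale $r$.

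On each slab I proceed as follows. Writing $\psi(\lz):=\lz^{2M}\Phi(\lz)$, so $(t^{2}L)^{M}\Phi(t\sqrt{L})=\psi(t\sqrt{L})$, Lemma \ref{l3.8} and the hypothesis $C_0/C_1\in(0,\frac{1}{10\sqrt{n}})$ give that the kernel of $\psi(t\sqrt{L})$ is supported where $d(x,y)\le\frac{C_0}{C_1}t$; hence for $(y,t)$ in the slab over $Q_j$ the value $\psi(t\sqrt{L})f(y)$ depends on $f$ only in a ball of radius $\lesssim\rho_{j,m}$ about $y$, inside a fixed dilate of $B_j$. I then expand $f$ via the identity writing $I$ as $2^{M}$ times an $M$-th power of $\rho_{j,m}^{-2}\int_{\rho_{j,m}}^{\sqrt{2}\rho_{j,m}}s(I-e^{-s^{2}L})^{M}\,ds$ plus resolvent--heat blocks carrying a factor $\rho_{j,m}^{-2}L^{-1}$ (the display in the proof of Proposition \ref{p3.5}, with $r$ replaced by $\rho_{j,m}$), and compose with $\psi(t\sqrt{L})$: the $L^{M}$ in $\psi(t\sqrt{L})$ is absorbed by the $L^{-1}$-factors, producing gains $(t/\rho_{j,m})^{2M}\le1$ together with Davies--Gaffney-type decay (Lemmas \ref{l2.3} and \ref{l2.4}), while in every summand an operator $\mathcal{A}_{s}=(I-e^{-s^{2}L})^{M}$ with $s\sim\rho_{j,m}$ survives (for the summand with no $L^{-1}$-factor one uses only the uniform $L^{2}$-boundedness and the quadratic estimate of $\psi(t\sqrt{L})$). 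Splitting this surviving $\mathcal{A}_{s}f$ over the annuli $S_i(\vec{B})$, I bound the near-annulus contributions by the quadratic estimate of $\psi(t\sqrt{L})$, which brings in $\|\mathcal{A}_{s}f\|_{L^{2}(S_i(\vec{B}))}^{2}\ls 2^{2i\mathcal{N}}|\boz|^{1+2\az/n}[\mathcal{O}_{\mathcal{N}}(f,\cdot)]^{2}$ after passing to a suitably dilated covering in \eqref{3.5} (as at the end of the proof of Proposition \ref{p3.5}), and the far-annulus contributions by the finite speed of propagation and off-diagonal decay of $\psi(t\sqrt{L})$; here $M>\frac{n}{2}(\frac{1}{p}-\frac{1}{2})$ secures convergence of the $t$-integral and of the sum over $m$, and $\mathcal{N}>n(\frac{1}{p}-\frac{1}{2})$ that of the sum over $i$. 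Summing over the spatially disjoint $Q_j$ with the bounded overlap of the dilated cubes from Definition \ref{d3.x3}(v), and assembling the slab-sums against $d\dz/\dz$, then yields $\|\mu_f\|_{\mathcal{C}_\az}\ls\|f\|_{W\Lambda_{L,\mathcal{N}}^{\az}(\rn)}$.

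The main obstacle, as the authors flag in the introduction, will be the off-diagonal estimates on the $S_i(\vec{B})$: unlike in the classical single-ball Lipschitz-space setting (cf.\ the proof of \cite[Theorem 3.51]{hmm}), the balls $B_j$ have disparate radii, so $S_i(\vec{B})=(\cup_j2^{i}B_j)\setminus(\cup_j2^{i-1}B_j)$ is not a genuine annulus and the decay estimates must be carried out simultaneously over a family of balls of very different sizes. The truncation $r=\inf_jr_{B_j}>0$ from Definition \ref{d3.x3}(ii), the dyadic-slab reduction, and the passage to dilated coverings are what make this tractable, but matching the scale $\rho_{j,m}$ arising in the $t$-slab with the scale entering $\mathcal{O}_{\mathcal{N}}(f,\cdot)$, and keeping the resulting $m$- and $i$-sums under control, will require the most care.
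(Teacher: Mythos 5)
Your proposal correctly identifies the two essential tools, namely the finite speed of propagation from Lemma \ref{l3.8} (exploited via the hypothesis $C_0/C_1<\frac{1}{10\sqrt n}$ to localize $(t^2L)^M\Phi(t\sqrt L)$ to a neighbourhood of each $Q_j$) and the $L^2$ quadratic estimate, but the route you take differs from the paper's in a way that opens a real gap. The paper works at the \emph{single} global scale $r=\inf_{j\in\Lambda}r_{B_j}$: it splits $f=\mathcal{B}_rf+(I-\mathcal{B}_r)f$ with the resolvent $\mathcal{B}_r$ from \eqref{3.7}, observes via Lemma \ref{l3.8} that $(t^2L)^M\Phi(t\sqrt L)(\chi_{S_i(Q_j)}\mathcal{B}_rf)$ is supported in $\widetilde S_i(Q_j)$ so that only $i\in\{0,1\}$ contribute over $\widehat B_j\cap(Q_j\times(0,\infty))$, then swallows the entire $t$-integral by the quadratic estimate $\int_0^\infty\|\psi(t\sqrt L)g\|_{L^2}^2\,\frac{dt}{t}\lesssim\|g\|_{L^2}^2$ with no need to slice in $t$ at all, and sums over $j$ using the bounded overlap of the $2Q_j$ in Definition \ref{d3.x3}(v); the outcome is the bound $\mathcal{C}_\az(\mu_f,\boz)\lesssim\mathcal{O}_{\mathrm{res},\mathcal{N}}(f,\boz)$ for \emph{each fixed} $\boz$, and Proposition \ref{p3.5} is then invoked once, at the very end, to replace $\mathcal{O}_{\mathrm{res},\mathcal{N}}$ by $\mathcal{O}_{\mathcal{N}}$. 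You instead dispense with $\mathcal{B}_r$ and with Proposition \ref{p3.5}, slice $t$ into dyadic slabs of size $\rho_{j,m}=2^{-m}r_{B_j}$, and try to compare $f$ to $\mathcal{A}_{\rho_{j,m}}f$ at the local slab scale.

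The gap is precisely in that local replacement. The oscillation $\mathcal{O}_{\mathcal{N}}(f,\boz)$ in \eqref{3.5} is built from $\mathcal{A}_r f$ at the \emph{one} scale $r=\inf_jr_{B_j}$ attached to the chosen $\vec B\in\mathcal{W}_\boz$, whereas your slab decomposition produces $\mathcal{A}_{\rho_{j,m}}f$ at scales $\rho_{j,m}$ that depend on $j$ and $m$ and can be arbitrarily small compared with $r$. The ``dilated covering'' trick from the end of the proof of Proposition \ref{p3.5} only handles a \emph{bounded} dilation $C_0\in(1,2^{1/2})$ (it is used there to pass from $\widetilde r=C_0 r$ to $r$); it does not let you reach scale $2^{-m}r_{B_j}$. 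To make your inequality $\|\mathcal{A}_{\rho_{j,m}}f\|_{L^2(S_i(\vec B))}^2\lesssim2^{2i\mathcal{N}}|\boz|^{1+2\az/n}[\mathcal{O}_{\mathcal{N}}(f,\cdot)]^2$ precise you would need, for every $(j,m)$, a covering $\vec B^{(j,m)}\in\mathcal{W}_{\boz^{(j,m)}}$ with $\inf$ radius $\sim\rho_{j,m}$ of some other open set, and it is not clear what this set is or how the resulting doubly indexed sum in $(j,m)$ collapses into the integral $\int_0^\infty\omega_{\mathcal{N}}(\dz)\frac{d\dz}{\dz}$; the phrase ``assembling the slab-sums against $d\dz/\dz$'' is doing unsupported work here, because that integral already carries a supremum over all $\boz$ of a given measure and is not a sum over slabs of a single $\boz$. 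The cleaner move, which you should adopt, is not to slice $t$ at all: once finite speed of propagation reduces matters to $i\in\{0,1\}$, the quadratic estimate handles $\int_0^{r_{B_j}}\frac{dt}{t}$ in one stroke, at the single scale $r$, and the change of square function (resolvent versus heat semigroup) is an exterior step handled by Proposition \ref{p3.5} rather than something to be rebuilt slab by slab.
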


\begin{proof}
Let $f\in W\Lambda_{L,{\mathcal{N}}}^\az(\rn)$.
For all bounded open sets
$\boz$ and $\vec{B}=\{B_j\}_{j\in \Lambda} \in \wz{\wz{\mathcal{W}}}_{\boz},$
by the definition of weak Carleson measures,  we need
to estimate
\begin{eqnarray*}
\mathrm{I}:=\lf\{\frac{1}{|\boz|^{1+\frac{2\az}{n}}}
\mu_f\lf(\bigcup_{j\in\Lambda}
\widehat{B}_{j}\cap (Q_j\times(0,\,\fz))\r)\r\}^{\frac{1}{2}}.
\end{eqnarray*}
To this end, let $r:=\inf_{j\in\Lambda}\{r_{B_j}\}$ and
$\mathcal{B}_r$ be as in \eqref{3.7}. By Minkowski's
inequality, we write
\begin{eqnarray*}
\mathrm{I}&&\sim \lf\{\frac{1}{|\boz|^{1+\frac{2\az}{n}}}
\dsum_{{j}\in\Lambda}
\iint_{\widehat{B}_{j}\cap (Q_j\times(0,\,\fz))}
\lf|\lf(t^{2}L\r)^{M}\Phi(t\sqrt{L})f(x)\r|^2 \frac{dx\,dt}{t} \r\}^{\frac{1}{2}}\\
&&\ls\lf\{\frac{1}{|\boz|^{1+\frac{2\az}{n}}}
\dsum_{{j}\in\Lambda} \iint_{\widehat{B}_{j}\cap (Q_j\times(0,\,\fz))}
\lf|\lf(t^{2}L\r)^{M} \Phi(t\sqrt{L})\mathcal{B}_{r}
f(x)\r|^2 \frac{dx\,dt}{t} \r\}^{\frac{1}{2}}\\
\nonumber &&\hs\hs+\lf\{\frac{1}{|\boz|^{1+\frac{2\az}{n}}}
\dsum_{{j}\in \Lambda} \iint_{\widehat{B}_{j}\cap (Q_j\times(0,\,\fz))}
\lf|\lf(t^{2}L\r)^{M} \Phi(t\sqrt{L})\lf(I-\mathcal{B}_{r} \r)
f(x)\r|^2\frac{dx\,dt}{t}\r\}^{\frac{1}{2}}=:\mathrm{I}_1+
\mathrm{I}_2.
\end{eqnarray*}

To estimate $\mathrm{I}_1$, by Minkowski's inequality again, we obtain
\begin{eqnarray*}
\mathrm{I}_1&&\ls \lf\{\frac{1}{|\boz|^{1+\frac{2\az}{n}}}
\dsum_{{j}\in\Lambda} \lf[\dsum_{i\in\zz_+}
\lf(\iint_{\widehat{B}_{j}\cap (Q_j\times(0,\,\fz))}\lf|\lf(t^{2}L\r)^{M}
 \Phi(t\sqrt{L})\lf(\chi_{S_i(Q_j)}\mathcal{B}_{r}
f\r)(x)\r|^2\r.\r.\r.\\
&&\lf.\lf.\lf.\hs\times \frac{dx\,dt}{t}\r)^{1/2}\r]^2 \r\}^{\frac{1}{2}}.
\end{eqnarray*}

Now, let $\wz{S_i}(B_j):=2^{i+1}B_j\setminus 2^{i-2}B_j$. By Lemma \ref{l3.8} with
$\frac{C_0}{C_1}\in(0,\,\frac{1}{10 \sqrt n})$, $t\in(0,\,r_{B_{i,\,j}})$ and
$Q_j=\frac{1}{5\sqrt{n}}B_j$, we know that
$\supp\{(t^2L)^M\Phi(t\sqrt{L})(\chi_{S_i(Q_j)}\mathcal{B}_rf)\}\subset
\wz{S_i}(Q_j)$,
which, together with Definitions  \ref{d3.x3}, \ref{d3.3} and the quadratic estimates, implies that
\begin{eqnarray*}
\mathrm{I}_1&&\ls \lf\{\frac{1}{|\boz|^{1+\frac{2\az}{n}}}
\dsum_{j\in\Lambda} \lf[\dsum_{i=0}^1\lf\{\iint_{Q_j\cap (0,\,r_{B_j})} \lf|\lf(t^{2}L\r)^{M}
\Phi(t\sqrt{L})\lf(\chi_{S_i(Q_j)}\mathcal{B}_rf\r)(x)\r|^2\,\frac{dx\,dt}{t}\r\}^{\frac{1}{2}}\r]^2\r\}^{\frac{1}{2}}\\
&&\ls \lf\{\frac{1}{|\boz|^{1+\frac{2\az}{n}}}
\dsum_{j\in\Lambda} \dsum_{i=0}^1\dint_\rn\lf[\lf\{\dint_{0}^\fz \lf|\lf(t^{2}L\r)^{M}
\Phi(t\sqrt{L})\lf(\chi_{S_i(Q_j)}\mathcal{B}_rf\r)(x)\r|^2\,\frac{dt}{t}\r\}^{\frac{1}{2}}\r]^2\,dx\r\}^{\frac{1}{2}}\\
&&\ls \lf\{\frac{1}{|\boz|^{1+\frac{2\az}{n}}}
\dsum_{j\in\Lambda} \dint_{2Q_j}\lf|\mathcal{B}_rf(x)\r|^2\,dx\r\}^{\frac{1}{2}}
\ls \lf\{\frac{1}{|\boz|^{1+\frac{2\az}{n}}} \dint_{\cup_{j\in\Lambda}
2Q_j}\lf|\mathcal{B}_rf(x)\r|^2\,dx\r\}^{\frac{1}{2}}\\
&&\ls \dsup_{i\in\zz}\dsup_{\vec{B}\in \wz{W}_{\boz}}
2^{-i\mathcal{N}}\lf\{\frac{1}{|\boz|^{1+\frac{2\az}{n}}} \dint_{S_i(\vec{B})}
\lf|\mathcal{B}_rf(x)\r|^2\,dx\r\}^{\frac{1}{2}}
\ls \mathcal{O}_{\mathrm{res},{\mathcal{N}}}(f,\,\boz),
\end{eqnarray*}
which is desired.

The estimate of $\mathrm{I}_2$ is similar to that for $\mathrm{I}_1$. In
this case, we need the following operator equality that, for all
$r\in(0,\,\fz)$,
$$\lf(I-\lf[I-\lf(I+r^{2}L\r)^{-1}\r]^M\r)\lf[I-\lf(I+r^{2}L
\r)^{-1}\r]^{-M}=\dsum_{\ell=1}^M \frac{M!}{(M-\ell)!\ell !}
\lf(r^{2}L\r)^{-\ell},$$ the details being omitted.

By combining the estimates for $\mathrm{I}_1$ and $\mathrm{I}_2$, we conclude that
$\mathcal{C}_\az(\mu_f,\,\boz)\ls\mathcal{O}_{\mathrm{res},
{\mathcal{N}}}(f,\,\boz)$,
which, together with Proposition \ref{p3.5}, shows that $\mu_f$ is a
weak Carleson measure of order $\az$. This finishes the proof of
Proposition \ref{p3.7}.
\end{proof}

We now turn to the proof of Theorem \ref{t3.6}.

\begin{proof}[Proof of Theorem \ref{t3.6}]
We first prove that
$$W\Lambda_{L,{\mathcal{N}}}^{n
(\frac{1}{p}-1)}(\rn)\subset
(WH_L^p(\rn))^*.$$

For all $\epsilon\in(0,\,\fz)$ and
$M>\frac{n}{2}(\frac{1}{p}-\frac{1}{2})$,
let $\mathcal{M}_{n(\frac{1}{p}-1),L}^{\ez,M}(\rn)$
be the space defined as in \eqref{4.xx1}.  For all $g\in W\Lambda_{L,{\mathcal{N}}}^{n
(\frac{1}{p}-1)}(\rn)$, since $$g\in
\bigcap_{\ez\in(0,\,\fz)}(\mathcal{M}_{n(\frac{1}{p}-1),L}^{\ez,M}(\rn))^*,$$
from the fact that all $(p,\,2,\,M)_L$-atoms $a$ belong to
$\cup_{\ez\in(0,\,\fz)}\mathcal{M}_{n(\frac{1}{p}-1),L}^{\ez,M}(\rn)$,
it follows that, for all $(p,\, 2,\,M)_L$-atom $a$, $\langle
g,\,a\rangle$ is well defined.
Moreover, for all $f\in
WH_L^p(\rn)\cap L^2(\rn)$, by Remark \ref{r2.18},
we see that $f$ has a weak atomic
$(p,\,2,\,M)_L$-representation
$\sum_{i\in\mathcal{I},j\in\Lambda_i}\wz \lz_{i,j}a_{i,j}$
such that $\{a_{i,j}\}_{i\in\mathcal{I},j\in{\Lambda_i}}$ is a sequence of
$(p,\,2,\,M)_T$-atoms associated to the balls $\{{B}_{i,j}\}_{i\in\mathcal{I},j\in{\Lambda_i}}$
and $\{\wz\lz_{i,j}\}_{i\in\mathcal{I},j\in{\Lambda_i}}
=\{\wz C2^i|{B}_{i,j}|^{\frac{1}{p}}\}_{i\in\mathcal{I},j\in{\Lambda_i}}$,
with $\wz C$ being a positive constant independent of $f$, satisfies
\begin{eqnarray}\label{4.xx12}
\sup_{i\in\mathcal{I}}
\lf(\sum_{j\in{\Lambda_i}}|\wz\lz_{i,j}|^{p}\r)^{{1}/{p}}\ls
\|f\|_{WH_L^p(\rn)}.
\end{eqnarray}
By the definition of $WH_L^p(\rn)$,
we further know that $t^{2}Le^{-t^{2}L}f\in WT^p(\rr^{n+1}_+)\cap T^2(\rr^{n+1}_+)$.
This, together with Theorem \ref{t2.12} and the proof of
Theorem \ref{t2.16} (see \eqref{2.x15} and \eqref{2.x16}), implies
that there exist sequences
$\{{A}_{i,j}\}_{i\in\mathcal{I}, j\in \Lambda_i}$ of
$T^p(\rr^{n+1}_+)$-atoms, associated with balls
$\{B_{i,j}\}_{i\in\mathcal{I}, j\in \Lambda_i}$,
and $\{{\lz}_{i,j}\}_{i\in\mathcal{I}, j\in\Lambda_i}\subset\cc$ satisfying
$\lz_{i,j}=\wz {\wz C}2^i|{B}_{i,j}|^{\frac{1}{p}}$, with $\wz {\wz C}$ being a positive constant independent of $f$,
such that
$$t^{2}Le^{-t^{2}L}f=\dsum_{i\in\mathcal{I}}
\dsum_{j\in \Lambda_i}{\lz}_{i,j} {A}_{i,j}$$
holds true pointwisely almost everywhere in $\rr^{n+1}_+$
and in $T^2(\rr^{n+1}_+)$, where $\mathcal{I}$ and $\Lambda_i$
are as in Theorem \ref{t2.12}. By \eqref{4.xx12} and the definitions
of $\lz_{i,\,j}$ and $\wz\lz_{i,\,j}$, we further obtain
\begin{eqnarray}\label{4.x14}
\sup_{i\in\mathcal{I}}
\lf(\sum_{j\in{\zz_+}}|\lz_{i,j}|^{p}\r)^{\frac{1}{p}}\ls
\|f\|_{WH_L^p(\rn)}.
\end{eqnarray}

Now, for all $i\in\mathcal{I}$, let
$\boz_i:= \cup_{j\in\Lambda_i} B_{i,j}$.
From the proof of Theorem \ref{t2.12} (see the argument below \eqref{2.x8}),
it follows that $\{B_{i,\,j}\}_{j\in\Lambda_i}
\in \wz{\mathcal {W}}_{\boz_i}$. Moreover, by comparing the
quasi-norms between $W\Lambda_{L, {\mathcal{N}}}^{n(\frac{1}{p}-1)}(\rn)$ and
$\Lambda_{L}^{n(\frac{1}{p}-1)}(\rn)$, we see that
$W\Lambda_{L, {\mathcal{N}}}^{n(\frac{1}{p}-1)}(\rn)\subset
\Lambda_{L}^{n(\frac{1}{p}-1)}(\rn)$, where
$\Lambda_{L}^{n(\frac{1}{p}-1)}(\rn)$ denotes the \emph{``strong"}
\emph{Lipschitz space associated to} $L$ defined as in
\cite[(1.26)]{hmm}. This, together with the Calder\'on reproducing
formula \cite[Lemma 8.4]{hm09}, Theorem \ref{t2.6}, Remark
\ref{r2.10}, Theorem \ref{t2.12} and H\"older's
inequality, implies that
\begin{eqnarray*}
&&\lf|\langle g,\,f\rangle\r|\\
&&\hs\sim\lf|\iint_{\rr^{n+1}_+}\lf( t^{2}L\r)^M
\Phi(t\sqrt{L})g(x)\overline{t^{2}Le^{-t^{2}L}f(x)}
\,\frac{dx\,dt}{t}\r|\\
&&\hs\ls\iint_{\rr^{n+1}_+}\lf|\lf( t^{2}L\r)^M
\Phi(t\sqrt{L})g(x)  \lf[\dsum_{i\in\mathcal{I}}\dsum_{j\in \Lambda_i} 2^i
|B_{i,j}|^{\frac{1}{p}}\overline
{A_{i,j}(x,\,t)}\r]\r| \,\frac{dx\,dt}{t}\\
&&\hs\ls \dsum_{i\in\mathcal{I}}2^i
\dsum_{j\in\Lambda_i} \lf|B_{i,j}\r|^{\frac{1}{p}}
\lf[\iint_{\widehat{B}_{i,j}\cap (\supp{A_{i,\,j})}}
\lf|G_{L}(t,x)\r|^2\,\frac{dx\,dt}{t}\r]^{\frac{1}{2}}
\lf[\iint_{ \widehat{B}_{i,j}}\lf|{A}_{i,{j}}(x,t)\r|^2\,
\frac{dx\,dt}{t}\r]^{\frac{1}{2}}\\&&\hs=:\rm{I},
\end{eqnarray*}
where,  for all $t\in(0,\,\fz)$ and $x\in\rn$,
we let
$$G_{L}(t,x):=(t^{2}L)^M\Phi(t\sqrt{L})g(x).$$

To estimate $\mathrm{I}$,
by Proposition \ref{p3.7}, H\"older's inequality,
the definition of $\lz_{i,\,j}$ and \eqref{4.x14},  we conclude that
\begin{eqnarray*}
\rm{I}&&\ls \dsum_{i\in\mathcal{I}}2^{i}\lf|\boz_{i}\r|^{\frac{1}{p}-\frac{1}{2}}
\lf\{\dsum_{j\in\Lambda_i} \lf|B_{i,j}\r|^{\frac{1}{2}}
\lf[\frac{1}{|\boz_i|^{\frac{2}{p}-1}} \iint_{\widehat{B}_{i,j}\cap (\supp A_{i,\,j})}\lf|G_{L}(t,x)\r|^2\,
\frac{dx\,dt}{t}\r]^{\frac{1}{2}}\r\}\\
&&\ls\dsum_{i\in\mathcal{I}}2^{i}\lf|\boz_{i}\r|^{\frac{1}{p}-\frac{1}{2}}
\lf[\dsum_{j\in\Lambda_i} \lf|B_{i,j}\r|\r]^{\frac{1}{2}}
\lf[\dsum_{j\in\Lambda_i}\frac{1}{|\boz_i|^{\frac{2}{p}-1}} \iint_{\widehat{B}_{i,j}\cap
(\supp A_{i,\,j})}\lf|G_{L}(t,x)\r|^2\,
\frac{dx\,dt}{t} \r]^{\frac{1}{2}}\\
&&\ls\dsum_{i\in\mathcal{I}}2^i|\boz_i|^{\frac{1}{p}}
\lf[\frac{1}{|\boz_i|^{\frac{2}{p}-1}} \iint_{\cup_{j\in\Lambda_i}\widehat{B}_{i,j}\cap
(\supp A_{i,\,j})}\lf|G_{L}(t,x)\r|^2\,
\frac{dx\,dt}{t}\r]^{\frac{1}{2}}\\
&&\ls\|f\|_{WH_L^p(\rn)} \dsum_{i\in
\mathcal{I}}\omega_{\mathcal{C}}(|\boz_i|).
\end{eqnarray*}
From this, together with Remark \ref{r2.10},
the fact that $\omega_{\mathcal{C}}\ls\omega_{\mathcal{N}}$ and the decreasing property of
$\omega_\mathcal{N}$, where $\omega_\mathcal{N}$ and $\omega_{\mathcal{C}}$ are, respectively,
as in \eqref{3.6} and \eqref{3.9}, we know that
\begin{eqnarray*}
 \rm{I}&&\ls\|f\|_{WH_L^p(\rn)}
\dsum_{\wz{i}_j\in \wz{\mathcal{I}}}
\omega_{{\mathcal{N}}}\lf(2^{-\wz{i}_j}|\boz_{{i}_0}|\r)\\
&&\ls\|f\|_{WH_L^p(\rn)} \dsum_{i\in
\zz}\omega_{\mathcal{N}}(2^{-i}|\boz_{{i}_0}|)\ls\|f\|_{WH_L^p(\rn)}
\dint_0^\fz\frac{\omega_{\mathcal{N}}(\dz)}{\dz}\,d\dz\\&&\sim\|f\|_{WH_L^p(\rn)}\|g\|_{W
\Lambda_{L,\mathcal{N}}^{n(\frac{1}{p}-1)}(\rn)},
\end{eqnarray*}
where both $\wz{i}_j$ and $i_0$ are as in Remark \ref{r2.10},
which, combined with a density argument, implies that
$$W\Lambda_{L, \mathcal{N}}^{n(\frac{1}{p}-1)}(\rn)\subset
\lf(WH_L^p(\rn)\r)^*.$$

Now, we prove the inclusion that $(WH_L^p(\rn))^*\subset
W\Lambda_{L, \mathcal{N}}^{n(\frac{1}{p}-1)}(\rn)$.

Let $g\in(WH_L^p(\rn))^*$. For all $(p,\,\ez,\,M)_L$-molecules
$m$, from the fact that
$\|m\|_{WH_L^p(\rn)}\ls1$, it follows that
$|g (m)|\ls 1$. By this and the fact that, for all
$\ez\in(0,\,\fz)$, $M\in\nn$ and
$m_0\in\mathcal{M}_{\az,L}^{\ez,M}(\rn)$, $m_0$ is a
$(p,\,\ez,\,M)$-molecule, we conclude that $g\in
\mathcal{M}_{\az,L}^{M,*}(\rn)$.

Now, we prove that
$\|g\|_{W\Lambda^{n(\frac{1}{p}-1)}_{L,\mathcal{N}}
(\rn)}\ls\|g\|_{(WH_L^p(\rn))^*}$. By Definition \ref{d3.2},
\eqref{3.5} and \eqref{3.6}, we first write
\begin{eqnarray}\label{3.10}
\dint_0^\fz\frac{\omega_{\mathcal{N}}{(\dz)}}{\dz}\,d\dz
&&\ls\dsum_{l\in\zz}\omega_{\mathcal{N}}
(2^{-l})\\ \nonumber&&\ls \dsum_{l\in\zz}
\dsup_{|\boz|=2^{-l}} \dsup_{i\in\zz_+} \dsup_{\vec{B}\in \mathcal{W}_{\boz}}
2^{-i\mathcal{N}}\lf[\frac{1}{|\boz|^{\frac{2}{p}-1}}\dint_{S_i(\vec{B})}
\lf|\mathcal{A}_{r}g(x)\r|^2\,dx\r]^{\frac{1}{2}}\\ \nonumber&&=: \dsum_{l\in\zz}
\dsup_{|\boz|=2^{-l}} \dsup_{i\in\zz_+} \dsup_{\vec{B}\in \mathcal{W}_{\boz}}
\mathrm{A}_l,
\end{eqnarray}
where $\mathcal{A}_{r}$ and $S_i(\vec{B})$ are, respectively, as in \eqref{3.4} and
\eqref{4.xx}.

To estimate $\mathrm{A}_l$, from  the dual norm of $L^2(\rn)$,
we deduce that there exists $\fai_l \in L^2(\rn)$ satisfying $\|\fai_l\|_{L^2(\rn)}\le 1$
such that
\begin{equation}\label{4.x13}
\mathrm{A}_l\sim
\lf|\lf \langle g,\, 2^{-i\mathcal{N}} \frac{1}{|\boz|^{\frac{1}{p}-\frac{1}{2}}}
\mathcal{A}_r\lf(\chi_{S_i(\vec{B})}\fai_l\r) \r\rangle_{L^2(\rn)}\r|
=:\lf|\lf \langle g,\, f_l \r\rangle_{L^2(\rn)}\r|.
\end{equation}

We now estimate the $WH_L^p(\rn)$ quasi-norm of $f_l$. For all $\az\in(0,\,\fz)$, by
Chebyshev's inequality, H\"older's inequality, the definition of $S_L$ and the fact that,
for all $\ell\in\zz_+$, $|S_\ell (\vec{B})|\ls 2^{\ell n}|\boz|\sim 2^{\ell n}2^{-l}$, we obtain
\begin{eqnarray}\label{3.x11}
\hs\quad&&\az^p\lf|\lf\{x\in\rn:\ S_L(f_l)(x)>\az\r\}\r|\\
&&\nonumber\hs\ls \dint_\rn \lf|S_L(f_l)(x)\r|^p\,dx
\ls\dsum_{\ell\in\zz_+}\lf\{\dint_{S_\ell(\vec{B})} \lf|S_L(f_l)(x)\r|^2
\,dx\r\}^{\frac{p}{2}}|S_\ell(\vec{B})|^{1-\frac{p}{2}}\\
&&\nonumber\hs\ls\dsum_{\ell\in \zz_+} 2^{-i\mathcal{N}p} 2^{\ell n(1-\frac{p}{2})}
\lf\{\lf(\dint_{S_{\ell}(\vec{B})}\lf[\dint_0^r+\dint_r^{2^{\ell-3}r}+\dint_{2^{\ell-3}r}^\fz\r]
\dint_{\{y\in\rn:\ |y-x|<t\}}\r.\r.\\
&&\nonumber\lf.\hs\hs\times
\lf|t^2Le^{-t^2L}\mathcal{A}_r\lf(\chi_{S_i (\vec{B})}\fai_l\Bigg)(y)\r|^2\,\frac{dy\,dt\,dx}{t^{n+1}}
\r)^{\frac{1}{2}}\r\}^p=:\mathrm{Q}_1+\mathrm{Q}_2+\mathrm{Q}_3.
\end{eqnarray}

We first estimate $\mathrm{Q}_1$.
For all $\ell\in\zz_+$, let
\begin{eqnarray}\label{3.x13}
\mathrm{E}_{\ell}:=\lf\{x\in\rn:
\dist(S_{\ell} (\vec{B}),\, x)<r\r\}.
\end{eqnarray}
It is easy to see that, for all $\ell>i+1$,
\begin{eqnarray*}
\frac{r}{\dist(E_{\ell},\, S_{i}(\vec{B}))}\ls 2^{-\ell},
\end{eqnarray*}
where $r$ is as in Definition \ref{d3.1}(ii).
Thus, by Fubini's theorem, the quadratic estimates and
the Davies-Gaffney estimates, we know that there exists a positive constant $\az_6$, independent of $i$, $l$ and $\ell$,
such that
\begin{eqnarray*}
\mathrm{Q}_1&&\ls\dsum_{\ell\in\zz_+}
2^{-i\mathcal{N}p}2^{\ell n(1-\frac{p}{2})}
\lf\{\lf[ \dint_0^{r}
\dint_{\mathrm{E}_\ell}\lf|t^{2}Le^{-t^{2}L} \mathcal{A}_{r}\lf(\chi_{S_i(\vec{B})} \fai_{l}\r)(y)\r|^2\,
\frac{dy\,dt}
{t}\r]^{\frac{1}{2}}\r\}^p\\
&&\ls \dsum_{\ell=0}^{i+1} 2^{-i\mathcal{N}p}
2^{\ell n(1-\frac{p}{2})}
\|\fai_l\|^p_{L^2(S_i(\vec{B}))}\\
&&\nonumber\hs+\dsum_{\ell=i+2}^\fz 2^{-i\mathcal{N}p}2^{\ell n(1-\frac{p}{2})}
\lf[\dint_0^{r} \exp\lf\{-\frac{[\dist(E_l,\,S_i(\vec{B}))]^2}{t^2}\r\}\,\frac{dt}
{t} \r]^{\frac{p}{2}}\|\fai_l\|_{L^2(S_i(\vec{B}))}^p\\ \nonumber
&&\ls  \dsum_{\ell=0}^{i+1} 2^{-i\mathcal{N}p}
2^{\ell n(1-\frac{p}{2})}
\|\fai_l\|^p_{L^2(S_i(\vec{B}))}\\
&&\nonumber\hs+\dsum_{\ell=i+2}^\fz 2^{-i\mathcal{N}p}2^{\ell n(1-\frac{p}{2})}
2^{-\ell (\mathcal{N}+\az_6)}\|\fai_l\|_{L^2(S_i(\vec{B}))}^p
\ls1.\nonumber
\end{eqnarray*}

To estimate $\mathrm{Q}_2$, for all $\ell\in\zz_+$, let
\begin{eqnarray*}
F_\ell:=\lf\{x\in\rn:\ \dist(x,\,S_\ell(\vec{B}))<2^{\ell-3}r\r\}.
\end{eqnarray*}
By Fubini's theorem, the Davies-Gaffney estimates and the quadratic estimates, we know that there exists a positive constant $\az_7$, independent of $i$, $l$ and $\ell$, such that
\begin{eqnarray*}
\mathrm{Q}_2&&\ls\dsum_{\ell\in\zz_+}2^{\ell n(1-\frac{p}{2})}
2^{-i\mathcal{N}p}
\lf\{\lf[\dint_r^{2^{\ell-3}r}
\dint_{F_\ell} \lf|t^2Le^{-t^2L}\mathcal{A}_r\lf(\chi_{S_i(\vec{B})}\fai_l\r)(y)\r|^2\,
\frac{dy\,dt}{t}\r]^{\frac{1}{2}}\r\}^p\\
&&\ls\dsum_{\ell\in\zz_+}2^{\ell n(1-\frac{p}{2})}
2^{-i\mathcal{N}p}
\lf\{\lf[\dint_r^{\fz}\dint_{F_\ell} \lf|\lf(t^2L\r)^{M+1}
e^{-t^2L} \lf(r^2L\r)^{-M}\mathcal{A}_r\lf(\chi_{S_i(\vec{B})}\fai_l\r)(y)\r|^2\,\r.\r.\\
&&\lf.\lf.\hs\times\frac{dy\,dt}{t^{4M+1}}\r]^{\frac{1}{2}}r^{2M}\r\}^p\\
&&\ls\dsum_{\ell=0}^{i+1}2^{\ell n(1-\frac{p}{2})}
2^{-i\mathcal{N}p}
\lf\{\lf[\dint_r^{\fz}\frac{dt}{t^{4M+1}}\r]^{\frac{1}{2}}r^{2M}\|\chi_{S_i(\vec{B})}\fai_l\|_{L^2(\rn)}\r\}^p
\\&&\hs+\dsum_{\ell =i+2}^{\fz}2^{\ell n(1-\frac{p}{2})}
2^{-i\mathcal{N}p} \lf\{\lf[\dint_r^{\fz}
\frac{dt}{t^{4M+1}}\r]^{\frac{1}{2}}r^{2M} \exp\lf\{-C\frac{[\dist(F_\ell,\, S_i(\vec{B}))]^2}
{t^2}\r\}\r.\\&&\hs\times
\|\chi_{S_i(\vec{B})}\fai_{l}\|_{L^2(\rn)}\Bigg\}^p\\
&&\ls \dsum_{\ell=0}^{i+1} 2^{-i\mathcal{N}p} 2^{\ell n(1-\frac{p}{2})}\|\chi_{S_i(\vec{B})}\fai_l\|_{L^2(\rn)}^p
+ \dsum_{\ell=0}^{i+1} 2^{-i\mathcal{N}p} 2^{\ell n(1-\frac{p}{2})}
2^{-\ell(\mathcal{N}+\az_7)}\|\chi_{S_i(\vec{B})}\fai_l\|_{L^2(\rn)}^p\\
&&\ls 1.
\end{eqnarray*}

Similar to the estimates of $\mathrm{Q}_2$, we also obtain
$\mathrm{Q}_3\ls 1.$ Thus, by \eqref{3.x11} and the estimates of $\mathrm{Q}_1$, $\mathrm{Q}_2$
and $\mathrm{Q}_3$, we see that $$\az^p|\lf\{x\in\rn:\ S_L(f_l)(x)>\az\r\}|\ls 1,$$
where the implicit positive constant is independent of $l$. Thus, $\{f_l\}_{l\in\zz}$ are uniformly bounded in
$WH_L^p(\rn)$, which implies that, for all $N\in \nn$,
$\sum_{l=-N}^{N}f_l\in WH_L^p(\rn).$
By choosing $\wz \az\in(0,\,\fz)$ satisfying
\begin{eqnarray*}
\lf\|\dsum_{l=-N}^N f_l\r\|^p_{WH_L^p(\rn)}\sim \wz \az^p\lf|\lf\{x\in\rn:\
S_L\lf(\dsum_{l=-N}^{N}f\r)(x)>\wz \az\r\}\r|
\end{eqnarray*}
and $l_0\in\nn$ satisfying $2^{l_0}\le \wz \az^p <2^{l_0+1}$, we write
\begin{eqnarray*}
\dsum_{l=-N}^{N}f_l=\dsum_{l=-N}^{l_0}f_l+\dsum_{l=l_0+1}^{N}\cdots=:f_{1}+f_{2}.
\end{eqnarray*}
Here, without loss of generality, we may assume that $l_0\le N$; otherwise, we only need to
estimate $f_{1}$.

To estimate $f_{1}$, let $q\in(p,\,2)$. By Chebyshev's inequality, Minkowski's inequality, H\"older's inequality and the definition of $f_l$, we know that
\begin{eqnarray*}
&&\lf|\lf\{x\in\rn:\
S_L(f_{1})(x)>{\wz\az}\r\}\r|\\
&&\hs\ls{\wz\az}^{-q} \lf(\lf[\dint_{\rn}\lf|S_L(f_{1})(x)\r|^q\,dx\r]^{\frac{1}{q}}\r)^q
\ls{\wz\az}^{-q} \lf(\dsum_{l=-N}^{l_0}
\lf[\dint_{\rn}\lf|S_L(f_{l})(x)\r|^q\,dx\r]^{\frac{1}{q}}\r)^q\\
&&\hs\ls {\wz\az}^{-q} \lf\{\dsum_{l=-N}^{l_0}
\dsum_{\ell\in\zz_+}\lf[\dint_{S_\ell(\vec{B})} \lf|S_L(f_{l})(x)\r|^2\,dx\r]^{\frac{1}{2}}
\lf|S_{\ell}(\vec{B})\r|^{\frac{1}{q}-\frac{1}{2}}\r\}^q\\
&&\hs\ls {\wz\az}^{-q} \lf[\dsum_{l=-N}^{l_0}
\dsum_{\ell\in\zz_+} 2^{\ell n(\frac{1}{q}-\frac{1}{2})} |\boz|^{(\frac{1}{2}-\frac{1}{p})+
(\frac{1}{q}-\frac{1}{2})} 2^{-i\mathcal{N}}\lf\{\dint_{S_{\ell}(\vec{B})}
\lf[\dint_{0}^r +\dint_{r}^{2^{\ell-3}r}+\dint_{2^{\ell-3}r}^\fz\r]\r.\r.\\
&&\lf.\lf.\hs\hs\times
\dint_{\{y\in\rn:\ |y-x|<t\}}\lf|t^2Le^{-t^2L}\mathcal{A}_r\lf(\chi_{S_i(\vec{B})}\fai_l\r)(y)\r|^{2}
\,\frac{dy\,dt\,dx}{t^{n+1}}\r\}^{\frac{1}{2}}\r]^q
=:\wz{\mathrm{Q}}_1+\wz{\mathrm{Q}}_2+\wz{\mathrm{Q}}_3.
\end{eqnarray*}

We first estimate $\wz{\mathrm{Q}}_1$. For all $\ell\in\zz_+$, let $E_\ell$ be  as in \eqref{3.x13}.
It is easy to see that, for all $i\in\nn$ and $\ell>i+1$,
\begin{eqnarray*}
\frac{r}{\dist(E_\ell,\,S_i(\vec{B}))}\ls 2^{-\ell}.
\end{eqnarray*}
Thus, by the fact $|\boz|\sim 2^{-l}$,
Fubini's theorem, the Davies-Gaffney estimates, the quadratic estimates,
$q\in(p,\,2)$ and the assumption $2^{l_0}\sim \wz \az^p $, we conclude that
 \begin{eqnarray*}
 \wz{\mathrm{Q}}_1&&\ls \wz\az^{-q} \lf[\dsum_{l=-N}^{l_0}\dsum_{\ell=0}^{i+1}2^{\ell n(\frac{1}{q}-
 \frac{1}{2})}2^{l(\frac{1}{p}-\frac{1}{q})}2^{-i\mathcal{N}}\|\fai_l\|_{L^2(S_i(\vec{B}))}\r]^q\\
 &&\hs+\wz\az^{-q} \lf[\dsum_{l=-N}^{l_0}\dsum_{\ell=i+2}^{\fz} 2^{\ell n(\frac{1}{q}-
 \frac{1}{2})}2^{l(\frac{1}{p}-\frac{1}{q})}2^{-i\mathcal{N}}
\dint_0^r\exp\lf\{-C\frac{[\dist(E_\ell,\,S_i(\vec{B}))]^2}
{t^2}\r\} \frac{dt}{t}\r.\\
&&\hs\times\|\fai_l\|_{L^2(S_i(\vec{B}))}\Bigg]^q\\
&&\ls \wz\az^{-q} \lf[\dsum_{l=-\fz}^{l_0}2^{l(\frac{1}{p}-\frac{1}{q})}\r]^q+\wz{\az}^{-q}
2^{l_0}\lf[\dsum_{l=-\fz}^{l_0}2^{l(\frac{1}{p}-\frac{1}{q})}\r]^{q}
\sim \wz\az^{-q} 2^{l_0(\frac{q}{p}-1)}\sim \wz\az^{-p}.
 \end{eqnarray*}
Similar to the estimates of $\mathrm{Q}_1$ and $\mathrm{Q}_2$,
we have $\wz{\mathrm{Q}}_2+ \wz{\mathrm{Q}}_3\ls \wz \az^{-p}.$
Thus,
\begin{eqnarray*}
\wz\az^p\lf|\lf\{x\in\rn:\ S_L(f_{1})(x)>\wz\az\r\}\r|\ls \wz\az^{p-q} 2^{l_0(\frac{1}{p}-\frac{1}{q})q}
\sim1.
\end{eqnarray*}

On the other hand, to estimate $f_2$, let $\wz q\in(0,\,p)$. Then
\begin{eqnarray*}
\mathrm{J}:=&&\lf|\lf\{x\in\rn:\ S_L\lf(\dsum_{l=l_0+1}^N f_l\r)(x)>\wz\az\r\}\r|\\
\ls&& \wz\az^{-q}
\lf[\dsum_{l=-N}^{l_0} \dsum_{\ell\in\zz_+} \lf\{\dint_{S_\ell(\vec{B})}
\lf[S_L(f_l)(x)\r]^2\,dx\r\}^{\frac{1}{2}}\lf|S_\ell(\vec{B})\r|^{\frac{1}{q}-\frac{1}{2}}\r]^{q}.
\end{eqnarray*}
Similar to the estimates of $f_1$, we also obtain
\begin{eqnarray*}
\mathrm{J}\ls \wz\az^{-q}2^{l(\frac{1}{p}-\frac{1}{q})\epsilon},
\end{eqnarray*}
which implies that
$\wz\az^p|\{x\in\rn:\ S_L(f_2)(x)>\wz\az\}|\ls 1$. Combining the estimates of $f_1$ and $f_2$,
we conclude that
\begin{eqnarray*}
\lf\|\dsum_{l=-N}^N f_l\r\|_{WH_L^p(\rn)}\sim \wz\az^p\lf|\lf\{x\in\rn:\ S_L\lf(\dsum_{l=-N}^N
f_l\r)(x)>\wz\az\r\}\r|\ls 1,
\end{eqnarray*}
where the implicit constants are independent of $N$. Thus, by letting $N\to \fz$, we obtain
\begin{eqnarray*}
\lf\|\dsum_{l=-\fz}^{\fz}f_l\r\|_{WH_L^p(\rn)}\ls 1.
\end{eqnarray*}
Thus, from \eqref{3.10}, \eqref{4.x13} and the assumption that $g\in(WH_L^p(\rn))^*$,
we deduce that
\begin{eqnarray*}
\|g\|_{W\Lambda_{L,\,\mathcal{N}}^{n(\frac{1}{p}-1)}}\sim
\dint_0^\fz \frac{w_\mathcal{N}(\dz)}{\dz}\,d\dz \ls
\lim_{N\to \fz}
\lf|\lf\langle g,\,\dsum_{l=-N}^{N}
 f_l \r\rangle_{L^2(\rn)}\r|\ls \|g\|_{(WH_L^p(\rn))^*},
\end{eqnarray*}
which implies that $g\in W\Lambda_{L,\,\mathcal{N}}^{n(\frac{1}{p}-1)}$ and
\begin{eqnarray*}
\|g\|_{W\Lambda_{L,\,\mathcal{N}}^{n(\frac{1}{p}-1)}}\ls \|g\|_{(WH_L^p(\rn))^*}.
\end{eqnarray*}
This shows that $(WH_L^p(\rn))^*\subset W\Lambda_{L,\,\mathcal{N}}^{n(\frac{1}{p}-1)}$
and hence finishes the proof of Theorem \ref{t3.6}.
\end{proof}

\begin{remark}\label{r3.8}
By Theorem \ref{t3.6}, we see that
$W\Lambda_{L, {\mathcal{N}}}^{\az}(\rn)$ for $\az\in[0,\,\fz)$
is independent of the choice of ${\mathcal{N}}\in(n(\frac{1}{p}-\frac{1}{2}),\,\fz)$.
Thus, we can write $W\Lambda_{L, {\mathcal{N}}}^{\az}(\rn)$ simply by
$W\Lambda_{L}^{\az}(\rn)$.
\end{remark}

\smallskip

\thanks{\bf Acknowledgment.}
Part of the results in this paper was announced by the second author during the International Workshop on Nonlinear Variational Analysis 2014 which
was held on August 1st to 3rd at Kaohsiung Medical University. The second author would like to express his profound gratitude
to Professor Jen-Chih Yao for his invitation and for the warm hospitality extended to him during his stay in Taiwan.
The authors would like to thank Dr.
Dongyong Yang for his careful reading and many helpful
discussions on the topic of this article.

\bigskip

\noindent Jun Cao and Dachun Yang (Corresponding author)

\medskip

\noindent School of Mathematical Sciences, Beijing Normal
University, Laboratory of Mathematics and Complex Systems, Ministry
of Education, Beijing 100875, People's Republic of China

\smallskip

\noindent{\it E-mails:} \texttt{caojun1860@mail.bnu.edu.cn} (J. Cao)

\hspace{1.02cm}\texttt{dcyang@bnu.edu.cn} (D. Yang)

\bigskip

\noindent \emph{Current address} of Jun Cao

\medskip

\noindent Department of Applied Mathematics, Zhejiang University of Technology,
Hangzhou 310032, People's Republic of China

\bigskip

\noindent Der-Chen Chang

\medskip

\noindent Department of Mathematics and Department of Computer
Science, Georgetown University, Washington D. C. 20057, U. S. A.

\smallskip

\noindent{\it E-mail:} \texttt{chang@georgetown.edu}

\bigskip

\noindent Huoxiong Wu

\medskip

\noindent School of Mathematical Sciences, Xiamen University, Xiamen
361005, People¡¯s Republic of China

\smallskip

\noindent{\it E-mail:} \texttt{huoxwu@xmu.edu.cn}


\begin{thebibliography}{99}

\bibitem{at07} W. Abu-Shammala and A. Torchinsky,  The Hardy-Lorentz spaces
$H^{p,q} (\rn)$, Studia Math. 182 (2007), 283-294.

\vspace{-0.3cm}

\bibitem{a94} J. \'Alvarez, $H^p$ and weak $H^p$ continuity of
Calder\'on-Zygmund type operators, in: Fourier analysis (Orono, ME, 1992), 17-34,
Lecture Notes in Pure and Appl. Math., 157, Dekker, New York, 1994

\vspace{-0.3cm}

\bibitem{ao11} J. Assaad and E. M. Ouhabaz,
Riesz transforms of Schr\"odinger operators on manifolds, J. Geom.
Anal. 22 (2012), 1108-1136.

\vspace{-0.3cm}

\bibitem{adm05} P. Auscher, X. T. Duong and A. McIntosh,
Boundedness of Banach space valued singular integral operators and
Hardy spaces, Unpublished preprint, 2005.

\vspace{-0.3cm}

\bibitem{am07} P. Auscher and J. M. Martell, Weighted norm inequalities,
off-diagonal estimates and elliptic operators. II.
Off-diagonal estimates on spaces of homogeneous type,
J. Evol. Equ. 7 (2007), 265-316.

\vspace{-0.3cm}

\bibitem{amr08} P. Auscher, A. McIntosh and E. Russ, Hardy spaces of differential
forms on Riemannian manifolds, J. Geom. Anal. 18 (2008), 192-248.

\vspace{-0.3cm}

\bibitem{bl76}
J. Bergh and J. L\"ofstr\"om, Interpolation spaces. An introduction,
Grundlehren der Mathematischen Wissenschaften, No. 223. Springer-Verlag, Berlin-New York,
1976. x+207 pp.

\vspace{-0.3cm}

\bibitem{bk05} S. Blunck and P. Kunstmann, Generalized Gaussian estimates and
the Legendre transform, J. Operator Theory 53 (2005), 351-365.

\vspace{-0.3cm}

\bibitem{ccyy} J. Cao, D.-C. Chang,
D. Yang and S. Yang, Boundedness of generalized Riesz transforms on
Orlicz-Hardy spaces associated to operators,
Integral Equations Operator Theory  76  (2013),  225-283.

\vspace{-0.3cm}

\bibitem{cly10} J. Cao, Y. Liu  and D. Yang, Hardy spaces $H^1_{\cl}(\rn)$
associated to Schr\"odinger type operators $(-\bdz)^2+V^2$, Houston
J. Math. 36 (2010),  1067-1095.

\vspace{-0.3cm}

\bibitem{cy} J. Cao and D. Yang, Hardy spaces $H_L^p(\rn)$ associated to
operators satisfying $k$-Davies-Gaffney estimates, Sci. China. Math.
55 (2012), 1403-1440.

\vspace{-0.3cm}

\bibitem{cyy} J. Cao, D. Yang and S. Yang,
Endpoint boundedness of Riesz transforms on Hardy spaces associated
with operators, Rev. Mat. Complut.  26  (2013),  99-114.

\vspace{-0.3cm}

\bibitem{ccy14} J. Cao, D.-C. Chang, Z. Fu and D. Yang,
On the real interpolation of tent spaces with application to Hardy spaces,
Preprint.



\vspace{-0.3cm}

\bibitem{cds99} D.-C. Chang, G. Dafni and E. M. Stein,  Hardy spaces,
$\mathop\mathrm{BMO}$ and boundary value problems for the Laplacian
on a smooth domain in $\rn$, Trans. Amer. Math. Soc. 351 (1999),
1605-1661.

\vspace{-0.3cm}

\bibitem{cks92} D.-C. Chang, S. G. Krantz and E. M. Stein,
Hardy spaces and elliptic boundary value problems, in: The Madison
Symposium on Complex Analysis (Madison, WI, 1991), 119-131, Contemp.
Math., 137, Amer. Math. Soc., Providence, RI, 1992.

\vspace{-0.3cm}

\bibitem{cks93} D.-C. Chang, S. G. Krantz and E. M. Stein,
$H^p$ theory on a smooth domain in $\rr^N$ and elliptic boundary
value problems, J. Funct. Anal. 114 (1993), 286-347.

\vspace{-0.3cm}

\bibitem{co74} R. R. Coifman, A real variable characterization of
$H^p$, Studia Math. 51 (1974), 269-274.

\vspace{-0.3cm}

\bibitem{clms93} R. R. Coifman, P.-L. Lions, Y. Meyer and P. Semmes,
Compensated compactness and Hardy spaces, J. Math. Pures Appl. (9)
72 (1993), 247-286.

\vspace{-0.3cm}

\bibitem{cms85} R. R. Coifman, Y. Meyer and E. M. Stein, Some new
function spaces and their applications to harmonic analysis, J.
Funct. Anal. 62 (1985), 304-335.

\vspace{-0.3cm}

\bibitem{da95}  E. B. Davies, Uniformly elliptic operators with measurable coefficients,
J. Funct. Anal. 132 (1995), 141-169.

\vspace{-0.3cm}

\bibitem{cw77}  R. R. Coifman and G. Weiss,
Extensions of Hardy spaces and their use in analysis, Bull. Amer.
Math. Soc. 83 (1977), 569-645.

\vspace{-0.3cm}

\bibitem{ddy12} Q. Deng, Y. Ding and X. Yao, Characterizations of Hardy spaces
associated to higher order elliptic operators, J. Funct. Anal. 263 (2012), 604-674.

\vspace{-0.3cm}

\bibitem{dl13} X. T. Duong and J. Li, Hardy spaces associated to operators
satisfying Davies-Gaffney estimates and bounded holomorphic functional
calculus, J. Funct. Anal. 264 (2013), 1409-1437.

\vspace{-0.3cm}

\bibitem{dxy07} X. T. Duong, J. Xiao and L. Yan, Old and new Morrey
spaces with heat kernel bounds, J. Fourier Anal. Appl. 13 (2007),
87-111.

\vspace{-0.3cm}

\bibitem{dy05-1} X. T. Duong and L. Yan, New function
spaces of $\mathrm{BMO}$ type, the John-Nirenberg inequality,
interpolation, and applications, Comm. Pure Appl. Math. 58 (2005),
1375-1420.

\vspace{-0.3cm}

\bibitem{dy05-2} X. T. Duong and L. Yan, Duality of Hardy and BMO spaces
associated with operators with heat kernel bounds, J. Amer. Math.
Soc. 18 (2005),  943-973.

\vspace{-0.3cm}

\bibitem{dz99} J. Dziuba\'nski and J. Zienkiewicz,
Hardy space $H^1$ associated to Schr\"odinger operator with
potential satisfying reverse H\"older inequality, Rev. Mat. Ibero.
15 (1999), 279-296.

\vspace{-0.3cm}

\bibitem{dz00} J. Dziuba\'nski and J. Zienkiewicz, $H^p$
spaces for Schr\"odinger operators, in: Fourier analysis and related
topics (Bk{e}dlewo, 2000), 45-53, Banach Center Publ. 56, Polish
Acad. Sci., Warsaw, 2002.

\vspace{-0.3cm}

\bibitem{frs74} C. Fefferman, N. M. Rivi\'ere and Y. Sagher,
Interpolation between $H^p$ spaces: the real method, Trans. Amer.
Math. Soc. 191 (1974), 75-81.

\vspace{-0.3cm}

\bibitem{fs72} C. Fefferman and E. M. Stein, $H^{p}$ spaces of several
variables, Acta Math. 129 (1972),  137-193.

\vspace{-0.3cm}

\bibitem{fso86} R. Fefferman and F. Soria, The space weak $H^{1}$,
Studia Math. 85 (1986),  1-16.

\vspace{-0.3cm}

\bibitem{ga59}
M. Gaffney, The conservation property of the heat equation on Riemannian manifolds,
Comm. Pure Appl. Math. 12 (1959), 1-11.

\vspace{-0.3cm}

\bibitem{gr92} L. Grafakos, Hardy space estimates for multilinear operators. II,
Rev. Mat. Ibero. 8 (1992),  69-92.

\vspace{-0.3cm}

\bibitem{gr08} L. Grafakos,  Modern Fourier Analysis, Second edition,
Graduate Texts in Mathematics 250, Springer, New York, 2009.

\vspace{-0.3cm}

\bibitem{gr95} L. Grafakos and R. Rochberg, Compensated compactness and
the Heisenberg group, Math. Ann. 301 (1995),  601-611.

\vspace{-0.3cm}

\bibitem{ha06} M. Haase, The Functional Calculus for Sectorial Operators,
Operator Theory: Advances and Applications 169, Birkh\"auser Verlag,
Basel, 2006.

\vspace{-0.3cm}

\bibitem{he13} D. He,  Square function characterization of Weak Hardy Spaces,
J. Fourier Anal. Appl. 20 (2014), 1083-1110.

\vspace{-0.3cm}

\bibitem{hlmmy} S. Hofmann, G. Lu, D. Mitrea, M. Mitrea and L. Yan,
Hardy spaces associated to non-negative self-adjoint operators
satisfying Davies-Gaffney estimates, Mem. Amer. Math. Soc. 214
(2011), no. 1007, vi+78 pp.

\vspace{-0.3cm}

\bibitem{hm09} S. Hofmann and S. Mayboroda,
Hardy and BMO spaces associated to divergence form elliptic
operators, Math. Ann. 344 (2009), 37-116.

\vspace{-0.3cm}

\bibitem{hm09c} S. Hofmann and S. Mayboroda,
Correction to ``Hardy and BMO spaces associated to divergence form
elliptic operators", arXiv: 0907.0129.

\vspace{-0.3cm}

\bibitem{hmm} S. Hofmann, S. Mayboroda and A. McIntosh, Second order elliptic
operators with complex bounded measurable coefficients in $L^p$,
Sobolev and Hardy spaces, Ann. Sci. \'Ec. Norm. Sup\'er. (4)  44  (2011),
723-800.

\vspace{-0.3cm}

\bibitem{jy11} R. Jiang and D. Yang, Orlicz-Hardy spaces associated with operators
satisfying Davies-Gaffney estimates, Commun. Contemp. Math. 13
(2011), 331-373.

\vspace{-0.3cm}

\bibitem{jya} R. Jiang and D. Yang,
Predual spaces of Banach completions of Orlicz-Hardy spaces
associated with operators, J. Fourier Anal. Appl. 17 (2011), 1-35.

\vspace{-0.3cm}

\bibitem{jy10} R. Jiang and D. Yang, New Orlicz-Hardy spaces associated
with divergence form elliptic operators, J. Funct. Anal. 258 (2010),
1167-1224.

\vspace{-0.3cm}

\bibitem{jyz09} R. Jiang, D. Yang and Y. Zhou, Orlicz-Hardy spaces
associated with operators, Sci. China Ser. A 52 (2009), 1042-1080.

\vspace{-0.3cm}

\bibitem{kmm}
N. Kalton, S. Mayboroda and M. Mitrea,
Interpolation of Hardy-Sobolev-Besov-Triebel-Lizorkin spaces and
applications to problems in partial differential equations,
in:  Interpolation Theory and Applications,  121-177, Contemp. Math., 445,
Amer. Math. Soc., Providence, RI, 2007.

\vspace{-0.3cm}

\bibitem{kmp99}
N. Krugljak, L. Maligranda and L.-E. Persson, The failure of the Hardy inequality and interpolation of intersections,
Ark. Mat.  37  (1999), 323-344.

\vspace{-0.3cm}

\bibitem{la78} R. H. Latter, A characterization of $H^p(\mathbb{R}^n)$
in terms of atoms, Studia Math. 62 (1978), 93-101.

\vspace{-0.3cm}

\bibitem{lyj14} Y. Liang, D. Yang and R. Jiang,
Weak Musielak-Orlicz-Hardy spaces and applications, Submitted.

\vspace{-0.3cm}

\bibitem{lyy11} Y. Liang, D. Yang and S. Yang, Applications of
Orlicz-Hardy spaces associated with operators satisfying poisson
estimates, Sci. China Math. 54 (2011), 2395-2426.

\vspace{-0.3cm}

\bibitem{li91} H. Liu, The weak $H^p$ spaces on homogenous groups,
in: Harmonic analysis (Tianjin, 1988), 113-118, Lecture Notes in
Math., 1984, Springer, Berlin, 1991.

\vspace{-0.3cm}

\bibitem{lu95} S. Lu, Four lectures on real $H^p$ spaces, World Scientific
Publishing Co., Inc., River Edge, NJ, 1995, viii +217 pp.

\vspace{-0.3cm}

\bibitem{mc86} A. McIntosh, Operators which have an $H_\infty$ functional
calculus, in: Miniconference on operator theory and partial differential
equations (North Ryde, 1986), 210-231, Proc. Centre Math. Anal.,
Austral. Nat. Univ., 14, Austral. Nat. Univ., Canberra, 1986.

\vspace{-0.3cm}

\bibitem{mmy10} D. Mitrea, M. Mitrea and L. Yan,  Boundary value problems
for the Laplacian in convex and semiconvex domains, J. Funct. Anal. 258 (2010),
2507-2585.

\vspace{-0.3cm}

\bibitem{mu94} S. M\"uller, Hardy space methods for nonlinear
partial differential equations, Equadiff 8 (Bratislava, 1993),
Tatra Mt. Math. Publ. 4 (1994), 159-168.

\vspace{-0.3cm}

\bibitem{qy00} T. Quek and D. Yang, Calder\'on-Zygmund-type operators
on weighted weak Hardy spaces over $\rn$, Acta Math. Sin. (Engl.
Ser.) 16 (2000), 141-160.

\vspace{-0.3cm}

\bibitem{se94} S. Semmes, A primer on Hardy spaces, and some remarks
on a theorem of Evans and M\"uller, Comm. Partial Differential
Euqations 19 (1994), 277-319.

\vspace{-0.3cm}

\bibitem{st93} E. M. Stein, Harmonic Analysis: Real-Variable Methods,
Orthogonality, and Oscillatory Integrals, Princeton Mathematical
Series 43, Monographs in Harmonic Analysis III, Princeton University
Press, Princeton, NJ, 1993.

\vspace{-0.3cm}
\bibitem{stw81} E. M. Stein, M. H. Taibleson and G. Weiss, Weak type
estimates for maximal operators on certain $H^p$ spaces, Rend. Circ.
Mat. Palermo, Supplemento, 1 (1981), 81-97.

\vspace{-0.3cm}

\bibitem{sw60} E. M. Stein and G. Weiss, On the theory of harmonic functions of
several variables. I. The theory of $H^{p}$-spaces,  Acta Math. 103
(1960), 25-62.

\vspace{-0.3cm}

\bibitem{tw80} M. H. Taibleson and G. Weiss, The molecular characterization
of certain Hardy spaces, in: Representation theorems for Hardy spaces,
pp. 67-149, Ast\'erisque, 77, Soc. Math. France, Paris, 1980.

\vspace{-0.3cm}

\bibitem{tr78} H. Triebel, Interpolation Theory, Function Spaces, Differential Operators,
Second edition, Johann Ambrosius Barth, Heidelberg, 1995. 532 pp.

\vspace{-0.3cm}

\bibitem{y08} L. Yan, Classes of Hardy spaces associated
with operators, duality theorem and applications, Trans. Amer. Math.
Soc. 360 (2008), 4383-4408.

\vspace{-0.3cm}

\bibitem{yy11} Da. Yang and Do. Yang, Real-variable characterizations of Hardy spaces
associated with Bessel operators, Anal. Appl. (Singap.) 9 (2011), 345-368.

\end{thebibliography}
\end{document}